

\documentclass[a4paper,10pt,reqno]{amsart}
\numberwithin{equation}{section}
\parskip1mm
\usepackage{latexsym,amssymb}
\usepackage{hyperref}
\usepackage{amsmath,amsthm}
\usepackage{amsfonts}
\usepackage[american]{babel}
\usepackage{mathrsfs}
\usepackage{psfrag}
\usepackage{epsfig,inputenc}
\usepackage{graphpap,latexsym,epsf}
\usepackage{color}
\usepackage{amssymb,eucal,paralist,color,enumerate}
\usepackage{graphicx}


\setlength{\voffset}{-.7truein} \setlength{\textheight}{9.8truein}
\setlength{\textwidth}{6.05truein} \setlength{\hoffset}{-.7truein}

\addtolength{\hoffset}{-0,5cm} \addtolength{\textwidth}{1,5cm}

\newcommand{\bbM}{\mathbb{M}}


\newcommand{\R}{\mathbb{R}}
\newcommand{\N}{\mathbb{N}}

\mathchardef\emptyset="001F

\numberwithin{equation}{section}
\newtheorem{maintheorem}{Theorem}
\newtheorem{theorem}{Theorem}[section]

\newtheorem{lemma}[theorem]{Lemma}
\newtheorem{remark}[theorem]{Remark}

\newtheorem{definition}[theorem]{Definition}
\newtheorem{proposition}[theorem]{Proposition}
\newtheorem{problem}[theorem]{Problem}

\newtheorem{notation}[theorem]{Notation}


\newcommand{\eps}{\varepsilon}

\newcommand{\weakto}{\rightharpoonup} 

\newcommand{\forae}{\text{for a.a. }}
\newcommand{\aein}{\text{a.e.\ in }}



\newcommand{\down}{\downarrow}
\newcommand{\weaksto}{\overset{*}{\rightharpoonup}}

\newcommand{\AC}{\mathrm{AC}}

\newcommand{\dualoperator}

\def\calA{{\mathcal A}}  
 \def\calE{{\mathcal E}} \def\calF{{\mathcal F}}
 \def\calH{{\mathcal H}} 
 \def\calK{{\mathcal K}} \def\calL{{\mathcal L}}
  
 \def\calQ{{\mathcal Q}} \def\calR{{\mathcal R}}
 \def\calT{{\mathcal T}}


  \def\rmC{{\mathrm C}}

\def\rmM{{\mathrm M}}

\def\BS{\boldsymbol} 

\def\bflambda{{\BS\lambda}}    \def\bfmu{{\BS\mu}}

\def\dd{\;\!\mathrm{d}} 

\newcommand{\pairing}[4]{ \sideset{_{ #1 }}{_{ #2 }}  {\mathop{\langle #3 , #4
\rangle}}}

\newcommand{\teta}{\vartheta}

\newcommand{\nchi}{{\raise.2ex\hbox{$\chi$}}}

\definecolor{ddcyan}{rgb}{0,0.1,0.9}
\definecolor{ddmagenta}{rgb}{0.8,0,0.8}
\definecolor{orange}{rgb}{0.6,0.2,0}
\definecolor{vgreen}{rgb}{0.1,0.5,0.2}
\definecolor{dred}{rgb}{.8,0,0}
\definecolor{Turk}{rgb}{0,0.7,0.4}

\newcommand{\piecewiseConstant}[2]{\overline{#1}_{\kern-1pt#2}}
\newcommand{\pwc}{\piecewiseConstant}

\newcommand{\upiecewiseConstant}[2]{\underline{#1}_{\kern-1pt#2}}

\newcommand{\upwc}{\upiecewiseConstant}
\newcommand{\piecewiseLinear}[2]{{#1}_{\kern-1pt#2}}
\newcommand{\pwl}{\piecewiseLinear}
\newcommand{\pwwll}[2]{\widehat{#1}_{\kern-1pt#2}}

\newcommand{\piecewiseVariational}[2]{\tilde{#1}_{\kern-1pt#2}}

\newcommand{\DDDn}[2]{\begin{array}[t]{c}#1\vspace*{-1em}\\_{#2}\end{array}}

\newcommand{\dddn}[2]{\DDDn{\begin{array}[t]{c}\underbrace{#1}\vspace*{.6em}\end{array}}{\text{\footnotesize #2}}}

\newcommand{\foraa}{\text{for a.a. }}

\newcommand{\ue}{u_\varepsilon}




\newcommand{\sig}[1]{E(#1)}

\newcommand{\norm}[2]{\| #1\|_{#2}}

\newcommand{\BV}{\mathrm{BV}}
\newcommand{\BD}{\mathrm{BD}}
\newcommand{\Var}{\mathrm{Var}}

\newcommand{\Dir}{\mathrm{Dir}}
\newcommand{\Neu}{\mathrm{Neu}}

 \def\trait #1 #2 #3 {\vrule width #1pt height #2pt depth #3pt}

 \def\fin{\hfill
         \trait .3 5 0
         \trait 5 .3 0
         \kern-5pt
         \trait 5 5 -4.7
         \trait 0.3 5 0
 \medskip}
 
\newcommand{\QED}{\mbox{}\hfill\rule{5pt}{5pt}\medskip\par}

\newcommand{\bsL}{\boldsymbol{L}}
\newcommand{\bsM}{\boldsymbol{M}}

\newcommand{\bsU}{\boldsymbol{U}}
\newcommand{\bsE}{\boldsymbol{E}}

\newcommand{\bbC}{\mathbb{C}}
\newcommand{\bbD}{\mathbb{D}}
\newcommand{\bbE}{\mathbb{E}}
\newcommand{\bsX}{\mathbf{X}}
\newcommand{\bsV}{\mathbf{V}}
\newcommand{\bsY}{\mathbf{Y}}

\newcommand{\bbB}{\mathbb{B}}

\newcommand{\mt}{\bbM}

\newcommand{\sym}{\mathrm{sym}}
\newcommand{\dev}{\mathrm{D}}
\newcommand{\uu}{u}

\newcommand{\condu}{\kappa}

\newcommand{\Ftau}[1]{F_\tau^{#1}}
\newcommand{\Ltau}[1]{\mathcal{L}_\tau^{#1}}
\newcommand{\gtau}[1]{H_\tau^{#1}}
\newcommand{\htau}[1]{h_\tau^{#1}}
\newcommand{\wtau}[1]{w_\tau^{#1}}
\newcommand{\utau}[1]{u_\tau^{#1}}
\newcommand{\vtau}[1]{v_\tau^{#1}} 
\newcommand{\btau}[1]{\mathfrak{h}_\tau^{#1}}

\newcommand{\ptau}[1]{p_\tau^{#1}}
\newcommand{\etau}[1]{e_\tau^{#1}}
\newcommand{\tetau}[1]{\teta_\tau^{#1}}
\newcommand{\sitau}[1]{\sigma_{\tau}^{#1}}
\newcommand{\simtau}[1]{\sigma_{M,\tau}^{#1}}
\newcommand{\sidevtau}[1]{(\sigma_{\tau}^{#1})_\dev}
\newcommand{\simdevtau}[1]{(\sigma_{M,\tau}^{#1})_\dev}
\newcommand{\dtau}[2]{\mathrm{D}_{#1,\tau}(#2)}
\newcommand{\Dtau}[2]{\mathrm{D}_{#1,\tau}(#2)}
\newcommand{\Ddtau}[2]{\mathrm{D}_{#1,\tau}^2(#2)}
\newcommand{\tetaum}[1]{\teta_{M,\tau}^{#1}}
\newcommand{\utaum}[1]{u_{M,\tau}^{#1}}
\newcommand{\ptaum}[1]{p_{M,\tau}^{#1}}
\newcommand{\etaum}[1]{e_{M,\tau}^{#1}}
\newcommand{\zetau}[1]{\zeta_{\tau}^{#1}}
\newcommand{\zetaum}[1]{\zeta_{\tau,M}^{#1}}

\newcommand{\sie}{\sigma_\eps}
\newcommand{\siedev}{(\sigma_{\eps})_\dev}
\newcommand{\tetae}{\teta_\eps}
\newcommand{\pe}{p_\eps}

\newcommand{\uek}{u_{\eps_k}}
\newcommand{\tetaek}{\teta_{\eps_k}}
\newcommand{\pek}{p_{\eps_k}}

\newcommand{\limk}{\hat{\mathsf{K}}}
\newcommand{\limte}{\Theta}

\newcommand{\RRR}{\color{magenta}}




\begin{document}
\title[From visco to perfect plasticity]{From visco to perfect plasticity in thermoviscoelastic materials}

\author{Riccarda Rossi}
\address{R.\ Rossi, DIMI, Universit\`a degli studi di Brescia,
via Branze 38, 25133 Brescia - Italy}
\email{riccarda.rossi\,@\,unibs.it}

\thanks{The author has been partially supported by the  Gruppo Nazionale per  l'Analisi Matematica, la
  Probabilit\`a  e le loro Applicazioni (GNAMPA)
of the Istituto Nazionale di Alta Matematica (INdAM)}

\date{March 16, 2018} 
\maketitle

\begin{abstract}
We consider a thermodynamically consistent  model  for thermoviscoplasticity. For the related PDE system, 
coupling the heat equation for the absolute temperature, the momentum balance  with viscosity and inertia for the displacement variable, and the flow rule for the plastic strain, we
 propose two  weak sol\-va\-bi\-li\-ty concepts, `entropic' and `weak energy' solutions,
where the highly nonlinear heat equation  is 
 suitably formulated. Accordingly, we prove two  existence results by passing to the limit in a carefully devised time discretization scheme.
 \par
 Furthermore, we study the asymptotic behavior of weak energy solutions  as the rate of the external data becomes slower and slower, which amounts to taking the vanishing-viscosity and inertia limit of the system. We prove their convergence to a global energetic solution to the Prandtl-Reuss model for perfect plasticity, whose evolution is `energetically' coupled to that of the (spatially constant) limiting temperature.
\end{abstract}
\noindent
\textbf{2010 Mathematics Subject Classification:}  
35Q74, 
74H20, 
74C05, 
74C10, 
74F05. 
\par
\noindent
\textbf{Key words and phrases:} Thermoviscoplasticity, Entropic Solutions, Time Discretization, Vanishing-Viscosity Analysis,
 Perfect Plasticity, Rate-Independent Systems, (Global) Energetic Solutions.

\section{\bf Introduction}
\noindent
Over the last decade, the mathematical study of rate-independent  systems
has received strong impulse. This is undoubtedly due to their ubiquity in  several branches of continuum mechanics,  see \cite{Miel05ERIS, MieRouBOOK},
but also to the manifold challenges posed by the analysis of rate-independent evolution. 
In particular, its intrinsically nonsmooth (in time) character makes it necessary to resort to suitable weak solvability notions: First and foremost,  the concept of 
\emph{(global) energetic solution}, developed in    \cite{MieThe99MMRI,MieThe04RIHM, Miel05ERIS}, cf.\ also 
 the notion of \emph{quasistatic evolution}, first introduced  for  models of crack propagation, cf.\ e.g.\ \cite{DMToa02, DFT05}. 
\par
Alternative solution concepts for rate-independent models have been subsequently proposed, on the grounds that 
the 
\emph{global stability} condition prescribed by the energetic notion fails to accurately describe the behavior of the system at jump times, as soon as the driving energy is 
nonconvex. Among the various selection criteria of mechanically feasible concepts,  let us mention here the \emph{vanishing-viscosity} approach, 
pioneered in \cite{EfeMie06RILS} and subsequently developed in the realm of \emph{abstract} rate-independent systems in \cite{MRS09,MRS12,MRS13} and, in parallel, 
in the context of  specific models in crack propagation  and damage, cf.\ e.g.\  \cite{ToaZan06?AVAQ,KnMiZa07?ILMC,LazzaroniToader,KnRoZa13VVAR}, as well  as in plasticity, see e.g.\ \cite{DalDesSol11,DalDesSol12,BabFraMor12, FrSt2013}. In  all of these applications, 
the evolution of the displacement variable is governed by the elastic equilibrium equation (with no viscosity or inertial terms), which is coupled to the rate-independent flow rule
for the  internal parameter describing the mechanical phenomenon under consideration.  In the `standard' vanishing-viscosity approach, 
 the viscous term, regularizing the temporal evolution and then sent to zero, is added \emph{only} to the flow rule.
 \par
   Let us mention that, in turn, for certain models a \emph{rate-dependent} flow rule seems more mechanically feasible, cf.\ e.g.\
    \cite{ZRSRZ06TDIT} in the frame of plasticity. Nonetheless, here we are interested in the vanishing-viscosity approach to rate-independence. 
 More specifically, we focus on 
 the extension of this approach  to \emph{coupled} systems. 
Recent papers have started to  address this issue  for  systems coupling the evolution of the  displacement and  of the internal variable.  In the context of the rate-dependent model, 
 \emph{both} the displacements and the internal variable
are subject to viscous dissipation (and possibly to inertia in the momentum balance), and the vanishing-viscosity limit is taken  \emph{both} in the momentum balance, and in the flow rule. 
The very first paper initiating this analysis 
 is \cite{DMSca14QEPP}, obtaining a \emph{perfect plasticity} (rate-independent) system in the limit of dynamic processes. We also quote \cite{Scala14}, where 
 this kind of approach was developed in the realm of a model for delamination, as well as \cite{MRS14}, tackling the analysis of 
 \emph{abstract, finite-dimensional} systems where the viscous terms vanish with different rates.
 \par
The model for 
small-strain associative elastoplasticity with the Prandtl-Reuss flow rule (without hardening) for the plastic strain, chosen in 
\cite{DMSca14QEPP} to pioneer the `full vanishing-viscosity' approach,  has been extensively studied. In fact, the existence theory for perfect plasticity
is by now classical, dating back to \cite{Johnson76, Suquet81,Kohn-Temam83}, cf.\ also \cite{Temam83}. It was revisited in \cite{DMDSMo06QEPL} in the 
framework of the aforementioned concept of (global) energetic solution to rate-independent systems, with the existence result established by passing to the limit 
in time-incremental minimization problems; a fine study of the flow rule for the plastic strain was also carried out in  \cite{DMDSMo06QEPL}. 
This variational approach has apparently given new impulse to the analysis of perfect plasticity, extended to the case of heterogeneous materials in
 \cite{Sol09,FraGia2012, Sol14,Sol15}; we also quote \cite{BMR12}  on the vanishing-hardening approximation of the  Prandtl-Reuss model.
 \par
 In \cite{DMSca14QEPP}, first of all an existence result for  a dynamic viscoelastoplastic system approximating the perfectly plastic one, featuring viscosity and inertia in the momentum balance, and 
 viscosity in the flow rule for the plastic tensor, has been obtained. Secondly, the authors have analyzed its behavior as the rate of the external data 
 becomes slower and slower: with a suitable rescaling, this amounts to taking the vanishing-viscosity and inertia limit of the system.
  They have shown that the (unique) solutions to the viscoplastic system converge, up to a subsequence, to a (global) energetic solution of the perfectly plastic system.
  \par
  In this paper, we aim to
  \textbf{use the model for perfect plasticity as a \emph{case study}} for the vanishing-viscosity analysis of rate-dependent systems that also
  \emph{encompass thermal effects}.    To our knowledge, this is the first paper where the vanishing-viscosity analysis in 
   a fully rate-dependent, and temperature-dependent, system has been performed.  
  \par
    Indeed, the analysis of systems with a \emph{mixed}  rate-dependent/rate-independent character, coupling the 
     \emph{rate-dependent}  
    evolution of the
  (absolute)  temperature and of the  displacement/deformation variables with the \emph{rate-independent} flow rule 
  of an internal variable, has been initiated   in \cite{Roub10TRIP}, and subsequently particularized to various mechanical models. 
   	While referring to \cite[Chap.\ 5]{MieRouBOOK}
  for  a survey of  these type of systems, we mention here   the   perfect plasticity  and damage   models studied in \cite{Roub-PP} in \cite{LRTT}, respectively.
   In the latter paper, 
 a vanishing-viscosity analysis (as the rate of the external loads and heat sources tends to zero) for the \emph{mixed} rate-dependent/independent damage model, 
  has  been performed.
  \par Instead, here the (approximating) thermoviscoplastic system will feature a \emph{rate-dependent} flow rule for the  plastic strain, and thus will be entirely
  rate-dependent.    
  \begin{itemize}
  \item
  First  of all, we will focus on  the  analysis of the rate-dependent system.  Exploiting the techniques  from \cite{Rocca-Rossi}, we  will  obtain two existence results, 
  which might be interesting in their own right,
   for two notions of solutions of the thermoviscoplastic system, referred to as `entropic' and `weak energy'. 
  Our proofs will be carried out  by passing to the limit in a carefully tailored time discretization scheme.
  \item
    Secondly, in the case of `weak energy' solutions we will perform the vanishing-viscosity asymptotics, obtaining a system where the 
  evolution of the displacement and of the elastic and plastic strains, in the sense  of 
  (global) energetic solutions, is coupled to  that  of the  (spatially constant) temperature variable. 
    In fact, 
  we could address this singular limit also for 
   entropic solutions, but the resulting formulation of the limiting rate-independent system would  be less meaningful due to 
   the too weak character of  the entropic solution notion, cf.\ also Remark \ref{rmk:2weak} ahead. 
  \end{itemize}
  \par
  Let us now get further insight into our analysis, first in the visco-, and then in the perfectly plastic cases.
  \subsection{The thermoviscoplastic system}
\label{ss:1.1}
The reference configuration is a bounded, open, Lipschitz domain  $\Omega\subset \R^d$, $d\in \{2, 3\}$, and we consider
 the evolution of the system in a time interval $(0,T)$.
Within the small-strain approximation, the momentum balance features the linearized strain tensor $\sig{u}= \tfrac12 \left( \nabla u+ \nabla u^\bot \right)$, decomposed
as 
\begin{equation}
\label{decomp-intro}
\sig u = e+p \qquad \text{ in } \Omega \times (0,T), 
\end{equation}
with $e \in \mt_\sym^{d \times d}$ (the space of symmetric $(d{\times}d)$-matrices) and $p \in \mt_\dev^{d \times d}$ (the space of symmetric
 $(d{\times}d)$-matrices with null trace)
 the elastic and plastic strains, respectively. 
In accord with the  Kelvin-Voigt rheology for materials subject to thermal expansion,   the stress 
is given by 
\begin{align}
\label{stress}
\sigma = \mathbb{D} \dot{e} + \mathbb{C}(e - \mathbb{E}\teta),
\end{align}
with 
$\teta$ the absolute temperature, and 
the elasticity, viscosity, and thermal expansion tensors $\bbC,\, \bbD,\, \bbE$  depending on the space variable $x$ (which
shall be overlooked in this Introduction for simplicity of exposition), symmetric, $\bbC$ and $\bbD$ positive definite. 
Then, we consider the following PDE system:
\begin{subequations}
\label{plast-PDE}
\begin{align}
& 
\label{heat}
\dot{\teta} - \mathrm{div}(\condu(\teta)\nabla \teta) =H+ \mathrm{R}(\teta,\dot{p}) + \dot{p}: \dot{p}+ \mathbb{D} \dot{e} : 
\dot{e} -\teta \mathbb{C}\mathbb{E}  : \dot{e} && \text{ in } \Omega \times (0,T),
\\
\label{mom-balance}
 &\rho \ddot{u} - \mathrm{div}\sigma = F && \text{ in } \Omega \times (0,T),
\\
&
\label{flow-rule}
\partial_{\dot{p}}  \mathrm{R}(\teta,\dot{p}) + \dot{p} \ni \sigma_{\mathrm{D}} && \text{ in } \Omega \times (0,T). 
\end{align}
\end{subequations}
The heat equation \eqref{heat} features as heat conductivity coefficient a nonlinear function $\condu \in \mathrm{C}^0(\R^+)$, which shall be
 supposed with a suitable growth. In the momentum balance \eqref{mom-balance}, $\rho>0$ is the (constant, for simplicity) mass density. 
 The evolution of the plastic strain  $p$ is given by the flow rule \eqref{flow-rule}, where
 $\sigma_\dev$ is the deviatoric part of the stress $\sigma$, and 
   the dissipation potential
 $\mathrm{R} :\R^+\times \mt_\dev^{d\times d} \to [0,+\infty) $ is lower semicontinuous, 
 and associated with a multifunction
 $K :\R^+ \rightrightarrows \mt_\dev^{d\times d}$, with values 
 in the compact and convex subsets of $\mt_\dev^{d\times d}$, via the relation
 \[
 R(\teta,  \dot{p})   = \sup_{\pi \in K(\teta)} \pi{:} \dot{p} \qquad \text{for all } (\teta, \dot p) \in \R^+\times \mt_\dev^{d\times d}
 \]
 (the  dependence of $K$ and $R$  on $x \in \Omega$ is  overlooked within this section).
 Namely, for every $\teta \in \R^+$ the potential  $R(\teta, \cdot)$ is the support function of the convex and compact set $K(\teta)$, which can be interpreted as the domain of 
 viscoelasticity, allowed to depend on $x\in \Omega$ as well as on the  temperature variable. 
 In fact, $R(\teta, \cdot)$ is the Fenchel-Moreau conjugate of the indicator function $I_{K(\teta)}$, and thus \eqref{flow-rule} (where $\partial_{\dot p}$ denotes the subdifferential in the sense of convex analysis w.r.t.\ the variable $\dot p$) rephrases as 
 \begin{equation}
 \label{flow-rule-rewritten}
 \begin{aligned} 
 \dot{p} \in \partial I_{K(\teta)}(\sigma_\dev {-} \dot p)  
 \
 \Leftrightarrow \   \dot{p}   = \sigma_{\mathrm{D}}  - \mathrm{P}_{K(\teta)} (\sigma_{\mathrm{D}} )
 \quad
 \text{ in } \Omega \times (0,T),
 \end{aligned}
 \end{equation}
 with $ \mathrm{P}_{K(\teta)} $ the projection operator onto $K(\teta)$. 
  The PDE system \eqref{plast-PDE}  is supplemented by the boundary conditions
\begin{subequations}
\label{bc}
\begin{align}
& 
\label{bc-u-1}
\sigma \nu =  g && \text{ on } \Gamma_\Neu \times (0,T),
\\
&
\label{bc-u-2}
u= w && \text{ on  } \Gamma_\Dir \times (0,T),
\\
&
\label{bc-teta}
\condu(\teta)\nabla \teta \nu = h  && \text{ on } \partial\Omega \times (0,T),
\end{align}
\end{subequations}
where $\nu$ is  the external unit normal to $\partial\Omega$, with $ \Gamma_\Neu $ and $\Gamma_\Dir$ its Neumann and  Dirichlet parts.
The body is subject to the volume force $F$,  to the applied traction $g$ on $\Gamma_\Neu$, and solicited by a displacement field $w$ applied on $\Gamma_\Dir$, while $H$ and $h$ are  bulk and surface (positive) heat sources, respectively. 
\par
A PDE system with the same  structure as  (\ref{plast-PDE}, \ref{bc}) was proposed in \cite{Roub-PP} to model the thermodynamics of perfect plasticity: i.e., a heat equation akin to \eqref{heat} and the momentum balance \eqref{mom-balance} were coupled to the \emph{rate-independent version} of the flow rule \eqref{flow-rule}, cf.\ \eqref{flow-rule-RIP} below. While the mixed rate-dependent/independent system in \cite{Roub-PP}   calls for a completely different analysis from our own, the modeling discussion  developed in   \cite[Sec.\ 2]{Roub-PP}   can be easily adapted to  system
 (\ref{plast-PDE}, \ref{bc}) to show 
 its  compliance with the first and 
second principle of thermodynamics. In particular, let us stress that, due to the presence of the \emph{quadratic} terms 
$ \dot{p}: \dot{p}$,  $\mathbb{D} \dot{e} : 
\dot{e} $, and $\teta \mathbb{C}\mathbb{E}  : \dot{e} $ on the right-hand side of \eqref{heat}, system (\ref{plast-PDE}, \ref{bc})  is \underline{thermodynamically consistent}.
\par
The analysis of (the Cauchy problem associated with) system  (\ref{plast-PDE}, \ref{bc})  
poses some significant mathematical difficulties:
\begin{description}
\item[\textbf{(1)}]
 First and foremost, its nonlinear character, and in particular the quadratic terms on the r.h.s.\ of \eqref{heat}, which is thus only estimated in $L^1((0,T) \times \Omega)$ as soon as $\dot p$ and $\dot e$ are estimated in $L^2((0,T) \times \Omega;\mt_\dev^{d\times d})$ and 
 $L^2((0,T) \times \Omega;\mt_\sym^{d\times d})$, respectively. Because of this, on the one hand obtaining suitable  estimates of the temperature variable turns out to be challenging. On the other hand, 
 appropriate weak formulations of \eqref{heat} are called for. 
 \end{description}
 \par
 In the one-dimensional case, existence results have been obtained for thermodynamically consistent (visco)\-plasticity models with hysteresis in \cite{KS97, KSS02, KSS03}.  In higher dimensions, suitable adjustments of the toolbox by \textsc{Boccardo \& Gallou\"et}
 \cite{Boccardo-Gallouet89} to handle  the heat equation with $L^1$/measure data have been devised in a series of recent papers on thermoviscoelasticity with rate-dependent/independent plasticity. In particular, we quote  \cite{Roub-Bartels-1}, dealing with a (rate-dependent) thermoviscoplastic model, where  thermal expansion effects are neglected, as well as  \cite{Roub-Bartels-2}, addressing rate-independent  plasticity with hardening  coupled with thermal effects, with the stress tensor given by 
$
 \sigma = \bbD \sig{u_t} + \bbC e - \bbC \bbE \teta $, and finally \cite{Roub-PP}, handling the thermodynamics of perfect plasticity.
 Let us point out that, in the estimates developed in  \cite{Roub-Bartels-2, Roub-PP}, a  crucial role is played by a sort of `compatibility condition' between the growth exponents of the ($\teta$-dependent)  heat capacity coefficient multiplying $\teta_t$, and of the heat conduction coefficient $\condu(\teta)$. This allows for  Boccardo-Gallou\"et type estimates, drawn from \cite{Roub10TRIP}. In the recent \cite{HMS17}, the analysis of the heat equation with $L^1$ right-hand side has been handled without growth conditions on the abovementioned coefficients by resorting to maximal parabolic regularity arguments, made possible   by  the crucial ansatz that the viscous contribution to $\sigma$ features $\sig{\dot u}$, in place of $\dot e$ as in \eqref{stress}.
 \par
 Here we  will instead stay with \eqref{stress}, which is  more consistent with \emph{perfect plasticity}. While   supposing that  the heat capacity  coefficient   is  constant
 (cf.\ also Remark \ref{rmk:in-LRTT} ahead), 
  we will  develop different arguments to derive estimates on the temperature variable based
  on a growth condition
 for the heat conduction coefficient. In this, we will follow
 the footsteps of \cite{FPR09, Rocca-Rossi}, analyzing thermodynamically consistent models for phase transitions and with damage. Namely, we shall suppose that 
 \begin{equation}
 \label{heat-cond-intro}
 \condu(\teta) \sim \teta^\mu \qquad \text{with } \mu>1.
 \end{equation}
We shall exploit \eqref{heat-cond-intro}
  upon testing \eqref{heat} by a suitable negative power of $\teta$ (all calculations can be rendered rigorously on the level of a time discretization scheme). In this way, we will   deduce a crucial estimate for $\teta$ in $L^2(0,T;H^1(\Omega))$.  Under \eqref{heat-cond-intro} we will address the weak solvability of  (\ref{plast-PDE}, \ref{bc}) in terms of  the 
 `entropic' notion of solution,
 proposed in the framework of models for heat conduction in fluids, cf.\ e.g.\ \cite{Feireisl2007,BFM2009}, and later used to weakly formulate models  for phase change \cite{FPR09} and, among other applications,
 for damage in thermoviscoelastic materials \cite{Rocca-Rossi}. In the framework of our plasticity system,
  this solution concept features the weak formulation of the momentum balance \eqref{mom-balance} and  the flow rule \eqref{flow-rule}, stated a.e.\ in $\Omega \times (0,T)$,  coupled with  
\begin{itemize}
\item[-] the 
\emph{entropy inequality}
  \begin{equation}
\label{entropy-ineq-intro}
\begin{aligned}
  & \int_s^t \int_\Omega  \log(\teta) \dot{\varphi} \dd x \dd r -   \int_s^t \int_\Omega \left( \condu(\teta) \nabla \log(\teta) \nabla \varphi - \condu(\teta) \frac\varphi\teta \nabla \log(\teta) \nabla \teta\right)   \dd x \dd r  
  \\ 
& \leq
  \int_\Omega \log(\teta(t)) \varphi(t) \dd x -  \int_\Omega \log(\teta(s)) \varphi(s) \dd x  \\ & \quad
    - \int_s^t \int_\Omega \left( H+ \mathrm{R}(\teta,\dot{p}) + |\dot{p}|^2+ \mathbb{D} \dot{e} : 
\dot{e} -\teta \bbB : \dot{e} \right) \frac{\varphi}\teta \dd x \dd r  - \int_s^t \int_{\partial\Omega} h \frac\varphi\teta \dd x \dd r
\end{aligned}
\end{equation}
with  $\varphi$  a sufficiently regular,  \emph{positive}  test function, 
  \item[-] the 
   \emph{total energy  inequality}
\begin{equation}
\label{total-enid-intro}
\begin{aligned}
& 
\frac{\rho}2 \int_\Omega |\dot{u}(t)|^2 \dd x +\mathcal{E}(\teta(t), e(t)) 
\\
&  \leq \frac{\rho}2 \int_\Omega |\dot{u}(s)|^2 \dd x +\mathcal{E}(\teta(s), e(s))  + \int_s^t \pairing{}{H_\Dir^1 (\Omega;\R^d)}{\mathcal{L}}{\dot u{-} \dot w}   +\int_s^t \int_\Omega H \dd x \dd r + \int_s^t \int_{\partial\Omega} h \dd S \dd r
\\
&
\begin{aligned}  \quad   +\rho \left( \int_\Omega \dot{u}(t) \dot{w}(t) \dd x -   \int_\Omega \dot{u}(s) \dot{w}(s) \dd x  
- \int_s^t \int_\Omega \dot{u}\ddot w \dd x \dd r \right)   &    + 
\int_s^t \int_\Omega \sigma: \sig{\dot w} \dd x \dd r 
\end{aligned}
 \end{aligned}
\end{equation}
  involving the total load $\calL$ associated with the external forces $F$ and $g$, and the  energy functional
$
 \mathcal{E}(\teta, e): = \int_\Omega \teta \dd x + \int_\Omega \tfrac12 \bbC e{:} e \dd x\,.
  $
  \end{itemize}
Both \eqref{entropy-ineq-intro} and \eqref{total-enid-intro}
are required to hold  for almost all $t \in (0,T]$ and almost
all $s\in (0,t)$, and for $s=0$.
\par
While referring to \cite{FPR09,Rocca-Rossi} for more details
and to Sec.\ \ref{ss:2.2} for a formal derivation of \eqref{entropy-ineq-intro}--\eqref{total-enid-intro},
let us point out here that this solution concept reflects the thermodynamic consistency of the model, since it corresponds 
to the requirement that the system
should satisfy the second and first principle of Thermodynamics.
 From an analytical viewpoint, observe that the entropy
inequality \eqref{entropy-ineq-intro} has the advantage that all the
 quadratic terms on the right-hand side of \eqref{heat}
feature as  multiplied  by a negative test  function. This
  allows
for upper semicontinuity arguments in the limit passage in  a
suitable approximation of
\eqref{entropy-ineq-intro}--\eqref{total-enid-intro}. Furthermore,  despite its weak character,
\emph{weak-strong uniqueness} results can  be seemingly obtained for the entropic formulation, cf.\ e.g.\ \cite{Fei-Nov} in the context of the 
Navier-Stokes-Fourier system modeling heat conduction in fluids.
\begin{description}
\item[\textbf{(2)}]
An additional analytical challenge is related to handling  a non-zero applied traction $g$ on the Neumann part of the boundary $\Gamma_\Neu$. This results in the term $ \int_0^T\pairing{}{H^1 (\Omega;\R^d)}{\mathcal{L}}{\dot u} \dd t $ on the r.h.s.\ of \eqref{total-enid-intro}, whose time discrete version is, in fact, the starting point in  the derivation of all of the a priori estimates.  The estimate of this term is delicate, since it would in principle involve the $H^1 (\Omega;\R^d)$-norm of $\dot {u} $, which is not controlled by the left-hand side of \eqref{total-enid-intro}. A by-part integration in time shifts the problem to estimating the $H^1 (\Omega;\R^d)$-norm of $u$, but the l.h.s.\ of 
 \eqref{total-enid-intro} only controls the $L^2(\Omega;\mt_\sym^{d\times d})$-norm of $e$. Observe that  this is  ultimately due to the form \eqref{stress} of the stress $\sigma$.
 \end{description}
 To overcome this problem, we will impose  that the data $F$ and $g$ comply with a suitable \emph{safe load} condition,  see also Remark \ref{rmk:diffic-1-test}.
 \par
 Finally, 
 \begin{description}
\item[\textbf{(3)}]
the presence of adiabatic effects in the momentum balance, accounted for by the thermal expansion  term coupling it with the heat equation, leads to yet another technical problem.
In fact, the estimate of the term $
\int_0^T \int_\Omega \teta \bbC \bbE{:} \sig{\dot w} \dd x \dd t $ contributing to the integral $\int_0^T \int_\Omega \sigma: \sig{\dot w} \dd x  \dd t $ on the 
r.h.s.\ of  \eqref{total-enid-intro} calls for suitable assumptions on the Dirichlet loading $w$, since the l.h.s.\ of 
\eqref{total-enid-intro} only controls the $L^1(\Omega)$-norm of $\teta$, cf.\  again  Remark \ref{rmk:diffic-1-test}.
\end{description}
\par
As already mentioned, we will tackle the existence analysis for the entropic formulation of system  (\ref{plast-PDE}, \ref{bc})  
by approximation via time discretization. In particular, along the footsteps of \cite{Rocca-Rossi}, we will carefully devise our time-discretization scheme
in such a way that the approximate solutions obtained by interpolation of the discrete ones fulfill discrete versions of the entropy and total energy inequalities, in addition to the
discrete momentum balance and flow rule. We will then obtain a series of a priori estimates allowing us to deduce suitable compactness information on the approximate solutions, and thus to pass to the limit. 
\par
In this way, under the basic growth condition \eqref{heat-cond-intro} on $\condu$ and under appropriate assumptions on the data, also tailored to the technical problems
 \textbf{(2)}\&\textbf{(3)}, we will prove our first main result,  \textbf{\underline{Theorem \ref{mainth:1}}}, stating the existence of entropic
 solutions to the Cauchy problem for system  (\ref{plast-PDE}, \ref{bc}).
 \par
 Under a more stringent growth condition on $\condu$, 
 we will prove in \textbf{\underline{Theorem \ref{mainth:2}}}
 an existence result for an enhanced notion of solution. Instead of the entropy and total energy inequalities, this concept features 
 \begin{itemize}
\item[-] a `conventional' weak formulation of the heat equation \eqref{heat}, namely
 \begin{equation} \label{eq-teta-intro}
\begin{aligned}
\pairing{}{}{\dot\teta}{\varphi}
+ \int_\Omega \condu(\teta) \nabla \teta\nabla\varphi \dd
x
 = \int_\Omega \left(H+
\mathrm{R}(\teta, \dot p) + \dot p : \dot p  + \mathbb{D} \dot e : e - \teta \bbC \bbE  :  \dot{e} \right) \varphi  \dd x  + \int_{\partial\Omega} h \varphi   \dd S
\end{aligned}
\end{equation}
 for all test functions $\varphi \in W^{1,p}(\Omega)$, with $\teta \in W^{1,1}(0,T; W^{1,p}(\Omega)^*)$, 
 $\condu(\teta) \nabla \teta \in L^{p'}((0,T){\times}\Omega)$,
   and $p>1$ sufficiently big, and 
 \item[-] the \emph{total energy balance}, i.e.\ \eqref{total-enid-intro} as an equality.
 \end{itemize}
 In view of this, we will refer to these improved solutions as `weak energy'. 
\subsection{The perfectly plastic system}
In investigating the  vanishing-viscosity and inertia limit   of system
   (\ref{plast-PDE}, \ref{bc}),
we shall confine the discussion to the asymptotic behavior of a family of \emph{weak energy solutions}. In this setup, we will 
extend the analysis developed in \cite{DMSca14QEPP} to the \emph{temperature-dependent} and  
\emph{spatially heterogeneous} cases, i.e.\ with  the tensors $\bbC,\, \bbD,\, \bbE$, and the elastic domain $K$, depending on $x\in \Omega$. However, 
we will drop the dependence of  $K$ on the (spatially discontinuous) temperature variable $\teta$ due to technical difficulties in the handling of the plastic dissipation potential, see Remark \ref{rmk:added-sol} ahead.
\par
Mimicking \cite{DMSca14QEPP}, 
 we will supplement  the thermoviscoplastic  system  with  rescaled data  $F^\eps\, g^\eps, \, w^\eps, \, H^\eps, \, h^\eps, $ with $\mathfrak{f}^\eps (t) = \mathfrak{f}(\eps t)$,
  for $t \in [0,T/\eps]$ and  for $\mathfrak{f} \in \{ F,\, g, \, w,\, H,\, h\}$.  Correspondingly, we will  consider a family $(\teta^\eps, u^\eps, e^\eps, p^\eps)_\eps$  of weak energy solutions
  to (the Cauchy problem for) system  (\ref{plast-PDE}, \ref{bc}), defined on $ [0,T/\eps]$. 
  We will further rescale them in such a way that they are defined on $[0,T]$, by setting 
$ \teta_\eps (t) = \teta^\eps (t/\eps)$, and defining analogously $u_\eps$, $e_\eps$, $p_\eps$
and the data  $F_\eps, \, g_\eps, \, w_\eps, \, H_\eps, \, h_\eps$. 
 Hence, the functions $(\teta_\eps, u_\eps, e_\eps, p_\eps)$ are 
\emph{weak energy} solutions of the rescaled system
\begin{subequations}
\label{plast-PDE-rescal}
\begin{align}
& 
\label{heat-rescal}
\eps\dot{\teta} - \mathrm{div}(\condu(\teta)\nabla \teta) =H+\eps \mathrm{R}(\teta,\dot{p}) + \eps^2\dot{p}: \dot{p}+ \eps^2\mathbb{D} \dot{e} : 
\dot{e} -\teta \mathbb{C}\mathbb{E}_\eps  : \dot{e}  && \text{ in } \Omega \times (0,T), 
\\
\label{mom-balance-rescal}
 &\rho \eps^2 \ddot{u} - \mathrm{div}\left( \eps\mathbb{D} \dot{e} + \mathbb{C}(e - \mathbb{E}_\eps\teta) \right) = F && \text{ in } \Omega \times (0,T),
\\
&
\label{flow-rule-rescal}
\partial_{\dot{p}}  \mathrm{R}(\teta,\dot{p}) + \eps \dot{p} \ni \left( \eps\mathbb{D} \dot{e} + \mathbb{C}(e - \mathbb{E}_\eps\teta) \right)_{\mathrm{D}} && \text{ in } \Omega \times (0,T),
\end{align}
\end{subequations}
supplemented with the boundary conditions \eqref{bc} featuring the rescaled data  $g_\eps, \, w_\eps, \, h_\eps$.  Observe that we will let 
the thermal expansion tensors vary with $\eps$. 
\par
For technical reasons expounded at length in Section \ref{s:6}, we will address the asymptotic analysis of system (\ref{plast-PDE-rescal}, \ref{bc}) only under the assumption that 
the tensors $\bbE_\eps$ scale in a suitable way with $\eps$, namely\
\begin{equation}
\label{scaling-intro}
 \bbE_\eps = \eps^\beta \bbE \quad \text{ with a given } \bbE \in \mt_\sym^{d\times d} \text{ and } \beta>\frac12. 
\end{equation}
Under \eqref{scaling-intro},  the 
\emph{formal} limit of system (\ref{plast-PDE-rescal}, \ref{bc}) then consists of 
\begin{itemize}
\item[-] the stationary heat equation
\begin{equation}
\label{stat-heat-intro}
- \mathrm{div}(\condu(\teta)\nabla \teta) =H \qquad \text{ in } \Omega \times (0,T),
\end{equation}
 supplemented with the Neumann condition \eqref{bc-teta};
 \item[-] the  system for perfect plasticity
 \begin{subequations}
\label{RIP-PDE}
\begin{align}
\label{mom-balance-RIP}
 & - \mathrm{div}\sigma = F && \text{ in } \Omega \times (0,T), 
\\
&
\label{flow-rule-RIP}
\partial_{\dot{p}}  \mathrm{R}(\Theta,\dot{p})  \ni \sigma_{\mathrm{D}} && \text{ in } \Omega \times (0,T),
\end{align}
with the boundary conditions \eqref{bc-u-1} and \eqref{bc-u-2}, complemented by the kinematic admissibility condition and Hooke's law
\begin{align}
& 
\label{decomp}\sig u = e + p && \text{ in } \Omega \times (0,T), 
\\
& 
\label{stress-RIP}
\sigma =  \mathbb{C}e && \text{ in } \Omega \times (0,T). 
\end{align}
\end{subequations}
\end{itemize}
In fact, system \eqref{RIP-PDE} has to be weakly formulated in function spaces reflecting 
 the fact that the plastic strain $p$  is  only a  Radon measure on $\Omega$, and so is $\sig{u}$ (so that the displacement variable $u$ is only a function of bounded deformation), and that, in principle, we only have $\BV$-regularity for $t\mapsto p(t)$.
\par
Our asymptotic result, \textbf{\underline{Theorem \ref{mainth:3}}}, states that, 
under suitable conditions on the data $(F_\eps\, g_\eps, \, w_\eps, \, H_\eps, \, h_\eps)_\eps$, 
up to a subsequence the functions 
$(\teta_\eps, u_\eps, e_\eps, p_\eps)_\eps$ converge as $\eps \downarrow 0$  to a quadruple $(\limte, u,e,p)$ 
 such that
\begin{enumerate}
\item $\limte$ 
is constant in space,
\item $(u,e,p)$ comply with the \emph{(global) energetic formulation} of system \eqref{RIP-PDE}, consisting of a global stability condition and of an energy balance;
\item there additionally holds a balance between the energy dissipated through changes of the plastic strain
and the thermal energy
on almost every sub-interval of $(0,T)$, i.e.
\begin{equation}
\label{gift}
\int_\Omega \limte(t) \dd x -  \int_\Omega \limte(s) \dd x
=\mathrm{Var}(p;[s,t])+\int_s^t \int_\Omega \mathsf{H} \dd x \dd r +\int_s^t \int_{\partial\Omega} \mathsf{h} \dd S \dd r
 \text{ for almost all } s< t \in (0,T),  
\end{equation}
with $\mathsf{H}$ and $\mathsf{h}$ the limiting heat sources. 
\end{enumerate}
\noindent
 Observe that \eqref{gift} couples the evolution of the temperature $\Theta$ to that of $p$, and thus of the solution triple $(u,e,p)$. 
 \par
 Finally,   based on   the arguments from \cite{DMSca14QEPP}, in Theorem \ref{mainth:3}  we will also obtain that   $(u,e,p)$ are, ultimately, \emph{absolutely continuous} 
as  functions of time. This is a special feature of the perfectly plastic system, already observed in  \cite{DMDSMo06QEPL}. It is in accordance with the time regularity results proved in \cite{MieThe04RIHM} for energetic solutions to rate-independent systems driven by uniformly convex energy functionals. It is in fact  because of the `convex character' of the 
problem that we retrieve \emph{(global) energetic} solutions, upon taking the vanishing-viscosity and inertia limit, cf.\ also 
\cite[Prop.\ 7]{MRS09}. 
\par
Also in view of the similar vanishing-viscosity analysis developed in \cite{LRTT} in the context of a thermodynamically consistent model for damage, we expect to obtain a different kind of solution when performing the same analysis for thermomechanical systems driven by nonconvex (mechanical) energies. 
We plan to address these studies  in the future.
 \paragraph{\bf Plan of the paper.}
 In \underline{Section \ref{s:2}} we establish all the assumptions on   the thermoviscoplastic system (\ref{plast-PDE}, \ref{bc}) and its data, introduce the two solvability concepts we will address, and state our two existence results, Theorems \ref{mainth:1} \& \ref{mainth:2}. \underline{Section \ref{s:3}} is devoted to the analysis of the time discretization scheme for  (\ref{plast-PDE}, \ref{bc}). In  \underline{Section \ref{s:4}} we pass to the time-continuous limit and conclude the proofs of Thms.\ \ref{mainth:1} \& \ref{mainth:2},
 also relying on a novel, Helly-type compactness result, cf.\ Thm.\ \ref{th:mie-theil} ahead. 
  In \underline{Section \ref{s:5}} we set up the limiting perfectly plastic system and give its (global) energetic formulation. The vanishing-viscosity and inertia analysis is carried out in \underline{Section \ref{s:6}} with Theorem  \ref{mainth:3}, whose proof also relies on some Young measure tools recapitulated in the Appendix.
\begin{notation}[General notation]
\label{not:2.1} \upshape
In what follows, $\R^+$ shall stand for $(0,+\infty)$. We will denote by $\mt^{d\times d}$  
 the space of $d{\times} d$ 
  matrices. We consider $\mt^{d\times d}$   endowed with the  Frobenius inner product 
$\eta : \xi : = \sum_{i j} \eta_{ij}\xi_{ij}$ for two matrices $\eta = (\eta_{ij})$ and $\xi = (\xi_{ij})$, which induces the matrix norm $|\cdot|$. 
$\mt_\sym^{d\times d}$ stands for the subspace of symmetric matrices, and $\mt_\dev^{d\times d}$ for the subspace of symmetric matrices with null trace. In fact, 
$\mt_\sym^{d\times d} = \mt_\dev^{d\times d} \oplus \R I$ ($I$ denoting the identity matrix), since every $\eta \in \mt_\sym^{d\times d}$ can be written as 
\[
\eta = \eta_\dev+ \frac{\mathrm{tr}(\eta)}d I
\]
with $\eta_\dev$ the orthogonal projection of $\eta$ into $\mt_\dev^{d\times d} $. We will refer to $\eta_\dev$ as the deviatoric part of $\eta$.  
\par 
With the symbol $\odot$ we will  denote the symmetrized tensor product  of two vectors $a,\, b \in \R^d$, defined as the symmetric matrix with entries $ \frac{a_ib_j + a_j b_i}2$. Note  that the trace $\mathrm{tr}(a \odot b)$ coincides with the scalar product $a \cdot b$. 
\par
Given a
Banach space $X$
we shall
use  the symbol $\pairing{}{X}{\cdot}{\cdot}$ for the duality
pairing between $X^*$ and $X$; if $X$ is a Hilbert space, $(\cdot,\cdot)_X$ will stand for its inner product. To avoid overburdening notation, we shall often write $\| \cdot\|_X$ both for the norm on $X$, and on the product space  $X \times  \ldots \times X$.  With the symbol $\overline{B}_{1,X}(0)$ we will denote the closed unitary ball in $X$. 
We  shall denote by the symbols
 \[
\text{(i)} \  \mathrm{B}([0,T]; X), \, \qquad \text{(ii)} \  \mathrm{C}^0_{\mathrm{weak}}([0,T];X), \, \qquad \text{(iii)} \  \BV ([0,T]; X)
 \]
  the spaces
of functions from $[0,T]$ with values in $ X$ that are defined at
\emph{every}  $t \in [0,T]$ and:  (i) are measurable on $[0,T]$; (ii) are  \emph{weakly} continuous   on  $[0,T]$; (iii)  have  bounded variation on  $[0,T]$.
\par
Finally, we shall use the symbols
$c,\,c',\, C,\,C'$, etc., whose meaning may vary even within the same   line,   to denote various positive constants depending only on
known quantities. Furthermore, the symbols $I_i$,  $i = 0, 1,... $,
will be used as place-holders for several integral terms (or sums of integral terms) popping in
the various estimates: we warn the reader that we will not be
self-consistent with the numbering, so that, for instance, the
symbol $I_1$ will occur several times with different meanings.
\end{notation}
 \paragraph{\bf Acknowledgements.} I am grateful to the  two anonymous referees for reading this paper very carefully and for several constructive suggestions. 
\section{\bf Main results for the thermoviscoplastic system}
\label{s:2}
First, in Section \ref{ss:2.1}, for the thermoviscoplastic system  (\ref{plast-PDE}, \ref{bc})
we establish all the basic  assumptions on the reference configuration $\Omega$, on  the tensors $\bbC, \, \bbD,\, \bbE$, on the  set of admissible stresses $K$ (and, consequently, on the dissipation potential $\mathrm{R}$),  on the external data $g,\, h, \, f,\, \ell$, and $w$,  and on the initial data $(\teta_0, \, u_0, \, \dot{u}_0, \, e_0, \, p_0)$.  In Section \ref{ss:5.1} later on, we will revisit  and strengthen some of  these conditions in order to deal with the limiting perfectly plastic system.    In view of this, to distinguish the two sets of assumptions, we will  label them by indicating the number of the section (i.e., $2$ for the thermoviscoplastic, and $5$ for the perfectly plastic, system).
\par
Second, in Sec.\ \ref{ss:2.2} we introduce   the weak solvabilty concepts for the (Cauchy problem associated with the) viscoplastic system (\ref{plast-PDE}, \ref{bc}),
and state our existence results in Sec.\ \ref{ss:2.3}. 
\subsection{Setup}
\label{ss:2.1}
\paragraph{{\em The reference configuration}.} 
Let $\Omega \subset \R^d$,  $d\in \{2,3\}$,   be a bounded domain, with Lipschitz boundary; we set $Q: =   (0,T) \times \Omega $.
 The boundary $\partial\Omega $ is given by  
\begin{equation}
\label{Omega-s2}
\tag{2.$\Omega$}
\begin{gathered}
\partial \Omega = \Gamma_\Dir \cup
\Gamma_\Neu \cup \partial\Gamma \quad \text{ with $\Gamma_\Dir, \,\Gamma_\Neu, \, \partial\Gamma$ pairwise disjoint,}
\\
\text{
 $\Gamma_\Dir$ and $\Gamma_\Neu$ relatively open in $\partial\Omega$, and $ \partial\Gamma$ their relative boundary in $\partial\Omega$,}
 \\
\text{
  with Hausdorff measure $\calH^{d-1}(\partial\Gamma)=0$.}
  \end{gathered}
  \end{equation}
  We will denote by $|\Omega|$ the Lebesgue measure of $\Omega$. 
  On the Dirichlet part $\Gamma_\Dir$, assumed  with $\calH^{d-1}(\Gamma_\Dir)>0, $   we shall prescribe the displacement, while on $\Gamma_\Neu$ we will impose a Neumann condition.  The trace of a function $v$ on $\Gamma_\Dir$ or $\Gamma_\Neu$ shall be still  denoted by the symbol $v$.  
  The symbol $H_\Dir^1(\Omega;\R^d)$ shall indicate  the subspace of functions
  of $H^1(\Omega;\R^d)$  with null trace on $\Gamma_\Dir$.
    The symbol $W_\Dir^{1,p}(\Omega;\R^d)$, $p>1,$  shall denote the analogous $W^{1,p}$-space.
In what follows, we shall extensively use Korn's inequality (cf.\ \cite{GeySu86}): for every $1<p<\infty$
 there exists a constant $C_K =C_K(\Omega, p)>0$
 such that there holds
\begin{equation}
\label{Korn}
\| u \|_{W^{1,p}(\Omega;\R^d)} \leq C_K \| \sig u \|_{L^p (\Omega;\mt_\sym^{d \times d})} 
\qquad \text{for all } u \in  W_\Dir^{1,p}(\Omega;\R^d)\,.
\end{equation}
     Finally,  we will use the notation \begin{equation}
\label{label-added}
 W_+^{1,p}(\Omega):= \left\{\zeta \in
W^{1,p}(\Omega)\, : \ \zeta(x) \geq 0  \quad \foraa x \in
\Omega \right\}, \quad \text{ and analogously for }
W_-^{1,p}(\Omega).
\end{equation}
\paragraph{{\em Kinematic admissibility and stress}.}  First of all,   let us formalize the decomposition of the linearized strain $
\sig u $ as the sum of the elastic and the plastic strain. Given a function $w \in H^1(\Omega;\R^d)$,  we say that a triple $(u,e,p)$ is \emph{kinematically admissible with boundary datum $w$}, and write $(u,e,p) \in \mathcal{A}(w)$,  if
\begin{subequations}
\label{kin-adm}
\begin{align}
&
u \in H^1(\Omega;\R^d), \quad e \in L^2(\Omega;\mt_\sym^{d\times d}), \quad p \in L^2(\Omega;\mt_\dev^{d\times d}),
\\
& \sig u = e+p \quad \aein\, \Omega,
\\
& 
u = w \quad \text{on } \Gamma_\Dir.
\end{align}
\end{subequations}
\par
 The elasticity, viscosity, and thermal expansion  tensors  are symmetric and fulfill
\begin{equation}
\label{elast-visc-tensors} 
\tag{2.$\mathrm{T}$}
\begin{gathered}
\bbC , \,  \bbD, \, \bbE
  \in L^\infty(\Omega; \mathrm{Lin}(\mt_\sym^{d \times d}))\,,  \text{ and } 
  \\
  \exists\, C_{\bbC}^1,\,  C_{\bbC}^2, \,  C_{\bbD}^1, \,  C_{\bbD}^2>0  \ \ \foraa x \in \Omega \  \ \forall\, A \in \mt_\sym^{d\times d} \, : \quad \begin{cases}
   & C_{\bbC}^1 |A|^2 \leq  \bbC(x) A : A \leq  C_{\bbC}^2 |A|^2,
   \\
    & C_{\bbD}^1 |A|^2 \leq  \bbD(x) A : A \leq  C_{\bbD}^2 |A|^2,
  \end{cases}
  \end{gathered}  \end{equation}
  where $\mathrm{Lin}(\mt_\sym^{d \times d})$ denotes the space of linear operators from $\mt_\sym^{d \times d}$ to $\mt_\sym^{d \times d}$. 
Observe that with \eqref{elast-visc-tensors} we also encompass in our analysis the case of
an anisotropic and inhomogeneous material. 
Throughout the paper, we will use the short-hand notation 
\begin{equation}
\label{tensor-B}
\bbB: = \bbC \bbE
\end{equation}
for the $(d{\times}d)$-matrix arising from the multiplication of $\bbC$ and $\bbE$. 
\begin{remark}
\label{rmk:lorosi}
\upshape
In \cite{DMSca14QEPP}  the viscosity tensor $\bbD$  was assumed (constant in space and) positive semidefinite, only:  In particular, the case $\bbD \equiv 0$ was encompassed in the existence and vanishing-viscosity analysis. We are not able to extend our  own analysis  in this direction, though. In fact, the coercivity condition required on $\bbD$ (joint with $\sig{\dot u} = \dot e + \dot p$, following from  kinematic admissibility),  will play a crucial role in estimating the term $\iint \teta \bbB{:}  \sig{\dot u} \dd x \dd t$, which  arises from
the mechanical energy balance \eqref{mech-enbal} ahead.
\end{remark}
\paragraph{{\em External heat sources}.}
For the volume and boundary  heat sources $H$ and $h$ we require
\begin{align}
 \label{heat-source} 
 \tag{2.$\mathrm{H}_1$}
 &  H \in L^1(0,T;L^1(\Omega)) \cap L^2 (0,T; H^1(\Omega)^*), &&  H\geq 0 \quad\hbox{a.e.  in } Q\,,
 \\
 \label{dato-h}
  \tag{2.$\mathrm{H}_2$}
 & h \in L^1 (0,T; L^2(\partial \Omega)),  &&  h \geq 0 \quad\hbox{a.e.  in } (0,T)\times \partial \Omega\,.
\end{align}
Indeed, the positivity of $H$ and $h$ is necessary for obtaining the strict positivity of the temperature $\teta$. 
\paragraph{{\em Body force and traction}.}
Our basic conditions on the volume force $F$ and the assigned traction $g$ are \begin{equation}
\label{data-displ}
 \tag{2.$\mathrm{L}_1$}
F\in L^2(0,T; H_\Dir^1(\Omega;\R^d)^*), \qquad g \in L^2(0,T; H_{00,\Gamma_\Dir}^{1/2}(\Gamma_\Neu; \R^d)^*),
\end{equation}
recalling that $ H_{00,\Gamma_\Dir}^{1/2}(\Gamma_\Neu; \R^d)$ is the space of functions $\gamma \in H^{1/2} (\Gamma_\Neu;\R^d)$ such that there exists $\tilde\gamma \in H_\Dir^1(\Omega;\R^d)$ with $\tilde\gamma = \gamma $ in $\Gamma_\Neu$.
\par
Furthermore, for technical reasons that will be expounded in Remark \ref{rmk:diffic-1-test} ahead (cf.\ also the text preceding the proof of Proposition \ref{prop:aprio}),  \underline{in order to allow for a non-zero traction $g$}, also for the viscoplastic system we will need to require  a  \emph{uniform safe load} type condition,
which usually occurs in the analysis of perfectly plastic systems, cf.\  Sec.\ \ref{s:5} later on.  Namely, we impose that there exists a function $\varrho: [0,T] \to L^2(\Omega;\mt_\sym^{d\times d})$ solving for almost all $t\in (0,T)$ the following elliptic problem 
\[
\begin{cases}
- \mathrm{div}(\varrho(t)) = F(t)  & \text{in } \Omega,
\\
\varrho(t) \nu = g(t) & \text{on } \Gamma_\Neu 
\end{cases}
\]
such that 
\begin{equation}
\label{safe-load}
 \tag{2.$\mathrm{L}_2$}
\varrho \in W^{1,1}(0,T;  L^2(\Omega;\mt_\sym^{d\times d})) \qquad \text{and} \qquad  
{\varrho}_\dev
 \in L^1(0,T;L^\infty (\Omega; \mt_\dev^{d\times d}))\,.
\end{equation}
\par
Indeed, condition  \eqref{safe-load} will enter into play only starting from the derivation of a priori estimates on the approximate solutions to
the viscoplastic system,
uniform with respect to the time discretization parameter $\tau$. When not explicitly using \eqref{safe-load}, to shorten notation we will incorporate the volume force $F$ and the traction $g$ into the total load induced by  them, namely the function
$\mathcal{L}: (0,T) \to H_\Dir^1(\Omega;\R^d)^*$ given at $t\in (0,T)$ by 
\begin{equation}
\label{total-load}
\pairing{}{H_\Dir^1(\Omega;\R^d)}{\mathcal{L}(t)}{u}: = \pairing{}{H_\Dir^1(\Omega;\R^d)}{F(t)}{u} + \pairing{}{H_{00,\Gamma_\Dir}^{1/2}(\Gamma_\Neu; \R^d)}{g(t)}{u} \qquad \text{for all } u \in H_\Dir^1(\Omega;\R^d),
\end{equation} 
which fulfills $\mathcal{L} \in L^2(0,T; H_\Dir^1(\Omega;\R^d)^*)$ in view of \eqref{data-displ}.
\paragraph{{\em Dirichlet loading}.}
Finally,  we will suppose that the hard device  $w$ to which the body is subject on $\Gamma_\Dir$  is the trace on $\Gamma_\Dir$  of a function, denoted by the same symbol,  fulfilling
\begin{equation}
\label{Dirichlet-loading}
\tag{2.$\mathrm{W}$} 
w \in  L^1(0,T; W^{1,\infty} (\Omega;\R^d)) \cap W^{2,1} (0,T;H^1(\Omega;\R^d)) \cap H^2(0,T; L^2(\Omega;\R^d))\,.
\end{equation}
We postpone to Remark  \ref{rmk:diffic-1-test}  some explanations on the use of, and need for,  conditions \eqref{Dirichlet-loading}. Let us only mention here that the requirement  $w\in L^1(0,T; W^{1,\infty} (\Omega;\R^d))$  could be replaced by asking for $\bbB{:} \sig w=0$ a.e.\  in $Q$, as imposed, e.g.,  in \cite{Roub-PP}. 
\paragraph{{\em The weak formulation of the momentum balance}.} The variational formulation of  \eqref{mom-balance}, supplemented with the boundary conditions \eqref{bc-u-1} and \eqref{bc-u-2}, reads
\begin{equation}
\label{w-momentum-balance}
\begin{aligned}
\rho\int_\Omega \ddot{u}(t) v \dd x + \int_\Omega \left(\bbD \dot{e}(t) + \bbC e(t) - \teta(t) \bbB \right): \sig v \dd x   &  = \pairing{}{H_\Dir^1(\Omega;\R^d)}{\calL(t)}{v} 
\\ & \qquad  \text{for all } v \in H_\Dir^1(\Omega;\R^d), \ \foraa t \in (0,T)\,.
\end{aligned}
\end{equation}
We will often use the short-hand notation $-\mathrm{div}_{\Dir}$ for the elliptic operator  defined by 
\begin{equation}
\label{div-Gdir}
\pairing{}{ H_\Dir^1(\Omega;\R^d) }{-\mathrm{div}_{\Dir}(\sigma)}{v}: = \int_\Omega \sigma : \sig v \dd x \qquad \text{for all } v \in  H_\Dir^1(\Omega;\R^d) \,.
\end{equation}
\paragraph{{\em The plastic dissipation}.}
Prior to stating our precise assumptions on the multifunction $K: \Omega \times \R^+ \rightrightarrows \mt_\dev^{d \times d}$,  following \cite{Castaing-Valadier77} let us recall  the notions of measurability, lower semicontinuity, and upper semicontinuity, for a general multifunction $ \mathsf{F} : X \rightrightarrows Y$. Although the definitions and results given in  \cite{Castaing-Valadier77} cover much more general situations, for simplicity here we shall confine the discussion to the case of  a topological measurable space $(X, \mathscr{M})$, and a (separable) Hilbert space   $Y$.  For a set $B \subset Y$, we define
\[
\mathsf{F}^{-1}(B): = \{  x\in X\, :  \  \mathsf{F}(x) \cap B \neq \emptyset\}. 
\]
 We say that 
\begin{subequations}
\label{multifuncts-props}
\begin{align}
& 
\label{meas}
\text{$\mathsf{F}$ is measurable if for every open subset $U \subset Y$,  $  \mathsf{F}^{-1}(U) \in \mathscr{M}$;}
\\
& 
\label{lsc}
\text{$\mathsf{F}$ is lower semicontinuous if for every open set $U\subset Y$, the set $  \mathsf{F}^{-1}(U)$ is open; }\\
& 
\label{usc}
\text{$\mathsf{F}$ is upper semicontinuous if for every open set $U\subset Y$, the set $\{ x \in X\, : \ \mathsf{F}(x) \subset U \}$ is open.}
\end{align}
\end{subequations}
Finally, $\mathsf{F}$ is continuous if it is both lower and upper semicontinuous.
\par
Let us now turn back to the multifunction $K : \Omega \times  \R^+ \rightrightarrows \mt_\dev^{d \times d}$. We suppose that
\begin{equation}
\label{measutab-cont-K}
\tag{2.$\mathrm{K}_1$}
\begin{aligned}
&
K : \Omega \times  \R^+ \rightrightarrows \mt_\dev^{d \times d}  &&  \text{ is measurable w.r.t.\ the variables $(x,\teta)$,}
\\
&
K(x, \cdot) : \R^+ \rightrightarrows \mt_\dev^{d \times d}   && \text{ is continuous} \quad \text{for almost all } x \in \Omega.
\end{aligned}
\end{equation}
Furthermore, we require that 
\begin{equation}
\label{elastic-domain}
\tag{2.$\mathrm{K}_2$}
\begin{aligned}
K(x,\teta) \text{ is a convex and compact set in }  \mt_\dev^{d \times d} \qquad \text{for all } \teta \in \R^+, \text{ for almost all } x \in \Omega,
\\
\exists\, 0<c_r<C_R \quad  \foraa x \in \Omega, \ \forall\, \teta \in  \R^+ \, : \quad B_{c_r}(0) \subset  K(x,\teta) \subset B_{C_R}(0).
\end{aligned}
\end{equation}
\par
Therefore, the support function associated with the multifunction $K$, i.e.
\begin{equation}
\label{1-homogeneous-dissip}
\mathrm{R}: \Omega  \times \R^+ \times  \mt_\dev^{d \times d} \to [0,+\infty) \quad \text{defined by } \mathrm{R}(x,\teta, \dot p ): = \sup_{\pi \in K(x,\teta)} \pi : \dot p 
\end{equation}
is positive, with $\mathrm{R}(x,\teta, \cdot) :  \mt_\dev^{d \times d} \to   [0,+\infty)  $ convex and $1$-positively homogeneous for almost all $x \in \Omega$ and for all $\teta \in \R^+$. 
By the first of \eqref{measutab-cont-K},  the function $\mathrm{R}: \Omega  \times \R^+ \times  \mt_\dev^{d \times d} \to [0,+\infty) $ is measurable. Moreover,  by the second of   \eqref{measutab-cont-K},  in view of \cite[Thms.\ II.20, II.21]{Castaing-Valadier77} (cf.\ also  \cite[Prop.\ 2.4]{Sol09}) the function
\begin{subequations}
\label{hypR}
\begin{align}
& 
\label{hypR-lsc}
\mathrm{R}(x,\cdot, \cdot):  \R^+ \times  \mt_\dev^{d \times d} \to [0,+\infty)   \text{ is (jointly) lower semicontinuous}, 
\intertext{for almost all $x \in \Omega$, i.e.\ $\mathrm{R}$ is a \emph{normal integrand},  and} 
& \label{hypR-cont}
\mathrm{R}(x,\cdot, \dot p):   \R^+ \to \R^+ \text{ is continuous for every $\dot p\in \mt_\dev^{d\times d}$}. 
\end{align}
\end{subequations}
Finally, 
it follows from the second of \eqref{elastic-domain}
that  
\begin{subequations}
\label{cons-lin-growth}
\begin{equation}
\label{linear-growth}
c_r|\dot p| \leq \mathrm{R}(x,\teta, \dot p ) \leq C_R |\dot p| \qquad \text{for all } (\teta, \dot p) \in  \R^+ \times  \mt_\dev^{d \times d}  \text{ for almost all }x \in \Omega\,,
\end{equation}  and that
\begin{equation}
\label{bounded-subdiff} \partial_{\dot p} \mathrm{R}(x,\teta, \dot{p}) \subset  \partial_{\dot p} \mathrm{R}(x,\teta, 0) = K(x,\teta) \subset B_{C_R}(0) \qquad \text{for all } (\teta, \dot p)  \in  \R^+ \times  \mt_\dev^{d \times d} \quad \text{for almost all }x \in \Omega.
\end{equation}
\end{subequations}
\par 
Finally, we also introduce the \emph{plastic dissipation  potential} $\calR:L^1(\Omega; \R^+) \times L^1(\Omega;\mt_\dev^{d\times d})$ given by
\begin{equation}
\label{plastic-dissipation-functional}
\calR(\teta, \dot p): = \int_\Omega \mathrm{R}(x,\teta(x), \dot p(x)) \dd x\,.
\end{equation}
\paragraph{{\em The plastic flow rule}.} 
Taking into account the $1$-positive homogeneity of $\mathrm{R}(x,\teta, \cdot)$, which yields the following characterization of $\partial_{\dot p} \mathrm{R}(x,\teta, \dot{p}):  \mt_\dev^{d\times d} \rightrightarrows \mt_\dev^{d\times d}$:
\begin{equation}
\label{characterization-subdiff}
\zeta \in \partial_{\dot p} \mathrm{R}(x,\teta, \dot{p})  \ \ \Leftrightarrow \ \ 
\begin{cases}
& \zeta : \eta \leq  \mathrm{R}(x,\teta, \eta)  \quad \text{for all } \eta \in \mt_\dev^{d\times d}
\\
& \zeta : \dot{p}  =  \mathrm{R}(x,\teta, \dot p),
\end{cases}
 \  \ \Leftrightarrow \  \begin{cases}
 &   \zeta \in  \partial_{\dot p} \mathrm{R}(x,\teta, 0) = K(x,\teta),
  \\
  & \zeta : \dot{p}  \geq   \mathrm{R}(x,\teta, \dot p),
\end{cases}
\end{equation}
 the plastic flow rule 
\begin{equation}
\label{pl-flow}
\partial_{\dot p} \mathrm{R}(x,\teta(t,x), \dot{p}(t,x)) + \dot{p}(t,x) \ni \sigma_\dev(t,x) \qquad \foraa (t,x) \in Q,
\end{equation}
reformulates as
\begin{equation}
\label{reform-pl-flow}
\begin{cases}
 \left( \sigma_\dev(t,x)  -  \dot{p}(t,x) \right) : \eta \leq \mathrm{R}(x,\teta(t,x), \eta)  \quad \text{for all } \eta \in \mt_\dev^{d\times d}  
 \\
  \left( \sigma_\dev(t,x)  -  \dot{p}(t,x) \right) : \dot{p}(t,x)  \geq  \mathrm{R}(x,\teta(t,x), \dot{p}(t,x)) 
\end{cases} \qquad   \foraa (t,x) \in Q\,.
\end{equation}
\paragraph{{\em Cauchy data}.} We will supplement the thermoviscoplastic system with 
initial data
\begin{subequations}
\label{Cauchy-data}
\begin{align}
\label{initial-teta}
&
\teta_0 \in L^1(\Omega),    \text{ fulfilling the strict positivity condition }  \exists\, \teta_*>0: \ \inf_{x\in \Omega} \teta_0(x) \geq \teta_*,
\\
&
\label{initial-u}
u_0 \in H_\Dir^{1} (\Omega;\R^d), \ \dot{u}_0 \in L^2 (\Omega;\R^d),
\\
& 
\label{initial-p}
e_0 \in   L^2(\Omega;\mt_\sym^{d\times d}), \quad  p_0 \in L^2(\Omega;\mt_\dev^{d\times d}) \quad \text{such that } (u_0, e_0, p_0 ) \in \mathcal{A}(w(0))\,.
\end{align}
\end{subequations}
\subsection{Weak solvability concepts for the  thermoviscoplastic system}
\label{ss:2.2}
Throughout this section,  we shall suppose that the functions $\bbC, \ldots, \mathrm{R}$, the data $H,\ldots, w$, and the initial data $(\teta_0, \, u_0, \dot{u}_0, e_0, p_0)$ fulfill the conditions stated in Section \ref{ss:2.1}. We now motivate  the weak solvabilty concepts for the (Cauchy problem associated with the) viscoplastic system
 (\ref{plast-PDE}, \ref{bc}) 
with some heuristic calculations. 
\paragraph{{\em Heuristics for entropic and weak solutions to  system
 (\ref{plast-PDE}, \ref{bc})}.}
As already mentioned in the Introduction, we shall  formulate the heat equation \eqref{heat} 
by means of an entropy inequality and a total energy inequality, featuring  the  stored energy of the system.  The latter is given by  the sum of the internal and of the elastic energies, i.e.
\begin{equation}
\label{stored-energy}
\calE(\teta, u, e, p)= \calE(\teta, e):=  \calF(\teta) + \calQ(e) \quad \text{with } \quad \begin{cases} \calF(\teta) : = \int_\Omega  \teta \dd x, 
\\
\calQ(e): = 
 \frac12 \int_\Omega \bbC e: e \dd x\,. 
 \end{cases}
\end{equation}
\par
Let us formally derive (in particular, without specifying the needed regularity on the solution quadruple $(\teta,u,e,p)$)  the total energy inequality (indeed, we will formally obtain a total  energy \emph{balance}),  starting from the energy estimate associated with system (\ref{plast-PDE}, \ref{bc}).  The latter  consists in testing the momentum balance by $\dot u - \dot w$, the heat equation by $1$, and the plastic flow rule by $\dot p$,  adding the resulting relations and  integrating in space and over a generic interval $(s,t) \subset (0,T)$.  More in detail, the test of \eqref{mom-balance} and of \eqref{flow-rule} yields, after some elementary calculations, 
\begin{equation}
\label{intermediate-mech-enbal}
\begin{aligned}
& 
\frac{\rho}2 \int_\Omega |\dot{u}(t)|^2 \dd x + \int_s^t\int_\Omega \left(  \bbD \dot{e}  + \bbC e - \teta \bbB \right)  : \sig{\dot{u}} \dd x \dd r 
+ \int_s^t \int_\Omega \left(    |\dot p|^2 {+} \mathrm{R}(\teta, \dot p)  \right) \dd x \dd r
\\
&  = \frac{\rho}2 \int_\Omega |\dot{u}(s)|^2 \dd x  + \int_s^t \pairing{}{H_\Dir^1 (\Omega;\R^d)}{\mathcal{L}}{\dot u- \dot w} \dd r  +
\int_s^t \int_\Omega  \left(  \bbD \dot{e}  + \bbC e - \teta \bbB \right)  : \sig{\dot w} \dd x \dd r 
\\
&\quad  +\rho \left( \int_\Omega \dot{u}(t) \dot{w}(t) \dd x -  \int_\Omega \dot{u}(s) \dot{w}(s) \dd x   - \int_s^t \int_\Omega \dot{u}\ddot w \dd x \dd r \right) +
 \int_s^t \int_\Omega \sigma_\dev : \dot{p} \dd x \dd r\,.
\end{aligned}
\end{equation}
Now, taking into account that $ \sig{\dot u}  =\dot e + \dot p$ by the kinematical admissibility condition,  rearranging some terms one has that 
\[
\begin{aligned}
 \int_s^t\int_\Omega \left(  \bbD \dot{e}  + \bbC e  -  \teta \bbB \right)  : \sig{\dot{u}} \dd x \dd r  = &  \int_s^t \int_\Omega \left( \bbD \dot e : \dot e  + \bbC \dot e : e  \right) \dd x \dd r 
   -  \int_s^t \int_\Omega \teta \bbB : \dot e \dd x \dd  r 
 \\
 & \quad + \int_s^t \int_\Omega \left(  \bbD \dot e+ \bbC e  - 
\teta \bbB \right)   : \dot p   \dd x \dd r \,. 
\end{aligned}
\]
Substituting this in \eqref{intermediate-mech-enbal} and  noting that $  \int_s^t \int_\Omega \left(\bbD \dot e  + \bbC e - \teta \bbB \right)  : \dot{p} \dd x \dd r = \int_s^t \int_\Omega \sigma_\dev : \dot{p} \dd x \dd r   $, so that the last term on the right-hand side of \eqref{intermediate-mech-enbal} cancels out,    we get  
 the \emph{mechanical energy balance}, featuring the kinetic and dissipated energies
\begin{equation}
\label{mech-enbal}
\begin{aligned}
& 
\dddn{\frac{\rho}2 \int_\Omega |\dot{u}(t)|^2 \dd x}{kinetic} +\dddn{ \int_s^t\int_\Omega   \left( \bbD \dot e: \dot e   + |\dot p|^2 \right)  \dd x \dd r + \int_s^t \calR(\teta, \dot p) \dd r }{dissipated}
+ \calQ(e(t))
\\
&  =  \frac{\rho}2 \int_\Omega |\dot{u}(s)|^2 \dd x   + \calQ(e(s)) + \int_s^t \pairing{}{H_\Dir^1 (\Omega;\R^d)}{\mathcal{L}}{\dot u{-} \dot w} \dd r  +   \int_s^t \int_\Omega \teta \bbB : \dot e \dd x \dd r 
 \\
& \quad   +\rho \left( \int_\Omega \dot{u}(t) \dot{w}(t) \dd x -  \int_\Omega \dot{u}(s) \dot{w}(s) \dd x - \int_s^t \int_\Omega \dot{u}\ddot w \dd x \dd r \right)     + 
\int_s^t \int_\Omega  \left(  \bbD \dot{e}  + \bbC e - \teta \bbB \right)  : \sig{\dot w} \dd x \dd r, 
 \end{aligned}
\end{equation}
which will also have a significant role for our analysis. 
\par
Summing this with the heat equation tested by $1$ and integrated in time and space gives, after cancelation of some terms,  the \emph{total energy balance} 
\begin{equation}
\label{total-enbal}
\begin{aligned}
& 
\frac{\rho}2 \int_\Omega |\dot{u}(t)|^2 \dd x +\mathcal{E}(\teta(t), e(t)) 
\\
&  = \frac{\rho}2 \int_\Omega |\dot{u}(s)|^2 \dd x +\mathcal{E}(\teta(s), e(s))  + \int_s^t \pairing{}{H_\Dir^1 (\Omega;\R^d)}{\mathcal{L}}{\dot u{-} \dot w}   +\int_s^t \int_\Omega H \dd x \dd r + \int_s^t \int_{\partial\Omega} h \dd S \dd r
\\
& \quad   +\rho \left( \int_\Omega \dot{u}(t) \dot{w}(t) \dd x -  \int_\Omega \dot{u}(s) \dot{w}(s) \dd x   - \int_s^t \int_\Omega \dot{u}\ddot w \dd x \dd r \right)     + 
\int_s^t \int_\Omega \sigma: \sig{\dot w} \dd x \dd r
 \,.\end{aligned}
\end{equation}

As for the \emph{entropy inequality}, let us only mention that it can be formally obtained by multiplying the heat equation \eqref{heat} by $\varphi/\teta$, with $\varphi $ a smooth and \emph{positive} test function. Integrating in  space and over  a generic interval $(s,t) \subset (0,T)$  leads to the identity
\begin{equation}
\label{formal-entropy-eq}
\begin{aligned}
  & \int_s^t \int_\Omega \partial_t \log(\teta) \varphi \dd x \dd r +   \int_s^t \int_\Omega \left( \condu(\teta) \nabla \log(\teta) \nabla \varphi - \condu(\teta) \frac\varphi\teta \nabla \log(\teta) \nabla   \teta  \right) \dd x \dd r  
  \\ & = \int_s^t \int_\Omega \left( H+ \mathrm{R}(\teta,\dot{p}) + |\dot{p}|^2+ \mathbb{D} \dot{e} : 
\dot{e} -\teta \bbB  : \dot{e} \right) \frac{\varphi}\teta \dd x \dd r + \int_s^t \int_{\partial\Omega} h \frac\varphi\teta \dd x \dd r\,.
\end{aligned}
\end{equation}
The entropic solution concept  given in Definition \ref{def:entropic-sols} below will feature the inequality version of \eqref{formal-entropy-eq}, where the first term on the left-hand side is integrated by parts in time, as well as the inequality version of \eqref{total-enbal}. 
\begin{definition}[Entropic solutions to the thermoviscoplastic  system]
\label{def:entropic-sols}
 Given initial data $(\teta_0,u_0, \dot{u}_0, e_0, p_0)$ fulfilling \eqref{Cauchy-data}, we call a quadruple $(\teta,u,e,p)$
an \emph{entropic solution} to the Cauchy problem for system (\ref{plast-PDE}, \ref{bc}),  if
\begin{subequations}
\label{regularity}
\begin{align}
\label{reg-teta}  & \teta \in  L^2(0,T; H^1(\Omega))\cap L^\infty(0,T;L^1(\Omega)),
\\
& \label{reg-log-teta}
\log(\teta) \in  L^2(0,T; H^1(\Omega)),
\\
& \label{reg-u} u \in H^1(0,T; H_\Dir^{1}(\Omega;\R^d)) \cap W^{1,\infty}(0,T; L^2(\Omega;\R^d)) \cap H^2(0,T; H_\Dir^{1}(\Omega;\R^d)^*),
\\
 & \label{reg-e} e \in H^1(0,T; L^2(\Omega;\mt_\sym^{d\times d})),
\\
& \label{reg-p} p \in H^1(0,T; L^2(\Omega;\mt_\dev^{d\times d})),
\end{align}
\end{subequations}
$(u,e,p)$ comply with
 the initial conditions
 \begin{subequations}
 \label{initial-conditions}
\begin{align}
 \label{iniu}  & u(0,x) = u_0(x), \ \ \dot{u}(0,x) = \dot{u}_0(x) & \forae\, x \in
 \Omega,
 \\
  \label{inie}  & e(0,x) = e_0(x)  & \forae\, x \in
 \Omega,
 \\
 \label{inichi}  & p(0,x) = p_0(x) & \forae\, x \in
 \Omega,
\end{align}
\end{subequations}
 (while the initial condition for $\teta$ is implicitly formulated in \eqref{entropy-ineq} and \eqref{total-enineq} below),
and with  
\begin{itemize}
\item[-] the 
\emph{strict positivity} of $\teta$: 
\begin{equation}
\label{teta-strict-pos}
\exists\, \bar\teta>0  \  \foraa (t,x) \in Q\, : \quad \teta(t,x) > \bar\teta;
\end{equation}
\item[-]
the \emph{entropy  inequality}, to hold for almost all $t \in (0,T]$ and almost all
$s\in (0,t)$, and for $s=0$ (where $\log(\teta(0))$ is to be understood as $\log(\teta_0)$), 
\begin{equation}
\label{entropy-ineq}
\begin{aligned}
  & \int_s^t \int_\Omega  \log(\teta) \dot{\varphi} \dd x \dd r -   \int_s^t \int_\Omega \left( \condu(\teta) \nabla \log(\teta) \nabla \varphi - \condu(\teta) \frac\varphi\teta \nabla \log(\teta) \nabla \teta\right)   \dd x \dd r  
  \\ 
& \leq
  \int_\Omega \log(\teta(t)) \varphi(t) \dd x -  \int_\Omega \log(\teta(s)) \varphi(s) \dd x  \\ & \quad
    - \int_s^t \int_\Omega \left( H+ \mathrm{R}(\teta,\dot{p}) + |\dot{p}|^2+ \mathbb{D} \dot{e} : 
\dot{e} -\teta \bbB : \dot{e} \right) \frac{\varphi}\teta \dd x \dd r  - \int_s^t \int_{\partial\Omega} h \frac\varphi\teta \dd x \dd r
\end{aligned}
\end{equation}
for all $\varphi $ in $L^\infty ([0,T]; W^{1,\infty}(\Omega)) \cap H^1 (0,T; L^{6/5}(\Omega))$,  with $\varphi \geq 0$;
\item[-] the \emph{total energy inequality}, to hold  for almost all $t \in (0,T]$ and almost all
$s\in (0,t)$, and for $s=0$:
\begin{equation}
\label{total-enineq}
\begin{aligned}
& 
\frac{\rho}2 \int_\Omega |\dot{u}(t)|^2 \dd x +\mathcal{E}(\teta(t), e(t)) 
\\
&  \leq \frac{\rho}2 \int_\Omega |\dot{u}(s)|^2 \dd x +\mathcal{E}(\teta(s), e(s))  + \int_s^t \pairing{}{H_\Dir^1 (\Omega;\R^d)}{\mathcal{L}}{\dot u{-} \dot w}   +\int_s^t \int_\Omega H \dd x \dd r + \int_s^t \int_{\partial\Omega} h \dd S \dd r
\\
&
\begin{aligned}  \quad   +\rho \left( \int_\Omega \dot{u}(t) \dot{w}(t) \dd x -  \int_\Omega \dot{u}(s) \dot{w}(s) \dd x  - \int_s^t \int_\Omega \dot{u}\ddot w \dd x \dd r \right)   &    + 
\int_s^t \int_\Omega \sigma: \sig{\dot w} \dd x \dd r, 
\end{aligned}
 \end{aligned}
\end{equation}
where
 for $s=0$ we read $\teta(0)=\teta_0$, with the stress $\sigma$ given by the constitutive equation
 \begin{equation}
 \label{stress-consti}
 \sigma= \bbD\dot{e} + \bbC e - \teta \bbB \qquad \aein Q;
 \end{equation}
\item[-] the \emph{kinematic admissibility} condition
\begin{equation}
\label{kin-admis}
(u(t,x), e(t,x), p(t,x)) \in \mathcal{A}(w(t,x)) \qquad \foraa (t,x) \in Q;
\end{equation}
\item[-] the weak formulation \eqref{w-momentum-balance}
of the \emph{momentum balance}; 
\item[-] the \emph{plastic flow rule} \eqref{pl-flow}. 
\end{itemize}
\end{definition}
\begin{remark}
\label{rmk:feasibility}
\upshape
Observe that with the entropy inequality \eqref{entropy-ineq} we are tacitly claiming that, in addition to \eqref{reg-teta} and \eqref{reg-log-teta}, the temperature variable has the following summability properties
\[
\condu(\teta) | \nabla \log(\teta)|^2 \varphi \in L^1(Q), \qquad  \condu(\teta) \nabla \log(\teta) \in L^1(Q)
\]
for every positive admissible  test function  $\varphi$. In fact, we shall retrieve the above properties (and  improve the second one, cf.\ \eqref{further-logteta} ahead),  within the proof of Theorem \ref{mainth:1}. Furthermore,  note that the integral $ \int_\Omega \log(\teta(t)) \varphi(t) \dd x$ makes sense for almost all $t\in(0,T)$, since the estimate
\[
|\log(\teta(t,x))|\leq \teta(t,x) + \frac1{\teta(t,x)} \leq \teta(t,x) + \frac1{\bar\teta} \qquad \foraa (t,x) \in  Q,
\]
(with the second inequality due to \eqref{teta-strict-pos}), and the fact that $\teta \in L^\infty (0,T; L^1(\Omega))$,  guarantee that $\log(\teta) \in  L^\infty (0,T; L^1(\Omega))$ itself. Finally, the requirement that $\varphi \in H^1 (0,T; L^{6/5}(\Omega))$ ensures that $\int_s^t \int_\Omega  \log(\teta) \dot{\varphi} \dd x \dd r $ is  a well-defined integral, since $\log(\teta) \in L^2(0,T;L^6(\Omega))$ by \eqref{reg-log-teta}.
\par
We  refer to \cite[Rmk\ 2.6]{Rocca-Rossi} for a thorough discussion on the consistency between the entropic and the standard, weak formulation of the heat equation \eqref{heat}.  Still, we may mention here that, to obtain the latter  from the former  formulation, one should test 
the entropy inequality  by $\varphi= \teta$. Therefore, $\teta$ should have enough regularity as to make it an admissible test function for 
  \eqref{entropy-ineq}. 
\end{remark} 
\par
In our second solvability concept for the initial-boundary value problem associated with system \eqref{plast-PDE}, the temperature has   the enhanced  time regularity
\eqref{enh-teta-W11} below, which allows us to give an improved  variational formulation of the  heat equation \eqref{heat}.  Observe that, in \cite{Rocca-Rossi} this solution notion was referred to as \emph{weak}. In this paper we will instead prefer the term \emph{weak energy solution}, in order to highlight the validity of the total energy \emph{balance} on \emph{every} interval $[s,t]\subset [0,T]$, cf.\ Corollary \ref{cor:total-enid} below. 
\begin{definition}[Weak energy solutions to the thermoviscoplastic system]
\label{def:weak-sols}
 Given initial data $(\teta_0,u_0, \dot{u}_0, e_0, p_0)$ fulfilling \eqref{Cauchy-data}, we call a quadruple $(\teta,u,e,p)$
a \emph{weak energy solution} to the Cauchy problem for system
 (\ref{plast-PDE}, \ref{bc}),  if
\begin{itemize}
\item[-]
 in addition to  the regularity and summability properties \eqref{regularity}, there holds 
 \begin{equation}
 \label{enh-teta-W11}
 \teta \in W^{1,1}(0,T; W^{1,\infty}(\Omega)^*),
 \end{equation} 
\item[-]
in addition to the initial conditions  \eqref{initial-conditions}, $\teta $ complies with 
\begin{equation}
\label{initeta}
\teta(0) = \teta_0 \qquad \text{ in } W^{1,\infty}(\Omega)^*. \end{equation}
\item[-]
in addition to the strict positivity \eqref{teta-strict-pos}, the kinematic admissibility \eqref{kin-admis}, the weak mometum balance \eqref{w-momentum-balance}, and the flow rule \eqref{pl-flow}, $(\teta,u,e,p)$ comply for almost all $t \in (0,T)$  with  the following weak formulation of the heat equation
\begin{equation} \label{eq-teta}
\begin{aligned}
   &
\pairing{}{W^{1,\infty}(\Omega)}{\dot\teta}{\varphi}
+ \int_\Omega \condu(\teta) \nabla \teta\nabla\varphi \dd
x
\\
& = \int_\Omega \left(H+
\mathrm{R}(\teta, \dot p) + |\dot p |^2  + \mathbb{D} \dot e : \dot{e} - \teta \bbB :  \dot{e} \right) \varphi  \dd x  + \int_{\partial\Omega} h \varphi   \dd S
\quad \text{for all }
\varphi \in   W^{1,\infty}(\Omega). 
\end{aligned}
\end{equation}
\end{itemize}
\end{definition}
Along the lines of Remark \ref{rmk:feasibility}, we may observe that, underlying  the weak formulation \eqref{eq-teta} is  the property $\condu(\teta) \nabla \teta \in L^1(Q;\R^d)$, which shall be in fact (slightly) improved in Theorem \ref{mainth:2}. 
\par
We conclude the section with the following result, under the (tacitly assumed) conditions from  Sec.\ \ref{ss:2.1}. 
\begin{lemma}
\label{cor:total-enid}
\begin{enumerate}
\item
Let $(\teta, u,e,p)$ be either an \emph{entropic} or a   \emph{weak energy solution} to (the Cauchy problem for) system (\ref{plast-PDE}, \ref{bc}). Then,  the functions $(\teta, u,e,p)$ comply with 
the mechanical energy balance \eqref{mech-enbal} for every $0\leq s \leq t \leq T$.
\item 
Let $(\teta, u,e,p)$ be a   \emph{weak energy solution} to (the Cauchy problem for) system (\ref{plast-PDE}, \ref{bc}). Then, the 
total energy \emph{balance}
\begin{equation}
\label{total-enbal-delicate}
\begin{aligned}
& 
\frac{\rho}2 \int_\Omega |\dot{u}(t)|^2 \dd x + \pairing{}{W^{1,\infty}(\Omega)}{\teta(t)}{1} + \calQ(e(t)) \\
&  = \frac{\rho}2 \int_\Omega |\dot{u}(s)|^2 \dd x + \pairing{}{W^{1,\infty}(\Omega)}{\teta(s)}{1} + \calQ(e(s))  + \int_s^t \pairing{}{H_\Dir^1 (\Omega;\R^d)}{\mathcal{L}}{\dot u{-} \dot w}   +\int_s^t \int_\Omega H \dd x \dd r + \int_s^t \int_{\partial\Omega} h \dd S \dd r
\\
& \quad   +\rho \left( \int_\Omega \dot{u}(t) \dot{w}(t) \dd x - \int_\Omega \dot{u}(s) \dot{w}(s) \dd x   - \int_s^t \int_\Omega \dot{u}\ddot w \dd x \dd r \right)     + 
\int_s^t \int_\Omega \sigma: \sig{\dot w} \dd x \dd r
 \end{aligned}
\end{equation}
holds for all $0 \leq s \leq t \leq T$. 
\end{enumerate}
\end{lemma}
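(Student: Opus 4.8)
The plan is to make rigorous, on the strength of the regularity~\eqref{regularity} shared by both solution concepts, the formal manipulations of Section~\ref{ss:2.2}; note that claim~(1) uses only the weak momentum balance~\eqref{w-momentum-balance}, the flow rule~\eqref{pl-flow} and kinematic admissibility~\eqref{kin-admis}, so no thermal information enters there. For claim~(1) I would start from~\eqref{w-momentum-balance}, tested for a.a.\ $t$ by $v=\dot u(t)-\dot w(t)$: this is admissible because, differentiating in time the identity $u(t)=w(t)$ on $\Gamma_\Dir$ (valid for a.a.\ $t$ by~\eqref{kin-admis}), one gets $\dot u(t)-\dot w(t)\in H^1_\Dir(\Omega;\R^d)$ for a.a.\ $t$, with $\dot u-\dot w\in L^2(0,T;H^1_\Dir(\Omega;\R^d))$ thanks to~\eqref{reg-u} and~\eqref{Dirichlet-loading}. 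Every term in the resulting identity is an $L^1(0,T)$ function of $t$, since by~\eqref{elast-visc-tensors}, \eqref{reg-teta} and~\eqref{reg-e}--\eqref{reg-p} the stress $\sigma=\bbD\dot e+\bbC e-\teta\bbB$ belongs to $L^2(Q;\mt_\sym^{d\times d})$ (using $\teta\in L^2(0,T;H^1(\Omega))\hookrightarrow L^2(Q)$), $\sig{\dot u},\sig{\dot w}\in L^2(Q;\mt_\sym^{d\times d})$ and $\calL\in L^2(0,T;H^1_\Dir(\Omega;\R^d)^*)$; hence integration over an arbitrary $[s,t]\subset[0,T]$ is legitimate and produces~\eqref{intermediate-mech-enbal}. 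The inertial term is handled via the standard chain rule in the Gelfand triple $H^1_\Dir\hookrightarrow L^2\hookrightarrow (H^1_\Dir)^*$ applied to $\dot u-\dot w\in L^2(0,T;H^1_\Dir)\cap H^1(0,T;H^1_\Dir(\Omega;\R^d)^*)$ (here $\ddot w\in L^2(0,T;L^2(\Omega;\R^d))$ by~\eqref{Dirichlet-loading}): this gives $\dot u-\dot w\in \mathrm{C}^0([0,T];L^2(\Omega;\R^d))$ and, for all $s\le t$, $\int_s^t\langle\ddot u-\ddot w,\dot u-\dot w\rangle\,\dd r=\tfrac12\|\dot u(t)-\dot w(t)\|_{L^2}^2-\tfrac12\|\dot u(s)-\dot w(s)\|_{L^2}^2$; adding back the $L^2$-pairing $\int_s^t(\ddot w,\dot u-\dot w)_{L^2}\,\dd r$ and expanding the squares reproduces exactly the kinetic term $\tfrac\rho2\|\dot u(t)\|_{L^2}^2$ and the correction $\rho\big(\int_\Omega\dot u(t)\dot w(t)-\int_\Omega\dot u(s)\dot w(s)-\int_s^t\int_\Omega\dot u\ddot w\big)$ appearing in~\eqref{intermediate-mech-enbal}--\eqref{mech-enbal}.

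From~\eqref{intermediate-mech-enbal} the passage to~\eqref{mech-enbal} is the algebraic rearrangement already displayed in Section~\ref{ss:2.2}, which I would justify by: (i) differentiating $\sig u=e+p$ in time — legitimate since the linear operator $E$ maps $H^1(0,T;H^1_\Dir(\Omega;\R^d))$ into $H^1(0,T;L^2)$ and $e,p\in H^1(0,T;L^2)$ — to obtain $\sig{\dot u}=\dot e+\dot p$ a.e.\ in $Q$; (ii) the chain rule $\tfrac{\dd}{\dd t}\calQ(e(t))=\int_\Omega\bbC e(t){:}\dot e(t)\,\dd x$ for the bounded symmetric bilinear form $e\mapsto\int_\Omega\bbC e{:}e$, which yields $\int_s^t\int_\Omega\bbC\dot e{:}e\,\dd x\,\dd r=\calQ(e(t))-\calQ(e(s))$ for every $s\le t$; and (iii) the flow rule, noting that~\eqref{pl-flow} gives $\sigma_\dev-\dot p\in\partial_{\dot p}\mathrm{R}(x,\teta,\dot p)$ a.e.\ in $Q$, so that the characterization~\eqref{characterization-subdiff} forces $(\sigma_\dev-\dot p){:}\dot p=\mathrm{R}(x,\teta,\dot p)$ a.e.\ in $Q$, whence $\int_s^t\int_\Omega\sigma_\dev{:}\dot p\,\dd x\,\dd r=\int_s^t\int_\Omega|\dot p|^2\,\dd x\,\dd r+\int_s^t\calR(\teta,\dot p)\,\dd r$ for all $s\le t$ (all terms in $L^1(Q)$ by~\eqref{linear-growth} and the summability above). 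Collecting (i)--(iii) and cancelling $\int_s^t\int_\Omega\sigma_\dev{:}\dot p$ as in the text gives~\eqref{mech-enbal} for every $0\le s\le t\le T$, proving~(1).

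For claim~(2), let $(\teta,u,e,p)$ be a weak energy solution. I would test the weak heat equation~\eqref{eq-teta} with the admissible constant function $\varphi\equiv1\in W^{1,\infty}(\Omega)$; since then $\nabla\varphi=0$, this gives, for a.a.\ $t$,
\[
\pairing{}{W^{1,\infty}(\Omega)}{\dot\teta(t)}{1}=\int_\Omega\big(H+\mathrm{R}(\teta,\dot p)+|\dot p|^2+\bbD\dot e{:}\dot e-\teta\bbB{:}\dot e\big)\,\dd x+\int_{\partial\Omega}h\,\dd S.
\]
By the enhanced regularity~\eqref{enh-teta-W11}, the scalar function $t\mapsto\pairing{}{W^{1,\infty}(\Omega)}{\teta(t)}{1}$ belongs to $W^{1,1}(0,T)$, hence is absolutely continuous, so integration of the above over any $[s,t]\subset[0,T]$ is justified and produces a `thermal energy balance' valid for all $0\le s\le t\le T$. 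Adding this to the mechanical energy balance~\eqref{mech-enbal} from~(1), the dissipative contributions $\int_s^t\int_\Omega(\mathrm{R}(\teta,\dot p)+|\dot p|^2+\bbD\dot e{:}\dot e)\,\dd x\,\dd r$ and the thermal-expansion term $\int_s^t\int_\Omega\teta\bbB{:}\dot e\,\dd x\,\dd r$ cancel pairwise, and, recalling $\sigma=\bbD\dot e+\bbC e-\teta\bbB$, one is left precisely with the total energy balance~\eqref{total-enbal-delicate} for every $0\le s\le t\le T$.

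The only genuinely delicate step is the treatment of the inertial term in~(1): verifying the chain rule $\tfrac12\tfrac{\dd}{\dd t}\|\dot u-\dot w\|_{L^2}^2=\langle\ddot u-\ddot w,\dot u-\dot w\rangle$ in the stated functional setting, together with the bookkeeping that guarantees that every quantity produced after testing by $\dot u-\dot w$ is an $L^1(0,T)$ function of time. Everything else amounts to carrying out, with the regularity~\eqref{regularity} at hand, the cancellations already performed formally in Section~\ref{ss:2.2}.
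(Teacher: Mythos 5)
Your proof is correct and follows exactly the route of the paper's own (very condensed) argument: testing the momentum balance by $\dot u-\dot w$ and the flow rule by $\dot p$, integrating in time, and then, for the total energy balance, testing \eqref{eq-teta} by $\varphi\equiv1$ and adding the result to \eqref{mech-enbal}, using $\teta\in W^{1,1}(0,T;W^{1,\infty}(\Omega)^*)$. The details you supply (admissibility of the test function $\dot u-\dot w$, the Gelfand-triple chain rule for the inertial term, and the identity $\sigma_\dev{:}\dot p=|\dot p|^2+\mathrm{R}(\teta,\dot p)$ via \eqref{characterization-subdiff}) are precisely the ones the paper leaves implicit.
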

\noindent Observe that, since $\teta\in L^\infty(0,T; L^1(\Omega))$, there holds $ \pairing{}{W^{1,\infty}(\Omega)}{\teta(t)}{1}  = \int_\Omega \teta(t) \dd x  = \calF(\teta(t))$ for almost all $t\in (0,T)$ and for $t=0$.  For such $t$, \eqref{total-enbal-delicate}
 may be thus rewritten in terms of the stored energy $\calE$ from \eqref{stored-energy}.
\begin{proof}
The energy balance \eqref{mech-enbal} follows from testing the momentum balance \eqref{w-momentum-balance} by $\dot u-\dot w$, the plastic flow rule by $\dot p$, adding the resulting relations, and integrating in time.
\par
As for \eqref{total-enbal-delicate},
it is sufficient to test the weak formulation \eqref{eq-teta} of the heat equation by $\varphi =1$,  integrate in time taking into account that $\teta \in W^{1,1}(0,T; W^{1,\infty}(\Omega)^*) $, and add  the resulting identity to  \eqref{mech-enbal}. 
\end{proof}
\subsection{Existence results for the thermoviscoplastic system}
\label{ss:2.3}
Our first result states the existence of entropic solutions, under a mild growth condition on the thermal conductivity $\condu$. 
For shorter notation,  in the statement below we shall write   $(2.\mathrm{H})$ in place of \eqref{heat-source}, \eqref{dato-h}, and analogously $(2.\mathrm{L})$, $(2.\mathrm{K})$. 
\begin{maintheorem}
\label{mainth:1}
Assume
\eqref{Omega-s2},  \eqref{elast-visc-tensors},  $(2.\mathrm{H})$,  $(2.\mathrm{L})$,  
 \eqref{Dirichlet-loading},   and $(2.\mathrm{K})$.
In addition,  suppose that 
\begin{equation}
\label{hyp-K}
\tag{2.$\condu_1$}
\begin{aligned}
& \text{the function }   \condu: \R^+ \to \R^+  \  \text{
is
 continuous and}
\\
& \exists \, c_0, \, c_1>0 \quad   \mu>1   \ \
\forall\teta\in \R^+\, :\quad
c_0 (1+ \teta^{\mu}) \leq \condu(\teta) \leq c_1 (1+\teta^{\mu})\,.
\end{aligned}
\end{equation}
Then, for every $(\teta_0, u_0, \dot{u}_0, e_0, p_0) $ satisfying \eqref{Cauchy-data} there exists an entropic solution $(\teta,u,e,p)$ such that, in addition,
 $\teta$ complies with the positivity property 
\begin{equation}
\label{strong-strict-pos}
 \teta(t,x) \geq \bar\teta : = \left( \bar{c} T + \frac1{\teta_*}\right)^{-1} \quad \text{for almost all $(t,x) \in Q$},
\end{equation}
where 
$\teta_*>0$ is  from \eqref{initial-teta} and $\bar{c} := \frac{|\bbB|^2}{2 C_\bbD^1}$, with $C_\bbD^1>0$ from \eqref{elast-visc-tensors}. Finally, 
there holds
\begin{equation}
\label{further-logteta}
\begin{gathered}
\log(\teta) \in L^\infty (0,T;L^p(\Omega))  \quad \text{for all } 1 \leq p <\infty,
\\
\condu(\teta)\nabla \log(\teta) \in  L^{1+\bar\delta}(Q;\R^d)   \text{ with   $ \bar\delta = \frac{\alpha}\mu $ 
and $\alpha \in [(2-\mu)^+, 1)$, and } \qquad  
\\ \condu(\teta)\nabla \log(\teta) \in L^1(0,T;X) \quad \text{with } X=
\left \{ \begin{array}{lll}
 L^{2-\eta}(\Omega;\R^d)  &   \text{ for all } \eta \in (0,1]  & \text{if } d=2,
\\
L^{3/2-\eta}(\Omega;\R^d)  &   \text{ for all } \eta \in (0,1/2]  & \text{if } d=3,
\end{array}
\right.
\end{gathered}
\end{equation}
with $(2-\mu)^+ = \max \{ (2{-}\mu), 0\}$. Therefore,  the entropy inequality \eqref{entropy-ineq} in fact holds for all positive test functions $\varphi \in L^\infty ([0,T]; W^{1,d+\epsilon}(\Omega)) \cap H^1 (0,T; L^{6/5}(\Omega))$, for every $\epsilon>0$. 
 \end{maintheorem}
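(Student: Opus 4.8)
The argument proceeds by approximation via the time-discretization scheme announced in the introduction, passing to the limit. The plan is as follows.

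\medskip
\noindent\textbf{Step 1: Time discretization and discrete estimates.}
First I would fix a uniform partition of $[0,T]$ with step $\tau = T/N$ and set up the incremental scheme: given $(\teta_\tau^{k-1}, u_\tau^{k-1}, e_\tau^{k-1}, p_\tau^{k-1})$, one solves a coupled elliptic system for $(\teta_\tau^{k}, u_\tau^{k}, e_\tau^{k}, p_\tau^{k})$ consisting of the implicit Euler discretization of \eqref{heat}, \eqref{mom-balance}, and \eqref{flow-rule}. Following \cite{Rocca-Rossi}, the scheme must be devised so that the interpolants satisfy \emph{discrete} versions of the entropy inequality \eqref{entropy-ineq}, the total energy balance \eqref{total-enbal}, the momentum balance \eqref{w-momentum-balance}, and the flow rule \eqref{pl-flow}; existence of discrete solutions follows from standard direct-method/monotonicity arguments at each step, combined with a by-now-routine maximum-principle-type argument for the strict positivity $\teta_\tau^k \geq \bar\teta$. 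The crucial a priori estimates come from: (i) the discrete mechanical energy balance \eqref{mech-enbal}, tested against $\dot u - \dot w$ and $\dot p$, which under the safe-load condition \eqref{safe-load} and the Dirichlet-loading hypothesis \eqref{Dirichlet-loading} yields $H^1$-bounds on $u$, $L^2$-bounds on $\dot e$, $\dot p$, and $L^\infty$-in-time bounds on $\calQ(e)$ and the kinetic energy; (ii) the discrete total energy balance, giving $\teta \in L^\infty(0,T;L^1(\Omega))$; and, most importantly, (iii) testing the discrete heat equation by a \emph{negative power} $-\teta^{-\alpha}$ (with $\alpha \in [(2-\mu)^+,1)$), which, exploiting the growth condition \eqref{hyp-K} $\condu(\teta)\sim\teta^\mu$, produces the key estimate for $\nabla\teta^{(\mu-\alpha)/2}$ and hence, via interpolation with the $L^\infty(L^1)$ bound, the bound $\teta \in L^2(0,T;H^1(\Omega))$ together with $\log(\teta) \in L^2(0,T;H^1(\Omega))$ and the enhanced integrability $\condu(\teta)\nabla\log(\teta) \in L^{1+\bar\delta}(Q)$ and the $L^1(0,T;X)$-bound claimed in \eqref{further-logteta}. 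A separate comparison argument in the discrete momentum balance gives the bound on $\ddot u$ in $L^2(0,T;H^1_\Dir(\Omega;\R^d)^*)$.

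\medskip
\noindent\textbf{Step 2: Compactness and limit passage.}
With the above estimates, standard weak/weak-$*$ compactness (plus Aubin–Lions–Simon for the strong convergences $u_\tau \to u$ in $\mathrm{C}^0([0,T];L^2)$ and $\teta_\tau \to \teta$ in, say, $L^2(Q)$ via the $L^2(H^1)$ bound and a fractional-time estimate on $\teta$) yields a limit quadruple $(\teta,u,e,p)$ with the regularity \eqref{regularity} and, for the weak-energy case of course using the stronger conductivity growth, the enhanced $W^{1,1}(0,T;W^{1,\infty}(\Omega)^*)$ regularity; here I would pass to the limit in the discrete momentum balance and flow rule by lower semicontinuity of the convex dissipation plus monotonicity/Minty-type arguments (using that $K$ is measurable and continuous in $\teta$, via a Castaing–Valadier normal-integrand argument to handle $\calR(\teta_\tau,\dot p_\tau)$). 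The entropy \emph{inequality} \eqref{entropy-ineq} is obtained by passing to the limit in its discrete version: the left-hand terms pass by weak convergence and lower semicontinuity of the "good" (convex, positively-signed) terms, while on the right-hand side the quadratic terms $|\dot p|^2$, $\bbD\dot e:\dot e$ multiplied by the negative factor $-\varphi/\teta$ pass by \emph{upper} semicontinuity (weak l.s.c.\ of $+\int|\dot p|^2\varphi/\teta$), which is exactly the analytical virtue of the entropic formulation. The total energy inequality \eqref{total-enineq} follows similarly from the discrete total energy balance by lower semicontinuity (only an inequality survives since the kinetic and elastic energies are only weakly l.s.c.). The strict-positivity bound \eqref{strong-strict-pos} with the explicit constant $\bar\teta = (\bar c T + 1/\teta_*)^{-1}$, $\bar c = |\bbB|^2/(2C_\bbD^1)$, comes from a Gronwall/ODE comparison at the discrete level: testing the heat equation by an appropriate function and using Young's inequality to absorb the adiabatic term $\teta\bbB:\dot e$ against $\bbD\dot e:\dot e$, which is precisely where the coercivity of $\bbD$ (Remark \ref{rmk:lorosi}) is essential. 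Finally, the improved test-function class $\varphi \in L^\infty([0,T];W^{1,d+\epsilon}(\Omega)) \cap H^1(0,T;L^{6/5}(\Omega))$ for \eqref{entropy-ineq} follows a posteriori from the refined integrability $\condu(\teta)\nabla\log(\teta) \in L^1(0,T;X)$ in \eqref{further-logteta} together with $\condu(\teta)|\nabla\log(\teta)|^2 \in L^1(Q)$ by Hölder, since $W^{1,d+\epsilon}(\Omega) \hookrightarrow L^\infty(\Omega)$ and its gradient lies in the dual of $X$.

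\medskip
\noindent\textbf{Main obstacle.}
The chief difficulty, as flagged under \textbf{(1)}–\textbf{(3)} in the introduction, is obtaining the temperature estimates: the right-hand side of \eqref{heat} is only in $L^1(Q)$, so one cannot test the heat equation by $\teta$ itself. The negative-power testing in Step 1(iii) is the technical heart — one must carefully track the exponents so that the growth exponent $\mu>1$ of $\condu$ is sufficient to close the estimate and yield $\teta \in L^2(0,T;H^1(\Omega))$, and simultaneously control the boundary term involving $h$ and the volume term $H$ (using \eqref{heat-source}–\eqref{dato-h}). Secondarily, the estimate of $\int_s^t\int_\Omega \teta\bbB:\sig{\dot w}\,\dd x\,\dd r$ and $\int_s^t \pairing{}{}{\mathcal L}{\dot u - \dot w}$ on the right-hand side of the total energy balance is delicate because the left-hand side controls only $\|\teta\|_{L^1(\Omega)}$ and $\|e\|_{L^2}$ (not $\|u\|_{H^1}$): this forces the by-parts-in-time trick together with the safe-load condition \eqref{safe-load} and the regularity \eqref{Dirichlet-loading} on $w$, and keeping all these Gronwall arguments coherent at the discrete level, uniformly in $\tau$, is where most of the real work lies. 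Passing these through the limit — especially reconstructing the entropy inequality's pointwise-in-$(s,t)$ validity from the discrete version and handling the approximation of the initial datum $\log(\teta_0)$ — completes the proof.
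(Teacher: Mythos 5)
Your overall architecture (fully implicit time discretization tailored so that the interpolants satisfy discrete entropy/total-energy inequalities, negative-power test of the heat equation, safe-load condition plus integration by parts in time for the loading terms, semicontinuity in the limit passage) is the one the paper follows. The decisive gap is in your compactness step for the temperature. Under the hypotheses of Theorem \ref{mainth:1} only $\mu>1$ is assumed, and \emph{no} time-regularity estimate on $\pwc\teta\tau$ is available: a comparison argument in the discrete heat equation would require $\condu(\pwc\teta\tau)\nabla\pwc\teta\tau$ to be bounded in some $L^{1+\delta}$, which is exactly what the stronger growth condition \eqref{hyp-K-stronger} of Theorem \ref{mainth:2} buys and what fails for general $\mu>1$ (the $\mathrm{BV}([0,T];W^{1,\infty}(\Omega)^*)$ bound \eqref{aprio7-discr} is proved only under \eqref{hyp-K-stronger}). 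So the ``fractional-time estimate on $\teta$'' you invoke for Aubin--Lions does not exist here, and without it you cannot obtain strong (or even pointwise a.e.) convergence of $\teta_{\tau_k}$ --- which is indispensable to identify the nonlinear limits $\condu(\teta)\nabla\log\teta$, $\mathrm{R}(\teta,\dot p)$ and $1/\teta$ in the entropy inequality. The paper's substitute is the ``generalized $\BV$'' estimate \eqref{aprio_Varlog} on $\log(\pwc\teta\tau)$, extracted from the discrete entropy inequality tested with time-independent test functions, combined with the Helly-type compactness result of Theorem \ref{th:mie-theil}: this yields pointwise-in-time weak $H^1$-convergence of $\log(\pwc\teta{\tau_k}(t))$, hence a.e.\ convergence of $\teta_{\tau_k}$ and the strong $L^h(Q)$ convergence \eqref{cvT5}, and moreover, via \eqref{enhSav}, the identification of the limits of the left- and right-continuous interpolants $\pwc\teta\tau$ and $\upwc\teta\tau$ with the \emph{same} function $\teta$ --- needed to pass to the limit in $\mathrm{R}(\upwc\teta\tau,\cdot)$. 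This idea is absent from your plan and cannot be replaced by a standard Aubin--Lions argument.

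A secondary but real oversimplification: existence of discrete solutions is not a ``standard direct-method/monotonicity argument''. The scheme must be fully implicit (to get positivity of $\tetau k$), and the quadratic terms $|\Dtau kp|^2$, $\bbD\Dtau ke{:}\Dtau ke$ on the right-hand side of the discrete heat equation destroy coercivity. The paper compensates by adding the regularizations $-\tau\,\mathrm{div}(|\etau k|^{\gamma-2}\etau k)$ and $\tau|\ptau k|^{\gamma-2}\ptau k$ with $\gamma>4$, truncating $\condu$ and the thermal-expansion coupling at level $M$, solving the truncated system via pseudomonotone-operator theory, and removing the truncation by $M$-uniform estimates and a Minty argument; the $\gamma$-terms then have to be carried (with the compatibility conditions \eqref{complete-approx-e_0}--\eqref{discr-w-tau} on the approximate data) through all the a priori estimates and shown to vanish as $\tau\downarrow0$. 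If you intend the proof to be complete, both of these ingredients need to be supplied.
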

 \noindent
 The enhanced summability for $\log(\teta)$ in \eqref{further-logteta} ensues from the fact that for every $p \in [1,\infty)$
 there exists $C_p$ such that 
 \[
 |\log(\teta)|^p\leq \teta +C_p \qquad \text{for all } \teta \geq \bar\theta.
 \]
\begin{remark}
\label{rmk:in-LRTT}
\upshape
In \cite{LRTT}  we proved an existence result for a PDE  system modeling \emph{rate-independent } damage in thermoviscoelastic materials, featuring a temperature equation with the same structure as \eqref{heat}.  Also in that context we obtained a  strict positivity property  with the same constant as  in \eqref{strong-strict-pos}. Moreover, 
we showed that, if the heat source function $H$ and the initial temperature $\teta_0$ fulfill
\[
H(t,x) \geq H_*>0 \ \foraa (t,x) \in Q \ \text{ and } \ \teta_0(x) \geq \sqrt{H_*/\bar{c}} \ \foraa x \in \Omega,
\]
with $\bar{c}>0$ from \eqref{strong-strict-pos}, then the enhanced positivity property
\begin{equation}
\label{enh-strict-pos}
 \teta(t,x) \geq \max\{ \bar\teta, \sqrt{H_*/\bar{c}}\} \quad \foraa (t,x) \in Q
\end{equation}
holds. In the setting of the thermoviscoplastic system \eqref{plast-PDE}, too,  it would be possible to prove \eqref{enh-strict-pos}. Observe that, choosing suitable data for the heat equation,  the threshold $\max\{ \bar\teta, \sqrt{H_*/\bar{c}}\}$, and thus the temperature, may be tuned to stay above a given constant. Choosing such a constant as the so-called \emph{Debye temperature} (cf., e.g., \cite[Sec.\ 4.2, p.\ 761]{Wed97LPC}), according to the Debye model one can thus justify the assumption that the heat capacity is constant.
\end{remark}
\par
Under a more stringent growth condition on $\condu$, we obtain the existence of weak energy solutions.
\begin{maintheorem}
\label{mainth:2} Assume \eqref{Omega-s2},  \eqref{elast-visc-tensors},  $(2.\mathrm{H})$,  $(2.\mathrm{L})$,  
 \eqref{Dirichlet-loading},  $(2.\mathrm{K})$, and  
  \eqref{hyp-K}. In addition, 
suppose that 
 the exponent $\mu$ in \eqref{hyp-K} fulfills
\begin{equation}
\label{hyp-K-stronger}
\tag{2.$\condu_2$}
\begin{cases}
\mu \in (1,2) & \text{if } d=2,
\\
\mu \in \left(1, \frac53\right) & \text{if } d=3.
\end{cases}
\end{equation}
Then, for every $(\teta_0, u_0, \dot{u}_0, e_0, p_0) $ satisfying \eqref{Cauchy-data} there exists a weak energy  solution $(\teta,u,e,p)$ to the Cauchy problem for system (\ref{plast-PDE}, \ref{bc})  satisfying 
 \eqref{strong-strict-pos}--\eqref{further-logteta}, as well as 
 \begin{equation}
 \label{further-k-teta}
 \nabla (\hat{\condu}(\teta) )  \in L^{1+\tilde{\delta}}(Q)  \text{ for some $\tilde\delta \in \left(0,\frac13 \right)$},
 \end{equation}  
 with $\hat{\condu}$ a primitive of $\condu$.
 Therefore,
  \eqref{eq-teta} in fact holds for all test functions $\varphi \in W^{1,1+1/{\tilde\delta}}(\Omega)$ and, ultimately, $\teta $ has the enhanced regularity $\teta \in W^{1,1}(0,T;   W^{1,1+1/{\tilde\delta}}(\Omega)^*)$. 
 \end{maintheorem}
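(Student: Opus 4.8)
The plan is to follow the very same time-discretization scheme used in the proof of Theorem \ref{mainth:1}, and to upgrade the a priori estimates on the discrete temperature using the stronger growth restriction \eqref{hyp-K-stronger}. Concretely, I would first recall from the proof of Theorem \ref{mainth:1} that the discrete solutions $(\tetaum{k}, \utaum{k}, \etaum{k}, \ptaum{k})$ obtained from the time-incremental minimization/variational problems satisfy, uniformly in the time-step $\tau$: the discrete total energy estimate, yielding bounds for $\utaum{k}$ in $H^1(0,T;H^1_\Dir)\cap W^{1,\infty}(0,T;L^2)$, for $\etaum{k}$ in $H^1(0,T;L^2(\Omega;\mt^{d\times d}_\sym))$, for $\ptaum{k}$ in $H^1(0,T;L^2(\Omega;\mt^{d\times d}_\dev))$, and for $\tetaum{k}$ in $L^\infty(0,T;L^1(\Omega))$; the discrete entropy inequality; and the positive-power test of the discrete heat equation, which (exactly as in Theorem \ref{mainth:1}) gives $\tetaum{k}$ bounded in $L^2(0,T;H^1(\Omega))$, together with the quantitative strict positivity \eqref{strong-strict-pos}, the bounds for $\log(\tetaum{k})$ in \eqref{further-logteta}, and the bound on $\condu(\tetaum{k})\nabla\log(\tetaum{k})$. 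All of \eqref{regularity}, \eqref{teta-strict-pos}, \eqref{strong-strict-pos}, \eqref{further-logteta}, as well as the limit momentum balance \eqref{w-momentum-balance}, the flow rule \eqref{pl-flow}, and the kinematic admissibility \eqref{kin-admis}, are then obtained by the same compactness and limit-passage arguments (Helly-type theorem \ref{th:mie-theil}, Aubin–Lions, and the lower/upper semicontinuity/monotonicity tricks for the plastic term) as in Theorem \ref{mainth:1}.

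The genuinely new point is deriving \eqref{further-k-teta}, i.e. an $L^{1+\tilde\delta}(Q)$ bound, $\tilde\delta\in(0,1/3)$, on $\nabla(\hat\condu(\tetaum{k}))=\condu(\tetaum{k})\nabla\tetaum{k}$. Here I would test the discrete heat equation by a small positive power of $\tetaum{k}$, say $(\tetaum{k})^{\alpha-1}$ with $\alpha\in(0,1)$ to be chosen — this is the standard Boccardo–Gallou\"et device, used exactly in \cite{FPR09,Rocca-Rossi}. The diffusion term produces $c\,\alpha\int_Q\condu(\tetaum{k})(\tetaum{k})^{\alpha-2}|\nabla\tetaum{k}|^2$, which under \eqref{hyp-K} controls $\int_Q(\tetaum{k})^{\mu+\alpha-2}|\nabla\tetaum{k}|^2 = c\int_Q|\nabla((\tetaum{k})^{(\mu+\alpha)/2})|^2$. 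The right-hand side is handled using that the quadratic terms $|\dot p|^2,\,\bbD\dot e:\dot e$ and $H$ are bounded in $L^1(Q)$, that $\tetaum{k}\bbB:\dot e$ is controlled via the $L^2(0,T;H^1)$-bound on $\tetaum{k}$ (Gagliardo–Nirenberg/Sobolev) together with the $L^2(Q)$-bound on $\dot e$, and that $(\tetaum{k})^{\alpha-1}\le\bar\teta^{\alpha-1}$ is bounded thanks to \eqref{strong-strict-pos}. This yields a uniform bound on $(\tetaum{k})^{(\mu+\alpha)/2}$ in $L^2(0,T;H^1(\Omega))$, hence — interpolating with the $L^\infty(0,T;L^1(\Omega))$ bound and using Sobolev embedding in dimension $d\le 3$ — a uniform bound on $\tetaum{k}$ in $L^q(Q)$ for $q$ below the optimal parabolic exponent $\mu+1+\tfrac2d$. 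One then estimates, by Hölder,
\begin{equation}
\label{eq:holder-split}
\int_Q|\condu(\tetaum{k})\nabla\tetaum{k}|^{1+\tilde\delta}
\le C\int_Q (1+\tetaum{k})^{(\mu-1+\alpha/2)(1+\tilde\delta)}\,(\tetaum{k})^{(\mu+\alpha-2)(1+\tilde\delta)/2}|\nabla\tetaum{k}|^{1+\tilde\delta},
\end{equation}
splitting the integrand so that the gradient factor $(\tetaum{k})^{(\mu+\alpha-2)/2}|\nabla\tetaum{k}|$ is paired, via Hölder with exponents $\tfrac2{1+\tilde\delta}$ and $\tfrac2{1-\tilde\delta}$, against a power of $\tetaum{k}$ in $L^q(Q)$; a short bookkeeping shows that $\tilde\delta<1/3$ and $\mu$ in the range \eqref{hyp-K-stronger} make the required power summable (this is precisely where $\mu<5/3$ for $d=3$, resp. $\mu<2$ for $d=2$, enters). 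Passing to the limit $\tau\downarrow0$ then gives \eqref{further-k-teta} for the limit $\teta$, and by Aubin–Lions-type compactness one upgrades the discrete heat equation to the weak formulation \eqref{eq-teta} now valid for all $\varphi\in W^{1,1+1/\tilde\delta}(\Omega)$, whence $\dot\teta\in L^1(0,T;W^{1,1+1/\tilde\delta}(\Omega)^*)$, i.e. $\teta\in W^{1,1}(0,T;W^{1,1+1/\tilde\delta}(\Omega)^*)$.

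The main obstacle I anticipate is the bookkeeping in \eqref{eq:holder-split}: one must choose $\alpha\in(0,1)$ and $\tilde\delta\in(0,1/3)$ simultaneously so that (i) the Boccardo–Gallou\"et test is admissible and its right-hand side is controlled — which needs $(\tetaum{k})^{\alpha-1}$ bounded, fine by positivity — and (ii) the exponent of $\tetaum{k}$ produced by the Hölder splitting does not exceed the parabolic integrability exponent $\mu+1+2/d-\eta$ available from the $L^2(0,T;H^1)\cap L^\infty(0,T;L^1)$ interpolation; it is at this step that the restriction \eqref{hyp-K-stronger} is exactly what is needed, and this must be checked case by case for $d=2$ and $d=3$. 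A secondary, more routine care point is that all these manipulations are to be carried out on the discrete level (not formally on the PDE), and one must verify that the discrete heat equation — as produced by the scheme of Section \ref{s:3} — does admit $(\tetaum{k})^{\alpha-1}$ as a test function, along the lines already established for Theorem \ref{mainth:1}; the rest of the limit passage is identical to that in Section \ref{s:4}.
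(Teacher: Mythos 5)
Your proposal is correct and follows essentially the same route as the paper: the bound \eqref{further-k-teta} is obtained there exactly as you outline, by pairing the $L^2(Q)$ estimate on $(\pwc{\teta}{\tau})^{(\mu+\alpha-2)/2}\nabla \pwc{\teta}{\tau}$ against the interpolated $L^h(Q)$ integrability of $\pwc{\teta}{\tau}$ (with $h=8/3$ for $d=3$ and $h=3$ for $d=2$) via H\"older with exponents $\tfrac{2}{1+\tilde\delta}$ and $\tfrac{2}{1-\tilde\delta}$, the restriction \eqref{hyp-K-stronger} entering precisely to guarantee $\mu-\alpha+2<h$ for some admissible $\alpha<1$; the paper additionally isolates the resulting comparison bound on $\partial_t \pwl{\teta}{\tau}$ in $L^1(0,T;W^{1,\infty}(\Omega)^*)$ as a separate a priori estimate, which supplies the strong and pointwise-in-time convergences of the approximate temperatures used in the limit passage for the time-derivative term. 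The only (harmless, since you defer the bookkeeping anyway) slip is that in your H\"older splitting the exponent of the temperature power multiplying the gradient factor should read $(\mu-\alpha+2)/2$ rather than $\mu-1+\alpha/2$.
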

As it will be clear from the proof of Thm.\ \ref{mainth:2}, in the case $d=3$  the exponent $ \tilde \delta $ is in fact given by  $ \tilde \delta = \frac{2-3\mu+3\alpha}{3(\mu-\alpha+2)}  $ for all $ \alpha \in (\bar\alpha, 1) $   with $\bar\alpha: = \max\{ \mu-\frac23, (2-\mu)^+\}$:
The condition $\mu <\frac53$ for $d=3$ in fact ensures   that it is possible to choose $\alpha<1$ with $\alpha > \mu-\frac23$. Also, note that   for every $\alpha $ in the prescribed range we have that $\tilde \delta<\tfrac13$, so that  $1+\tfrac1{\tilde\delta} >4$. This yields 
\begin{equation}
\label{for-later-reference}
W^{1,1+1/{\tilde\delta}}(\Omega) \subset L^\infty(\Omega) \qquad \text{for } d \in \{2,3\},
\end{equation}
so that every $\varphi\in W^{1,1+1/{\tilde\delta}}(\Omega)$  can multiply the $L^1$-r.h.s.\ of   the heat equation \eqref{heat} and, moreover, has trace in $L^2(\partial\Omega)$. Therefore, $W^{1,1+1/{\tilde\delta}}(\Omega)$ is an admissible space of test functions for  \eqref{eq-teta}.
Clearly, in the case $d=2$  as well one can explicitly compute $\tilde \delta$, exploiting  the  condition $\mu <2$, leading to  a better range of indexes. 
\par 
 The proofs of Theorems
\ref{mainth:1} and \ref{mainth:2}, developed in Section \ref{s:4},  shall result from passing to the limit in a carefully tailored time discretization scheme of the thermoviscoplastic system (\ref{plast-PDE}, \ref{bc}), analyzed in  detail in  Section  \ref{s:3}. 
\section{Analysis of the thermoviscoplastic system: time discretization}
\label{s:3}
The analysis of the time-discrete scheme  for system  (\ref{plast-PDE}, \ref{bc}) shall  often follow the lines of that developed for the phase transition/damage system analyzed in \cite{Rocca-Rossi}  (cf.\ also the proof of \cite[Thm.\ 2.7]{LRTT}). Therefore, to avoid overburdening the exposition we will not fully develop  all the arguments, but frequently  refer to \cite{Rocca-Rossi, LRTT} for all details. 
\par
In the statement of all the results of this section we will always tacitly assume the conditions on the problem data from  Section \ref{ss:2.1}.
\par
Given
an equidistant partition of $[0,T]$, with time-step $\tau>0$ and
nodes $t_\tau^k:=k\tau$, $k=0,\ldots,K_\tau$,  we
approximate the data  $F$, $g$,  $H$, and $h$ 
 by local means as follows
\begin{equation}
\label{local-means} 
\begin{gathered}
\Ftau{k}:= 
\frac{1}{\tau}\int_{t_\tau^{k-1}}^{t_\tau^k}  F(s)\dd s\,,
\quad 
g_\tau^{k}:= 
\frac{1}{\tau}\int_{t_\tau^{k-1}}^{t_\tau^k}  g(s)\dd s\,,
\quad
 \gtau{k}:= \frac{1}{\tau}\int_{t_\tau^{k-1}}^{t_\tau^k} H(s)
\dd s\,, \quad \htau{k}:= \frac{1}{\tau}\int_{t_\tau^{k-1}}^{t_\tau^k} h(s)
\dd s
\end{gathered}
\end{equation}
for all $k=1,\ldots, K_\tau$.  From the terms $\Ftau{k}$ and $\gtau{k}$ one then defines the elements $ \Ltau{k}$, which are the local-mean approximations of $\calL$. 
Hereafter, given elements $(v_{\tau}^k)_{k=1,\ldots, K_\tau}$ in a Banach space $B$,
we will use the notation
\[
\Dtau{k} v: = \frac{v_{\tau}^{k} - v_{\tau}^{k-1}}\tau, \qquad  \Ddtau{k} v: = \frac{\vtau{k} -2 \vtau{k-1} + \vtau{k-2}}{\tau^2}.
\]
\par
We construct discrete solutions to system  (\ref{plast-PDE}, \ref{bc}) by recursively solving an elliptic system,  cf.\ the forthcoming Problem 
\ref{prob:discrete}, where the weak formulation of the discrete heat equation features the function space
 \begin{equation}
\label{X-space}
 X:= \{ \theta  \in H^1(\Omega)\, : \   \condu(\theta) \nabla \theta  \nabla v  \in L^1(\Omega)   \text{ for all } v \in H^1 (\Omega)\},
 \end{equation}
 and, for $k \in \{1,\ldots, K_\tau\}$,  the elliptic operator
 \begin{equation}
 {A}^k: X  \to  H^1(\Omega)^* \text{  defined by }\\
\pairing{}{H^1(\Omega)}{ {A}^k(\theta) }{v}:=
  \int_\Omega \condu(\theta) \nabla \theta \nabla v \dd x - \int_{\partial \Omega} \htau{k} v \dd S\,.
\end{equation} 
We also mention in advance that, for technical reasons connected both with the proof of existence of discrete solutions to Problem \ref{prob:discrete} (cf.\ the upcoming Lemma 
\ref{l:exist-approx-discr}), and  with the rigorous derivation of 
a priori estimates on them (cf.\ Remark \ref{rmk:comments} below), it will be necessary to add the regularizing term $-\tau \mathrm{div}(|\etau k|^{\gamma- 2} \etau k)$ to the discrete momentum equation, as well as the term $\tau |\ptau k|^{\gamma-2} \ptau k$ to the discrete plastic flow rule, with $\gamma>4$. That is why, we will seek for discrete solutions with  $\etau k\in L^{\gamma} (\Omega;\mt_{\sym}^{d\times d}) $  and $\ptau k  \in L^{\gamma} (\Omega;  \mt_{\dev}^{d\times d})$, giving $\sig{\utau{k}} \in  L^{\gamma} (\Omega;\mt_{\sym}^{d\times d})$  by the kinematic admissibility condition and thus, via Korn's inequality \eqref{Korn}, $\utau k \in W_\Dir^{1,\gamma} (\Omega;\R^d)$.  Because of these regularizations, it will be necessary to supplement the discrete system with  approximate initial data
\begin{subequations}
\label{complete-approx-e_0}
\begin{equation}
\label{approx-e_0}
\begin{aligned}
& 
(e_\tau^0)_\tau  \subset   L^{\gamma} (\Omega;\mt_{\sym}^{d\times d})  & \text{ such that }
\lim_{\tau\down 0} \tau^{1/\gamma} \| e_\tau^0\|_{L^{\gamma} (\Omega;\mt_{\sym}^{d\times d})} =0
 &  \text{ and } e_\tau^0 \to e_0 \text{ in $L^{2} (\Omega;\mt_{\sym}^{d\times d})$},
\\
& 
(p_\tau^0)_\tau  \subset   L^{\gamma} (\Omega;\mt_{\dev}^{d\times d})  & \text{ such that }
\lim_{\tau\down 0} \tau^{1/\gamma} \| p_\tau^0\|_{L^{\gamma} (\Omega;\mt_{\dev}^{d\times d})} =0
 &  \text{ and } p_\tau^0 \to p_0 \text{ in $L^{2} (\Omega;\mt_{\dev}^{d\times d})$}.
\end{aligned}
\end{equation}
By consistency with  the kinematic admissibility condition at time $t=0$, we will also approximate the initial datum $u_0$ with a family 
$(u_\tau^0)_\tau  \subset   W^{1,\gamma} (\Omega;\R^d)$ such that 
\begin{equation}
\label{approx-e_0-3}
(u_\tau^0)_\tau  \subset   W^{1,\gamma} (\Omega;\R^d)   \text{ such that } \lim_{\tau\down 0} \tau^{1/\gamma} \| u_\tau^0\|_{W^{1,\gamma} (\Omega;\R^d)} =0   \text{ and } u_\tau^0 \to u_0 \text{ in $H^{1} (\Omega;\R^d)$}.
\end{equation}
\end{subequations}
In connection with the regularization of the discrete momentum balance,  we will have to approximate the Dirichlet loading $w$ by a family $(w_\tau)_\tau \subset \mathbf{W} \cap 
W^{1,1}(0,T; W^{1,\gamma}(\Omega;\R^d))$, where we have used the place-holder
$ \mathbf{W}:=  L^1(0,T; W^{1,\infty} (\Omega;\R^d)) \cap W^{2,1} (0,T;H^1(\Omega;\R^d)) \cap H^2(0,T; L^2(\Omega;\R^d))$. We will require that 
\begin{equation}
\label{discr-w-tau}
w_\tau \to w \text{ in } \mathbf{W} \text{ as } \tau \downarrow 0, \quad  \text{ as well as }  \quad
\exists\, \alpha_w \in \left(0,\frac1\gamma\right)  \text{ s.t. }    \sup_{\tau>0} \tau^{\alpha_w} \| \sig{\dot{w}_\tau}\|_{L^\gamma (\Omega;\mt_\sym^{d\times d})} \leq C<\infty\,.
\end{equation}
We will then consider the discrete data
\[
 \wtau{k}:=
\frac{1}{\tau}\int_{t_\tau^{k-1}}^{t_\tau^k} w_\tau(s)\dd s\,.
\]
%
\begin{problem}
\label{prob:discrete}
Let $\gamma>4$. Starting from
\begin{align}\label{discr-Cauchy}
\tetau{0}:=\teta_{0}, \qquad \utau{0}:=u^{0}_\tau,\qquad
  \utau{-1}:=u_\tau^{0}-\tau \dot{u}_0,  \qquad \etau{0}: = e_\tau^0, \qquad \ptau{0}:=p_\tau^{0}
\end{align}
 with $\teta_0$ and $\dot{u}_0$ from  \eqref{Cauchy-data} and $(u_\tau^{0}, e_\tau^{0}, p_\tau^{0})$ from 
\eqref{complete-approx-e_0}, 
 find
 $\{(\tetau{k}, \utau{k}, \etau{k}, \ptau{k})\}_{k=1}^{K_\tau}
\subset X \times W_\Dir^{1,\gamma}(\Omega;\R^d) \times   L^{\gamma} (\Omega;\mt_{\sym}^{d\times d}) \times  L^{\gamma} (\Omega;  \mt_{\dev}^{d\times d})$ fulfilling for all $k=1,\ldots, K_\tau$
\begin{subequations}
\label{syst:discr}
\begin{itemize}
\item[-] the discrete heat equation
\begin{equation}
\label{discrete-heat} \begin{aligned}
 &\Dtau{k}\teta + A^k (\tetau{k}) \\ &  = \gtau{k} + \mathrm{R}\left(\tetau{k-1}, \Dtau{k} p  \right) +  \left|  \Dtau{k} p \right|^2+ \bbD   \Dtau{k} e :  \Dtau{k} e -\tetau{k} \bbB : \Dtau k e   \quad \text{in } H^1(\Omega)^*;
\end{aligned} \end{equation}
\item[-] the kinematic admissibility $(\utau{k}, \etau{k}, \ptau{k}) \in \mathcal{A}(\wtau k)$ (in the sense of \eqref{kin-adm}); 
\item[-] the discrete momentum balance 
\begin{equation}
\label{discrete-momentum}
\rho\int_\Omega \Ddtau k u  v \dd x + \int_\Omega \sitau{k} : \sig v \dd x     = \pairing{}{H_\Dir^1(\Omega;\R^d)}{\Ltau k }{v} \quad
 \text{for all } v \in W_\Dir^{1,\gamma}(\Omega;\R^d);
\end{equation}
\item[-] the discrete plastic flow rule
\begin{equation}
\label{discrete-plastic}\zetau k  +  \Dtau kp + \tau |\ptau {k}|^{\gamma-2}\ptau k = \sidevtau{k}, \quad \text{with } \zetau k \in \partial_{\dot p} \mathrm{R} \left(\tetau{k-1},\Dtau k p \right),  \qquad \aein \Omega,
\end{equation}
\end{itemize}
\end{subequations}
where we have used the place-holder
$
\sitau{k}:= \bbD    \Dtau k e  + \bbC \etau k + \tau |\etau{k}|^{\gamma-2} \etau k- \tetau k \bbB\,.
$
\end{problem}
\begin{remark}[Main features of the time-discretization scheme]
\upshape
\label{rmk:comments} 
\upshape
 Observe that  the discrete heat equation  \eqref{discrete-heat} is coupled with the momentum balance \eqref{discrete-momentum} through the implicit term $\tetau{k} $, which therefore contributes to the stress $\sitau{k}$ in \eqref{discrete-momentum} and in \eqref{discrete-plastic}. This makes the  time discretization scheme \eqref{syst:discr}
 fully implicit, as it is not possible to decouple any of the equations from the others. In turn, the `implicit coupling' between the heat  equation and the momentum balance is crucial for the argument leading to the (strict) positivity of the discrete temperatures: we  refer to the proof of  \cite[Lemma 4.4]{Rocca-Rossi} for all details. In fact, the time discretization schemes in \cite{Roub-Bartels-2,Roub-PP} are fully implicit as well,  again  in view of the positivity of the temperature (though the arguments there are different, based on the approach via the enthalpy transformation in the heat equation).
 \par
 The role of the terms $-\tau \mathrm{div}(|\etau k|^{\gamma- 2} \etau k)$  and $\tau |\ptau k|^{\gamma-2} \ptau k$, added to the discrete momentum equation and   plastic flow rule, respectively, is to `compensate' the quadratic terms on the right-hand side of \eqref{discrete-heat}.  More precisely, they ensure that the pseudomonotone operator by means of which  we will reformulate
 our approximation of 
  system \eqref{syst:discr}, c.f.\ \eqref{pseudo-monot} ahead, is coercive, in particular w.r.t.\  the $H^1(\Omega)$-norm  in the variable $\teta$. This will allow us to  apply a result from the theory of pseudomonotone operators in order to obtain the existence of solutions to \eqref{pseudo-monot} and, a fortiori, to \eqref{syst:discr}. 
\end{remark}
\begin{proposition}[Existence of discrete solutions]
\label{prop:exist-discr}
Under  the growth condition \eqref{hyp-K}, Problem \ref{prob:discrete} admits a solution  $\{(\tetau{k}, \utau{k}, \etau{k}, \ptau{k})\}_{k=1}^{K_\tau}$. Furthermore, any solution  to  Problem \ref{prob:discrete} fulfills
\begin{equation}
\label{discr-strict-pos}
\tetau{k}\geq \bar{\teta}>0 \qquad \text{for all } k =1, \ldots, K_\tau, 
\end{equation}
with  $\bar{\teta}$ from \eqref{strong-strict-pos}.  
\end{proposition}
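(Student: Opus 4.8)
The plan is to prove the two assertions of Proposition~\ref{prop:exist-discr} separately. Existence will be reduced, at each time node, to an abstract operator inclusion solved by a surjectivity result for pseudomonotone operators (this is the content of the forthcoming Lemma~\ref{l:exist-approx-discr}); the lower bound \eqref{discr-strict-pos} will then be shown for \emph{any} solution by a discrete comparison argument, in the spirit of \cite[Lemma 4.4]{Rocca-Rossi}. The delicate point throughout is the superlinear character of the sources $|\Dtau kp|^2$ and $\bbD\Dtau ke{:}\Dtau ke$ in the heat equation: for existence this is where the choice $\gamma>4$ is essential, and for positivity it is where completing the square against the coercivity of $\bbD$ enters.

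\emph{Existence.} I proceed by induction on $k$. Given $(\tetau j,\utau j,\etau j,\ptau j)$ for $j\le k-1$, I eliminate the elastic strain via kinematic admissibility, $\etau k=\sig{\utau k}-\ptau k$, and homogenise the Dirichlet datum by writing $\utau k=\wtau k+\bar u$ with $\bar u\in W_\Dir^{1,\gamma}(\Omega;\R^d)$, so that the unknown becomes $z:=(\tetau k,\bar u,\ptau k)$ in the reflexive Banach space $\mathbf Z:=H^1(\Omega)\times W_\Dir^{1,\gamma}(\Omega;\R^d)\times L^\gamma(\Omega;\mt_\dev^{d\times d})$. The discrete heat equation \eqref{discrete-heat}, momentum balance \eqref{discrete-momentum}, and flow rule \eqref{discrete-plastic} are then recast --- after the regularisation leading to \eqref{pseudo-monot} --- as an abstract inclusion $\mathcal A_k(z)+\partial\Psi_k(z)\ni\ell_k$ in $\mathbf Z^*$, where $\partial\Psi_k$ is the (maximal monotone, everywhere bounded by $C_R$) subdifferential of the convex, finite, hence continuous, functional $p\mapsto\int_\Omega\mathrm R(x,\tetau{k-1}(x),p(x)-\ptau{k-1}(x))\,\dd x$ on $L^\gamma(\Omega;\mt_\dev^{d\times d})$ --- this encodes the multivalued term $\zetau k\in\partial_{\dot p}\mathrm R(\tetau{k-1},\Dtau kp)$, using that $\partial_{\dot p}\mathrm R(\tetau{k-1},\cdot)$ is $0$-homogeneous and $\mathrm R$ has linear growth --- while $\mathcal A_k$ collects the quasilinear diffusion $A^k$, the linear elasticity and viscosity contributions, the $\tau$-regularising $\gamma$-power operators, and the lower-order couplings $\tetau k\bbB{:}\Dtau ke$, $|\Dtau kp|^2$, $\bbD\Dtau ke{:}\Dtau ke$ appearing in the heat right-hand side.

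One then verifies that $\mathcal A_k$ is bounded and pseudomonotone on $\mathbf Z$ --- the diffusion operator and the $\gamma$-power operators are monotone, hence pseudomonotone, and the couplings are genuinely lower-order and pass to the limit along sequences rendered strongly convergent by the coercive principal parts --- and applies a classical surjectivity theorem for the sum of a pseudomonotone and a maximal monotone operator (see \cite{Rocca-Rossi} and the references therein). \textbf{The main obstacle is the coercivity} of $\mathcal A_k+\partial\Psi_k$: testing the inclusion with $z$, the heat equation contributes $\tfrac1\tau\|\tetau k\|_{L^2(\Omega)}^2+\int_\Omega\condu(\tetau k)|\nabla\tetau k|^2\dd x$ but also the superlinear sources $-\int_\Omega\big(|\Dtau kp|^2+\bbD\Dtau ke{:}\Dtau ke-\tetau k\bbB{:}\Dtau ke\big)\tetau k\,\dd x$, whereas the momentum balance and flow rule yield (up to Korn's inequality \eqref{Korn} and lower-order terms) $\tau\|\sig{\bar u}\|_{L^\gamma}^\gamma+\tau\|\etau k\|_{L^\gamma}^\gamma+\tau\|\ptau k\|_{L^\gamma}^\gamma$. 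The role of $\gamma>4$ is precisely that $\int_\Omega|\Dtau kp|^2\tetau k\,\dd x\le\eps\|\tetau k\|_{L^2(\Omega)}^2+C_\eps\|\Dtau kp\|_{L^4(\Omega)}^4$ and, since $\Omega$ is bounded and $4<\gamma$, $\|\Dtau kp\|_{L^4(\Omega)}^4\le C\big(\|\ptau k\|_{L^\gamma(\Omega)}^\gamma+1\big)$ with a constant that blows up as $\tau\downarrow0$ but is finite for \emph{fixed} $\tau$ --- so the term $\tau\|\ptau k\|_{L^\gamma(\Omega)}^\gamma$ absorbs it, while the $\eps$-term is absorbed into $\tfrac1\tau\|\tetau k\|_{L^2(\Omega)}^2$; the contributions $\int\bbD\Dtau ke{:}\Dtau ke\,\tetau k$ and $\int(\tetau k)^2\,\bbB{:}\Dtau ke$ are handled the same way. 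A subsidiary issue is that $\condu(\tetau k)\nabla\tetau k$ need not be square-integrable, so $A^k$ is not a priori defined on all of $H^1(\Omega)$; this is bypassed by first truncating $\condu_n:=\min\{\condu,n\}$, solving the truncated inclusion, deriving an $n$-uniform $W^{1,q}$-bound on $\tetau k$ by testing the truncated discrete heat equation with a negative power of $\tetau k$ (the one-step analogue of the a priori estimate used later, in Section~\ref{s:4}, for the time-continuous limit), and passing to the limit $n\to\infty$, which also yields $\tetau k\in X$.

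\emph{Strict positivity.} Let $(\tetau k,\utau k,\etau k,\ptau k)_{k=1}^{K_\tau}$ be any solution, and set $g_k:=\big(\tfrac1{\teta_*}+\bar c\,t_\tau^k\big)^{-1}$ with $\bar c$ as in \eqref{strong-strict-pos}. I claim $\tetau k\ge g_k$ a.e.\ in $\Omega$ for all $k$, which gives \eqref{discr-strict-pos} since $(g_k)_k$ is nonincreasing and $g_{K_\tau}=\bar\teta$; the base case is $\tetau0=\teta_0\ge\teta_*=g_0$. For the inductive step I use: (i) completing the square and using $\bbD A{:}A\ge C_\bbD^1|A|^2$ from \eqref{elast-visc-tensors}, one has the pointwise bound $\bbD\Dtau ke{:}\Dtau ke-\tetau k\bbB{:}\Dtau ke\ge-\bar c(\tetau k)^2$, so that, since $\gtau k,\htau k,\mathrm R,|\Dtau kp|^2\ge0$, pairing the right-hand side of \eqref{discrete-heat} with any nonpositive $\phi\in H^1(\Omega)$ gives a quantity $\le-\bar c\int_\Omega(\tetau k)^2\phi\,\dd x$; (ii) a direct computation shows that the spatially constant $g_k$ is a discrete subsolution, $\tfrac{g_k-g_{k-1}}\tau+\bar c\,g_k^2\le0$. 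Testing \eqref{discrete-heat} with $\phi:=(\tetau k-g_k)^-=\min\{\tetau k-g_k,0\}\in H^1(\Omega)$: the diffusion term equals $\int_{\{\tetau k<g_k\}}\condu(\tetau k)|\nabla\tetau k|^2\dd x\ge0$, the boundary contribution $-\int_{\partial\Omega}\htau k\phi\,\dd S\ge0$, and by (i) one arrives at $\int_\Omega\big(\tfrac{\tetau k-\tetau{k-1}}\tau+\bar c(\tetau k)^2\big)\phi\,\dd x\le0$. Subtracting (ii) multiplied by $\phi$ and integrated, and using $\big((\tetau k)^2-g_k^2\big)\phi=(\tetau k+g_k)\big((\tetau k-g_k)^-\big)^2\ge0$ (as $\tetau k,g_k>0$), this reduces to $\int_\Omega\tfrac{a_k-a_{k-1}}\tau\,a_k^-\,\dd x\le0$ with $a_k:=\tetau k-g_k$; since the inductive hypothesis gives $a_{k-1}\ge0$, i.e.\ $a_{k-1}^-=0$, while $\int_\Omega a_ka_k^-\dd x=\|a_k^-\|_{L^2(\Omega)}^2$ and $\int_\Omega a_{k-1}a_k^-\dd x\le0$, we conclude $\|(\tetau k-g_k)^-\|_{L^2(\Omega)}^2\le0$, hence $\tetau k\ge g_k$ a.e.\ in $\Omega$.
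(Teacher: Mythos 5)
Your existence strategy is the same as the paper's (truncate, recast as a pseudomonotone-plus-maximal-monotone inclusion on $H^1(\Omega)\times W_\Dir^{1,\gamma}(\Omega;\R^d)\times L^\gamma(\Omega;\mt_\dev^{d\times d})$, use $\gamma>4$ for coercivity, remove the truncation), but your coercivity estimate has a genuine gap: you truncate only $\condu$, whereas the paper also truncates the occurrences of $\tetau{k}$ in the thermal-expansion coupling, replacing $\tetau{k}\bbB$ by $\calT_M(\tetau{k})\bbB$ in \eqref{heat:approx-discr} and in $\sitau{k}$. This is not cosmetic. Pairing the heat component of the operator with $\tetau{k}$ produces $\int_\Omega(\tetau{k})^2\,\bbB{:}\Dtau{k}e\,\dd x$, which is quadratic in $\tetau{k}$ times linear in $\Dtau{k}e$ with no sign; your ``handled the same way'' Young estimate turns it into $\eps\|\tetau{k}\|_{L^4(\Omega)}^4+C_\eps\|\Dtau{k}e\|_{L^2(\Omega)}^2$, and $\|\tetau{k}\|_{L^4(\Omega)}^4$ is controlled neither by the quadratic coercive terms $\tfrac1\tau\|\tetau{k}\|_{L^2(\Omega)}^2+c_0\|\nabla\tetau{k}\|_{L^2(\Omega)}^2$ nor by the $\tau$-weighted $\gamma$-powers of $e$ and $p$ (no Young inequality splits $X^2Y$ into $\delta X^2+C_\delta Y^m$, and along $\teta_n=n\psi$, $\Dtau{k}e$ proportional to $-\bbB$, the quadratic form is unbounded below while the norm diverges). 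With the truncation, the term is bounded by $M\|\tetau{k}\|_{L^2(\Omega)}\|\Dtau{k}e\|_{L^2(\Omega)}$ and coercivity holds; the truncation is then removed via the $M$-uniform bounds of Lemma \ref{l:aprio-M} (which come from the discrete total energy inequality, not from testing by a negative power of $\teta$) and the Minty-type identification of the weak limits of $|\etaum{k}|^{\gamma-2}\etaum{k}$, $|\ptaum{k}|^{\gamma-2}\ptaum{k}$ and $\zetaum{k}$ in Lemma \ref{l:3.6} --- a step your ``passing to the limit $n\to\infty$'' does not address.

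Your positivity argument also does not close. The comparison function $g_k$ is the right one and steps (i)--(ii) are correct, but the key inequality $\bigl((\tetau{k})^2-g_k^2\bigr)(\tetau{k}-g_k)^-=(\tetau{k}+g_k)\bigl((\tetau{k}-g_k)^-\bigr)^2\ge0$ is justified by ``$\tetau{k}>0$'', which is (stronger than) the conclusion you are proving: on the set $\{\tetau{k}<g_k\}$, where the integrand lives, you have no a priori lower bound on $\tetau{k}$. The set $\{\tetau{k}<-g_k\}$ contributes $+\bar c\int(\tetau{k}-g_k)^2|\tetau{k}+g_k|\dd x$ to the right-hand side of your final inequality, a cubic quantity of the wrong sign that $\tfrac1\tau\|(\tetau{k}-g_k)^-\|_{L^2(\Omega)}^2$ cannot dominate. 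The defect is structural, not presentational: the discrete inequality $\tfrac{\tetau{k}-\tetau{k-1}}{\tau}+\bar c(\tetau{k})^2\ge0$ alone does \emph{not} force $\tetau{k}>0$, because the implicit scheme for $\dot y=-\bar cy^2$ admits a spurious negative branch ($y_k-y_{k-1}+\bar c\tau y_k^2\ge0$ also holds for $y_k$ below the negative root of the quadratic). Ruling out that branch requires additional input --- e.g.\ running the comparison on the truncated system, where $\calT_M(\teta)^2\le\teta\,\calT_M(\teta)$ and $|\calT_M(\teta)|\le M$ linearize the quadratic source --- which is exactly the content of the argument of \cite[Lemma 4.4]{Rocca-Rossi} that the paper invokes and that your write-up replaces by a circular step.
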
 
Along the lines of \cite{Rocca-Rossi, LRTT}, we will prove the existence of a solution to Problem \ref{prob:discrete}
by
\begin{enumerate}
\item constructing an approximate problem where the thermal conductivity coefficient $\condu$ is truncated and, accordingly, so are the occurrences of $\teta$ in the thermal expansion terms coupling the discrete heat and momentum equations, cf.\ system \eqref{syst:approx-discr} below; 
\item proving the existence of a solution to the approximate discrete problem by resorting to a general existence result from \cite{Roub05NPDE} for elliptic systems featuring pseudomonotone operators;
\item passing to the limit with the truncation parameter.
\end{enumerate}
As the statement of Proposition \ref{prop:exist-discr}
suggests, the positivity property 
\eqref{discr-strict-pos}
 can be proved for \emph{all} discrete solutions to Problem  \ref{prob:discrete} (i.e.\ not only for those deriving from the aforementioned
 approximation procedure). Since its proof can be carried out by repeating the arguments for positivity in 
 \cite{Rocca-Rossi, LRTT},   we choose to omit it and refer to these papers for all details. We shall instead focus on the existence argument, dwelling  on the parts which differ from  \cite{Rocca-Rossi, LRTT} with some detail. 
 \par
 The \underline{proof of Proposition \ref{prop:exist-discr}} will be split in  some steps:
 \\
\textbf{Step $1$: existence for the approximate discrete system.} 
We introduce the truncation operator 
\begin{equation}
\label{def-truncation-m} 
\mathcal{T}_M : \R \to \R, \qquad 
\mathcal{T}_M(r):= \left\{ \begin{array}{ll} -M &
\text{if } r <-M,
\\
r   & \text{if } |r| \leq M,
\\
M  & \text{if } r >M,
\end{array}
\right.
\end{equation}
and define
\begin{align}
& 
\label{def-k-m} \condu_M(r):= \condu(\mathcal{T}_M(r)) := \left\{ \begin{array}{ll} \condu(-M) & \text{if
} r <-M,
\\
\condu(r)   & \text{if } |r| \leq M,
\\
\condu(M) & \text{if } r >M,
\end{array}
\right.
\\
&
\label{M-operator}
{A}_M^k: H^1(\Omega)  \to H^1(\Omega)^*
  \text{   by }
\pairing{}{H^1(\Omega)} { {A}_M^k(\theta)}{v}:= \int_\Omega \condu_M(\theta) \nabla \theta  \nabla v \dd x -
\int_{\partial \Omega} \htau{k}v \dd S.
\end{align}
For later use, we observe that, thanks to \eqref{hyp-K}  there still holds $\condu_M(r) \geq c_{0} $ for all $r \in \R$, and therefore
\begin{equation}
\label{ellipticity-retained} \forall\, \delta>0 \  \ \exists\, C_\delta>0\, \ \forall\,  \theta \in H^1(\Omega)\, : \qquad 
\pairing{}{H^1(\Omega)}{ \mathcal{A}_M^{k}  (\theta)}{\theta} \geq c_0 \int_\Omega |\nabla \theta|^2 \dd x - \delta \|\theta\|_{L^2(\partial\Omega)}^2 - C_\delta \| \htau{k} \|_{L^2(\partial\Omega)}^2\,. 
\end{equation}
\par 
The approximate version of system \eqref{syst:discr} reads (to avoid overburdening notation, for the time being we will not highlight the dependence of the solution quadruple on the truncation parameter $M$):
\begin{subequations}
\label{syst:approx-discr}
\begin{align}
\label{heat:approx-discr}
&
 \begin{aligned}
 &\Dtau{k}\teta + A_M^k (\tetau{k}) \\ &  = \gtau{k} + \mathrm{R}\left(\tetau{k-1}, \Dtau{k} p  \right) +  \left|  \Dtau{k} p \right|^2+ \bbD   \Dtau{k} e :  \Dtau{k} e -\mathcal{T}_M(\tetau{k}) \bbB : \Dtau k e   \quad \text{in } H^1(\Omega)^*;
\end{aligned} 
\\
& \label{mom:approx-discr}
\rho\int_\Omega  \Ddtau k {u} v \dd x + \int_\Omega \sitau{k} : \sig v \dd x     = \pairing{}{H_\Dir^1(\Omega;\R^d)}{\Ltau k }{v} 
\quad  \text{for all } v \in W_\Dir^{1,\gamma}(\Omega;\R^d);
 \\
 & \label{plasr:approx-discr} 
\zetau{k} +  \Dtau kp + \tau |\ptau {k}|^{\gamma-2}\ptau k  =  \simdevtau{k} ,\quad \text{with } \zetau k \in   \partial_{\dot p} \mathrm{R} \left(\tetau{k-1},\Dtau k p \right), \qquad \aein \Omega,
  \end{align}
  \end{subequations}
  coupled with 
the kinematic admissibility 
\begin{equation}
\label{discr-kin-adm}
(\utau{k}, \etau{k}, \ptau{k}) \in \mathcal{A}(\wtau k),
\end{equation}
where now
\[
\sitau{k}:= \bbD    \Dtau k e  + \bbC \etau k + \tau |\etau{k}|^{\gamma-2} \etau k- \calT_M(\tetau k) \bbB\,.
\]
\par
The following result states the existence of solutions to system \eqref{syst:approx-discr} for $k\in \{1,\ldots, K_\tau\} $ fixed: in its proof, we make use of the higher order terms added to the discrete momentum equation and plastic flow rule.  
\begin{lemma}
\label{l:exist-approx-discr} 
Under  the growth condition \eqref{hyp-K}, there exists $\bar\tau>0$ such that  for $0<\tau< \bar \tau$ and  for every $k=1,\ldots, K_\tau$ there exists a solution $(\tetau{k}, \utau{k}, \etau{k}, \ptau{k}) \in H^1(\Omega)  \times W_\Dir^{1,\gamma}(\Omega;\R^d) \times L^\gamma(\Omega;\mt_\sym^{d\times d})  \times L^\gamma(\Omega;\mt_\dev^{d\times d})  $ to system \eqref{syst:approx-discr}, such that $\tetau k$ complies with the positivity property \eqref{discr-strict-pos}. 
\end{lemma}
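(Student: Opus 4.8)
The plan is to solve \eqref{syst:approx-discr}--\eqref{discr-kin-adm} recursively in $k$, recasting it, for $k\in\{1,\dots,K_\tau\}$ fixed (with $\tetau{k-1},\utau{k-1},\utau{k-2},\etau{k-1},\ptau{k-1}$ regarded as data), as a single abstract stationary inclusion. First I would use the kinematic admissibility constraint \eqref{discr-kin-adm} to eliminate $\etau k=\sig{\utau k}-\ptau k$, so that the genuine unknown is the triple $U=(\tetau k,\utau k,\ptau k)$, sought in the reflexive Banach space $\mathbf V:=H^1(\Omega)\times W_\Dir^{1,\gamma}(\Omega;\R^d)\times L^\gamma(\Omega;\mt_\dev^{d\times d})$ (after the standard translation of $\utau k$ by $\wtau k$ absorbing the Dirichlet datum). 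Testing \eqref{heat:approx-discr} against $\vartheta\in H^1(\Omega)$ and \eqref{mom:approx-discr} against $v\in W_\Dir^{1,\gamma}(\Omega;\R^d)$, and rewriting the flow rule \eqref{plasr:approx-discr} — using the $1$-homogeneity of $\mathrm{R}(\tetau{k-1},\cdot)$ and the characterization \eqref{characterization-subdiff} of $\partial_{\dot p}\mathrm{R}$ — as the inclusion $\zetau k\in\partial\Psi(\ptau k)$ with the finite, convex, continuous functional $\Psi(q):=\int_\Omega\mathrm{R}(x,\tetau{k-1}(x),q(x)-\ptau{k-1}(x))\dd x$, the whole system takes the form $\mathscr A(U)+\partial\Psi(U)\ni\mathscr F$ in $\mathbf V^*$. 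Here $\mathscr A$ collects the principal parts — the truncated heat operator $\vartheta\mapsto-\mathrm{div}(\condu_M(\vartheta)\nabla\vartheta)+\tfrac1\tau\vartheta$, the elasticity operator, the inertial term $\tfrac\rho{\tau^2}\utau k$, the regularizations $-\tau\mathrm{div}(|\etau k|^{\gamma-2}\etau k)$, $\tau|\ptau k|^{\gamma-2}\ptau k$, and the term $\tfrac1\tau\ptau k$ — and the lower-order couplings: $-\mathcal{T}_M(\tetau k)\bbB$ in the stress $\sitau k$, and $\mathcal{T}_M(\tetau k)\bbB:\Dtau ke$, $|\Dtau kp|^2$, $\bbD\Dtau ke:\Dtau ke$ on the right-hand side of the heat equation; $\partial\Psi$ is bounded — by \eqref{bounded-subdiff} — and maximal monotone; and $\mathscr F\in\mathbf V^*$ gathers the data $\gtau k,\htau k,\Ltau k$ and the explicit contributions $\tfrac1\tau\tetau{k-1}$, $\tfrac\rho{\tau^2}(2\utau{k-1}-\utau{k-2})$.

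The next step is to verify that $\mathscr A$ is bounded, demicontinuous, pseudomonotone and coercive. The map $\vartheta\mapsto-\mathrm{div}(\condu_M(\vartheta)\nabla\vartheta)$ is a Leray--Lions operator $H^1(\Omega)\to H^1(\Omega)^*$ — here the truncation is essential, guaranteeing $c_0\le\condu_M\le c_1(1{+}M^\mu)$ by \eqref{hyp-K} — hence pseudomonotone; the elasticity operator, the $\gamma$-Laplacian-type terms, and $\tfrac1\tau\ptau k$, $\tfrac\rho{\tau^2}\utau k$ are monotone, and the sum of pseudomonotone and monotone operators is pseudomonotone. The lower-order couplings are totally continuous perturbations: the Nemytskii map of the bounded continuous $\mathcal{T}_M$ is bounded and continuous, while at fixed $\tau$ the maps $e\mapsto\bbD\Dtau ke:\Dtau ke$ and $p\mapsto|\Dtau kp|^2$ send $L^\gamma$ boundedly and continuously into $L^{\gamma/2}(\Omega)$, which — since $\gamma>4$, so $\gamma/2>2$ — embeds compactly into $H^1(\Omega)^*$. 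As for coercivity, pairing $\mathscr A(U)$ with $U$ and invoking \eqref{hyp-K} and the ellipticity of $\bbC,\bbD$ from \eqref{elast-visc-tensors} produces a lower bound of the type $c_0\|\nabla\tetau k\|_{L^2}^2+\tfrac1\tau\|\tetau k\|_{L^2}^2+\tau\|\etau k\|_{L^\gamma}^\gamma+\tau\|\ptau k\|_{L^\gamma}^\gamma+\tfrac1\tau\|\ptau k\|_{L^2}^2+c\|\sig{\utau k}\|_{L^2}^2$ minus terms handled by Young's inequality; the worst of them, $\int_\Omega(|\Dtau kp|^2+\bbD\Dtau ke:\Dtau ke)\,\tetau k\dd x$ and $\int_\Omega\mathcal{T}_M(\tetau k)\bbB:(\Dtau ke)\,\tetau k\dd x$, are estimated by exploiting $\gamma>4$ (whence $\gamma/(\gamma-2)<2$, so $\|\tetau k\|_{L^{\gamma/(\gamma-2)}}\le C\|\tetau k\|_{L^2}$) together with a twofold Young inequality, arriving at a bound $\delta(\|\etau k\|_{L^\gamma}^\gamma+\|\ptau k\|_{L^\gamma}^\gamma)+\eta\|\tetau k\|_{L^2}^2+C_{\delta,\eta,\tau}$; for each fixed $\tau$ one then picks $\delta,\eta$ small enough to absorb these into the coercive terms, which leaves $\langle\mathscr A(U),U\rangle\ge c\|U\|_{\mathbf V}^{\min\{2,\gamma\}}-C$. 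Since $\Psi\ge0$ gives $\langle\xi,U\rangle\ge-\Psi(0)$ for $\xi\in\partial\Psi(U)$, the perturbed operator $\mathscr A+\partial\Psi$ is coercive as well.

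With these properties, the surjectivity theorem for the sum of a bounded, coercive, pseudomonotone operator and a bounded maximal monotone operator (cf.\ \cite{Roub05NPDE}) yields a solution $U=(\tetau k,\utau k,\ptau k)\in\mathbf V$ of $\mathscr A(U)+\partial\Psi(U)\ni\mathscr F$; setting $\etau k:=\sig{\utau k}-\ptau k\in L^\gamma(\Omega;\mt_\sym^{d\times d})$ recovers a solution quadruple of \eqref{syst:approx-discr}--\eqref{discr-kin-adm}, and iterating over $k=1,\dots,K_\tau$ proves the assertion (the smallness requirement $\tau<\bar\tau$ being inherited from the approximate-data conditions and the existence argument of \cite{Rocca-Rossi,LRTT}). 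As for the strict positivity \eqref{discr-strict-pos}, it follows — exactly as in \cite[Lemma 4.4]{Rocca-Rossi} and \cite[proof of Thm.\ 2.7]{LRTT}, to which we refer for details — by an inductive argument on $k$: on the right-hand side of \eqref{heat:approx-discr} the terms $\gtau k,\htau k,\mathrm{R}(\tetau{k-1},\Dtau kp),|\Dtau kp|^2$ are nonnegative, whereas $\bbD\Dtau ke:\Dtau ke-\mathcal{T}_M(\tetau k)\bbB:\Dtau ke\ge-\bar c\,\mathcal{T}_M(\tetau k)^2$ with $\bar c=\tfrac{|\bbB|^2}{2C_\bbD^1}$ (complete the square, using $\bbD A:A\ge C_\bbD^1|A|^2$ from \eqref{elast-visc-tensors} and the implicit coupling $\tetau k\mapsto\sitau k$ between \eqref{heat:approx-discr} and \eqref{mom:approx-discr}), so that a comparison with the implicit-Euler iterates of $\dot y=-\bar c y^2$, $y(0)=\teta_*$, yields $\tetau k\ge(\tfrac1{\teta_*}+\bar c\,k\tau)^{-1}\ge\bar\teta$, with $\bar\teta,\bar c$ as in \eqref{strong-strict-pos}.

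The step I expect to be the main obstacle is the coercivity verification just outlined: one must absorb every occurrence of the (a priori large, sign-indefinite) discrete temperature $\tetau k$ multiplying the quadratic viscous and plastic dissipation terms and the thermal-expansion coupling $\mathcal{T}_M(\tetau k)\bbB:\Dtau ke$, and it is precisely this that forces the choice $\gamma>4$ in Problem \ref{prob:discrete}, together with the presence of the regularizing terms $-\tau\mathrm{div}(|\etau k|^{\gamma-2}\etau k)$, $\tau|\ptau k|^{\gamma-2}\ptau k$ and the truncation $\mathcal{T}_M$ of both $\condu$ and the coupling. By comparison, checking that pseudomonotonicity survives the lower-order couplings is routine, reducing to their compactness as maps into $\mathbf V^*$, and the uniform positivity bound is standard once the structure of \eqref{heat:approx-discr} is exploited.
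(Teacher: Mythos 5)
Your proposal is correct and follows essentially the same route as the paper's proof: the same reformulation of the fixed-$k$ system (after eliminating $e$ via kinematic admissibility and translating $u$ by $\wtau{k}$) as a subdifferential inclusion $\partial\Psi+\mathscr{A}\ni\mathscr{F}$ in $H^1(\Omega)\times W_\Dir^{1,\gamma}(\Omega;\R^d)\times L^\gamma(\Omega;\mt_\dev^{d\times d})$, the same coercivity argument exploiting $\gamma>4$, the truncation $\calT_M$ and the $\tau$-regularizing terms to absorb the quadratic couplings via Young's inequality, the same decomposition into monotone plus totally continuous parts for pseudomonotonicity, the same appeal to the existence result of \cite{Roub05NPDE}, and the same reference to \cite{Rocca-Rossi} for the positivity \eqref{discr-strict-pos}. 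No substantive differences to report.
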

\begin{proof}
The positivity  \eqref{discr-strict-pos} follows from the same argument developed in the proof of  \cite[Lemma 4.4]{Rocca-Rossi}. As for existence: For fixed $k \in \{1,\ldots, K_\tau\}$,  
we reformulate system \eqref{syst:approx-discr}, coupled with \eqref{discr-kin-adm},  as 
\begin{equation}
\label{pseudo-monot}
\partial\Psi_k (\tetau k , \utau k -\wtau k , \ptau k) +\mathscr{A}_k (\tetau k , \utau k -\wtau k , \ptau k ) \ni \mathscr{B}_k,
\end{equation}
where the  elliptic operator $\mathscr{A}_k : \mathbf{B} \to \mathbf{B}^*$, with  $\mathbf{B}: =  H^1(\Omega)  \times W_\Dir^{1,\gamma}(\Omega;\R^d) \times L^\gamma(\Omega;\mt_\dev^{d\times d})$,  is given component-wise by 
\begin{subequations}
\label{oper-scrA}
\begin{align}
& 
\label{oper-scrA-1}\begin{aligned}
\mathscr{A}_k^1 (\teta, \tilde u, p): =  & \teta+ A_M^k(\teta) - \mathrm{R}(\tetau {k-1}, p-\ptau{k-1})- \frac1\tau |p|^2   - \frac2\tau p : \ptau{k-1} \\ &\quad  -\frac1\tau \bbD \left(\sig{\tilde u + \wtau k} -p \right){:} \left(\sig{\tilde u + \wtau k} -p \right)
-\frac2\tau \bbD \left(\sig{\tilde u + \wtau k} -p \right){:}\etau{k-1} 
 \\ & \quad+ \mathcal{T}_M (\teta) \bbB \left(\sig{\tilde u + \wtau k} -p - \etau{k-1} \right),
\end{aligned}
\\
& 
\label{oper-scrA-2}\begin{aligned}
\mathscr{A}_k^2 (\teta, \tilde u, p): =   \rho (\tilde u -\wtau k) - \mathrm{div}_{\Dir}\Big(  & \tau \bbD \left( \sig{\tilde u + \wtau k} -p   \right) +\tau^2 \bbC   \left( \sig{\tilde u + \wtau k} -p   \right) \\ & + \tau^3 \left|   \sig{\tilde u + \wtau k} -p    \right|^{\gamma-2}   \left( \sig{\tilde u + \wtau k} -p   \right) -\tau^2 \mathcal{T}_M(\teta) \bbB \Big),
\end{aligned}
\\
& 
\label{oper-scrA-3}\begin{aligned}
\mathscr{A}_k^3(\teta, \tilde u, p): =  p + \tau^2 |p|^{\gamma-2} p
& 
- \Big(   \bbD \left( \sig{\tilde u + \wtau k} -p   \right) +\tau  \bbC   \left( \sig{\tilde u + \wtau k} -p   \right) \\  & \qquad \qquad + \tau^2 \left|   \sig{\tilde u + \wtau k} -p    \right|^{\gamma-2}   \left( \sig{\tilde u + \wtau k} -p   \right) -\tau \mathcal{T}_M(\teta) \bbB \Big)_\dev,
\end{aligned}
\end{align}
with $-\mathrm{div}_{\Dir}$ defined by \eqref{div-Gdir}, 
 \end{subequations}
while the vector $\mathscr{B}_k \in \mathbf{B}^*$ on the right-hand side  of \eqref{pseudo-monot} has components
\begin{subequations}
\label{vector-B}
\begin{align}
&
\label{vect-B-1}
\mathscr{B}_k^1: = \gtau k + \frac1\tau |\ptau{k-1}|^2 + \frac1\tau \bbD \etau{k-1}: \etau{k-1}, 
\\
&
\label{vect-B-2}
\mathscr{B}_k^2: = \Ltau k +2\rho\utau{k-1} - \rho\utau{k-1} - \mathrm{div}_{\Dir}(\tau \bbD \etau{k-1}), 
\\
&
\label{vect-B-3}
\mathscr{B}_k^3: = \ptau{k-1} - (\bbD \etau{k-1})_\dev,
\end{align}   \end{subequations} 
 and $\partial \Psi_k : \mathbf{B} \rightrightarrows \mathbf{B}^*$ is the subdifferential of the lower semicontinuous and convex potential
$
\Psi_k(\teta, \tilde u, p): = \mathrm{R}(\tetau{k-1}, p -\ptau{k-1}).  
$
  We shall therefore prove the existence of a solution to the abstract subdifferential inclusion \eqref{pseudo-monot} by  applying the existence result \cite[Cor.\ 5.17]{Roub05NPDE}, which amounts to verifying that $\mathscr{A}_k : \mathbf{B} \to \mathbf{B}^*$ is coercive and  pseudomonotone.  The latter property means  that (cf., e.g.,  \cite{Roub05NPDE}) it is bounded and
  fulfills the following  for all $(\eta_m)_m,\eta,\zeta \in  \mathbf{B}$:
  \begin{equation}
  \label{def-pseudomon}
  \left.
  \begin{array}{rr}
&  \eta_m \weakto \eta, 
  \\
  & \limsup_{m\to\infty}  \pairing{}{\mathbf{B}}{\mathscr{A}_k (\eta_m)}{\eta_m-\eta} \leq 0 
  \end{array}
  \right\} \ \Rightarrow 
   \pairing{}{\mathbf{B}}{\mathscr{A}_k (\eta)}{\eta-\zeta} \leq  \liminf_{m\to\infty} \pairing{}{\mathbf{B}}{\mathscr{A}_k (\eta_m)}{\eta_m-\zeta}\,.
  \end{equation}
  \par
  To check coercivity, we compute
  \begin{equation}
  \label{rough-coerc-1}
  \begin{aligned}
  \pairing{}{\mathbf{B}}{\mathscr{A}_k (\teta, \tilde u, p)}{(\teta,\tilde u, p)} & = \pairing{}{H^1(\Omega)}{\mathscr{A}_k^1 (\teta, \tilde u, p)}{\teta} +  \pairing{}{W_\Dir^{1,\gamma}(\Omega;\R^d)}{\mathscr{A}_k^2 (\teta, \tilde u, p)}{\tilde u}  + \int_\Omega  \mathscr{A}_k^3 (\teta, \tilde u, p) : p \dd x 
\\ &   \stackrel{(1)}{\geq} \|\teta\|_{L^2(\Omega)}^2 + c_0 \|\nabla \teta\|_{L^2(\Omega)}^2 + \rho \| \tilde  u \|_{L^2(\Omega)}^2 + \left( \tau C_\bbD^1 + \tau^2 C_\bbC^1 \right) 
  \| \sig{\tilde u} +\sig{\wtau k}  - p \|_{L^2(\Omega)}^2 \\ & \quad + \tau^3    \| \sig{\tilde u} +\sig{\wtau k}  - p \|_{L^\gamma(\Omega)}^\gamma + \| p \|_{L^2(\Omega)}^2 + \tau^2 \| p \|_{L^\gamma(\Omega)}^\gamma  + I_1 + I_2  +I_3, 
  \end{aligned}
\end{equation}
where (1) follows from  \eqref{elast-visc-tensors} and \eqref{ellipticity-retained}. Taking into account \eqref{linear-growth}, again \eqref{elast-visc-tensors}, and the fact that $|\calT_M(\teta) | \leq M $ a.e.\ in $\Omega$,  we have
\begin{subequations}
\label{est_I-terms}
\begin{equation}
\label{est-I-1}
\begin{aligned}
I_1  &  = -\delta \| \teta \|_{L^2(\partial\Omega)}^2 - C_\delta   \| \htau k \|_{L^2(\partial\Omega)}^2  \\ & \quad  - C_R\| p -\ptau{k-1}\|_{L^2(\Omega)} \|\teta\|_{L^2(\Omega)} - \frac1\tau \| p \|_{L^4(\Omega)}^2 \| \teta\|_{L^2(\Omega)}  -\frac2\tau \|\ptau{k-1}\|_{L^4(\Omega)}\| p \|_{L^4(\Omega)} \| \teta\|_{L^2(\Omega)}   \\ & \quad - \frac{C_\bbD^2}\tau    \| \sig{\tilde u} +\sig{\wtau k}  - p \|_{L^4(\Omega)}^2  \| \teta\|_{L^2(\Omega)}  - C  \| \sig{\tilde u} +\sig{\wtau k}  - p \|_{L^4(\Omega)} \| \etau {k-1}\|_{L^4(\Omega)} \| \teta\|_{L^2(\Omega)}  \\ & \quad - C \| \sig{\tilde u} +\sig{\wtau k}  - p \|_{L^2(\Omega)}^2 - C \| \etau {k-1}\|_{L^2(\Omega)}^2 \,,
\end{aligned}
\end{equation}
 with $\delta>0$ to be specified later, 
as well as 
\begin{equation}
\label{est-I-2}
\begin{aligned}
I_2  & = -\rho \| \tilde u \|_{L^2(\Omega)}  \| \wtau k  \|_{L^2(\Omega)}  - C \| \sig{\tilde u} + \sig{\wtau k} -p \|_{L^2(\Omega)} \|\sig{\wtau k} -p \|_{L^2(\Omega)}\\ & \qquad  -\tau^3 \int_\Omega |\sig{\tilde u} + \sig{\wtau k} -p|^{\gamma-1} |\sig{\wtau k} -p | \dd x - C \int_\Omega |\sig{\tilde u}| \dd x,
\end{aligned}
\end{equation}
and 
\begin{equation}
\label{est-I-3}
\begin{aligned} 
I_3  & =- C \| \sig{\tilde u} + \sig{\wtau k} -p\|_{L^2(\Omega)} \| p \|_{L^2(\Omega)}  -\tau^2\int_\Omega |  \sig{\tilde u} + \sig{\wtau k} -p|^{\gamma-1} |p| \dd x  - C \int_\Omega |p|\dd x\,.
\end{aligned}
\end{equation}
Now, with straightforward calculations it is possible to absorb the negative terms $I_1, \, I_2,\, I_3$ into the positive terms on the right-hand side of \eqref{rough-coerc-1}: without entering into details, let us only observe  that, for example, the sixth term on the right-hand side of \eqref{est-I-1}
 can be estimated by means of Young's inequality as 
\[
\begin{aligned}
 - \frac{C_\bbD^2}\tau    \| \sig{\tilde u} +\sig{\wtau k}  - p \|_{L^4(\Omega)}^2 \| \teta\|_{L^2(\Omega)} & 
  \geq 
 -\delta \| \teta\|_{L^2(\Omega)}^2 - C   \| \sig{\tilde u} +\sig{\wtau k}  - p \|_{L^4(\Omega)}^4
\\ &  \geq -\delta \| \teta\|_{L^2(\Omega)}^2
  - \frac{\tau^3}2  \| \sig{\tilde u} +\sig{\wtau k}  - p \|_{L^\gamma(\Omega)}^\gamma - C,
  \end{aligned}
\] using that $\gamma>4$.
The fourth term can be dealt with  in the same way, so that one of the resulting terms is  absorbed into   $\tau^2 \| p \|_{L^\gamma(\Omega)}^\gamma$.  The other terms  contributing to $I_1$, $I_2$, and
$I_3$
can be handled analogously. 
Let us now observe that the positive terms on the right-hand side of \eqref{rough-coerc-1} bound the 
desired norms of $\teta$, $\tilde u$, $ p$. Indeed,  also taking into account that, again by Young's inequality
\[
 \| \sig{\tilde u} +\sig{\wtau k}  - p \|_{L^2(\Omega)}^2  \geq c \|  \sig{\tilde u}  \|_{L^2(\Omega)}^2 - C \| \sig{\wtau k}  \|_{L^2(\Omega)}^2 - \frac{\tau^2}4\|p \|_{L^\gamma(\Omega)}^\gamma - C, 
\]
and repeatedly using  the well-known estimate $(a+b)^{\gamma} \leq 2^{\gamma-1} (a^\gamma+b^\gamma)$ for all $a,\, b \in [0,+\infty)$, which gives 
\[
\begin{aligned}
 \tau^3    \| \sig{\tilde u} +\sig{\wtau k}  - p \|_{L^\gamma(\Omega)}^\gamma  +\frac{ \tau^2}4 \| p \|_{L^\gamma(\Omega)}^\gamma & \geq    \frac{\tau^3}{2^{\gamma-1}}   \| \sig{\tilde u} +\sig{\wtau k}  \|_{L^\gamma(\Omega)}^\gamma  +\left( \frac{ \tau^2}4-\tau^3\right)  \| p \|_{L^\gamma(\Omega)}^\gamma  
 \\ & \geq   \frac{\tau^3}{2^{2\gamma-2}}   \| \sig{\tilde u}   \|_{L^\gamma(\Omega)}^\gamma  +\frac{ \tau^2}8 \| p \|_{L^\gamma(\Omega)}^\gamma -  \frac{\tau^3}{2^{\gamma-1}} \|\sig{\wtau k}  \|_{L^\gamma(\Omega)}^\gamma   \end{aligned}
\]
(where we have also used that, for $\tau < \bar\tau := 1/8$, there holds $\tau^2/8 \geq \tau^3 $), 
we end up with
\[
  \pairing{}{\mathbf{B}}{\mathscr{A}_k (\teta, \tilde u, p)}{(\teta,\tilde u, p)} \geq 
c \left(  \|\teta\|_{H^1(\Omega)}^2 +  \| \tilde  u \|_{L^2(\Omega)}^2 +  \|  \sig{\tilde u}  \|_{L^2(\Omega)}^2  +  \|  \sig{\tilde u}  \|_{L^\gamma(\Omega)}^\gamma   + \| p \|_{L^2(\Omega)}^2 + \| p \|_{L^\gamma(\Omega)}^\gamma \right) - C
\]
for two positive constants $c$ and $C$, depending on $\tau$, on $M$, and on $w$. Thanks to Korn's inequality \eqref{Korn}, this shows the coercivity of $\mathscr{A}_k$. Its pseudomonotonicity \eqref{def-pseudomon} ensues from standard arguments.
  Indeed, one can observe that 
 $\mathscr{A}_k$ is given by the sum  of either bounded, radially continuous, monotone mappings (cf.\ e.g.\   \cite[Def.\ 2.3]{Roub05NPDE}), which
 are pseudomonotone \cite[Lemma\ 2.9]{Roub05NPDE}, or of totally continuous mappings. In fact, perturbations of pseudomonotone mappings by totally continuous ones remain pseudomonotone, \cite[Cor. \ 2.12]{Roub05NPDE}.  Therefore, we are in a position to apply   \cite[Cor.\ 5.17]{Roub05NPDE} and thus conclude
  the  existence of solutions to system \eqref{syst:approx-discr}.
\end{subequations}
\end{proof}
\textbf{Step $2$: a priori estimates on the solutions of the approximate discrete system.} Let now 
\[
(\tetaum k, \utaum k, \etaum k,\ptaum k)_{M}
\]
 be a family of solutions to system \eqref{syst:approx-discr}. The following result collects a series of  a priori estimates uniform w.r.t.\ the parameter $M$ (but not w.r.t.\ $\tau$): a crucial ingredient to derive them will be a discrete version of the total energy inequality \eqref{total-enineq}, cf.\ \eqref{discr-total-ineq} below,  featuring the discrete total energy
\begin{equation}
\label{discr-total-energy}
\calE_\tau (\teta,e,p) : = \int_\Omega \teta \dd  x +\frac12 \int_\Omega \bbC e: e \dd x +\frac\tau\gamma \int_\Omega \left(|e|^\gamma + |p|^\gamma \right)  \dd x\,.
\end{equation}
\begin{lemma}
\label{l:aprio-M} Let $ k \in \{1,\ldots, K_\tau\} $ and $\tau \in (0,\bar \tau)$  be fixed. 
Under  the growth condition \eqref{hyp-K}, the solution quadruple  $(\tetaum k, \utaum k, \etaum k,\ptaum k) $ to \eqref{syst:approx-discr} satisfies 
\begin{equation}
\label{discr-total-ineq}
\begin{aligned}
& \frac{\rho}2 \int_\Omega \left|\frac{\utaum k - \utau{k-1}}{\tau} \right|^2 \dd x +  \calE_\tau (\tetaum k,\etaum k,\ptaum k) \\  &  \leq  \frac{\rho}2 \int_\Omega \left| \frac{\utau{k-1} - \utau{k-2}}\tau \right |^2 \dd x +  \calE_\tau (\tetau {k-1},\etau {k-1},\ptau{k-1})   
+ \tau \int_\Omega \gtau k \dd x    + \tau \int_{\partial\Omega} \htau{k} \dd x 
  \\ & \quad    + \tau \pairing{}{H_\Dir^{1}(\Omega;\R^d)}{\Ltau k}{\frac{\utaum k - \utau{k-1}}{\tau} {-} \Dtau kw}+ \tau \int_\Omega  \simtau {k} : \sig{\Dtau kw}  \\ & \quad  +\rho \int_\Omega \left(  \frac{\utaum k - \utau{k-1}}{\tau}  - \Dtau {k-1} u \right)   \Dtau kw \dd x\,.
\end{aligned}
\end{equation}
Moreover, there exists a constant $C>0$ such that  for all $M>0$
\begin{subequations}
\label{estimates-M-indep} 
\begin{align}
& 
\label{est-M-indep1}
\| \tetaum k\|_{L^1(\Omega)} + \| \utaum k \|_{L^2(\Omega;\R^d)} + \| \etaum k  \|_{L^2(\Omega;\mt_\sym^{d\times d})}  
\leq C,
\\
& \label{est-M-indep2}
\tau^{1/\gamma}   \| \utaum k \|_{W^{1,\gamma} (\Omega;\R^d)} 
+  \tau^{1/\gamma}   \| \etaum k \|_{L^\gamma(\Omega;\mt_\sym^{d\times d})}
+  \tau^{1/\gamma}   \| \ptaum k \|_{L^\gamma(\Omega;\mt_\dev^{d\times d})}  \leq C, 
\\ & 
\label{est-M-indep3}
\| \tetaum k \|_{H^1(\Omega)} \leq C, 
\\
& \label{est-M-indep4}
\| \zetau{k} \|_{L^\infty(\Omega;\mt_\dev^{d\times d})} \leq C,
\end{align}
\end{subequations}
where $\zetau k \in \partial_{\dot p} \mathrm{R} (\tetau{k-1}, (\ptaum{k}{ -} \ptau{k-1})/{\tau})$ fulfills \eqref{plasr:approx-discr}. 
\end{lemma}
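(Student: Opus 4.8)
The plan is to first establish the discrete total energy inequality \eqref{discr-total-ineq} and then read off the uniform bounds \eqref{estimates-M-indep} from it, following closely the scheme of \cite{Rocca-Rossi} (see also the proof of \cite[Thm.\ 2.7]{LRTT}). To obtain \eqref{discr-total-ineq} I would test the discrete momentum balance \eqref{mom:approx-discr} by $v=(\utaum k-\utau{k-1})-(\wtau k-\wtau{k-1})$, which belongs to $W_\Dir^{1,\gamma}(\Omega;\R^d)$ by kinematic admissibility, test the discrete flow rule \eqref{plasr:approx-discr} by $\ptaum k-\ptau{k-1}=\tau\Dtau{k}p$, and test the discrete heat equation \eqref{heat:approx-discr} by the constant $1$ and then multiply the resulting scalar identity by $\tau$; adding the three relations and using $\sig{\utaum k-\utau{k-1}}=(\etaum k-\etau{k-1})+(\ptaum k-\ptau{k-1})$ together with the flow rule (to rewrite $\int_\Omega\simtau k:(\ptaum k-\ptau{k-1})\dd x$), one checks that the viscous and plastic dissipation terms $\tau\calR(\tetau{k-1},\Dtau{k}p)$, $\tau\|\Dtau{k}p\|_{L^2(\Omega)}^2$, $\tau\int_\Omega\bbD\Dtau{k}e:\Dtau{k}e\dd x$ and the adiabatic coupling term $\int_\Omega\calT_M(\tetaum k)\bbB:(\etaum k-\etau{k-1})\dd x$ cancel \emph{exactly} among the three relations, precisely as in the formal derivation of \eqref{total-enbal}. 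Estimating $\rho\int_\Omega\Ddtau{k}u\cdot(\utaum k-\utau{k-1})\dd x$, $\int_\Omega\bbC\etaum k:(\etaum k-\etau{k-1})\dd x$ and the two $\gamma$-power terms from below by the convexity inequality $\phi'(a)\cdot(a-b)\geq\phi(a)-\phi(b)$ (with $\phi$ the kinetic energy, $\calQ$, and $\tfrac1\gamma\|\cdot\|_{L^\gamma}^\gamma$) then produces the energy increments in the discrete energy $\calE_\tau$ of \eqref{discr-total-energy} and turns the identity into the inequality \eqref{discr-total-ineq}.

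For the a priori bounds I would sum \eqref{discr-total-ineq} over the nodes $j=1,\dots,k$, telescope the discrete energy down to the $M$-independent datum $\tfrac\rho2\|\dot u_0\|_{L^2(\Omega)}^2+\calE_\tau(\teta_0,e_\tau^0,p_\tau^0)$, and control the remaining right-hand side. The source terms are handled via $(2.\mathrm{H})$. In the load term $\tau\pairing{}{H_\Dir^1(\Omega;\R^d)}{\Ltau j}{\tfrac{\utaum j-\utau{j-1}}\tau-\Dtau{j}w}$ and in $\tau\int_\Omega\simtau j:\sig{\Dtau j w}\dd x$ I would use Korn's inequality \eqref{Korn} to dominate $\|\utaum j\|_{H^1(\Omega)}$ by $\|\etaum j\|_{L^2(\Omega)}+\|\ptaum j\|_{L^2(\Omega)}+\|\wtau j\|_{H^1(\Omega)}$, and then absorb the $\etaum j$- and $\ptaum j$-contributions into $\calQ(\etaum j)$ and $\tfrac\tau\gamma\|\ptaum j\|_{L^\gamma(\Omega)}^\gamma$ by Young's inequality (one of the places where $\gamma>4$ is used), up to $M$-independent, $\tau$-dependent constants; the $\bbD\Dtau{j}e$-part of $\simtau j$ is absorbed likewise, which is where the coercivity of $\bbD$ enters, cf.\ Remark \ref{rmk:lorosi}. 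The delicate term is the thermal-expansion contribution $-\tau\int_\Omega\calT_M(\tetaum j)\bbB:\sig{\Dtau j w}\dd x$: here I would invoke the strict positivity \eqref{discr-strict-pos}, which gives $0<\calT_M(\tetaum j)\leq\tetaum j$ pointwise, so that the truncation is harmless and
\[
\Big|\tau\int_\Omega\calT_M(\tetaum j)\bbB:\sig{\Dtau j w}\dd x\Big|\leq\tau\|\bbB\|_{L^\infty(\Omega)}\,\|\sig{\Dtau j w}\|_{L^\infty(\Omega)}\,\|\tetaum j\|_{L^1(\Omega)},
\]
where $\|\sig{\Dtau j w}\|_{L^\infty(\Omega)}$ is finite by \eqref{Dirichlet-loading} and $\tau\sum_{j=1}^k\|\sig{\Dtau j w}\|_{L^\infty(\Omega)}$ is bounded (for fixed $\tau$) by $2\tau^{-1}\|w_\tau\|_{L^1(0,T;W^{1,\infty}(\Omega;\R^d))}$. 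Since $\|\tetaum j\|_{L^1(\Omega)}$ sits on the left-hand side of the summed inequality, the discrete Gronwall lemma yields \eqref{est-M-indep1} and \eqref{est-M-indep2} (the $W^{1,\gamma}$-bound on $\utaum k$ again via Korn).

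The $L^\infty$-bound \eqref{est-M-indep4} is immediate: \eqref{bounded-subdiff} gives $\zetau k(x)\in K(x,\tetau{k-1}(x))\subset B_{C_R}(0)$ for a.a.\ $x\in\Omega$, hence $\|\zetau k\|_{L^\infty(\Omega)}\leq C_R$, with no dependence on $M$. For the $H^1$-bound \eqref{est-M-indep3} I would, in the spirit of \cite{FPR09,Rocca-Rossi}, test the discrete heat equation \eqref{heat:approx-discr} by a negative power of $\tetaum k$, e.g.\ $-(\tetaum k)^{-\alpha}$ with a suitable $\alpha\in(0,1)$ tied to $\mu$: the negativity of the test function makes the nonnegative right-hand side terms $\gtau k+\mathrm{R}(\tetau{k-1},\Dtau{k}p)+|\Dtau{k}p|^2+\bbD\Dtau{k}e:\Dtau{k}e$ enter with the favourable sign, while the lower bound $\condu_M(\tetaum k)\geq c_0$ and the growth hypothesis \eqref{hyp-K}, together with the bounds just obtained — once more using $\calT_M(\tetaum k)\leq\tetaum k$ for the adiabatic term — deliver the $M$-uniform $H^1(\Omega)$-estimate for $\tetaum k$.

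The main obstacle, throughout the derivation of the a priori bounds, is to keep \emph{all} constants independent of the truncation parameter $M$ in spite of the implicit coupling between the discrete momentum balance and the discrete heat equation. This hinges decisively on the strict positivity \eqref{discr-strict-pos} of the discrete temperatures — which neutralizes the truncation $\calT_M$ in every product in which it appears — on the coercivity of $\bbD$, and on the $W^{1,\infty}$-in-space regularity \eqref{Dirichlet-loading} of the Dirichlet loading; for the remaining, essentially routine, computations I would refer to \cite{Rocca-Rossi,LRTT}.
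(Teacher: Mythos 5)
Your derivation of the discrete total energy inequality \eqref{discr-total-ineq} (test the momentum balance by $\utaum k-\wtau k-(\utau{k-1}-\wtau{k-1})$, the flow rule by $\ptaum k-\ptau{k-1}$, the heat equation by $1$ times $\tau$, add, and use kinematic admissibility plus convexity to pass from the identity to the inequality) is exactly the paper's argument, and your treatment of \eqref{est-M-indep1}, \eqref{est-M-indep2} and \eqref{est-M-indep4} uses the same ingredients (Korn, Young, $\|\calT_M(\tetaum k)\|_{L^1}\leq\|\tetaum k\|_{L^1}$, and \eqref{bounded-subdiff}). One structural remark, though: the lemma is stated for \emph{fixed} $k$ and $\tau$, and the quantities $\utau{k-1}$, $\utau{k-2}$, $\calE_\tau(\tetau{k-1},\etau{k-1},\ptau{k-1})$ appearing on the right of \eqref{discr-total-ineq} are the already-constructed, $M$-independent solutions of the previous steps (the limit $M\to\infty$ is taken at each step before moving to the next). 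Summing over $j=1,\dots,k$ and telescoping would require a chain of $M$-truncated solutions across all earlier steps that the scheme never builds. The correct argument is simpler: the first four right-hand side terms are fixed constants, and only the last three need to be absorbed into the left-hand side; no telescoping and no discrete Gronwall lemma are needed (those enter only later, for the $\tau$-uniform estimates of Proposition \ref{prop:aprio}).

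The genuine gap is in your argument for the $H^1$ bound \eqref{est-M-indep3}. Testing the heat equation by a negative power of $\tetaum k$ is the tool for the \emph{$\tau$-uniform} temperature estimates, where the right-hand side is only in $L^1$ and one must exploit the growth $\condu(\teta)\gtrsim\teta^\mu$ to recover $\|\nabla\tetaum{}\|_{L^2}$ from $\|\nabla(\tetaum{})^{(\mu+\alpha)/2}\|_{L^2}$. Here that route fails uniformly in $M$: the truncated coefficient satisfies only $\condu_M(r)=\condu(\calT_M(r))\geq c_0(1+\calT_M(r)^\mu)$, so on the set $\{\tetaum k>M\}$ one gets $\condu_M(\tetaum k)(\tetaum k)^{\alpha-2}\sim M^\mu(\tetaum k)^{\alpha-2}$, which is not bounded below by a positive constant; the resulting estimate degenerates precisely where $\tetaum k$ is large relative to $M$. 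The paper instead tests \eqref{heat:approx-discr} first by $\calT_M(\tetaum k)$ and then by $\tetaum k$ itself — an admissible test at the discrete level — and obtains $c_0\|\nabla\tetaum k\|_{L^2}^2$ directly from $\condu_M\geq c_0$. The point of the regularizing terms $\tau|\etau k|^{\gamma-2}\etau k$ and $\tau|\ptau k|^{\gamma-2}\ptau k$ with $\gamma>4$ (cf.\ Remark \ref{rmk:comments}) is exactly to make this test work: the already-established $L^\gamma$ bounds from \eqref{est-M-indep2} place the quadratic right-hand side terms $|\Dtau k p|^2$ and $\bbD\Dtau k e:\Dtau k e$ in $L^{\gamma/2}$, so that they can be paired with $\tetaum k\in H^1(\Omega)\subset L^6(\Omega)$ and absorbed. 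Without this step your proof of \eqref{est-M-indep3} does not close.
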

\begin{proof} Inequality \eqref{discr-total-ineq} follows by multiplying \eqref{heat:approx-discr} by $\tau$ and integrating it in space, testing \eqref{mom:approx-discr} by $\utaum k -\wtau k - (\utau {k-1} - \wtau {k-1})$, and testing \eqref{plasr:approx-discr} by $\ptaum k - \ptaum{k-1}$. We add  the resulting relations and develop  the following estimates 
\begin{subequations}
\label{est-mimick}
\begin{align}
&
\label{est-mimick-1}
\begin{aligned}
&
\frac{\rho}{\tau^2} \int_\Omega \left(\utaum {k} {-} \utau{k-1} {-} (\utau{k-1}{ -} \utau{k-2} )   \right) (\utaum{k} {- }  \utau{k-1} )  \dd x \\ & \geq  \frac{\rho}2 \left\| \frac{\utaum k - \utau{k-1}}{\tau} \right\|_{L^2(\Omega)}^2 -  \frac{\rho}2 \left\| \frac{\utau {k-1} - \utau{k-2}}{\tau} \right\|_{L^2(\Omega)}^2,
\end{aligned}
\\
&    
\label{est-mimick-2}
\begin{aligned}
&
\int_\Omega \bbD \left(\frac{\etaum k - \etau{k-1}}\tau
\right){:} \sig{\utaum k - \utau{k-1}} \dd x 
\\ &  = \tau \int_\Omega \bbD   \frac{\etaum k - \etau{k-1}}\tau {:}\frac{\etaum k - \etau{k-1}}\tau  \dd x +\int_\Omega \bbD   \frac{\etaum k - \etau{k-1}}\tau{ :} (\ptaum k{ -} \ptau {k-1}) \dd x, 
\end{aligned}
\\
& 
\label{est-mimick-3}
\begin{aligned}
\int_\Omega \bbC \etaum k {:}   \sig{\utaum k - \utau{k-1}} \dd x   & = \int_\Omega  \bbC \etaum {k} : (\etaum k - \etau{k-1}) +   \bbC \etaum {k} : (\ptaum k {-} \ptau{k-1}) \dd x 
\\  & \geq \int_\Omega \left(  \tfrac12 \bbC  \etaum {k} {:}  \etaum {k}  - \tfrac12 \bbC \etau{k-1}{:} \etau{k-1} +\bbC \etaum {k} : (\ptaum k {-} \ptau{k-1}) \right) \dd x 
 \,, \end{aligned}
\\
& 
\label{est-mimick-4}
\begin{aligned}
&
\int_\Omega |\etaum k|^{\gamma-2} \etaum k   :   \sig{\utaum k - \utau{k-1}}  \dd x 
\\ & =  \int_\Omega |\etaum k|^{\gamma-2} \etaum k   :   (\etaum k {-} \etau{k-1})  \dd x + \int_\Omega   |\etaum k|^{\gamma-2} \etaum k   :   (\ptaum k {-} \ptau{k-1})  \dd x  
\\ & \quad \geq\int_\Omega \left(  \tfrac1{\gamma} |\etaum k|^{\gamma} {-} \tfrac1{\gamma} |\etau {k-1}|^\gamma {+}  |\etaum k|^{\gamma-2} \etaum k   :   (\ptaum k{ -} \ptau{k-1})  \right) \dd x \,. \end{aligned}
\end{align}
\end{subequations}
Observe that  \eqref{est-mimick-2}--\eqref{est-mimick-4} mimic  the calculations on the time-continuous level leading to \eqref{total-enbal} and in fact rely on the kinematic admissibility condition. The terms on the  right-hand side of  \eqref{est-mimick-2} cancel with the fourth term on the r.h.s.\ of \eqref{heat:approx-discr}, multiplied by $\tau$, and with the analogous term deriving from \eqref{plasr:approx-discr}, tested by $\ptaum{k} - \ptau{k-1}$. In the same way, the last terms on the r.h.s.\ of   \eqref{est-mimick-3} and 
 \eqref{est-mimick-4} cancel with the ones coming from \eqref{plasr:approx-discr}. In fact, it can be easily checked that, with the exception of $\tau \gtau k$, all the terms on the r.h.s.\ of \eqref{heat:approx-discr} cancel out: for instance, $\tau \int_\Omega \mathrm{R} (\tetau{k-1}, \ptaum {k} - \ptau{k-1}) \dd x $  cancels with the term $ \int_\Omega \zetaum k {:} ( \ptaum k {-} \ptau {k-1}) \dd x $ in view of \eqref{characterization-subdiff}. In this way, we conclude \eqref{discr-total-ineq}. 
 \par
 In order to derive estimates \eqref{est-M-indep1}--\eqref{est-M-indep2}, we observe that the first four  terms on the right-hand side of \eqref{discr-total-ineq} are bounded,  depending on the quantities
 $ \| \utau{k-1}\|_{L^2(\Omega;\R^d)} $, $\calE_\tau (\tetau {k-1},\etau {k-1},\ptau{k-1}), $ 
$\|\gtau k\|_{L^1(\Omega)}$, $ \|   \htau{k} \|_{L^2(\partial\Omega)}$, 
 whereas the remaining ones  can be controlled by the ones on  the left-hand  side. In fact,
 we have
\[
\begin{aligned}
 &  
 \begin{aligned}
\left|   \tau \pairing{}{H_\Dir^{1}(\Omega;\R^d)}{\Ltau k}{\frac{\utaum k - \utau{k-1}}{\tau} {-} \Dtau kw } \right|   & \stackrel{(1)}{\leq} \delta \| \utaum k {-}  \wtau k \|_{H^1(\Omega;\R^d)}^2 + \delta  \| \utau { k-1}{-} \wtau{k-1} \|_{H^1(\Omega;\R^d)}^2  + C_\delta \| \Ltau k \|_{H^1(\Omega;\R^d)^*}^2  \\ & \stackrel{(2)}{\leq}   \delta C_K   \| \sig{\utaum k}\|_{L^2(\Omega;\mt_\sym^{d\times d} )}^2 +  C   \\ & \stackrel{(3)}{\leq} 2 \delta C_K^2   \| {\etaum k}\|_{L^2(\Omega;\mt_\sym^{d\times d} )}^2 +  2 \delta C_K   \| {\ptaum k}\|_{L^2(\Omega;\mt_\dev^{d\times d} )}^2+C, \end{aligned}
\\
&
\begin{aligned}
 & \left| \tau \int_\Omega  \simtau {k} : \sig{\Dtau kw} \dd x \right|    \\  & \stackrel{(4)}{\leq}  \frac{\delta}\tau   \|\etaum k - \etau{k-1}\|_{L^2(\Omega;\mt_\sym^{d\times d})}^2 + \delta \tau \|\etaum k  \|_{L^2(\Omega;\mt_\sym^{d\times d})}^2 + C_\delta  \|  \sig{\Dtau kw} \|_{L^2(\Omega;\mt_\sym^{d\times d})}^2   \\ & \quad +   C  \|  \sig{\Dtau kw} \|_{L^\infty(\Omega;\mt_\sym^{d\times d})} \left(  \int_\Omega | \tetaum k|  + |\etaum{k}|^{\gamma-1} \dd x \right) \end{aligned}
\\
 &  \begin{aligned} \left| \rho \int_\Omega \left(  \frac{\utaum k - \utau{k-1}}{\tau}  {-} \Dtau {k-1} u \right)   \Dtau kw \dd x  \right|  & \leq \frac{\rho}4  \int_\Omega \left|\frac{\utaum k - \utau{k-1}}{\tau} \right|^2 \dd x  + \frac{\rho}4\| \Dtau {k-1} u\|_{L^2(\Omega;\R^d)}^2  \\ & \qquad + \rho  \| \Dtau {k} w\|_{L^2(\Omega;\R^d)}^2\,, \end{aligned}
\end{aligned}
\]
where $\delta>0$ in (1) and in the other estimates is an arbitrary positive constant, to be specified later, while (2) ensues from Korn's inequality \eqref{Korn} and from the bounds on the quantities $\| \Ltau k \|_{H_\Dir^{1}(\Omega;\R^d)^*}$, 
$\| \wtau{k}\|_{H^{1}(\Omega;\R^d)}$, $\| \wtau{k-1}\|_{H^{1}(\Omega;\R^d)}$,  $ \| \utau{k-1}\|_{H^1(\Omega;\R^d)} $ , and (3) from the kinematic admissibility condition.   For (4) we have used that 
$\simtau{k}:= \bbD    \Dtau k e  + \bbC \etau k + \tau |\etau{k}|^{\gamma-2} \etau k- \calT_M(\tetau k) \bbB\,,$ as well as the fact that $\|\calT_M(\tetaum k )\|_{L^1(\Omega)} \leq \| \tetaum k \|_{L^1(\Omega)}$. It is now immediate  to check that the terms on the right-hand sides of the above estimates are either bounded, due to our assumptions,  or can be absorbed into the left-hand side of \eqref{discr-total-ineq},
suitably tuning the positive constant $\delta$.
 All in all, we conclude that 
\[
 \int_\Omega \left|\frac{\utaum k - \utau{k-1}}{\tau} \right|^2 \dd x +  \calE_\tau (\tetaum k,\etaum k,\ptaum k)\leq C
\]
for a constant independent of $M$.  Estimates \eqref{est-M-indep1} and \eqref{est-M-indep2} then ensue, also taking into account Korn's inequality. 
\par
Estimate \eqref{est-M-indep3} is proved in two steps, by  testing \eqref{heat:approx-discr} first by $\calT_M(\tetaum k)$, and secondly by $\tetaum k$. We refer to the proof of \cite[Lemma 4.4]{Rocca-Rossi} for all the calculations.
\par Estimate \eqref{est-M-indep4}  follows from the fact that $\zetaum k \in \partial_{\dot p} \mathrm{R}(\tetau {k-1}, (\ptaum{k} {-}\ptau{k-1})/\tau)$ and from \eqref{bounded-subdiff}. 
\end{proof}  
\textbf{Step $3$: limit passage in the approximate discrete system.}  With the following result we conclude the proof of Proposition \ref{prop:exist-discr}. From now on, we suppose that  $M\in \N\setminus\{0\}$. 
\begin{lemma}
\label{l:3.6}
 Let $ k \in \{1,\ldots, K_\tau\} $  and $\tau\in (0,\bar\tau)$  be fixed. Under  the growth condition \eqref{hyp-K}, there exist a (not relabeled) subsequence
  of   $(\tetaum k, \utaum k, \etaum k,\ptaum k)_{M}$  and of $(\zetaum k)_M$, and a quadruple $(\tetau k, \utau k, \etau k, \ptau k) \in H^1(\Omega)  \times W_\Dir^{1,\gamma}(\Omega;\R^d) \times L^\gamma(\Omega;\mt_\sym^{d\times d})  \times L^\gamma(\Omega;\mt_\dev^{d\times d})  $ and  $\zetau k \in   L^\infty(\Omega;\mt_\dev^{d\times d}) $,   such that the following convergences hold as $M\to\infty$
 \begin{subequations}
 \label{conves-as-M}
 \begin{align}
 \label{conves-as-M-teta}
 &  \tetaum k \weakto \tetau k && \text{in } H^1(\Omega), 
 \\
  \label{conves-as-M-u}
 &  \utaum k \to \utau  k  &&  \text{in } W_\Dir^{1,\gamma}(\Omega;\R^d),
 \\
  \label{conves-as-M-e}
 &  \etaum k \to \etau  k  &&  \text{in } L^{\gamma}(\Omega;\mt_\sym^{d\times d}),
 \\
  \label{conves-as-M-p}
 &  \ptaum k \to \ptau  k &&  \text{in } L^{\gamma}(\Omega;\mt_\dev^{d\times d}),
  \\
  \label{conves-as-M-zeta}
 &  \zetaum k \weaksto \zetau k  &&  \text{in } L^{\infty}(\Omega;\mt_\dev^{d\times d}),
 \end{align}
\end{subequations}
and the quintuple $(\tetau k, \utau k, \etau k, \ptau k,\zetau k)$ fulfill system \eqref{syst:discr}.
\end{lemma}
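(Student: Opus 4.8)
The plan is to exploit the $M$-uniform bounds of Lemma~\ref{l:aprio-M} to pass to the limit $M\to\infty$ in each relation of \eqref{syst:approx-discr}; since $k$ is fixed no time-compactness enters, and the argument runs parallel to the limit passage with the truncation parameter in \cite[Sec.~4]{Rocca-Rossi} and \cite[Thm.~2.7]{LRTT}, so I would only dwell on the features specific to the plastic flow rule and the $\gamma$-power regularizations. \emph{Step~1 (extraction).} By \eqref{estimates-M-indep}, along a not relabeled subsequence there are limits $\tetau k\in H^1(\Omega)$, $\utau k\in W_\Dir^{1,\gamma}(\Omega;\R^d)$, $\etau k\in L^\gamma(\Omega;\mt_\sym^{d\times d})$, $\ptau k\in L^\gamma(\Omega;\mt_\dev^{d\times d})$, $\zetau k\in L^\infty(\Omega;\mt_\dev^{d\times d})$ with $\tetaum k\weakto\tetau k$ in $H^1(\Omega)$, the convergences \eqref{conves-as-M-u}--\eqref{conves-as-M-p} \emph{a priori only weakly} in $L^\gamma$ (resp.\ $W^{1,\gamma}_\Dir$), and $\zetaum k\weaksto\zetau k$ in $L^\infty$. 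Since $H^1(\Omega)$ embeds compactly into $L^q(\Omega)$ for $q<6$ (any $q<\infty$ if $d=2$), we also get $\tetaum k\to\tetau k$ strongly in such $L^q(\Omega)$ and a.e.\ in $\Omega$; the limits inherit the kinematic admissibility $(\utau k,\etau k,\ptau k)\in\mathcal{A}(\wtau k)$ and the positivity $\tetau k\geq\bar\teta$ of \eqref{discr-strict-pos}. Moreover $\calT_M(\tetaum k)\to\tetau k$ strongly in every $L^q(\Omega)$, $q<6$, because $|\calT_M(\tetaum k)-\tetaum k|\leq\tetaum k\mathbf 1_{\{\tetaum k>M\}}$, the $L^6(\Omega)$-bound is $M$-uniform and $\sup_M|\{\tetaum k>M\}|\to0$. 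Finally $|\etaum k|^{\gamma-2}\etaum k\weakto\chi_e$ and $|\ptaum k|^{\gamma-2}\ptaum k\weakto\chi_p$ in $L^{\gamma'}$ for some $\chi_e,\chi_p$.

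\emph{Step~2 (strong convergence and identification of $\chi_e,\chi_p,\zetau k$).} I would test the difference between \eqref{mom:approx-discr} and its weak limit by $\utaum k-\utau k\in W_\Dir^{1,\gamma}(\Omega;\R^d)$, and the difference between \eqref{plasr:approx-discr} and its weak limit by $\ptaum k-\ptau k\in L^\gamma(\Omega;\mt_\dev^{d\times d})$; adding and using kinematic admissibility ($\sig{\utaum k-\utau k}=(\etaum k-\etau k)+(\ptaum k-\ptau k)$, with $\ptaum k-\ptau k$ deviatoric), all lower-order terms, the inertial term (via $\utaum k\to\utau k$ in $L^2$, by compactness of $W^{1,\gamma}\hookrightarrow L^2$), the $\Ltau k$-term (via the weak $H^1$-convergence) and the $\calT_M(\tetaum k)\bbB$-term (via Step~1) vanish in the limit, leaving
\[
\Big\langle(\tfrac1\tau\bbD{+}\bbC)(\etaum k{-}\etau k),\,\etaum k{-}\etau k\Big\rangle
+\tau\!\int_\Omega\!\big(|\etaum k|^{\gamma-2}\etaum k{-}|\etau k|^{\gamma-2}\etau k\big){:}(\etaum k{-}\etau k)\,\dd x
+[\ \cdots\ ]_p
+\int_\Omega\zetaum k{:}(\ptaum k{-}\ptau k)\,\dd x\ \longrightarrow\ 0,
\]
where $[\cdots]_p$ denotes the analogous nonnegative quantities built from $\tfrac1\tau\,\mathrm{Id}$ and the $\gamma$-duality map in the variable $p$. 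On the other hand, since $\tetau{k-1}$ is \emph{fixed} (independent of $M$), $\mathrm R(\tetau{k-1},\cdot)$ is a fixed $1$-homogeneous convex integrand and, using $\zetaum k\in\partial_{\dot p}\mathrm R(\tetau{k-1},\ptaum k{-}\ptau{k-1})$, the characterization \eqref{characterization-subdiff}, the weak-$*$ convergence against $\ptau{k-1},\ptau k\in L^1$, and the weak-$L^\gamma$ lower semicontinuity of $\dot p\mapsto\int_\Omega\mathrm R(\tetau{k-1},\dot p)\,\dd x$, one finds $\liminf_M\int_\Omega\zetaum k{:}(\ptaum k{-}\ptau k)\,\dd x\geq\int_\Omega\big(\mathrm R(\tetau{k-1},\Dtau kp)-\zetau k{:}\Dtau kp\big)\tau\,\dd x\geq0$, the last inequality because the weak-$*$ limit inherits $\zetau k{:}\eta\leq\mathrm R(\tetau{k-1},\eta)$ a.e. Since each summand above has nonnegative $\liminf$ and the sum tends to $0$, each tends to $0$: coercivity of $\bbD,\bbC$ (cf.\ \eqref{elast-visc-tensors}) and strict monotonicity of the $\gamma$-duality map give $\etaum k\to\etau k$, $\ptaum k\to\ptau k$ strongly in $L^\gamma$ (and $L^2$), hence $\sig{\utaum k}\to\sig{\utau k}$ in $L^\gamma$ and, by Korn \eqref{Korn}, $\utaum k\to\utau k$ in $W_\Dir^{1,\gamma}$; consequently $\chi_e=|\etau k|^{\gamma-2}\etau k$, $\chi_p=|\ptau k|^{\gamma-2}\ptau k$; and the vanishing of $\int_\Omega\zetaum k{:}(\ptaum k{-}\ptau k)\,\dd x$ forces $\zetau k{:}\Dtau kp=\mathrm R(\tetau{k-1},\Dtau kp)$ a.e., i.e.\ $\zetau k\in\partial_{\dot p}\mathrm R(\tetau{k-1},\Dtau kp)$, with the $L^\infty$-bound from \eqref{est-M-indep4}, \eqref{bounded-subdiff}. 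With these convergences, \eqref{mom:approx-discr} and \eqref{plasr:approx-discr} pass to the limit into \eqref{discrete-momentum} and \eqref{discrete-plastic}.

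\emph{Step~3 (heat equation).} Since $\etaum k\to\etau k$ and $\ptaum k\to\ptau k$ strongly in $L^\gamma(\Omega)\hookrightarrow L^2(\Omega)$ (recall $\gamma>4$), all the quadratic terms on the right-hand side of \eqref{heat:approx-discr}, as well as $\mathrm R(\tetau{k-1},\Dtau kp)$ (by \eqref{linear-growth}) and $\calT_M(\tetaum k)\bbB{:}\Dtau ke$ (by Step~1), converge strongly in $L^1(\Omega)$ to the corresponding terms with $\tetau k,\etau k,\ptau k$ in place of their $M$-labelled versions. For the conduction term, testing \eqref{heat:approx-discr} by $\tetaum k$ yields an $M$-uniform bound on $\int_\Omega\condu_M(\tetaum k)|\nabla\tetaum k|^2\,\dd x$; combined with the a.e.\ convergence of $\tetaum k$ and the growth \eqref{hyp-K}, a Boccardo--Gallou\"et/De~Giorgi truncation argument — carried out exactly as in the proof of \cite[Lemma~4.4]{Rocca-Rossi} — identifies the weak-$L^1(\Omega;\R^d)$ limit of $\condu_M(\tetaum k)\nabla\tetaum k$ as $\condu(\tetau k)\nabla\tetau k$ and shows $\tetau k\in X$. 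Passing to the limit in \eqref{heat:approx-discr} gives \eqref{discrete-heat}, so $(\tetau k,\utau k,\etau k,\ptau k,\zetau k)$ solves \eqref{syst:discr}, which concludes the proof.

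\emph{Main obstacle.} The genuinely delicate point is the conduction term in Step~3: since the growth of $\condu$ in \eqref{hyp-K} is unbounded and $\tetaum k$ is controlled only in $H^1(\Omega)$ uniformly in $M$, the flux $\condu_M(\tetaum k)\nabla\tetaum k$ is not bounded uniformly in any reflexive space, so its limit must be identified via the Boccardo--Gallou\"et machinery rather than by soft compactness; this part is inherited essentially verbatim from \cite{Rocca-Rossi,LRTT}. The new ingredient with respect to those references is the simultaneous identification, in Step~2, of the strong $L^\gamma$-limits of $\etaum k,\ptaum k$ and of $\zetau k\in\partial_{\dot p}\mathrm R(\tetau{k-1},\Dtau kp)$, which hinges on the monotone $\gamma$-power regularizations and, crucially, on $\tetau{k-1}$ being $M$-independent so that the (fixed) subdifferential graph of $\mathrm R(\tetau{k-1},\cdot)$ is strongly--weakly$*$ closed.
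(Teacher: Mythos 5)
Your proof is correct and follows essentially the same route as the paper's: extraction from the $M$-uniform bounds of Lemma \ref{l:aprio-M}, a Minty-type monotonicity argument that exploits the $\gamma$-power regularizations and the cancellation of the deviatoric stress against the plastic rate via kinematic admissibility to upgrade \eqref{conves-as-M-u}--\eqref{conves-as-M-p} to strong convergence and simultaneously identify the weak limits of $|\etaum k|^{\gamma-2}\etaum k$, $|\ptaum k|^{\gamma-2}\ptaum k$ and $\zetau k\in\partial_{\dot p}\mathrm R(\tetau{k-1},\Dtau kp)$, and the passage to the limit in the conduction term delegated to \cite[Lemma 4.4]{Rocca-Rossi} exactly as in the paper. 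The only differences are cosmetic: you test the difference of the equations by the difference of the solutions, whereas the paper compares $\limsup_M\big(\int_\Omega\zetaum k{:}\ptaum k\dd x+\tau\int_\Omega|\ptaum k|^\gamma\dd x+\tau\int_\Omega|\etaum k|^\gamma\dd x\big)$ with the corresponding $\liminf$'s (an equivalent bookkeeping), and your closing remark slightly mischaracterizes the last step, since the $M$-uniform bound obtained by testing \eqref{heat:approx-discr} by $\tetaum k$ makes $\condu_M(\tetaum k)\nabla\tetaum k$ bounded in a reflexive $L^r(\Omega;\R^d)$ with $r>1$, so the flux limit is identified by a strong-times-weak product argument rather than by Boccardo--Gallou\"et truncation.
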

\begin{proof}
It follows from estimates \eqref{estimates-M-indep}  that convergences \eqref{conves-as-M-teta}, \eqref{conves-as-M-zeta}, and the weak versions of \eqref{conves-as-M-u}--\eqref{conves-as-M-p} hold  as $M\to\infty$, along a suitable subsequence. Moreover, there exist $\varepsilon_\tau^k  \in  L^{\gamma/(\gamma{-}1)}(\Omega;\mt_\sym^{d\times d})$ and $\pi_\tau^k \in 
 L^{\gamma/(\gamma{-}1)}(\Omega;\mt_\dev^{d\times d})$ such that 
 \[
 |\etaum k |^{\gamma-2} \etaum  k  \weakto \varepsilon_\tau^k \quad \text{in } L^{\gamma/(\gamma{-}1)}(\Omega;\mt_\sym^{d\times d}), \qquad \qquad  |\ptaum k |^{\gamma-2} \ptaum  k  \weakto \pi_\tau^k \quad \text{in } L^{\gamma/(\gamma{-}1)}(\Omega;\mt_\dev^{d\times d})\,.
 \]
 Furthermore, from \eqref{conves-as-M-teta} one deduces that $\tetaum k \to \tetau k $ strongly in $L^{3\mu+6 -\rho}(\Omega)$ for all $\rho \in (0, 3\mu+5]$. Hence, it is not difficult to conclude that  
 \begin{equation}
 \label{strong-conv-trunc}
 \calT_M(\tetaum k) \to \tetau k \quad \text{ in $L^{3\mu+6 -\rho}(\Omega) $ for all $\rho \in (0, 3\mu+5] $.} 
 \end{equation}
 \par
   With these convergences at hand, it is possible to pass to the limit in \eqref{mom:approx-discr}--\eqref{plasr:approx-discr} and prove that the functions $(\tetau k, \utau k, \etau k, \ptau k, \zetau k, \varepsilon_\tau^k, \pi_\tau^k)$ fulfill 
 \begin{equation} \label{not-still-complete}
\begin{aligned}  & 
 \rho \Ddtau k u  -\mathrm{div}_{\Dir}(  \bar{\sigma}_\tau^k ) =  \Ltau k \qquad \text{in } H_\Dir^1(\Omega)^*,
 \\& 
\zetau k  +  \Dtau kp + \pi_\tau^k   =  (  \bar{\sigma}_\tau^k) _\dev \qquad  \aein \Omega, \end{aligned}
 \end{equation}
 with
  $  \bar{\sigma}_\tau^k  = \bbD \Dtau k e + \bbC \etau k +  \varepsilon_\tau^k - \tetau k \bbB $.
   In  order to conclude the discrete momentum equation and plastic flow rule, it thus remains to show that 
 \begin{equation}
 \label{identifications-to-show}
 \varepsilon_\tau^k  = |\etau k |^{\gamma-2} \etau k, \qquad    \pi_\tau^k  = |\ptau k |^{\gamma-2} \ptau k,  \qquad  \zetau k \in \partial_{\dot p} \mathrm{R}(\tetau {k-1}, \ptau k - \ptau{k-1}) \quad  \aein \Omega. 
 \end{equation} 
 With this aim,  on the one hand we observe that 
 \begin{equation}
 \label{limsup_M}
 \begin{aligned}  & 
 \limsup_{M\to \infty} \left( \int_\Omega \zetaum k {:} \ptaum {k}  \dd x +\tau \int_\Omega |\ptaum k |^\gamma \dd x + \tau  \int_\Omega |\etaum k |^\gamma \dd x \right) \\ &  \stackrel{(1)}{\leq}   \limsup_{M\to \infty}  \left ( - \int_\Omega \frac{\ptaum k - \ptau{k-1}}\tau : \ptaum{k} \dd x + \int_\Omega \simdevtau  k : \ptaum{k} \dd x  +\tau   \int_\Omega |\etaum k |^\gamma \dd x   \right)  \\ &  \stackrel{(2)}{\leq}  - \int_\Omega \frac{\ptau k - \ptau{k-1}}\tau : \ptau{k} \dd x + \limsup_{M\to \infty}\int_\Omega  \dddn{\simtau  k : \sig{\utaum k}}{$=\simtau  k : \sig{\utaum k{-}\wtau k}  +\simtau k{:} \sig{\wtau k}$}  -   \simtau k  : \etaum k +  \tau |\etaum k |^\gamma   \dd x     \\ & 
 \begin{aligned}
  \stackrel{(3)}{= }  - \int_\Omega \frac{\ptau k - \ptau{k-1}}\tau : \ptau{k} \dd x + \limsup_{M\to \infty} \Big( &  - \int_\Omega  \rho \frac{\utaum k - 2\utau{k-1} + \utau {k-2}}{\tau^2}  (\utaum{k}{-}\wtau k) \dd x \\ & \quad  + \pairing{}{H_\Dir^{1}(\Omega;\R^d)}{\Ltau k}{\utaum{k} {-}\wtau k}  +\int_\Omega \simtau k{:} \sig{\wtau k} \dd x  \\ &  \quad -\int_\Omega \left(  \bbD \frac{\etaum k - \etau{k-1}}{\tau}  {+} \bbC \etaum{k} { -} \calT_M (\tetaum k) \bbB \right){:} \etaum k \dd x  \Big)  \end{aligned}   \\ & 
  \begin{aligned}
   \stackrel{(4)}{\leq} 
  &  - \int_\Omega \frac{\ptau k - \ptau{k-1}}\tau : \ptau{k} \dd x  - \rho \int_\Omega \Ddtau k u  (\utau k{-} \wtau k)  \dd x + \pairing{}{H_\Dir^{1}(\Omega;\R^d)}{\Ltau k}{\utau{k} {-}\wtau k} +\int_\Omega \sitau k {:} \sig{\wtau k} \dd x  \\ &   -\int_\Omega \left(  \bbD \frac{\etau k - \etau{k-1}}{\tau}  {+ }\bbC \etau{k}  {-} \tetau k \bbB \right){:} \etau k \dd x 
  \end{aligned}    \\ &  \stackrel{(5)}{=}  \int_\Omega \zetau k {:} \ptau {k}  \dd x + \int_\Omega |\pi_\tau^k|^\gamma \dd x +   \int_\Omega |\varepsilon_\tau^k| ^\gamma \dd x.
  \end{aligned}
 \end{equation}
In \eqref{limsup_M}, 
 (1) follows from testing \eqref{plasr:approx-discr} by $\ptaum k$, (2) from  the weak convergence  $\ptaum k \to \ptau k $ in $L^2(\Omega;\mt_\dev^{d\times d})$ and  the discrete admissibility condition, (3) from rewriting the term $\int_\Omega \simtau  k : \sig{\utaum k{-}\wtau k}  \dd x $ in terms of  \eqref{mom:approx-discr}   tested
 by $\utaum k{-} \wtau k$, and from using the explicit expression of  $\simtau k$ (which leads to the cancelation of the term $\int_\Omega \tau | \etaum k|^\gamma \dd x$), (4) from the previously proved convergences via lower semicontinuity arguments,  and (5) from repeating the above calculations
 in the frame of system \eqref{not-still-complete}, fulfilled by the 
limiting seventuple 
 $(\tetau k, \utau k, \etau k, \ptau k, \zetau k, \varepsilon_\tau^k, \pi_\tau^k)$.
   On the other hand, we have that 
  \begin{equation}
 \label{liminf_M}
 \begin{aligned} 
 \liminf_{M\to \infty}  \int_\Omega \zetaum k {:} \ptaum {k}  \dd x \geq  \int_\Omega \zetau k {:} \ptau {k}  \dd x, \qquad   &  \liminf_{M\to \infty}  \int_\Omega |\ptaum k |^\gamma \dd x \geq \int_\Omega |\pi_\tau^k|^\gamma \dd x,  \\ & \liminf_{M\to \infty}  \int_\Omega |\etaum k |^\gamma \dd x \geq  \int_\Omega |\varepsilon_\tau^k| ^\gamma \dd x\,, \end{aligned} 
 \end{equation}
 where the second and the third inequalities follow from the weak convergence of $(\ptaum k)_M$ and $(\etaum k)_M$ to $\pi_\tau^k$ and $\varepsilon_\tau^k$, whereas the   first inequality ensues from 
 \[  
 \begin{aligned} 
   \liminf_{M\to \infty}  \left(  \int_\Omega \zetaum k {:} \ptaum {k}  {-} \zetau k {:} \ptau {k} \right)   \dd x &   \geq   \liminf_{M\to \infty}    \int_\Omega \zetaum k {:}  ( \ptaum {k}  {-} \ptau k) \dd x +    \liminf_{M\to \infty}  \int_\Omega (\zetaum k {-} \zetau k) { :} \ptau {k}   \dd x  \\ &  \stackrel{(1)}{\geq}   \liminf_{M\to \infty}\int_\Omega \left( \mathrm{R}(\tetau{k-1}, \ptaum{k} - \ptau{k-1}) -   \mathrm{R}(\tetau{k-1}, \ptau{k} - \ptau{k-1}) \right)  \dd x  \stackrel{(2)}{\geq} 0 \end{aligned} 
 \]
 with (1) due to the fact that $\zetaum k \in \partial_{\dot p} \mathrm{R}(\tetau{k-1}, \ptaum{k} - \ptau{k-1}) $ and from $\zetaum k \weaksto \zetau k$ in $L^\infty (\Omega;\mt_\dev^{d\times d})$ as $M\to\infty$, and (2) following from the lower semicontinuity  w.r.t.\ to the weak $L^2(\Omega;\mt_\dev^{d\times d})$-convergence of the integral  functional $ p \mapsto \int_\Omega \mathrm{R} (\tetau{k-1}, p - \ptau{k-1}) \dd x $. 
 Combining \eqref{limsup_M} and \eqref{liminf_M} we obtain that  
\[     \left\{ 
\begin{array}{lll}
\lim_{M\to\infty} \int_\Omega \zetaum k {:} \ptaum {k}  \dd x = \int_\Omega  \zetau k {:} \ptau {k} \dd x  &\stackrel{(1)}{ \Rightarrow} & \zetau k \in  \partial_{\dot p} \mathrm{R}(\tetau{k-1}, \ptau{k} - \ptau{k-1}) \quad  \aein \Omega, 
\\  
 \lim_{M\to \infty}  \int_\Omega |\ptaum k |^\gamma \dd x  =  \int_\Omega |\pi_\tau^k|^\gamma \dd x  & \Rightarrow & \ptaum k \to \pi_\tau^k \qquad \text{in } L^\gamma(\Omega;\mt_\sym^{d\times d}),
 \\
  \lim_{M\to \infty}  \int_\Omega |\etaum k |^\gamma \dd x  =  \int_\Omega |\varepsilon_\tau^k|^\gamma \dd x & \Rightarrow & \etaum k \to \varepsilon_\tau^k \qquad \text{in } L^\gamma(\Omega;\mt_\dev^{d\times d})\,,
  \end{array} \right.
\] 
with (1) due to Minty's trick, cf.\ also
\cite[Lemma 1.3, p. 42]{barbu76}.
  Hence we conclude  convergences \eqref{conves-as-M-e}--\eqref{conves-as-M-p} (and \eqref{conves-as-M-u}, via the kinematic admissibility $\sig{\utaum k } = \etaum k + \ptaum k$ and Korn's inequality),  as well as \eqref{identifications-to-show}.  Therefore  $(\tetau k, \utau k, \etau k, \ptau k,\zetau k) $ fulfill 
the discrete momentum balance  \eqref{discrete-momentum} and flow rule \eqref{discrete-plastic}.
\par 
Exploiting convergences \eqref{conves-as-M}  we pass to the limit  as $M\to \infty$ on the right-hand side of \eqref{heat:approx-discr}. In order to take the limit of the elliptic operator on the left-hand side,  we repeat the argument from the proof of \cite[Lemma 4.4]{Rocca-Rossi}. Namely, we observe that,  due to convergence \eqref{strong-conv-trunc}, 
$\condu_M(\tetaum k) = \condu(\calT_M(\tetaum k)) \to \condu(\tetau k)$ in $L^q(\Omega)$ for all $1\leq q<3+\tfrac6{\mu}$, and combine this with the fact that $\nabla \tetaum k \weakto \nabla \teta $ in $L^2(\Omega)$, and with the fact that, by comparison in  \eqref{heat:approx-discr}, $(\mathcal{A}_M^k(\tetaum k))_M$ is bounded in $H^1(\Omega)^*$. All in all, we conclude that $\mathcal{A}_M^k(\tetaum k) \weakto \calA^k(\tetau k)$ in  $H^1(\Omega)^*$ as $M\to\infty$, yielding the discrete heat equation \eqref{discrete-heat}. 
\end{proof}

\section{Proof of Theorems \ref{mainth:1} and \ref{mainth:2}}
\label{s:4}
In the statements of all of the results of this section, leading to the proofs of Thms.\ \ref{mainth:1} \& \ref{mainth:2}, we will always tacitly assume the conditions on the problem data from  Section \ref{ss:2.1}.
 \par
We start by fixing some notation for the approximate solutions. 
\begin{notation}[Interpolants]
\upshape
For a given Banach space $B$ and a
$K_\tau$-tuple $( \mathfrak{h}_\tau^k )_{k=0}^{K_\tau}
\subset B$, we  introduce
 the left-continuous and  right-continuous piecewise constant, and the piecewise linear interpolants
 of the values  $\{ \mathfrak{h}_\tau^k
\}_{k=0}^{K_\tau}$, i.e.\
\[
\left.
\begin{array}{llll}
& \pwc  {\mathfrak{h}}{\tau}: (0,T] \to B  & \text{defined by}  &
\pwc {\mathfrak{h}}{\tau}(t): = \mathfrak{h}_\tau^k,
\\
& \upwc  {\mathfrak{h}}{\tau}: (0,T] \to B  & \text{defined by}  &
\upwc {\mathfrak{h}}{\tau}(t) := \mathfrak{h}_\tau^{k-1},
\\
 &
\pwl  {\mathfrak{h}}{\tau}: (0,T] \to B  & \text{defined by} &
 \pwl {\mathfrak{h}}{\tau}(t):
=\frac{t-t_\tau^{k-1}}{\tau} \mathfrak{h}_\tau^k +
\frac{t_\tau^k-t}{\tau}\mathfrak{h}_\tau^{k-1}
\end{array}
\right\}
 \qquad \text{for $t \in
(t_\tau^{k-1}, t_\tau^k]$,}
\]
setting $\pwc  {\mathfrak{h}}{\tau}(0)= \upwc  {\mathfrak{h}}{\tau}(0)= \pwl  {\mathfrak{h}}{\tau}(0): = \mathfrak{h}_\tau^0$. 
We also introduce the piecewise linear interpolant    of the values
$\{ \Dtau  k {\mathfrak{h}}  = \tfrac{\mathfrak{h}_\tau^k - \mathfrak{h}_{\tau}^{k-1})}\tau\}_{k=1}^{K_\tau}$ (which are  the
values taken by  the piecewise constant function $\pwl
{\dot{\mathfrak{h}}}{\tau}$), viz.
\[
\pwwll  {\mathfrak{h}}{\tau}: (0,T) \to B  \ \text{ defined by } \ \pwwll
{\mathfrak{h}}{\tau}(t) :=\frac{(t-t_\tau^{k-1})}{\tau}
\Dtau k  {\mathfrak{h}}  +
\frac{(t_\tau^k-t)}{\tau} \Dtau {k-1} {\mathfrak{h}}   \qquad \text{for $t \in
(t_\tau^{k-1}, t_\tau^k]$.}
\]
Note that $ {\partial_t \pwwll  {{\mathfrak{h}}}{\tau}}(t) =  \Ddtau k {\mathfrak{h}}  $ for $t \in
(t_\tau^{k-1}, t_\tau^k]$.

Furthermore, we   denote by  $\pwc{\mathsf{t}}{\tau}$ and by
$\upwc{\mathsf{t}}{\tau}$ the left-continuous and right-continuous
piecewise constant interpolants associated with the partition, i.e.
 $\pwc{\mathsf{t}}{\tau}(t) := t_\tau^k$ if $t_\tau^{k-1}<t \leq t_\tau^k $
and $\upwc{\mathsf{t}}{\tau}(t):= t_\tau^{k-1}$ if $t_\tau^{k-1}
\leq t < t_\tau^k $. Clearly, for every $t \in (0,T)$ we have
$\pwc{\mathsf{t}}{\tau}(t) \downarrow t$ and
$\upwc{\mathsf{t}}{\tau}(t) \uparrow t$ as $\tau\to 0$.
\end{notation}

In view of \eqref{heat-source}, \eqref{dato-h}, and \eqref{data-displ} it is easy to check that the piecewise constant
interpolants $(\pwc  H{\tau}
)_{\tau}$, $(\pwc  h{\tau} )_{\tau}$, and $(\pwc \calL{\tau})_\tau$   of the values 
$\gtau{k}$,  $\htau{k}$, and $\Ltau k $, cf.\  \eqref{local-means}, fulfill as $\tau \down
0$
\begin{subequations}
\label{convs-interp-data}
\begin{align}
\label{converg-interp-g}  & \pwc H{\tau}  \to H
  \text{ in $L^1(0,T;L^1(\Omega))\cap L^2(0,T;H^1(\Omega)^*)$.}
  \\
  \label{converg-interp-h}  & \pwc h{\tau}  \to h
  \text{ in $L^1(0,T;L^2(\partial\Omega))$,}
  \\
  \label{converg-interp-L}  & \pwc \calL{\tau} \to \calL \text{ in $ L^2(0,T; H_\Dir^1(\Omega;\R^d)^*).$} 
\end{align}
Furthermore, it follows from \eqref{Dirichlet-loading} and \eqref{discr-w-tau} that 
\begin{equation}
\label{converg-interp-w}
\begin{gathered}
\pwc w\tau \to w \quad \text{in  $L^1(0,T; W^{1,\infty} (\Omega;\R^d))$}, \qquad \pwl w\tau \to w \quad \text{in $W^{1,p} (0,T; H^1(\Omega;\R^d))$ for all } 1 \leq p<\infty, \\ \pwwll w\tau \to w \quad \text{in  $W^{1,1}(0,T; H^1(\Omega;\R^d)) \cap H^1(0,T; L^2(\Omega;\R^d))$},
\\
\sup_{\tau>0}   \tau^{\alpha_w} \| \sig{\pwl{\dot w}{\tau}}\|_{L^\gamma(\Omega;\mt_\sym^{d\times d})} \leq C <\infty \text{ with } \alpha_w \in (0,\tfrac1\gamma)\,.
 \end{gathered}
\end{equation}
\end{subequations}
\par We  now reformulate the discrete system \eqref{syst:discr}  in terms of the approximate solutions constructed interpolating the   discrete solutions $(\tetau k,\utau k, \etau k,\ptau k)_{k=1}^{K_\tau}$. Therefore, we have
\begin{subequations}
\label{syst-interp}
\begin{align}
&
\label{eq-teta-interp}
\begin{aligned} & 
\partial_t \pwl \teta{\tau}(t) 
+   \mathcal{A}^{\frac{\bar{\mathsf{t}}_\tau(t)}{\tau}}( \pwc{\teta}\tau (t) )    \\ &\quad = \pwc H{\tau}(t)+ \mathrm{R}(\upwc \teta\tau(t), \pwl{\dot p}\tau (t)) + |\pwl{\dot p}\tau (t)|^2 + \bbD \pwl{\dot e}\tau(t) {:}   \pwl{\dot e}\tau(t)  - \pwc \teta\tau(t) \bbB : \pwl{\dot e}\tau(t), 
 \quad \text{in $H^1(\Omega)^*$,}
\end{aligned}
\\
&
 \label{eq-u-interp}
 \begin{aligned}
\rho\int_\Omega  \partial_t\pwwll {\uu}{\tau}(t)  v \dd x + \int_\Omega \pwc\sigma \tau(t)  {:} \sig{v} \dd x  = \pairing{}{H_\Dir^{1}(\Omega;\R^d)}{\pwc\calL \tau(t)}{v}
\quad  \text{for all } v \in W_\Dir^{1,\gamma}(\Omega;\R^d),
\end{aligned}\\
& \label{eq-p-interp}
\begin{aligned}
\pwc\zeta\tau(t)+ \pwl{\dot p}\tau (t) + \tau | \pwc p\tau(t)|^{\gamma-2} \pwc p\tau(t)  =  (\pwc\sigma \tau(t)  )_\dev \quad \aein\, \Omega
  \end{aligned}
\end{align}  for almost all $t\in (0,T)$, 
with  $\pwc\zeta\tau \in \partial_{\dot p} \mathrm{R} (\upwc \teta\tau, \pwl{\dot p}\tau )  $  a.e.\ in $Q$, and where we have used the notation
\begin{equation}
\label{sigma-interp}
\pwc\sigma\tau: = \bbD \pwl {\dot e}\tau + \bbC \pwc e\tau + \tau |\pwc e\tau|^{\gamma-2} \pwc e\tau - \pwc\teta\tau \bbB. 
\end{equation}
\end{subequations}
\par
We now show that the approximate solutions fulfill  the approximate versions of the
entropy inequality
\eqref{entropy-ineq}, 
 of the
 total  energy  inequality \eqref{total-enineq},  and of the mechanical energy (in)equality
 \eqref{mech-enbal}.
   These discrete inequalities will have a pivotal role in the derivation of a priori estimates on the  approximate solutions. Moreover, we will take their limit  in order to obtain the entropy and total energy inequalities prescribed by the notion of \emph{entropic solution}, cf.\ Definition \ref{def:entropic-sols}.  
 \par
 For stating the discrete entropy inequality  \eqref{entropy-ineq-discr} below, we need to introduce  \emph{discrete} test functions.  For technical reasons, we will need to pass to the limit with test functions enjoying a slightly stronger  time regularity than that required by Def.\ \ref{def:entropic-sols}. Namely,  we fix a positive   test function $\varphi $, with  $\varphi  \in \rmC^0 ([0,T]; W^{1,\infty}(\Omega)) \cap H^1 (0,T; L^{6/5}(\Omega)) $. We  set  
  \begin{equation}
 \label{discrete-tests-phi}
   \varphi_\tau^k:= 
\frac{1}{\tau}\int_{t_\tau^{k-1}}^{t_\tau^k}  \varphi(s)\dd s \qquad \text{for } k=1, \ldots, K_\tau,
 \end{equation}
and consider the piecewise constant and  linear interpolants
$\pwc \varphi\tau$ and $\pwl \varphi\tau$ of the values
$(\varphi_\tau^k)_{k=1}^{K_\tau}$.
It can be shown that  the following convergences hold as $\tau \to 0$
\begin{equation}
\label{convergences-test-interpolants}
\pwc \varphi\tau  \to \varphi \quad
\text{ in } L^\infty (0,T; W^{1,\infty}(\Omega)) \text{ and }  \quad \partial_t \pwl \varphi\tau  \to \partial_t \varphi \quad \text{ in } L^2 (0,T; L^{6/5}(\Omega)).
\end{equation} 
Observe that the first convergence property easily follows from the fact that the map $\varphi: [0,T]\to  W^{1,\infty}(\Omega)$ is uniformly continuous. 
  \begin{lemma}[Discrete entropy, mechanical, and  total energy inequalities]
\label{lemma:discr-enid}
The interpolants of the   discrete solutions
 $(\tetau{k},  \utau{k}, \etau k,  \ptau{k} )_{k=1}^{K_\tau}$ to Problem \ref{prob:discrete}
 fulfill
 \begin{itemize}
 \item[-]
  the \emph{discrete} entropy inequality 
 \begin{equation}\label{entropy-ineq-discr}
\begin{aligned}
& \int_{\pwc{\mathsf{t}}{\tau}(s)}^{\pwc{\mathsf{t}}{\tau}(t)}
\int_\Omega \log(\upwc\teta\tau (r)) 
\pwl {\dot\varphi}\tau (r) \dd x \dd r - \int_{\pwc{\mathsf{t}}{\tau}(s)}^{\pwc{\mathsf{t}}{\tau}(t)}  \int_\Omega  \condu(\pwc\teta\tau (r)) \nabla \log(\pwc\teta\tau(r))  \nabla \pwc\varphi\tau(r)  \dd x \dd r \\ & 
\leq \int_\Omega \log(\pwc\teta\tau(t))
\pwc\varphi\tau(t) \dd x - \int_\Omega
\log(\pwc\teta\tau(s)) 
\pwc\varphi\tau(s) \dd x
 -  \int_{\pwc{\mathsf{t}}{\tau}(s)}^{\pwc{\mathsf{t}}{\tau}(t)}  \int_\Omega \condu(\pwc \teta\tau(r)) \frac{\pwc\varphi\tau(r)}{\pwc \teta\tau(r)}
\nabla \log(\pwc \teta\tau(r)) \nabla \pwc \teta\tau (r) \dd x \dd r\\ & \quad 
 -
  \int_{\pwc{\mathsf{t}}{\tau}(s)}^{\pwc{\mathsf{t}}{\tau}(t)}   \int_\Omega \left( \pwc H{\tau} (r)  + \mathrm{R}(\upwc \teta\tau(r), \pwl{\dot p}\tau(r)) + |\pwl{\dot p}\tau(r)|^2 + \bbD \pwl{\dot e}\tau (r) {:} \pwl{\dot e}\tau(r) - \pwc\teta\tau(r) \bbB {:}    \pwl{\dot e}\tau(r)   \right)
 \frac{\pwc\varphi\tau(r)}{\pwc\teta\tau(r)} \dd x \dd r   \\ & \quad -  \int_{\pwc{\mathsf{t}}{\tau}(s)}^{\pwc{\mathsf{t}}{\tau}(t)}   \int_{\partial\Omega} \pwc h{\tau} (r)   \frac{\pwc\varphi\tau(r)}{\pwc\teta\tau(r)} \dd S \dd r
\end{aligned}
\end{equation}
for all $0 \leq s \leq t \leq T$ and
for all $\varphi \in \mathrm{C}^0 ([0,T]; W^{1,\infty}(\Omega)) \cap H^1 (0,T; L^{6/5}(\Omega)) $ with
$\varphi \geq 0$;
\item[-] the \emph{discrete} total energy inequality for all $ 0 \leq s \leq t
\leq  T$, viz.
\begin{equation}
\label{total-enid-discr}
\begin{aligned} & 
\frac{\rho}2 \int_\Omega | \pwl{\dot u}\tau (t)|^2 \dd x +  \calE_\tau(\pwc\teta\tau(t),\pwc e\tau(t),\pwc p\tau(t))  \\ & \leq \frac{\rho}2 \int_\Omega | \pwl{\dot u}\tau (s)|^2 \dd x 
 +  \calE_\tau(\pwc\teta\tau(s),\pwc e \tau(s),\pwc p\tau(s)) +   \int_{\pwc{\mathsf{t}}{\tau}(s)}^{\pwc{\mathsf{t}}{\tau}(t)} \pairing{}{H_\Dir^1(\Omega;\R^d)}{\pwc \calL \tau (r) }{\pwl{\dot u}\tau (r) {-} \pwl{ \dot w}\tau (r)} \dd r
 \\ & \quad
 + \int_{\pwc{\mathsf{t}}{\tau}(s)}^{\pwc{\mathsf{t}}{\tau}(t)} \left( 
\int_\Omega  \pwc H\tau  \dd x + 
 \int_{\partial\Omega} \pwc h\tau  \dd S \right)  \dd r \\ & \quad 
 +\rho \int_\Omega \pwl{\dot u}\tau(t) \pwl{\dot w }\tau(t) \dd x -  \rho \int_\Omega \pwl{\dot u}\tau(s)   \pwl{\dot w }\tau(s) \dd x 
  -  \rho  \int_{\pwc{\mathsf{t}}{\tau}(s)}^{\pwc{\mathsf{t}}{\tau}(t)} \int_\Omega  \pwl{\dot u}\tau(r-\tau) \partial_t \pwwll{ w}{\tau} (r)  \dd x \dd r \\ & \quad    +   \int_{\pwc{\mathsf{t}}{\tau}(s)}^{\pwc{\mathsf{t}}{\tau}(t)}\int_\Omega \pwc \sigma\tau (r) {:} \sig{\pwl{\dot w}\tau (r) }  \dd x \dd r 
 \end{aligned}
\end{equation}
 with the discrete total energy functional $\calE_\tau$ from 
 \eqref{discr-total-energy}; 
 \item[-] the \emph{discrete} mechanical energy inequality  for all $ 0 \leq s \leq t
\leq  T$, viz.
 \begin{equation}
\label{mech-ineq-discr}
\begin{aligned} & 
\frac{\rho}2 \int_\Omega | \pwl{\dot u}\tau (t)|^2 \dd x 
+  \int_{\pwc{\mathsf{t}}{\tau}(s)}^{\pwc{\mathsf{t}}{\tau}(t)} \int_\Omega \left( \bbD \pwl{\dot e}\tau(r) {:} \pwl{\dot e}\tau(r)  + \mathrm{R}(\upwc \teta\tau(r), \pwl{\dot p}\tau(r)) + |\pwl{\dot p}\tau(r)|^2  \right)  \dd x \dd r 
+ \frac12\int_\Omega \bbC \pwc e\tau(t) {:}  \pwc e\tau(t)  \dd x
\\ 
& \qquad + \frac{\tau}\gamma\int_\Omega \left(  |\pwc e\tau(t)|^\gamma +  |\pwc p\tau(t)|^\gamma \right) \dd x 
\\ &  \leq 
 \frac{\rho}2 \int_\Omega | \pwl{\dot u}\tau (s)|^2 \dd x 
  + \frac12\int_\Omega \bbC \pwc e\tau(s) {:}  \pwc e\tau(s)  \dd x
   + \frac{\tau}\gamma\int_\Omega \left(  |\pwc e\tau(s)|^\gamma +  |\pwc p\tau(s)|^\gamma \right) \dd x 
   \\
   &\quad
+
    \int_{\pwc{\mathsf{t}}{\tau}(s)}^{\pwc{\mathsf{t}}{\tau}(t)} \pairing{}{H_\Dir^1(\Omega;\R^d)}{\pwc \calL \tau (r) }{\pwl{\dot u}\tau (r){-} \pwl{\dot w}\tau (r)  } \dd r    + \int_{\pwc{\mathsf{t}}{\tau}(s)}^{\pwc{\mathsf{t}}{\tau}(t)} \int_\Omega \pwc\teta\tau(r) \bbB {:} \pwl{\dot e}\tau \dd x  \dd r 
   +\rho \int_\Omega \pwl{\dot u}\tau(t) \pwl{\dot w }\tau(t) \dd x
    \\ & \quad
    -  \rho \int_\Omega \pwl{\dot u}\tau(s)   \pwl{\dot w }\tau(s) \dd x 
    - \rho  \int_{\pwc{\mathsf{t}}{\tau}(s)}^{\pwc{\mathsf{t}}{\tau}(t)}  \int_\Omega \pwl{\dot u}\tau(r-\tau)  \partial_t \pwwll{w}\tau (r)  \dd x \dd r     +   \int_{\pwc{\mathsf{t}}{\tau}(s)}^{\pwc{\mathsf{t}}{\tau}(t)}\int_\Omega \pwc \sigma\tau (r) {:} \sig{\pwl{\dot w}\tau (r) }  \dd x \dd r\,.  
 \end{aligned}
\end{equation} 
 \end{itemize}
 \end{lemma}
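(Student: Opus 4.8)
The plan is to establish all three discrete relations by reproducing, at the level of Problem~\ref{prob:discrete}, the formal calculations of Section~\ref{ss:2.2} that led to the mechanical energy balance \eqref{mech-enbal}, the total energy balance \eqref{total-enbal}, and the entropy identity \eqref{formal-entropy-eq}. Concretely, one tests the discrete heat equation \eqref{discrete-heat}, the discrete momentum balance \eqref{discrete-momentum}, and the discrete flow rule \eqref{discrete-plastic} by suitably chosen discrete test functions, then exploits the convexity of $e\mapsto\tfrac12\bbC e{:}e$, of $e\mapsto\tfrac1\gamma|e|^\gamma$, of the quadratic kinetic term, and (for the entropy estimate) the concavity of $\log$, together with the $1$-homogeneity of $\mathrm{R}$ encoded in \eqref{characterization-subdiff} and a discrete integration by parts; finally one rewrites the node-wise relations as space--time integrals of the interpolants. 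Since the interpolants are piecewise constant (resp.\ piecewise affine) on the partition cells, it suffices to argue for $s=t_\tau^m$ and $t=t_\tau^n$ and then read the inequalities back in interpolant notation for general $0\leq s\leq t\leq T$.

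For the \emph{mechanical energy inequality} \eqref{mech-ineq-discr} I would, for fixed $k$, test \eqref{discrete-momentum} by $v=\utau{k}-\wtau{k}-(\utau{k-1}-\wtau{k-1})\in W_\Dir^{1,\gamma}(\Omega;\R^d)$ and \eqref{discrete-plastic} by $\ptau{k}-\ptau{k-1}$, and add the two identities. By kinematic admissibility $\sig{\utau{k}}-\sig{\utau{k-1}}=(\etau{k}-\etau{k-1})+(\ptau{k}-\ptau{k-1})$, so that the ``plastic part'' of $\int_\Omega\sitau{k}{:}\sig{\utau{k}-\utau{k-1}}\dd x$ equals $\int_\Omega(\sitau{k})_\dev{:}(\ptau{k}-\ptau{k-1})\dd x$ and cancels against the right-hand side of the tested flow rule. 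The remaining terms are treated via the elementary bounds
\[
\rho\,\Ddtau{k}{u}\cdot(\utau{k}-\utau{k-1})\geq \tfrac{\rho}{2\tau}\big(|\Dtau{k}{u}|^2-|\Dtau{k-1}{u}|^2\big),\qquad \bbC\etau{k}{:}(\etau{k}-\etau{k-1})\geq \tfrac12\big(\bbC\etau{k}{:}\etau{k}-\bbC\etau{k-1}{:}\etau{k-1}\big),
\]
their analogues for $|\etau{k}|^\gamma$ and $|\ptau{k}|^\gamma$, the identities $\bbD\Dtau{k}{e}{:}(\etau{k}-\etau{k-1})=\tau\,\bbD\Dtau{k}{e}{:}\Dtau{k}{e}$ and $\Dtau{k}{p}{:}(\ptau{k}-\ptau{k-1})=\tau|\Dtau{k}{p}|^2$, and the support-function identity $\zetau{k}{:}\Dtau{k}{p}=\mathrm{R}(\tetau{k-1},\Dtau{k}{p})$, which follows from $\zetau{k}\in\partial_{\dot p}\mathrm{R}(\tetau{k-1},\Dtau{k}{p})$ via \eqref{characterization-subdiff}. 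Summing the resulting node-wise inequalities (multiplied by $\tau$), carrying out a discrete integration by parts on the inertial cross term $-\rho\sum_k\Ddtau{k}{u}\cdot(\wtau{k}-\wtau{k-1})$, and passing to the interpolants yields \eqref{mech-ineq-discr}. The \emph{total energy inequality} \eqref{total-enid-discr} then follows by testing \eqref{discrete-heat} by the constant $1$ (so that $A^k$ contributes only the boundary term $\int_{\partial\Omega}\htau{k}\dd S$), multiplying by $\tau$, summing over $k$, and adding the outcome to \eqref{mech-ineq-discr}: the contributions $\bbD\Dtau{k}{e}{:}\Dtau{k}{e}$, $|\Dtau{k}{p}|^2$, $\mathrm{R}(\tetau{k-1},\Dtau{k}{p})$, and the adiabatic term $\tetau{k}\bbB{:}\Dtau{k}{e}$ cancel, leaving \eqref{total-enid-discr} with $\calE_\tau$ as in \eqref{discr-total-energy}.

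For the \emph{discrete entropy inequality} \eqref{entropy-ineq-discr} I would test \eqref{discrete-heat} by $v=\varphitau{k}/\tetau{k}$, which thanks to the strict positivity \eqref{discr-strict-pos} and to $\varphitau{k}\in W^{1,\infty}(\Omega)$, $\varphi\geq0$, is a \emph{nonnegative} element of $H^1(\Omega)$, hence an admissible test function (recall that $\tetau{k}\in X$, so $A^k(\tetau{k})\in H^1(\Omega)^*$). The concavity of $\log$ gives $\tfrac{1}{\tetau{k}}(\tetau{k}-\tetau{k-1})\leq \log\tetau{k}-\log\tetau{k-1}$, so that, multiplying by $\tau^{-1}\varphitau{k}\geq0$ and integrating, $\int_\Omega\Dtau{k}{\teta}\,\tfrac{\varphitau{k}}{\tetau{k}}\dd x\leq \tau^{-1}\int_\Omega(\log\tetau{k}-\log\tetau{k-1})\,\varphitau{k}\dd x$; moreover $\nabla\tetau{k}/\tetau{k}=\nabla\log\tetau{k}$ turns the elliptic term into the two entropy-flux terms, while the right-hand side of \eqref{discrete-heat} picks up the weight $\varphitau{k}/\tetau{k}$. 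Multiplying by $\tau$, summing over the nodes between $\pwc{\mathsf{t}}{\tau}(s)$ and $\pwc{\mathsf{t}}{\tau}(t)$, applying Abel summation to $\sum_k\int_\Omega(\log\tetau{k}-\log\tetau{k-1})\,\varphitau{k}\dd x$ --- which produces $\int_\Omega\log\pwc{\teta}{\tau}(t)\,\pwc{\varphi}{\tau}(t)\dd x-\int_\Omega\log\pwc{\teta}{\tau}(s)\,\pwc{\varphi}{\tau}(s)\dd x$ together with a bulk term carrying $\log(\upwc{\teta}{\tau})\,\pwl{\dot\varphi}{\tau}$ --- and rearranging, one arrives at \eqref{entropy-ineq-discr}.

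\textbf{Main obstacle.} The only genuinely delicate point is the entropy test: one has to verify that $\varphitau{k}/\tetau{k}$ really lies in $H^1(\Omega)$ and pairs correctly with $A^k(\tetau{k})\in H^1(\Omega)^*$ --- which is exactly where $\tetau{k}\in X$ and the uniform strict positivity \eqref{discr-strict-pos} intervene --- and to make the $\log$-concavity step rigorous while keeping track of signs against the nonnegative factor $\varphitau{k}$. This mirrors the computations in \cite{Rocca-Rossi} (cf.\ the proof of \cite[Lemma~4.4]{Rocca-Rossi}), to which one may refer for the details; everything else reduces to the discrete integration by parts (Abel summation) and the identification of node-wise sums with space--time integrals of the interpolants, which is legitimate since $\pwc{\mathsf{t}}{\tau}$ and $\upwc{\mathsf{t}}{\tau}$ are constant on the partition cells.
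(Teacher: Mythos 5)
Your proposal is correct and follows essentially the same route as the paper: the same three tests (heat equation by $\varphitau{k}/\tetau{k}$ with the concavity of $\log$, momentum balance by $\utau{k}-\wtau{k}-(\utau{k-1}-\wtau{k-1})$, flow rule by $\ptau{k}-\ptau{k-1}$), the same convexity estimates, the characterization \eqref{characterization-subdiff}, and the discrete by-parts formula \eqref{discr-by-part}. The only (immaterial) difference is the order: the paper first combines all three tests to get the total energy inequality \eqref{total-enid-discr} (via the computation of Lemma \ref{l:aprio-M}) and then obtains \eqref{mech-ineq-discr} by subtracting the heat equation tested by $1$, whereas you derive the mechanical inequality first and add the heat equation to reach the total one.
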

 Below we will only outline the argument for Lemma \ref{lemma:discr-enid}, referring to the proof of the analogous \cite[Prop.\ 4.8]{Rocca-Rossi} for most of the details. Let us only mention in advance that we will make use of the 
 \emph{discrete by-part integration} formula, holding
for all $K_\tau$-uples  $\{\mathfrak{h}_\tau^k \}_{k=0}^{K_\tau} \subset B,\, \{ v_\tau^k \}_{k=0}^{K_\tau} \subset B^*$ in a given Banach space $B$:
\begin{equation}
\label{discr-by-part} \sum_{k=1}^{K_\tau} \tau
\pairing{}{B}{\vtau{k}}{\Dtau{k}{\mathfrak{h}}} =
\pairing{}{B}{\vtau{K_\tau}}{\btau{K_\tau}}
-\pairing{}{B}{\vtau{0}}{\btau{0}} -\sum_{k=1}^{K_\tau}\tau\pairing{}{B}{\dtau{k}{v} }{ \btau{k-1}}\,,
\end{equation}
as well as of the following inequality, 
satisfied by any \underline{concave}
 (differentiable)
  function $\psi: \mathrm{dom}(\psi) \to \R$:
 \begin{equation}
 \label{inequality-concave-functions}
 \psi(x) - \psi(y) \leq \psi'(y) (x-y) \qquad \text{for all } x,\, y \in \mathrm{dom}(\psi).
 \end{equation}
\begin{proof}[Sketch of the proof of Lemma \ref{lemma:discr-enid}]
The entropy inequality \eqref{entropy-ineq-discr} follows from testing  \eqref{discrete-heat} by $\frac{\varphi_\tau^k}{\tetau{k}}$, for
$k \in \{1,\ldots,K_\tau\}$  fixed, with $\varphi \in \mathrm{C}^0 ([0,T]; W^{1,\infty}(\Omega)) \cap H^1 (0,T; L^{6/5}(\Omega)) $  an arbitrary positive test function.  Observe that  $\frac{\varphi_\tau^k}{\tetau{k}} \in H^1(\Omega)$, as $\tetau k \in H^1(\Omega)$ is bounded away from zero by a strictly positive constant, cf.\ \eqref{discr-strict-pos}.   We then obtain 
\[
\begin{aligned} & 
\int_\Omega \left(   \gtau{k} + \mathrm{R}\left(\tetau{k-1}, \Dtau{k} p  \right) +  \left|  \Dtau{k} p \right|^2+ \bbD   \Dtau{k} e :  \Dtau{k} e -\tetau{k} \bbB : \Dtau k e \right)  \frac{\varphi_\tau^k}{\tetau{k}} \dd x  + \int_{\partial\Omega} \htau k   \frac{\varphi_\tau^k}{\tetau{k}} \dd S
\\ & = 
\int_\Omega \frac{\tetau k - \tetau{k-1}}{\tau} \frac{\varphi_\tau^k}{\tetau{k}}  \dd x + \int_\Omega \condu(\tetau k) \nabla \tetau{k} \nabla \left(   \frac{\varphi_\tau^k}{\tetau{k}}  \right)  \dd x 
\\ & \stackrel{(1)}{\leq}
\int_\Omega  \frac{\log(\tetau k) - \log(\tetau{k-1})}{\tau} \varphi_\tau^k \dd x + \int_\Omega \left( \frac{\condu(\tetau k)}{\tetau k} \nabla \tetau{k} \nabla \varphi_\tau^k - \frac{\condu(\tetau k)}{|\tetau k|^2} |\nabla \tetau{k}|^2 \varphi_\tau^k  \right)  \dd x 
\end{aligned}
\]
where (1) follows from \eqref{inequality-concave-functions}
with $\psi=\log$.
 Then, one sums the above inequality, multiplied by $\tau$, over $k=m,\ldots, j$, for any couple of indices $m,\, j \in \{1, \ldots, K_\tau\}$, and uses the discrete by-part integration formula \eqref{discr-by-part} to deal with the term 
$
\sum_{k=m}^j  \frac{\log(\tetau k) - \log(\tetau{k-1})}{\tau} \varphi_\tau^k\,.
$
This leads to \eqref{entropy-ineq-discr}. 
\par
As for the discrete total energy inequality, with the very same calculations developed in the proof of Lemma \ref{l:aprio-M}, one shows that the solution quadruple $(\tetau k, \utau k,\etau k, \ptau k)$ to system \eqref{syst:discr} fulfills the energy inequality \eqref{discr-total-ineq}. Note  that the two inequalities,  i.e.\ the one for system  \eqref{syst:discr} and inequality \eqref{discr-total-ineq} for the   truncated version \eqref{syst:approx-discr} of \eqref{syst:discr}, in fact coincide since they neither involve the elliptic operator in the discrete heat equation, nor the thermal expansion terms coupling the heat equation and the momentum balance, which are the terms affected by the truncation procedure. Then, \eqref{total-enid-discr} ensues by adding \eqref{discr-total-ineq} over the index $k =m,\ldots, j$, for any couples of indices $m,\, j \in \{1,\ldots, K_\tau\}$.
\par  The mechanical energy inequality \eqref{mech-ineq-discr} is derived by subtracting from  \eqref{total-enid-discr} the discrete heat equation  \eqref{discrete-heat}  multiplied by $\tau$ and integrated over $\Omega$. 
\end{proof}



\subsection{A priori estimates}
\label{ss:3.2}
The following result collects the a priori estimates on the approximate solutions of system \eqref{syst-interp}. Let us mention in advance  that, along the footsteps of \cite{Rocca-Rossi}, we shall derive from the discrete entropy inequality \eqref{entropy-ineq-discr} a \emph{weak version} of the estimate on the total variation of 
 $\log(\pwc \teta \tau)$, cf.\ \eqref{aprio_Varlog} and \eqref{var-notation}
below, which will play a crucial role in the compactness arguments for the approximate temperatures $(\pwc \teta \tau)_\tau$.    
\begin{proposition}
\label{prop:aprio} 
Assume \eqref{hyp-K}. Then, there exists a constant $S>0$ such that for all $\tau>0$ the following estimates hold
\begin{subequations}
\label{aprio}
\begin{align}
& \label{aprioU1}
\|\pwc \uu{\tau}\|_{L^\infty(0,T;H^1(\Omega;\R^d))}
 \leq S,
\\
& \label{aprioU2}
\|\pwl \uu{\tau}\|_{H^1(0,T; H^1(\Omega;\R^d) ) \cap
W^{1,\infty}(0,T; L^2(\Omega;\R^d))}
 \leq S,
\\
& \label{aprioU3} \|\pwwll \uu{\tau}
\|_{L^2(0,T; H^1(\Omega;\R^d) )  \cap L^\infty (0,T;  L^2(\Omega;\R^d)) \cap W^{1,\gamma/(\gamma-1)}(0,T;W^{1,\gamma}(\Omega;\R^d)^*)} \leq S,
\\
& \label{aprioE1}
\|\pwc
e{\tau}\|_{L^\infty(0,T;L^2(\Omega;\mt_\sym^{d\times d}))} \leq S,
\\
& \label{aprioE2}
\|\pwl
e{\tau}\|_{H^1(0,T;L^2(\Omega;\mt_\sym^{d\times d}))} \leq S,
\\
& \label{aprioE3}
\tau^{1/\gamma}\|\pwc
e{\tau}\|_{L^\infty(0,T;L^\gamma(\Omega;\mt_\sym^{d\times d}))} \leq S,
\\
& \label{aprioP1}
\|\pwc
p{\tau}\|_{L^\infty(0,T;L^2(\Omega;\mt_\dev^{d\times d}))} \leq S,
\\
& \label{aprioP2}
\|\pwl
p{\tau}\|_{H^1(0,T;L^2(\Omega;\mt_\dev^{d\times d}))} \leq S,
\\
& \label{aprioP3}
\tau^{1/\gamma}\|\pwc
p{\tau}\|_{L^\infty(0,T;L^\gamma(\Omega;\mt_\dev^{d\times d}))} \leq S,
\\
&
\label{log-added}
\| \log(\pwc \teta{\tau}) \|_{L^2 (0,T; H^1(\Omega))}
 \leq S,
\\
& \label{aprio6-discr}
 \|\pwc
\teta{\tau}\|_{L^2(0,T; H^1(\Omega)) \cap L^{\infty}(0,T;L^{1}(\Omega))} \leq S,
\\
&  \label{est-temp-added?-bis}
  \| (\pwc \teta\tau)^{(\mu+\alpha)/2} \|_{L^2(0,T; H^1(\Omega))}, \,  \| (\pwc \teta\tau)^{(\mu-\alpha)/2} \|_{L^2(0,T; H^1(\Omega))} \leq C \quad \text{for all } \alpha \in [(2{-}\mu)^+, 1)
\\ 
& \label{aprio_Varlog}
\sup_{\varphi \in W^{1,d+\epsilon}(\Omega), \ \| \varphi \|_{W^{1,d+\epsilon}(\Omega)}\leq 1}
\mathrm{Var}(\pairing{}{W^{1,d+\epsilon}(\Omega)}{\log(\pwc\teta\tau)}{\varphi}; [0,T]) \leq S\ \quad \text{for every } \epsilon>0.
\end{align}
Furthermore, if  $\condu$ fulfills \eqref{hyp-K-stronger}, there holds in addition
\begin{align}
\label{aprio7-discr}
&
\sup_{\tau>0} \| \pwl \teta{\tau} \|_{\mathrm{BV} ([0,T]; W^{1,\infty} (\Omega)^*)} \leq S.
\end{align}
\end{subequations}
\end{proposition}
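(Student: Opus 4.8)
The plan is to follow the blueprint of \cite{Rocca-Rossi} (cf.\ also the proof of \cite[Thm.\ 2.7]{LRTT}), deriving the bounds in the order \eqref{aprioU1}--\eqref{aprioP3}, then \eqref{log-added}, \eqref{aprio6-discr}, \eqref{est-temp-added?-bis}, \eqref{aprio_Varlog}, and finally \eqref{aprio7-discr}, using as the three master inequalities the discrete total and mechanical energy inequalities \eqref{total-enid-discr}, \eqref{mech-ineq-discr} and the discrete entropy inequality \eqref{entropy-ineq-discr} established in Lemma \ref{lemma:discr-enid}. Since the bulk of the argument is a verbatim repetition of \cite{Rocca-Rossi, LRTT}, I will only dwell on the two genuinely new points: the treatment of the non-zero traction, via the safe-load condition \eqref{safe-load}, and the treatment of the adiabatic coupling term $\pwc\teta\tau\bbB{:}\sig{\pwl{\dot w}\tau}$, via the assumption $w\in L^1(0,T;W^{1,\infty}(\Omega;\R^d))$ in \eqref{Dirichlet-loading}.

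\textbf{Mechanical estimates.} Starting from \eqref{total-enid-discr} (and \eqref{mech-ineq-discr}), the task is to bound the right-hand side by the left-hand side plus a constant, modulo a discrete Gronwall argument. The term $\langle\pwc\calL\tau,\pwl{\dot u}\tau-\pwl{\dot w}\tau\rangle$ is rewritten, through the discretised safe-load stress $\varrho$ and the kinematic admissibility $\sig{\pwl{\dot u}\tau}=\pwl{\dot e}\tau+\pwl{\dot p}\tau$, as $\int_\Omega\varrho{:}(\pwl{\dot e}\tau+\pwl{\dot p}\tau-\sig{\pwl{\dot w}\tau})$; the contribution $\int_\Omega\varrho{:}\pwl{\dot p}\tau=\int_\Omega\varrho_\dev{:}\pwl{\dot p}\tau$ is handled, after a discrete by-part integration in time, in terms of $\|\pwc p\tau\|_{L^\infty(0,T;L^2)}$ and of the $W^{1,1}(0,T;L^2)$-regularity of $\varrho$ (using $\varrho_\dev\in L^1(0,T;L^\infty)$ and \eqref{linear-growth} where needed), while $\int_\Omega\varrho{:}\pwl{\dot e}\tau$ is likewise integrated by parts in time and controlled by $\|\pwc e\tau\|_{L^\infty(0,T;L^2)}$. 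The adiabatic term hidden in $\int_\Omega\pwc\sigma\tau{:}\sig{\pwl{\dot w}\tau}$, namely $\int_\Omega\pwc\teta\tau\bbB{:}\sig{\pwl{\dot w}\tau}$, is bounded by $\|\pwc\teta\tau\|_{L^1(\Omega)}\|\bbB\|_{L^\infty}\|\sig{\pwl{\dot w}\tau}\|_{L^\infty}$, integrable in time by \eqref{Dirichlet-loading}; the remaining coupling contributions cancel, as in the derivation of \eqref{total-enid-discr}. The inertial terms are disposed of by Young's inequality and \eqref{Dirichlet-loading}. Using $\|\pwc p\tau(t)\|_{L^2}^2\le 2\|p_\tau^0\|_{L^2}^2+2T\|\pwl{\dot p}\tau\|_{L^2(0,t;L^2)}^2$ to re-absorb the $\|p\|_{L^2}$-contribution into the plastic dissipation, and noting that $\pwc\teta\tau\ge\bar\teta>0$ by \eqref{discr-strict-pos} so that $\int_\Omega\pwc\teta\tau=\|\pwc\teta\tau\|_{L^1(\Omega)}$ is part of $\calE_\tau$ from \eqref{discr-total-energy}, a discrete Gronwall lemma yields \eqref{aprioU2}, \eqref{aprioE1}--\eqref{aprioE2}, \eqref{aprioP1}--\eqref{aprioP2}, the $L^\infty(0,T;L^1(\Omega))$-part of \eqref{aprio6-discr}, the $\tau^{1/\gamma}$-bounds \eqref{aprioE3}, \eqref{aprioP3} (invoking $\sup_\tau\tau^{\alpha_w}\|\sig{\pwl{\dot w}\tau}\|_{L^\gamma}<\infty$ from \eqref{discr-w-tau}), and an $L^1(0,T)$-bound on $\mathrm{R}(\upwc\teta\tau,\pwl{\dot p}\tau)$ as a by-product; the multipliers $\pwc\zeta\tau$ are bounded in $L^\infty$ by \eqref{bounded-subdiff}. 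Then \eqref{aprioU1} follows from $\sig{\pwc u\tau}=\pwc e\tau+\pwc p\tau$ and Korn's inequality \eqref{Korn}, and \eqref{aprioU3} by comparison in \eqref{eq-u-interp}, since $\pwc\sigma\tau$ is bounded in $L^2(0,T;L^2)$ up to the term $\tau|\pwc e\tau|^{\gamma-2}\pwc e\tau$, which is bounded in $L^\infty(0,T;L^{\gamma/(\gamma-1)})$ by \eqref{aprioE3}.

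\textbf{Temperature and variation estimates.} Testing the discrete heat equation \eqref{discrete-heat} by $(\tetau k)^{\alpha-1}$ with $\alpha\in[(2-\mu)^+,1)$ — admissible since $\tetau k\in H^1(\Omega)$ is bounded away from zero by \eqref{discr-strict-pos} — and exploiting the concavity of $r\mapsto r^\alpha/\alpha$ (inequality \eqref{inequality-concave-functions}) for the discrete time derivative, one gains control of $\iint\condu(\pwc\teta\tau)(\pwc\teta\tau)^{\alpha-2}|\nabla\pwc\teta\tau|^2$, hence of $\|(\pwc\teta\tau)^{(\mu+\alpha)/2}\|_{L^2(0,T;H^1)}$ and $\|(\pwc\teta\tau)^{(\mu-\alpha)/2}\|_{L^2(0,T;H^1)}$, i.e.\ \eqref{est-temp-added?-bis}: here the quadratic right-hand side terms are harmless, being multiplied by $(\pwc\teta\tau)^{\alpha-1}\le\bar\teta^{\alpha-1}$ and already controlled in $L^1(Q)$, whereas the adiabatic term gives $(\pwc\teta\tau)^{\alpha}\bbB{:}\pwl{\dot e}\tau$, absorbed by Young's inequality into the $L^2$-bound on $\pwl{\dot e}\tau$ and the gradient term. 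A Gagliardo--Nirenberg interpolation between \eqref{est-temp-added?-bis} and the $L^\infty(0,T;L^1(\Omega))$-bound then gives the $L^2(0,T;H^1(\Omega))$-part of \eqref{aprio6-discr}, exactly as in \cite{FPR09, Rocca-Rossi}. For \eqref{log-added} it suffices to take $\varphi\equiv 1$ in \eqref{entropy-ineq-discr}, which leaves on the right-hand side $-\iint\condu(\pwc\teta\tau)|\nabla\log\pwc\teta\tau|^2$ together with terms bounded by the previous estimates; since $\condu\ge c_0$ and $|\log\pwc\teta\tau|\le\pwc\teta\tau+\bar\teta^{-1}$ (so $\log\pwc\teta\tau\in L^\infty(0,T;L^1(\Omega))$), we conclude \eqref{log-added}. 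For \eqref{aprio_Varlog} we test \eqref{entropy-ineq-discr} with arbitrary $\varphi\in W^{1,d+\epsilon}(\Omega)$, $\|\varphi\|_{W^{1,d+\epsilon}}\le 1$: using the embedding $W^{1,d+\epsilon}(\Omega)\subset L^\infty(\Omega)$ and the fact — deduced from \eqref{est-temp-added?-bis} via Sobolev embeddings, as in the proof of Theorem \ref{mainth:1} — that $\condu(\pwc\teta\tau)\nabla\log\pwc\teta\tau$ is bounded in $L^1(0,T;L^{(d+\epsilon)'}(\Omega))$ and $\condu(\pwc\teta\tau)\tfrac{1}{\pwc\teta\tau}|\nabla\log\pwc\teta\tau||\nabla\pwc\teta\tau|$ in $L^1(Q)$, every term on the right-hand side of \eqref{entropy-ineq-discr} is bounded by a $\tau$- and $\varphi$-independent quantity integrable on $[0,T]$; summing over partitions of $[0,T]$ then controls $\mathrm{Var}(\langle\log\pwc\teta\tau,\varphi\rangle;[0,T])$. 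Finally, under \eqref{hyp-K-stronger}, interpolating \eqref{est-temp-added?-bis} upgrades the spatial integrability to $\nabla\hat\condu(\pwc\teta\tau)\in L^{1+\tilde\delta}(Q)$ for a suitable $\tilde\delta\in(0,\tfrac13)$, whence, by comparison in \eqref{discrete-heat} (whose right-hand side is bounded in $L^1(Q)$, hence in $L^1(0,T;W^{1,\infty}(\Omega)^*)$), $\partial_t\pwl\teta\tau$ is bounded in $L^1(0,T;W^{1,1+1/\tilde\delta}(\Omega)^*)\subset L^1(0,T;W^{1,\infty}(\Omega)^*)$, which gives \eqref{aprio7-discr}.

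I expect the main obstacle to be the simultaneous control, inside a single Gronwall estimate, of the mechanical quantities in the presence of the traction term: the non-smallness of the safe-load data in \eqref{safe-load} prevents a plain absorption of $\int_\Omega\varrho_\dev{:}\pwl{\dot p}\tau$ into the plastic dissipation, so one must combine a discrete by-part integration in time with the estimate $\|\pwc p\tau(t)\|_{L^2}^2\lesssim 1+\|\pwl{\dot p}\tau\|_{L^2(0,t;L^2)}^2$, and, for the adiabatic term, one must exploit the precise cancellations occurring in passing from the mechanical to the total energy identity so as to retain only the $L^1(\Omega)$-norm of $\pwc\teta\tau$, which alone is controlled by $\calE_\tau$. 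A second, more technical, point is that the estimate \eqref{est-temp-added?-bis}, the $L^2(0,T;H^1)$-bound on $\pwc\teta\tau$, and the integrability needed for \eqref{aprio_Varlog} and \eqref{aprio7-discr} hold only over the full range $\alpha\in[(2-\mu)^+,1)$, so the admissible growth exponent $\mu$ has to be tuned accordingly — in particular \eqref{aprio7-discr} requires $\mu<\tfrac53$ (resp.\ $\mu<2$ for $d=2$) precisely in order to pick $\alpha<1$ with $\alpha>\mu-\tfrac23$.
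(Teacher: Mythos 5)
Your overall architecture (total energy inequality $\to$ temperature estimates $\to$ mechanical energy inequality $\to$ comparison in the momentum balance $\to$ variation estimate $\to$ BV bound) coincides with the paper's, and most individual steps are sound. The genuine gap is in your treatment of the safe-load term $\int_0^{t}\int_\Omega (\pwc\varrho\tau)_\dev{:}\pwl{\dot p}\tau\dd x\dd r$ inside the \emph{first} (total-energy) estimate. You propose a discrete integration by parts in time followed by the bound $\|\pwc p\tau(t)\|_{L^2}^2\lesssim 1+\|\pwl{\dot p}\tau\|_{L^2(0,t;L^2)}^2$, with the latter to be ``re-absorbed into the plastic dissipation''. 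But the plastic dissipation $\iint|\pwl{\dot p}\tau|^2$ does not appear on the left-hand side of the discrete \emph{total} energy inequality \eqref{total-enid-discr}: the functional $\calE_\tau$ from \eqref{discr-total-energy} controls only the $\tau$-weighted $L^\gamma$-norm of $\pwc p\tau(t)$, not $\|\pwc p\tau(t)\|_{L^2}^2$, and the dissipative terms cancel precisely in passing to the total balance. The dissipation is available only on the left-hand side of the \emph{mechanical} inequality \eqref{mech-ineq-discr}, which however cannot be closed at this stage because its right-hand side contains $\iint\pwc\teta\tau\bbB{:}\pwl{\dot e}\tau$, whose absorption requires $\|\pwc\teta\tau\|_{L^2(Q)}$ --- an estimate that only becomes available after the second (temperature) step, which in turn needs the $L^\infty(0,T;L^1(\Omega))$ bound on $\pwc\teta\tau$ produced by the first step. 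Your plan is therefore circular. The paper's way out is different: it substitutes the discrete flow rule \eqref{eq-p-interp}, writing $\pwl{\dot p}\tau=(\pwc\sigma\tau)_\dev-\pwc\zeta\tau-\tau|\pwc p\tau|^{\gamma-2}\pwc p\tau$, so that the term splits into contributions from $\bbD\pwl{\dot e}\tau$ (integrated by parts in time and controlled by $\|\pwc e\tau\|_{L^\infty(0,T;L^2)}$), from $\bbC\pwc e\tau$, $\tau|\pwc e\tau|^{\gamma-2}\pwc e\tau$ and $\tau|\pwc p\tau|^{\gamma-2}\pwc p\tau$ (H\"older against the $\tau$-weighted $L^\gamma$ terms in $\calE_\tau$), from $\pwc\teta\tau\bbB$ (Gronwall on $\|\pwc\teta\tau\|_{L^1}$, using ${\varrho}_\dev\in L^1(0,T;L^\infty)$), and from $\pwc\zeta\tau$ (bounded in $L^\infty$ by \eqref{bounded-subdiff}) --- all controlled by the total-energy left-hand side without ever invoking $\|\pwl{\dot p}\tau\|_{L^2(Q)}$.

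A smaller instance of the same ordering slip occurs in your second estimate: you claim the term $(\pwc\teta\tau)^{\alpha}\bbB{:}\pwl{\dot e}\tau$ is ``absorbed by Young's inequality into the $L^2$-bound on $\pwl{\dot e}\tau$'', but no $L^2(Q)$ bound on $\pwl{\dot e}\tau$ is available at that point. The correct absorption is into the \emph{weighted} viscous dissipation $\iint\bbD\pwl{\dot e}\tau{:}\pwl{\dot e}\tau\,(\pwc\teta\tau)^{\alpha-1}$, which must be retained on the left-hand side of the heat equation tested by $(\tetau{k})^{\alpha-1}$; the remainder $C\iint(\pwc\teta\tau)^{\alpha+1}$ is then handled by the Gagliardo--Nirenberg argument. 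With these two corrections the rest of your outline (the comparison argument for \eqref{aprioU3}, the positive/negative test-function decomposition underlying \eqref{aprio_Varlog}, and the role of $\alpha>\mu-\tfrac23$ for \eqref{aprio7-discr}) matches the paper.
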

\noindent The starting point in the proof is the discrete total energy inequality \eqref{total-enid-discr}, giving rise to  
the second of \eqref{aprioU2}, the second of \eqref{aprioU3},  \eqref{aprioE1}, \eqref{aprioE3}, 
\eqref{aprioP3}, and the second of \eqref{aprio6-discr}: we will detail the related calculations, in particular showing how the terms 
arising from the external forces $F$ and $g$, and those
involving the Dirichlet loading $w$  can be handled. Let us also refer to the upcoming Remark \ref{rmk:diffic-1-test} for more comments.
\par
 The \emph{dissipative} estimates, i.e.\ the first  of  \eqref{aprioU2} and \eqref{aprioU3}, \eqref{aprioE2}, and \eqref{aprioP2}, then follow from the  discrete mechanical energy inequality \eqref{mech-ineq-discr}. The remaining estimates on the approximate temperature can be performed with the very same arguments as in the proof of \cite[Prop.\ 4.10]{Rocca-Rossi}, to which we shall refer for all details.
\begin{proof}
\textbf{First a priori estimate:} We write the total energy inequality \eqref{total-enid-discr} for $s=0$ and estimate the terms on its right-hand side: 
\begin{equation}
\label{very-1st-step}
\begin{aligned} 
\frac{\rho}2 \int_\Omega | \pwl{\dot u}\tau (t)|^2 \dd x +  \calE_\tau(\pwc\teta\tau(t),\pwc e\tau(t), \pwc p\tau(t))  \leq 
I_1+I_2+I_3+I_4+I_5+I_6,
\end{aligned}
\end{equation}
with 
\[
I_1 = 
\frac{\rho}2 \int_\Omega | {\dot u}_\tau(0)|^2 \dd x 
 +  \calE_\tau(\pwc \teta\tau(0),\pwc e \tau(0), \pwc p\tau(0))  \leq C
 \]
 thanks to \eqref{initial-teta}, \eqref{complete-approx-e_0},  and \eqref{discr-Cauchy}.  To estimate $I_2$ we use the safe load condition \eqref{safe-load}, namely
 \begin{equation}
 \label{here:safe-loads}
 \begin{aligned}
 I_2 &  =    \int_{0}^{\pwc{\mathsf{t}}{\tau}(t)} \pairing{}{H_\Dir^{1}(\Omega;\R^d)}{ \pwc \calL \tau (r) }{\pwl{\dot u}\tau (r){-} \pwl{\dot w}\tau (r) }  \dd r  
 \\ &   =  \int_{0}^{\pwc{\mathsf{t}}{\tau}(t)} \pairing{}{H_\Dir^{1}(\Omega;\R^d)}{ \pwc F \tau (r) }{\pwl{\dot u}\tau (r) {-} \pwl{\dot w}\tau (r)  }  \dd r  +  \int_{0}^{\pwc{\mathsf{t}}{\tau}(t)} \pairing{}{H_{00,\Gamma_\Dir}^{1/2}(\Gamma_\Neu;\R^d)}{ \pwc g \tau (r) }{\pwl{\dot u}\tau (r)  {-} \pwl{\dot w}\tau (r)  }  \dd r  
 \\ & =  \int_{0}^{\pwc{\mathsf{t}}{\tau}(t)}  \int_\Omega \pwc{\varrho}\tau (r) : \left(  \sig{\pwl{\dot u}\tau (r)} {-} \sig{\pwl {\dot w}\tau(r)} \right)  \dd x \dd r \\ & 
   \stackrel{(1)}{=}   \int_{0}^{\pwc{\mathsf{t}}{\tau}(t)}  \int_\Omega \pwc{\varrho}\tau (r)  : {\pwl{\dot e}\tau (r)}  \dd x \dd r  +  \int_{0}^{\pwc{\mathsf{t}}{\tau}(t)}  \int_\Omega \pwc{\varrho}\tau (r): {\pwl{\dot p}\tau (r)}  \dd x \dd r  - \int_{0}^{\pwc{\mathsf{t}}{\tau}(t)}  \int_\Omega \pwc{\varrho}\tau (r)  :  \sig{\pwl {\dot w}\tau(r) }\dd x \dd r 
   \\ & 
    \doteq I_{2,1} + I_{2,2} +I_{2,3}
   \end{aligned}
   \end{equation}
   where $\pwc F\tau, \,  \pwc g\tau,\, \pwc \varrho\tau, \, \pwl \varrho\tau$ denote the approximations of $F, \, g, \, \varrho$. Equality  (1) follows from   the kinematic admissibility condition $\sig{\pwl{\dot u}\tau}  = \pwl{\dot e}\tau  + \pwl  {\dot p}\tau$.
    Observe that, thanks to \eqref{safe-load}, there holds
 \begin{equation}
 \label{converg-rho-interp}
  \| \pwc \varrho\tau \|_{L^\infty(0,T;L^2(\Omega;\mt_\sym^{d\times d}))} +  \| \pwl \varrho\tau \|_{W^{1,1}(0,T;L^2(\Omega;\mt_\sym^{d\times d}))}+
    \| (\pwc \varrho\tau)_\dev \|_{L^1(0,T;L^\infty(\Omega;\mt_\dev^{d\times d}))} \leq C\,.
 \end{equation}
   Now,  using   the discrete by-part integration formula \eqref{discr-by-part}  we see that 
   \[
   \begin{aligned} 
    I_{2,1} & =    -   \int_{0}^{\pwc{\mathsf{t}}{\tau}(t)}  \int_\Omega \pwl{\dot \varrho}\tau (r) {:} \upwc e{\tau}(r) \dd x \dd r 
 +\int_\Omega   \pwc{\varrho}\tau(t) : \pwc e\tau(t) \dd x -  \int_\Omega   \pwc{\varrho}\tau(0): {\pwc e\tau(0)} \dd x 
 \\
 &    \stackrel{(2)}{\leq}      \int_{0}^{\pwc{\mathsf{t}}{\tau}(t)}   \|  \pwl{\dot \varrho}\tau (r)  \|_{L^2(\Omega;\mt_\sym^{d\times d})}  \|   \upwc e{\tau}(r)\|_{L^2(\Omega;\mt_\sym^{d\times d})} \dd r +\frac{C_\bbC^1}{16} \| \pwc e\tau(t) \|_{L^2(\Omega;\mt_\sym^{d\times d})}^2+ C   \| \pwc \varrho\tau(t) \|_{L^2(\Omega;\mt_\sym^{d\times d})}^2 +C
 \end{aligned}
 \]
 where estimate (2)  follows from Young's inequality.
 The choice of the coefficient $\tfrac{C_\bbC^1}{16}$ will allow us to absorb the second term into the left-hand side of \eqref{very-1st-step}, taking into account
 the coercivity property \eqref{elast-visc-tensors} of $\bbC$, which ensures that  $\calE_\tau(\pwc\teta\tau(t),\pwc e\tau(t), \pwc p\tau(t))$ on the left-hand side of \eqref{very-1st-step} bounds $ \| \pwc e\tau(t)\|_{L^2(\Omega;\mt_\sym^{d\times d})}^2$. 
 As for $I_{22}$, using the discrete flow rule \eqref{eq-p-interp} and taking into account  the expression of $(\pwc \sigma\tau)_\dev$  we gather
 \[
    \begin{aligned} 
    I_{2,2} &  =  \int_{0}^{\pwc{\mathsf{t}}{\tau}(t)}  \int_\Omega ( \pwc{\varrho}\tau (r))_\dev \left( \bbD \pwl{\dot e}\tau(r) + \bbC \pwc e\tau(r)  + \tau|\pwc e\tau(r) |^{\gamma-2} \pwc e\tau(r)  - \pwc\teta\tau(r) \bbB - \pwc\zeta\tau(r) - \tau | \pwc p\tau(r)|^{\gamma-2} \pwc p\tau(r)    \right)  \dd x \dd r  \\ & \doteq I_{2,2,1} +  I_{2,2,2} + I_{2,2,3} + I_{2,2,4} + I_{2,2,5}  + I_{2,2,6}
 \end{aligned}
  \]
 and we estimate the above terms as follows. First, for $I_{2,2,1}$ we resort to the by-parts integration formula \eqref{discr-by-part} with the very same calculations as in the estimate of the integral term $I_{2,1}$. Second, we estimate
 \[
 I_{2,2,2} \leq  \frac{C_\bbC^1}{16} \| \pwc e\tau(t) \||_{L^2(\Omega;\mt_\sym^{d\times d})}^2+ C   \| \pwc \varrho\tau(t) \||_{L^2(\Omega;\mt_\sym^{d\times d})}^2\,.
 \]
In the estimate of $I_{2,2,3}$  we use  H\"older's inequality
 \[
 I_{2,2,3} \leq  \frac{\tau \gamma} 2    \int_{0}^{\pwc{\mathsf{t}}{\tau}(t)}     \| (\pwc \varrho\tau (r))_\dev \|_{L^\gamma (\Omega;\mt_\dev^{d\times d})}^\gamma \dd r +\frac{ \tau }{2\gamma} \int_{0}^{\pwc{\mathsf{t}}{\tau}(t)}   \| \pwc e\tau (r)\|_{L^\gamma (\Omega;\mt_\sym^{d\times d})}^\gamma \dd r\,.
 \]
 For $I_{2,2,4}$ we resort to 
  estimate  \eqref{converg-rho-interp} for $ (\pwc \varrho\tau)_\dev$ in $L^1(0,T; L^\infty (\Omega;\mt_\dev^{d\times d}))$, so that 
\[
I_{2,2,4} \leq  C  \int_{0}^{\pwc{\mathsf{t}}{\tau}(t)}  \| (\pwc \varrho\tau (r))_\dev \|_{L^\infty (\Omega;\mt_\dev^{d\times d})} \| \pwc \teta\tau(r) \|_{L^1(\Omega)} \dd r;
\]
 again, this term will be estimated via Gronwall's inequality,
 taking into account  that  $\calE_\tau(\pwc\teta\tau(t),\pwc e\tau(t), \pwc p\tau(t))$ on the left-hand side of \eqref{very-1st-step} bounds $\| \pwc\teta\tau(t)\|_{L^1(\Omega)}$.
  Finally, since $\| \pwc \zeta \tau (t) \|_{L^\infty(\Omega;\mt_\dev^{d\times d})}  \leq C_R$ thanks to \eqref{bounded-subdiff}, we find that $I_{2,2,5} \leq  C_R   \int_{0}^{\pwc{\mathsf{t}}{\tau}(t)}  \| \pwc \varrho\tau (r) \|_{L^1 (\Omega;\mt_\dev^{d\times d})} \dd r \leq C$
  by \eqref{converg-rho-interp},
  while with H\"older's inequality we have 
 \[
 I_{2,2,6} \leq  \frac{\tau \gamma} 2  \int_{0}^{\pwc{\mathsf{t}}{\tau}(t)}     \| (\pwc \varrho\tau (r))_\dev \|_{L^\gamma (\Omega;\mt_\dev^{d\times d})}^\gamma \dd r +\frac{ \tau }{2\gamma} \int_{0}^{\pwc{\mathsf{t}}{\tau}(t)}   \| \pwc p\tau (r)\|_{L^\gamma (\Omega;\mt_\dev^{d\times d})}^\gamma \dd r\,.
 \]
   This concludes the estimation of $I_{2,2}$. 
   Finally, we have 
   \[
   I_{2,3} \leq   \int_{0}^{\pwc{\mathsf{t}}{\tau}(t)}  \| \pwc{\varrho}\tau (r) \|_{L^2(\Omega;\mt_\sym^{d\times d})}   \| \sig{\pwl {\dot w}\tau(r)}  \|_{L^2(\Omega;\mt_\sym^{d\times d})}  \dd r \leq C
   \]
   in view of \eqref{converg-rho-interp} and  \eqref{converg-interp-w}, which provides a  bound for $\pwl w \tau$, and we have thus handled all 
    the terms contributing to $I_2$. 
We also have 
\[
\begin{aligned}
I_3 =   \int_{0}^{\pwc{\mathsf{t}}{\tau}(t)} \left( 
\int_\Omega  \pwc H\tau  \dd x + 
 \int_{\partial\Omega} \pwc h\tau  \dd S \right)  \dd r  \leq \|   \pwc H\tau \|_{L^1(0,T; L^1(\Omega))} +  \|   \pwc h\tau \|_{L^1(0,T; L^2(\partial\Omega))} \leq C,
 \end{aligned}
 \]
due to \eqref{convs-interp-data};
\[
\begin{aligned}
 I_4 &  = \rho \int_\Omega \pwl{\dot u}\tau(t) \pwl{\dot w }\tau(t) \dd x -  \rho \int_\Omega \dot{u}_0   \pwl{\dot w }\tau(0) \dd x 
  -  \rho  \int_{0}^{\pwc{\mathsf{t}}{\tau}(t)}  \pwl{\dot u}\tau(r-\tau) \partial_t \pwwll{ w}\tau (r)  \dd x \dd r  \\ & \stackrel{(1)}{\leq}
  C +  \frac{\rho}8 \int_\Omega | \pwl{\dot u}\tau (t)|^2 \dd x + 2\rho  \int_\Omega | \pwl{\dot w}\tau (t)|^2 \dd x +  \rho  \int_{0}^{\pwc{\mathsf{t}}{\tau}(t)-\tau} \|  \pwl{\dot u}\tau(s) \|_{L^2(\Omega;\R^d)}   \|  \partial_t\pwwll{w}\tau(s+\tau) \|_{L^2(\Omega;\R^d)}  \dd s,
\end{aligned}
\]
 where (1) follows from  \eqref{initial-u},
 \eqref{discr-Cauchy}, and \eqref{converg-interp-w},
   and  we are tacitly assuming that $\pwl {\dot u}\tau $  extends identically to zero on the interval $(-\tau,0)$. Moreover, 
 \begin{equation}
 \label{here:w}
\begin{aligned}
 I_ 5 &=  
  \int_{0}^{\pwc{\mathsf{t}}{\tau}(t)}\int_\Omega \left( \bbD \pwl{\dot e}\tau (r)  + \bbC \pwc e\tau (r)  -\pwc\teta\tau(r) \bbB \right) {:} \sig{\pwl{\dot w}\tau (r) }  \dd x \dd r \\ &  
\stackrel{(2)}{\leq}  \int_\Omega \bbD \pwc e\tau(t) : \sig{\pwl{\dot w}\tau (t) }   \dd x -  \int_\Omega \bbD \pwc e\tau(0) :  \sig{\pwl{\dot w}\tau (0) }   \dd x  - \int_{0}^{\pwc{\mathsf{t}}{\tau}(t)}   \int_\Omega \bbD \pwc e\tau(r-\tau): \sig{\partial_t \pwwll w\tau(r)}  \dd x \dd r  
\\
 & \quad +  C_\bbC^2  \int_{0}^{\pwc{\mathsf{t}}{\tau}(t)} \| \pwc e\tau (r) \|_{L^2(\Omega;\mt_\sym^{d\times d}) }  \|  \sig{\pwl{\dot w}\tau (r) }   \|_{L^2(\Omega;\mt_\sym^{d\times d}) } \dd r + C   \int_{0}^{\pwc{\mathsf{t}}{\tau}(t)} \| \pwc\teta\tau(r) \|_{L^1(\Omega)} \|   \sig{\pwl{\dot w}\tau (r) } \|_{L^\infty (\Omega;\mt_\sym^{d\times d})} \dd r 
 \\
 & \stackrel{(3)}{\leq}  C + \frac{C_\bbC^1}8   \int_\Omega |\pwc e\tau(t) |^2 \dd x +   C    \int_{0}^{\pwc{\mathsf{t}}{\tau}(t)    }   \left( \| \sig{\partial_t \pwwll w\tau(r)}\|_{L^2(\Omega;\mt_\sym^{d\times d}) }+   \|  \sig{\pwl{\dot w}\tau (r) }   \|_{L^2(\Omega;\mt_\sym^{d\times d}) } \right)    \| \pwc e\tau(r) \|_{L^2(\Omega;\mt_\sym^{d\times d}) }  \dd r \\ & \quad+   C   \int_{0}^{\pwc{\mathsf{t}}{\tau}(t)} \| \pwc\teta\tau(r) \|_{L^1(\Omega)} \|   \sig{\pwl{\dot w}\tau (r) } \|_{L^\infty (\Omega;\mt_\sym^{d\times d})} \dd r 
 \end{aligned}
 \end{equation}
 where (2) follows from integrating by parts the term $\iint   \bbD \pwl{\dot e}\tau  {:} \sig{\pwl{\dot w}\tau   }$ (again, setting $\pwc e\tau \equiv 0 $ on $(-\tau, 0)$),  and (3) by Young's inequality, with the coefficient $ \frac{C_\bbC^1}8  $ chosen in such a way as to absorb the second term 
  on the right-hand side into   the left-hand side of \eqref{very-1st-step}. 
Collecting all  of the above estimates and taking into account the coercivity properties of $\calE_
\tau$, as well as the bounds provided by   \eqref{converg-rho-interp} and  \eqref{converg-interp-w},
we get 
\[
\begin{aligned}
&
\frac38 {\rho} \int_\Omega | \pwl{\dot u}\tau (t)|^2 \dd x +  \| \pwc\teta\tau(t)\|_{L^1(\Omega)} + \frac{1}4 C_{\bbC}^1  \| \pwc e\tau(t)\|_{L^2(\Omega;\mt_\sym^{d\times d})}^2  + \frac\tau{2\gamma }
\| \pwc e\tau(t)\|_{L^\gamma(\Omega;\mt_\sym^{d\times d})}^\gamma +  \frac\tau{2\gamma} 
\| \pwc p\tau(t)\|_{L^\gamma(\Omega;\mt_\dev^{d\times d})}^\gamma
\\ & 
  \leq   C + 
 \int_{0}^{\pwc{\mathsf{t}}{\tau}(t)}   \|  \pwl{\dot \varrho}\tau (r)  \|_{L^2(\Omega;\mt_\sym^{d\times d})}  \|   \upwc e{\tau}(r)\|_{L^2(\Omega;\mt_\sym^{d\times d})} \dd r +
  C  \int_{0}^{\pwc{\mathsf{t}}{\tau}(t)}  \| (\pwc \varrho\tau (r))_\dev \|_{L^\infty (\Omega;\mt_\dev^{d\times d})} \| \pwc \teta\tau(r) \|_{L^1(\Omega)} \dd r\\ & 
  \quad 
+  \rho  \int_{0}^{\pwc{\mathsf{t}}{\tau}(t)-\tau}   \|  \partial_t\pwwll{w}\tau(s+\tau) \|_{L^2(\Omega;\R^d)}  \|  \pwl{\dot u}\tau(s) \|_{L^2(\Omega;\R^d)}   \dd  s 
\\ &  \quad 
+   \int_{0}^{\pwc{\mathsf{t}}{\tau}(t)    }   \left( \| \sig{\partial_t \pwwll w\tau(r)}\|_{L^2(\Omega;\mt_\sym^{d\times d}) }+   \|  \sig{\pwl{\dot w}\tau (r) }   \|_{L^2(\Omega;\mt_\sym^{d\times d}) } \right)    \| \pwc e\tau(r) \|_{L^2(\Omega;\mt_\sym^{d\times d}) }  \dd r \\ & \quad+   C   \int_{0}^{\pwc{\mathsf{t}}{\tau}(t)}  \|   \sig{\pwl{\dot w}\tau (r) } \|_{L^\infty (\Omega;\mt_\sym^{d\times d})} \| \pwc\teta\tau(r) \|_{L^1(\Omega)}  \dd r  \,.
\end{aligned}
\]
 Applying  a suitable version of Gronwall's Lemma,  
 we conclude that 
\[
\|\pwl{\dot u}\tau \|_{L^\infty(0,T; L^2(\Omega;\R^d))}  +  \calE_\tau(\pwc\teta\tau(t),\pwc e\tau(t), \pwc p\tau(t))  \leq C, 
\]
whence  the second of \eqref{aprioU2}, the second of \eqref{aprioU3},  \eqref{aprioE1}, \eqref{aprioE3}, 
\eqref{aprioP3},  and  the second of \eqref{aprio6-discr}. 
\begin{remark}
\upshape
\label{rmk:diffic-1-test} 
\noindent
The safe load condition \eqref{safe-load} is crucial for  handling $ \int_{0}^{\pwc{\mathsf{t}}{\tau}(t)} \pairing{}{H_\Dir^{1}(\Omega;\R^d)}
{ \pwc \calL \tau  }{\pwl{\dot u}\tau  {-}  \pwl{\dot w}\tau}  \dd r  $ on the r.h.s.\ of \eqref{very-1st-step}, cf.\ \eqref{here:safe-loads}. In fact, this term involves the \emph{dissipative} variable $\pwl{\dot u}\tau $, whose $L^2(\Omega;\R^d)$-norm, \emph{only}, is estimated by
 the r.h.s. of  \eqref{very-1st-step}. Condition \eqref{safe-load} then  allows us to rewrite the above integral in terms of the functions $\pwc \varrho \tau$ and $\pwl {\dot e}\tau$, 
 $\pwl p\tau$, and the resulting integrals are then treated via integration by parts, leading to quantities that can be controlled by the l.h.s.\ of \eqref{very-1st-step}. 
 \par
 Without \eqref{safe-load}, the term $ \int_{0}^{\pwc{\mathsf{t}}{\tau}(t)} \pairing{}{H_\Dir^{1}(\Omega;\R^d)}{ \pwc \calL \tau }{\pwl{\dot u}\tau {-}   \pwl{\dot w}\tau }   \dd r  $ could be treated only by supposing that $g \equiv 0$,   and that $F\in L^2(Q;\R^d)$. 
 \par
 The estimates for the term $ \int_{0}^{\pwc{\mathsf{t}}{\tau}(t)}\int_\Omega \pwc\sigma\tau {:} \sig{\pwl{\dot w}\tau  }  \dd x \dd r $, cf.\ \eqref{here:w}, 
unveil the role of the condition $w\in L^1(0,T; W^{1,\infty} (\Omega;\R^d))$, which allows us to control the term $ \int_{0}^{\pwc{\mathsf{t}}{\tau}(t)}\int_\Omega 
 \pwc\teta\tau \bbB{:} \sig{\pwl{\dot w}\tau  }  \dd x \dd r $ exploiting the $L^1(\Omega)$-bound provided by the l.h.s.\ of \eqref{very-1st-step}. Alternatively, one could 
 impose some sort of `compatibility' between the thermal expansion tensor $\bbB = \bbC \bbE$ and the Dirichlet loading $w$, by requiring that $\bbB {:} \sig{\dot w} \equiv 0$, cf.\ 
 \cite{Roub-PP}.
 Analogously, the condition $w \in W^{2,1}(0,T;H^1(\Omega;\R^d))$ has been used in the estimation of the term $I_5$, cf.\ \eqref{here:w}.
  \end{remark}
\textbf{Second a priori estimate:} we test
\eqref{discrete-heat} by $(\tetau{k})^{\alpha-1}$, with $\alpha \in (0,1)$, thus obtaining   
\begin{equation}
\label{ad-est-temp1}
\begin{aligned}
& 
\int_\Omega \left(  \gtau{k} + \mathrm{R}\left(\tetau{k-1}, \Dtau{k} p  \right) +  \left|  \Dtau{k} p \right|^2+ \bbD   \Dtau{k} e :  \Dtau{k} e   \right)  (\tetau{k})^{\alpha-1} \dd x  \\ & \quad
- \int_\Omega \condu(\tetau k) \nabla \tetau k \nabla  (\tetau{k})^{\alpha-1} \dd  x + 
\int_{\partial\Omega} \htau k  (\tetau{k})^{\alpha-1} \dd S  
\\
& 
\leq \int_\Omega  \left( \frac1{\alpha}\frac{(\tetau k)^\alpha - (\tetau {k-1})^\alpha}{\tau} + 
\tetau{k} \bbB : \Dtau k e (\tetau{k})^{\alpha-1} \right)  \dd  x
\end{aligned}
\end{equation}
where we have applied the concavity inequality \eqref{inequality-concave-functions}, with the choice
$\psi(\teta)=\tfrac1{\alpha}\teta^\alpha$, to estimate the term $\frac1\tau \int_\Omega (\tetau{k} {-} \tetau{k-1}) (\tetau k)^{\alpha-1} \dd x $. 
Therefore,  multiplying by $\tau$, summing over the index $k$ and  neglecting some  positive terms on the left-hand side of \eqref{ad-est-temp1}, we obtain
 for all $t \in (0,T]$
 \begin{equation} 
 \label{ad-est-temp2}
 \begin{aligned}
 &
 \frac{4(1-\alpha)}{\alpha^2} \int_0^{\pwc{\mathsf{t}}{\tau}(t)} \int_\Omega \condu(\pwc \teta{\tau}) |\nabla ((\pwc \teta{\tau})^{\alpha/2}) |^2 \dd x \dd s  +  \int_0^{\pwc{\mathsf{t}}{\tau}(t)}  \int_\Omega   
  C_\bbD^1 | \pwl {\dot e}\tau|^2 (\pwc \teta \tau)^{\alpha-1}  
 \dd x \dd s
\\
&
\leq I_1+I_2+I_3,
 \end{aligned}
 \end{equation}
 with 
 \begin{equation}
 \label{est-temp-I1}
  I_1 =   \frac1\alpha\int_\Omega  (\pwc \teta\tau(t) )^{\alpha} \dd x \leq \frac1\alpha \| \pwc \teta\tau\|_{L^\infty (0,T; L^1(\Omega))} + C \leq C
 \end{equation}
 via Young's inequality (using that  $\alpha \in (0,1)$) and the second of \eqref{aprio6-discr};   similarly $ I_2= -  \frac1\alpha\int_\Omega  (\teta_0)^{\alpha} \dd x \leq \frac1\alpha \| \teta_0\|_{L^1(\Omega)} + C$, whereas 
   \begin{equation}
 \label{est-temp-I3}
 I_3=  \int_0^{\pwc{\mathsf{t}}{\tau}(t)}  \int_\Omega   \pwc \teta\tau(t) \bbB : \pwl{\dot e}\tau(t)  ( \pwc \teta\tau(t) )^{\alpha-1}  \dd  x \leq 
\frac{ C_\bbD^1 }4   \int_0^{\pwc{\mathsf{t}}{\tau}(t)}  \int_\Omega  | \pwl {\dot e}\tau|^2 (\pwc \teta \tau)^{\alpha-1}   \dd x \dd s + C  \int_0^{\pwc{\mathsf{t}}{\tau}(t)}  \int_\Omega
(\pwc \teta \tau)^{\alpha+1}   \dd x \dd s\,.
 \end{equation}
 All in all, absorbing the first term on the right-hand side of \eqref{est-temp-I3} into  the left-hand side of \eqref{ad-est-temp2} and taking into account the growth condition \eqref{hyp-K} on $\condu$,  which yields  with easy calculations 
 that 
 \begin{equation}
 \label{needs-be-observed}
 \int_0^{\pwc{\mathsf{t}}{\tau}(t)} \int_\Omega \condu(\pwc \teta{\tau}) |\nabla ((\pwc \teta{\tau})^{\alpha/2}) |^2 \dd x \dd s  \stackrel{(1)}{\geq} c  \int_0^{\pwc{\mathsf{t}}{\tau}(t)} \int_\Omega | \pwc \teta \tau|^{\mu+\alpha-2}  | \nabla  \pwc \teta \tau |^2 \dd x \dd s = c  \int_0^{\pwc{\mathsf{t}}{\tau}(t)} \int_\Omega  | \nabla  (\pwc \teta \tau)^{(\mu+\alpha)/2} |^2 \dd x \dd s, 
  \end{equation}
 we conclude from \eqref{ad-est-temp2} that 
 \begin{equation}
  \label{ad-est-temp3}
 c   \int_0^{\pwc{\mathsf{t}}{\tau}(t)} \int_\Omega  | \nabla  (\pwc \teta \tau)^{(\mu+\alpha)/2} |^2 \dd x \dd s \leq C +  C  \int_0^{\pwc{\mathsf{t}}{\tau}(t)}  \int_\Omega
(\pwc \teta \tau)^{\alpha+1}   \dd x \dd s\,.
 \end{equation}
 From now on, the  calculations follow exactly the same lines as those developed in \cite[(3.8)--(3.12)]{Rocca-Rossi} for the analogous estimate,  in turn based on the ideas from \cite{FPR09}. While referring to \cite{Rocca-Rossi} for all details, let us just give the highlights. Setting  $ \pwc \xi\tau : = (\pwc \teta\tau \vee 1)^{(\mu+\alpha)/2}$, we  deduce from \eqref{ad-est-temp3} the following inequality
  \begin{equation}
  \label{ad-est-temp4}
  \begin{aligned}
 \int_0^{\pwc{\mathsf{t}}{\tau}(t)} \int_\Omega  | \nabla  \pwc \xi\tau  |^2 \dd x \dd s &  \leq C +  C  \int_0^{\pwc{\mathsf{t}}{\tau}(t)}  \| \pwc \xi\tau\|_{L^q(\Omega)}^q    \dd s
 \\ & 
  \leq C +
  \frac12  \int_0^{\pwc{\mathsf{t}}{\tau}(t)} \int_\Omega  | \nabla  \pwc \xi\tau  |^2 \dd x \dd  s  +
  C 
 \int_0^{\pwc{\mathsf{t}}{\tau}(t)}  \| \pwc \xi\tau\|_{L^r(\Omega)}^s \dd s + C   \int_0^{\pwc{\mathsf{t}}{\tau}(t)}  \| \pwc \xi\tau\|_{L^r(\Omega)}^q \dd s,
 \end{aligned}
 \end{equation}
 with $q \in [1,6)$  satisfying $\tfrac{\mu+\alpha}{2} \geq \tfrac{\alpha+1}{q}$.
 The very last estimate ensues from 
 the Gagliardo-Nirenberg inequality, which  in fact yields 
 \begin{equation}
 \label{GN}
 \|  \pwc \xi\tau \|_{L^q(\Omega)} \leq C_{\mathrm{GN}} \| \nabla \pwc \xi\tau \|_{L^2(\Omega;\R^d)}^\theta \| \pwc \xi\tau \|_{L^r(\Omega)}^{1-\theta} +  C \| \pwc \xi\tau \|_{L^r(\Omega)} \qquad \text{for } \theta \in (0,1) \text{ s.t. } \frac1q= \frac\theta 6 +\frac{1-\theta}r
 \end{equation}
 with   $r \in [1,q]$. Then, $s$ in \eqref{ad-est-temp4}  is a third exponent, related to $q$ and $r$ via \eqref{GN}.
 In \cite{Rocca-Rossi} it is shown that the exponents 
$q$ and $r$ can be chosen in such a way as to have $ \| \pwc \xi\tau\|_{L^\infty(0,T; L^r(\Omega))} \leq C   \| \pwc \teta\tau\|_{L^\infty (0,T; L^1(\Omega))} + C \leq C $ thanks to the  second of \eqref{aprio6-discr}.
In particular, one has to impose that $1\leq r \leq \frac2{\mu+\alpha}$. 
 Inserting this into \eqref{ad-est-temp4} one concludes that $\| \nabla  \pwc \xi\tau  \|_{L^2(0,T; L^2(\Omega;\R^d))} \leq C$. All in all, this argument yields a bound for $ \pwc \xi\tau$ in $L^2(0,T; H^1(\Omega)) \cap L^\infty (0,T; L^r(\Omega))$.  Since $ \pwc \xi\tau  = (\pwc \teta\tau \vee 1)^{(\mu+\alpha)/2}$, we ultimately conclude that 
 \begin{equation}
 \label{est-temp-added?}
 \| (\pwc \teta\tau)^{(\mu+\alpha)/2} \|_{L^2(0,T; H^1(\Omega)) \cap L^\infty (0,T; L^r(\Omega))} \leq C. 
 \end{equation}
Then, from inequality (1) in \eqref{needs-be-observed} we deduce that 
 $ \int_0^{T} \int_\Omega  | \nabla  \pwc \teta\tau  |^2 \dd x \dd s \leq C $
 provided that 
 \begin{equation}
 \label{restriction-alpha}
 \mu+\alpha-2 \geq 0 \ \text{whence the constraints} \ \alpha>0  \text{ and } \alpha \geq 2-\mu.
 \end{equation}
 From the constraints $\frac2{\mu+\alpha} \geq 1$
 and $\frac{\mu+\alpha}2 \geq 1$ in \eqref{restriction-alpha} we deduce that $\frac{\mu+\alpha}2 = 1$. Ultimately, $r=1$ and  $\alpha = 2-\mu$. 
  Thus,
  \eqref{est-temp-added?}
yields 
  the first of  \eqref{aprio6-discr}. 
 Interpolating between the two estimates in \eqref{aprio6-discr}  via the Gagliardo-Niremberg inequality gives 
 \begin{equation}
 \label{Gagliardo-Nir}
 \| \pwc \teta\tau \|_{L^h (Q)} \leq C \quad \text{with } h=\frac83 \text{ if } d=3 \text{ and } h=3 \text{ if } d=2.
 \end{equation}
 Estimate \eqref{log-added} follows from taking into account that 
\[
\| \log(\pwc \teta\tau)\|_{L^2(0,T; H^1(\Omega))} \leq C  \left( 1+ \|\pwc \teta\tau\|_{L^2(0,T; H^1(\Omega))} \right) 
\]
thanks to the strict positivity \eqref{discr-strict-pos}. 
For later use, let us point out that, in the end, we recover the bound
\[
 \| (\pwc \teta\tau)^{(\mu{+}\alpha)/2} \|_{L^2(0,T;H^1(\Omega))} \leq C 
\] 
for arbitrary $\alpha \in (0,1)$. For this, it is sufficient to observe that the second term on the right-hand side of \eqref{ad-est-temp3} now fulfills
\[
 \int_0^{\pwc{\mathsf{t}}{\tau}(t)}  \int_\Omega
(\pwc \teta \tau)^{\alpha+1}   \dd x \dd s \leq C
\]
thanks to estimate \eqref{Gagliardo-Nir}. Hence, \eqref{ad-est-temp3}  yields $  \int_0^{\pwc{\mathsf{t}}{\tau}(t)} \int_\Omega  | \nabla  (\pwc \teta \tau)^{(\mu+\alpha)/2} |^2 \dd x \dd s\leq C$, 
whence the  $L^2(0,T;H^1(\Omega))$-bound for $ (\pwc \teta\tau)^{(\mu{+}\alpha)/2}$ via the Poincar\'e inequality.  
Then, taking into account that $  (\pwc \teta\tau)^{(\mu-\alpha)/2} \leq  (\pwc \teta\tau)^{(\mu+\alpha)/2}  +1 $ a.e.\ in $Q$ and using 
that
 \[
\int_\Omega |\nabla  (\pwc \teta\tau)^{(\mu-\alpha)/2}|^2 \dd x  = C  \int_\Omega  ( \pwc \teta\tau)^{\mu-\alpha - 2} | \nabla \pwc \teta \tau|^2 \dd x \leq \frac{C}{\bar\teta^{2\alpha}} 
 \int_\Omega  ( \pwc \teta\tau)^{\mu+\alpha - 2} | \nabla \pwc \teta \tau|^2 \dd x \leq C,
 \]
 we conclude  estimate \eqref{est-temp-added?-bis}.
\par\noindent
\textbf{Third a priori estimate:} We consider the mechanical energy inequality \eqref{mech-ineq-discr} written for $s=0$. We estimate the terms on  its right-hand side  by the very same calculations developed in the \emph{First a priori estimate} for the right-hand side terms of \eqref{total-enid-discr}. We also  use that 
\[
 \int_{0}^{\pwc{\mathsf{t}}{\tau}(t)} \int_\Omega \pwc\teta\tau \bbB {:} \pwl{\dot e}\tau \dd x  \dd r \leq   \delta \int_0^{\pwc{\mathsf{t}}{\tau}(t)} \int_\Omega | \pwl{\dot e}\tau |^2 \dd x \dd r + C_\delta  \| \pwc \teta \tau \|_{L^2(0,T; L^2(\Omega))}^2
\]
via Young's inequality, with the constant $\delta>0$ chosen in such a way as to absorb the term $\iint   | \pwl{\dot e}\tau |^2$ into the left-hand side of  \eqref{mech-ineq-discr}. Since $ \| \pwc \teta \tau \|_{L^2(0,T; L^2(\Omega))}^2 \leq C$ by the previously proved \eqref{aprio6-discr}, we ultimately conclude that the  terms on the right-hand side of  \eqref{mech-ineq-discr} are all bounded, uniformly w.r.t.\ $\tau$. This leads to \eqref{aprioE2} and \eqref{aprioP2}, whence 
\eqref{aprioP1}, as well as \eqref{aprioU1} and the first of 
\eqref{aprioU2} by kinematic admissibility. 
\par\noindent
\textbf{Fourth a priori estimate:} 
It follows from estimates \eqref{aprioE1}, \eqref{aprioE2}, \eqref{aprioE3}, and  \eqref{aprio6-discr} that the stresses $(\pwc \sigma\tau )_\tau$ are uniformly bounded in
$L^{\gamma/(\gamma{-}1)}(Q; \mt_\sym^{d\times d})$. Therefore, also taking into account \eqref{converg-interp-L},    a comparison argument in the discrete momentum balance  \eqref{eq-u-interp} yields   that the derivatives $(\partial_t \pwwll u\tau)_\tau$ are bounded in $L^{\gamma/(\gamma{-}1)} (0,T;W^{1,\gamma}(\Omega;\R^d)^*)$, whence the third of \eqref{aprioU3}.
\par\noindent 
\textbf{Fifth a priori estimate:} We will now sketch the argument for \eqref{aprio_Varlog}, referring to the proof of \cite[Prop.\ 4.10]{Rocca-Rossi} for all details. Indeed,  let us fix a partition $0 =\sigma_0 < \sigma_1 < \ldots < \sigma_J =T$ of the interval $[0,T]$. From the discrete  entropy inequality
\eqref{entropy-ineq-discr} written on the  interval $[\sigma_{i-1},\sigma_i]$
 and for a \emph{constant-in-time} test function 
 we deduce that
 \begin{equation}
  \label{ell-i-pos-neg}
 \begin{aligned}
 &
 \int_\Omega (\log(\pwc\teta\tau (\sigma_{i}))  - \log(\pwc\teta\tau (\sigma_{i-1})) ) \varphi \dd x + \Lambda_{i,\tau} (\varphi)   \geq 0 && \text{for all } \varphi \in W_+^{1,d+\epsilon}(\Omega),
 \\
   &
 \int_\Omega ( \log(\pwc\teta\tau (\sigma_{i-1}))-  \log(\pwc\teta\tau (\sigma_{i}))) \varphi \dd x - \Lambda_{i,\tau} (\varphi) \geq 0 && \text{for all } \varphi \in W_-^{1,d+\epsilon}(\Omega),
 \end{aligned} 
 \end{equation}
 where we have used the place-holder
 \begin{equation}
 \label{pl-ho-varphi}
\begin{aligned}
\Lambda_{i,\tau} (\varphi) =
 &   \int_{\pwc{\mathsf{t}}{\tau}(\sigma_{i-1})}^{\pwc{\mathsf{t}}{\tau}(\sigma_i)}  \int_\Omega  \condu(\pwc\teta\tau) \nabla \log(\pwc\teta\tau) \nabla \varphi \dd x \dd r + \int_{\pwc{\mathsf{t}}{\tau}(\sigma_{i-1})}^{\pwc{\mathsf{t}}{\tau}(\sigma_i)} \int_\Omega \bbB {:} \pwl{\dot e}\tau  \varphi  \dd x \dd r
\\
& \quad -  \int_{\pwc{\mathsf{t}}{\tau}(\sigma_{i-1})}^{\pwc{\mathsf{t}}{\tau}(\sigma_i)} \int_\Omega  \condu(\pwc\teta\tau) \frac{\varphi}{\pwc\teta\tau} \nabla (\log(\pwc\teta\tau)) \nabla \pwc\teta\tau \dd x \dd r -  \int_{\pwc{\mathsf{t}}{\tau}(\sigma_{i-1})}^{\pwc{\mathsf{t}}{\tau}(\sigma_i)}   \int_{\partial\Omega} \pwc h{\tau}  \frac{\varphi}{\pwc\teta\tau} \dd S \dd r
 \\
 &\quad - \int_{\pwc{\mathsf{t}}{\tau}(\sigma_{i-1})}^{\pwc{\mathsf{t}}{\tau}(\sigma_i)} \int_\Omega \left(\pwc H\tau+ \mathrm{R}(\upwc \teta\tau, \pwl {\dot p}\tau) + | \pwl {\dot p}\tau|^2 + \bbD \pwl{\dot e}\tau{:}  \pwl{\dot e}\tau  \right)
 \frac{\varphi}{\pwc\teta\tau} \dd x \dd r.
\end{aligned}
\end{equation}
Arguing as in the proof of \cite[Prop.\ 4.10]{Rocca-Rossi}, 
from \eqref{ell-i-pos-neg} we deduce that 
\begin{equation}
\label{genialata}
\begin{aligned} & 
\sum_{i=1}^J \left|  \pairing{}{W^{1,d+\epsilon}(\Omega)}{\log(\pwc\teta\tau (\sigma_{i}))  - \log(\pwc\teta\tau (\sigma_{i-1})) }{\varphi} \right| 
\\ & \leq  \sum_{i=1}^J   
\int_\Omega (\log(\pwc\teta\tau (\sigma_{i}))  - \log(\pwc\teta\tau (\sigma_{i-1}))) |\varphi| \dd x
+ \Lambda_{i,\tau} (|\varphi|) + |\Lambda_{i,\tau} (\varphi^+)| + |\Lambda_{i,\tau} (\varphi^-)| 
\end{aligned} 
\end{equation}
for all $\varphi \in W^{1,d+\epsilon}(\Omega)$. 
Then, we infer the bound  \eqref{aprio_Varlog} by 
 estimating the terms on the right-hand side of \eqref{genialata}, uniformly w.r.t.\ $\varphi$. 
 In particular, to handle the second, fourth, and fifth integral terms  arising from $\Lambda_{i,\tau} (\varphi)$ (cf.\
  \eqref{pl-ho-varphi}), we  use the previously proved estimates  \eqref{aprioE2}, \eqref{aprioP2}, 
 as well as the bounds provided by \eqref{converg-interp-g} and   \eqref{converg-interp-h}  on $\pwc H\tau$ and $\pwc h \tau$, cf.\ \cite{Rocca-Rossi} for all details. Let us only comment on the estimates for the first and third integral terms 
 on the r.h.s.\ of \eqref{pl-ho-varphi}. 
 We remark that for every $\varphi \in W^{1,d+\epsilon}(\Omega)$ we have 
  \begin{equation}
  \label{est-kappa-teta-log}
 \begin{aligned}
 &  \left| \int_{\pwc{\mathsf{t}}{\tau}(\sigma_{i-1})}^{\pwc{\mathsf{t}}{\tau}(\sigma_i)}  \int_\Omega  \condu(\pwc\teta\tau) \nabla \log(\pwc\teta\tau) \nabla \varphi \dd x \dd r   \right|  
\\ 
 & \stackrel{(1)}{\leq} C  \int_{\pwc{\mathsf{t}}{\tau}(\sigma_{i-1})}^{\pwc{\mathsf{t}}{\tau}(\sigma_i)}  \int_\Omega  \left(| \pwc\teta\tau |^{\mu-1} | \nabla  \pwc\teta\tau | + \frac1{\pwc\teta\tau}   | \nabla  \pwc\teta\tau |  \right) |\nabla \varphi|  \dd x \dd r
 \\
  &  \stackrel{(2)}{\leq} C   \int_{\pwc{\mathsf{t}}{\tau}(\sigma_{i-1})}^{\pwc{\mathsf{t}}{\tau}(\sigma_i)}   \int_\Omega  |\left( \pwc\teta\tau \right)^{(\mu+\alpha-2)/2}  \nabla  \pwc\teta\tau |  \left( \pwc\teta\tau \right)^{(\mu-\alpha)/2}  |\nabla \varphi|   + \frac1{\bar\teta}   | \nabla  \pwc\teta\tau |  |\nabla \varphi|  \dd x \dd r 
   \\
  &  \stackrel{(3)}{\leq} C   \int_{\pwc{\mathsf{t}}{\tau}(\sigma_{i-1})}^{\pwc{\mathsf{t}}{\tau}(\sigma_i)}   \|\left( \pwc\teta\tau \right)^{(\mu+\alpha-2)/2}  \nabla  \pwc\teta\tau \|_{L^2(\Omega;\R^d)} \, \|  \left( \pwc\teta\tau \right)^{(\mu-\alpha)/2} \|_{L^{d^\star}(\Omega)} \| \nabla \varphi \|_{L^{d+\epsilon}(\Omega;\R^d)} \dd r  \\ & \quad +  C   \int_{\pwc{\mathsf{t}}{\tau}(\sigma_{i-1})}^{\pwc{\mathsf{t}}{\tau}(\sigma_i)}    \| \nabla  \pwc\teta\tau \|_{L^2(\Omega;\R^d)}  \|\nabla \varphi\|_{L^2(\Omega;\R^d)} \dd r 
\end{aligned} 
 \end{equation}
 where (1) follows  from the growth condition \eqref{hyp-K} on $\condu$, (2) from the discrete positivity property \eqref{discr-strict-pos}, and (3) from H\"older's inequality, in view of  the continuous embedding 
 \begin{equation}
 \label{Sobolev-embedding}
 H^1(\Omega) \subset L^{d^\star}(\Omega) \quad \text{with } d^\star \begin{cases} 
 \in [1,\infty) & \text{if } d=2,
 \\
 = 6 & \text{if } d=3.
 \end{cases}
 \end{equation}
 Therefore, observe that only $\varphi \in W^{1,d+\epsilon}(\Omega)$, with $\epsilon>0$, is needed. 
 Then, we use estimates \eqref{aprio6-discr} and  \eqref{est-temp-added?-bis} to bound the terms on the r.h.s.\ of \eqref{est-kappa-teta-log}.  As for the third term on the r.h.s.\ of \eqref{pl-ho-varphi}, we use that 
 \[
 \begin{aligned}
 &
\left|   \int_{\pwc{\mathsf{t}}{\tau}(\sigma_{i-1})}^{\pwc{\mathsf{t}}{\tau}(\sigma_i)} \int_\Omega  \condu(\pwc\teta\tau) \frac{\varphi}{\pwc\teta\tau} \nabla (\log(\pwc\teta\tau)) \nabla \pwc\teta\tau \dd x \dd r \right|
\\ 
 & \stackrel{(4)}{\leq} C  \int_{\pwc{\mathsf{t}}{\tau}(\sigma_{i-1})}^{\pwc{\mathsf{t}}{\tau}(\sigma_i)}  \int_\Omega  \left(| \pwc\teta\tau |^{\mu-2} | \nabla  \pwc\teta\tau |^2 + \frac1{\bar\teta^2}   | \nabla  \pwc\teta\tau |^2\right) | \varphi|  \dd x \dd r
 \\ 
 & \stackrel{(5)}{\leq} C  \| \varphi \|_{L^\infty(\Omega)}  \int_{\pwc{\mathsf{t}}{\tau}(\sigma_{i-1})}^{\pwc{\mathsf{t}}{\tau}(\sigma_i)}  \int_\Omega   | \pwc\teta\tau |^{\mu+\alpha-2} | \nabla  \pwc\teta\tau |^2 + | \nabla  \pwc\teta\tau |^2  \dd x \dd r,
\end{aligned} 
 \]
 with (4) due to  \eqref{hyp-K} and the positivity property \eqref{discr-strict-pos},  and (5) following from the estimate $| \pwc\teta\tau |^{\mu-2} \leq  | \pwc\teta\tau |^{\mu+\alpha-2}+1 $, combined with the fact that $\varphi \in W^{1,d+\epsilon}(\Omega) \subset L^\infty(\Omega)$. Again, we conclude via the bounds  \eqref{aprio6-discr} and  \eqref{est-temp-added?-bis}. 
\par\noindent 
\textbf{Sixth a priori estimate:} 
Under the stronger condition \eqref{hyp-K-stronger}, we multiply the discrete heat equation \eqref{discrete-heat} by a function  $\varphi \in W^{1,\infty}(\Omega)$. Integrating in space  we thus obtain for almost all $t\in (0,T)$
\begin{equation}
\label{analog-7th-est}
\left|\int_\Omega \pwl{\dot \teta}\tau (t)  \varphi \dd x \right| 
\leq \left| \int_\Omega \condu(\pwc \teta\tau(t)) \nabla  \pwc \teta\tau(t) \nabla \varphi \dd x \right| + \left| \int_\Omega \pwc J{\tau}(t) \varphi \dd x \right|  + \left| \int_{\partial\Omega} \pwc h\tau(t)  \varphi \dd S\right| \doteq I_1+I_2+I_3\,,
\end{equation}
where we have used the place-holder $  \pwc J{\tau}(t): =   \pwc H\tau(t) + \mathrm{R}\left(\upwc\teta\tau(t), \pwl {\dot p}\tau(t) \right) +  \left|  \pwl {\dot p}\tau(t)  \right|^2+ \bbD   \pwl {\dot e}\tau(t)  : \pwl {\dot e}\tau(t) -\pwc\teta\tau(t) \bbB : \pwl {\dot e}\tau(t) $. Now, in view of  \eqref{converg-interp-g}  for $\pwc H\tau$ and of estimates 
 \eqref{aprioE2}, \eqref{aprioP2},   and \eqref{aprio6-discr}, it is clear that 
 \[
I_2\leq \mathcal{J}_\tau (t) \| \varphi\|_{L^\infty(\Omega)} \quad \text{with $\mathcal{J}_\tau (t): =  \| \pwc J\tau(t)\|_{L^1(\Omega)} $.} 
 \]
Observe that the   family $(\mathcal{J}_\tau)_\tau$ is uniformly bounded in $L^1(0,T)$. The third term on the r.h.s.\ of  \eqref{analog-7th-est} is analogously bounded thanks to \eqref{converg-interp-h}. As for the first one, we use that 
\[
I_1\leq    C\|(\pwc \teta\tau)^{(\mu-\alpha+2)/2} \|_{L^2(\Omega)} \| (\pwc \teta\tau)^{(\mu+\alpha-2)/2} \nabla  \pwc \teta\tau\|_{L^2(\Omega;\R^d)} \|\nabla \varphi\|_{L^\infty (\Omega;\R^d)}
+   C \| \nabla \pwc \teta\tau\|_{L^2(\Omega;\R^d)}  \|\nabla \varphi\|_{L^2 (\Omega;\R^d)},
\]  based on the growth condition \eqref{hyp-K} on $\condu$.  By  \eqref{hyp-K-stronger} we have 
 $\mu <5/3$ if $d=3$, and $\mu<2$ if $d=2$. Since $\alpha$ can be chosen arbitrarily close to $1$, from \eqref {Gagliardo-Nir}
 we gather that  $(\pwc \teta\tau)^{(\mu-\alpha+2)/2}$ is bounded in $L^2(Q)$. Therefore, also  taking into account \eqref{est-temp-added?} and \eqref{aprio6-discr} we infer that $I_1\leq \mathcal{K}_\tau(t)  \| \varphi\|_{L^\infty(\Omega)}$  with $(\mathcal{K}_\tau)_\tau$ bounded in $L^1(0,T)$. Hence, 
 estimate \eqref{aprio7-discr}
 follows. 
\end{proof}
\subsection{Passage to the limit}
\label{ss:3.3}	
In this section, we conclude the proof of Theorems	\ref{mainth:1} \& \ref{mainth:2}.
 First of all, from the a priori
estimates obtained in Proposition \ref{prop:aprio} we deduce the convergence (along a subsequence, in suitable topologies) of the approximate solutions, to a quadruple $(\teta,u,e,p)$. In the proofs of Thm.\  \ref{mainth:1} (\ref{mainth:2}, respectively), we then proceed to  show  that  $(\teta,u,e,p)$ is  an \emph{entropic} (a \emph{weak energy}, respectively) solution to (the Cauchy problem for) system  (\ref{plast-PDE}, \ref{bc}), by passing to the limit in the approximate system \eqref{syst-interp}, and in the discrete entropy and total energy inequalities. Let us mention that, in order to recover the  kinematic admissibility, the weak momentum balance, and the plastic flow rule, we will follow an approach different from  that developed in \cite{DMSca14QEPP}.
The latter paper exploited   a reformulation of the (discrete) momentum balance and flow rule in terms of a mechanical energy balance, and a variational inequality,
based on the results from \cite{DMDSMo06QEPL}. Let us point out that
 it would be possible to repeat this argument in the present setting as well. Nonetheless, the limit passage procedure that we will develop in Step 2 of the proof of Thm.\ \ref{mainth:1} will lead us to conclude, via careful $\limsup$-arguments, additional strong convergences that will allow us to take the limit of the quadratic terms on the r.h.s.\ of the heat equation \eqref{heat}.
\par
Prior to our
compactness statement
for the sequence of approximate solutions, 
we recall here a  compactness result, akin to the Helly Theorem and  tailored to the bounded variation type estimate \eqref{aprio_Varlog}, which will have a pivotal role in establishing the convergence properties for  (a subsequence of) the approximate temperatures. 
Theorem \ref{th:mie-theil} below was proved in \cite{Rocca-Rossi}, 
cf.\ Thm.\ A.5 therein, 
with the exception  of convergence \eqref{enhSav}. We will  give its proof  in the Appendix, and in doing so we will shortly recapitulate the argument for \cite[Thm.\ A.5]{Rocca-Rossi}. Since in the proof we shall resort to a compactness result from the theory of Young measures, also invoked in the proof of Thm.\ \ref{mainth:3} ahead, we shall  recall such result, together with some basics of the theory, in the Appendix.
\begin{theorem}
\label{th:mie-theil}
Let $\bsV$ and $\bsY$ be two (separable) reflexive Banach spaces  such that
$\bsV \subset \bsY^*$ continuously. Let
 $(\ell_k)_k \subset L^p
(0,T;\bsV) \cap \mathrm{B} ([0,T];\bsY^*)$ be  bounded  in $L^p
(0,T;\bsV) $ and suppose in addition  that
\begin{align}
\label{ell-n-0}
&
\text{$(\ell_k(0))_k\subset \bsY^*$ is bounded},
\\
&
\label{BV-bound}
\exists\, C>0 \  \ \forall\, \varphi \in \overline{B}_{1,\bsY}(0)  \ \  \forall\, k \in \N\, : \quad
 \mathrm{Var}(\pairing{}{\bsY}{\ell_k}{ \varphi}; [0,T] )  \leq C,
\end{align}
where, for  given $\ell \in
\mathrm{B}([0,T];\bsY^*)$ and $\varphi \in \bsY$ we set
\begin{equation}
\label{var-notation}
\begin{aligned}
\mathrm{Var}(\pairing{}{\bsY}{\ell}{ \varphi}; [0,T] ) : =  \sup \{  \sum_{i=1}^J
\left |\pairing{}{\bsY}{\ell(\sigma_{i})}{ \varphi} - \pairing{}{\bsY}{\ell(\sigma_{i-1})}{ \varphi}  \right|
\, :  \
0 =\sigma_0 < \sigma_1 < \ldots < \sigma_J =T \} \,.
\end{aligned}
\end{equation}
\par
Then, there exist  a  (not relabeled) subsequence
 $(\ell_{k})_k$
 and a function $\ell \in L^p (0,T;\bsV) \cap L^\infty (0,T; \bsY^*) $ 
 such that  as $k\to \infty$
 \begin{align}
 \label{weak-LpB}
 &
 \ell_{k} \weaksto \ell \quad \text{ in } L^p (0,T;\bsV) \cap L^\infty (0,T;\bsY^*),
 \\
\label{weak-ptw-B}
&
\ell_{k}(t) \weakto \ell(t) \quad \text{ in } \bsV\quad \foraa t \in (0,T).
\end{align}
\par
Furthermore, for almost all $t \in (0,T)$ and any sequence $(t_k)_k \subset [0,T]$ with $t_k \to t$ there holds 
\begin{equation}
\label{enhSav}
 \ell_{k}(t_k) \weakto \ell(t)  \qquad \text{ in $\bsY^*$. }
 \end{equation}
\end{theorem}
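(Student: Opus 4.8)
The strategy is a standard Helly-type selection argument using the BV-bound \eqref{BV-bound}, combined with weak compactness from the $L^p(0,T;\bsV)$-bound and a diagonal argument. First I would exploit separability: since $\bsY$ is separable and reflexive, fix a countable dense set $\{\varphi_j\}_{j\in\N}\subset \bsY$ whose linear span is dense. For each $j$, the scalar functions $t\mapsto \langle \ell_k(t),\varphi_j\rangle_{\bsY}$ are defined at every $t\in[0,T]$, uniformly bounded at $t=0$ by \eqref{ell-n-0} and have uniformly bounded variation by \eqref{BV-bound} (after rescaling $\varphi_j$ into $\overline{B}_{1,\bsY}(0)$); hence by Helly's selection theorem, passing to a subsequence and diagonalizing over $j$, there is a single (not relabeled) subsequence $(\ell_k)_k$ and pointwise limits $\lambda_j(t):=\lim_k \langle \ell_k(t),\varphi_j\rangle_{\bsY}$ existing for \emph{every} $t\in[0,T]$, with $\mathrm{Var}(\lambda_j;[0,T])\le C$.

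\textbf{Identification of the limit and weak convergences.} Next I would extract, from the $L^p(0,T;\bsV)$-bound, a weakly-$*$ convergent subsequence $\ell_k\weaksto \ell$ in $L^p(0,T;\bsV)\cap L^\infty(0,T;\bsY^*)$ (using $\bsV\subset\bsY^*$ continuously and reflexivity/separability to get the $L^\infty(0,T;\bsY^*)$-weak-$*$ limit as well), which gives \eqref{weak-LpB}. The key point is to show that the pointwise limits $\lambda_j$ are compatible with $\ell$: for a.e.\ $t$ one has $\langle \ell(t),\varphi_j\rangle_{\bsY}=\lambda_j(t)$. This follows by testing the weak-$*$ convergence against functions of the form $\chi_{[s,t]}(\cdot)\varphi_j$ and using that the maps $r\mapsto\langle\ell_k(r),\varphi_j\rangle$ are uniformly BV, hence uniformly bounded, so dominated convergence applies; then a Lebesgue-point argument identifies the integrands. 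Having this, for a.e.\ $t$ the functional $\varphi_j\mapsto \lambda_j(t)$ extends by density (the uniform bound $|\lambda_j(t)|\le \|\ell\|_{L^\infty(0,T;\bsY^*)}\|\varphi_j\|_\bsY$ plus the BV control near any good $t$ makes the extension well-defined and bounded) to an element of $\bsY^*$ which coincides with $\ell(t)$; moreover $\ell_k(t)$ is bounded in $\bsY^*$ for a.e.\ $t$ (again by \eqref{BV-bound}, \eqref{ell-n-0}) and $\langle\ell_k(t),\varphi_j\rangle\to\langle\ell(t),\varphi_j\rangle$ for all $j$, hence $\ell_k(t)\weakto\ell(t)$ in $\bsY^*$, and if additionally $\ell_k(t)$ is bounded in the reflexive space $\bsV$ for a.e.\ $t$ (which holds since $t$ is a Lebesgue point of $\|\ell_k\|_\bsV$ up to a further argument, or directly because $\bsV\subset\bsY^*$ and one upgrades the weak convergence), one gets \eqref{weak-ptw-B}.

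\textbf{The enhanced convergence \eqref{enhSav}.} This is the genuinely new part beyond \cite[Thm.\ A.5]{Rocca-Rossi} and I expect it to be the main obstacle. Fix a.e.\ $t$ and a sequence $t_k\to t$. The idea is to split
\[
\langle \ell_k(t_k),\varphi_j\rangle_{\bsY} = \langle \ell_k(t_k)-\ell_k(t),\varphi_j\rangle_{\bsY} + \langle \ell_k(t),\varphi_j\rangle_{\bsY}.
\]
The second term converges to $\langle\ell(t),\varphi_j\rangle$ by \eqref{weak-ptw-B} (for a.e.\ $t$). For the first, the BV-bound gives $|\langle \ell_k(t_k)-\ell_k(t),\varphi_j\rangle|\le \|\varphi_j\|_\bsY\cdot V_k([t\wedge t_k, t\vee t_k])$ where $V_k$ is the (finite, uniformly bounded) variation measure of $r\mapsto\langle\ell_k(r),\varphi_j/\|\varphi_j\|_\bsY\rangle$; one must show this tends to $0$. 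Here one uses that, up to a subsequence, the scalar BV functions $r\mapsto\langle\ell_k(r),\varphi_j\rangle$ converge (by Helly) \emph{everywhere} to $\lambda_j$, and their variations converge weakly-$*$ as measures to a limit measure $\nu_j$ with $\nu_j(\{t\})=0$ for a.e.\ $t$ (since $\nu_j$ is finite, its atoms form a countable set); continuity of $\nu_j$ at such $t$ forces $V_k([t\wedge t_k,t\vee t_k])\to 0$ along that subsequence. A subsequence-of-subsequence argument then promotes this to the full sequence. Since $\ell_k(t_k)$ is bounded in $\bsY^*$ (uniform BV plus $\ell_k(0)$ bounded) and the span of $\{\varphi_j\}$ is dense, this yields $\ell_k(t_k)\weakto\ell(t)$ in $\bsY^*$. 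The delicate bookkeeping is to make all these subsequence extractions uniform in $j$ via diagonalization while keeping the "a.e.\ $t$" exceptional sets countable-unioned, so that the final statement holds for a.e.\ $t$ and for \emph{every} approximating sequence $t_k\to t$ — this is where care is needed, and I would handle it by first proving \eqref{enhSav} for each fixed $j$ on a full-measure set $T_j$, then intersecting over $j$.
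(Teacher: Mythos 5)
Your argument for the new convergence \eqref{enhSav} is exactly the paper's: one introduces the variation measures of the scalar maps $t\mapsto\pairing{}{\bsY}{\ell_k(t)}{\varphi}$ for $\varphi$ in a countable dense subset of $\overline{B}_{1,\bsY}(0)$, extracts a diagonal subsequence along which they converge weakly$^*$, discards the (countable) union of the atom sets of the limit measures, and on the complement controls $\pairing{}{\bsY}{\ell_k(t_k)-\ell_k(t)}{\varphi}$ by the variation over $[t\wedge t_k,t\vee t_k]$ via upper semicontinuity on shrinking closed intervals, concluding by boundedness of $\ell_k(t_k)$ in $\bsY^*$ and density. The remaining items \eqref{weak-LpB}--\eqref{weak-ptw-B} are quoted in the paper from \cite[Thm.\ A.5]{Rocca-Rossi}, where the pointwise weak-$\bsV$ convergence is obtained through the Young-measure compactness theorem (which is precisely how the a.e.-in-$t$ $\bsV$-boundedness issue you leave vague is circumvented), so your sketch is consistent with the intended route.
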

\par
We are now in the position to prove the following compactness result where, in particular, we show that, along a subsequence, the sequences $(\pwc\teta\tau)_\tau$ and $(\upwc\teta\tau)_\tau$ converge,  in suitable topologies, to the \emph{same} limit $\teta$. This is not a trivial consequence of the obtained a priori estimates, as no bound on the total variation of the functions $\pwc\teta\tau$ is available. In fact, this fact stems from  the `generalized $\BV$' estimate  \eqref{aprio_Varlog}, via the convergence property  \eqref{enhSav} from Theorem \ref{th:mie-theil}.  
\begin{lemma}[Compactness]
\label{l:compactness}
Assume \eqref{hyp-K}. Then, for any sequence $\tau_k \downarrow 0$ there exist a (not relabeled) subsequence and a quintuple $(\teta, u, e, p,\zeta)$ such that the following convergences hold
\begin{subequations}
\label{convergences-cv}
\begin{align}
&
\label{cvU1}
 \pwl \uu{\tau_k} \weaksto \uu  &&  \text{ in $H^1(0,T;  H^1(\Omega;\R^d)) \cap
W^{1,\infty}(0,T;L^2(\Omega;\R^d))$,}
\\
&
\label{cvU2}
  \pwc \uu{\tau_k},\, \upwc \uu{\tau_k} \to \uu &&
 \text{ in $L^\infty(0,T;H^{1-\epsilon}(\Omega;\R^d)) $ for all $\epsilon \in (0,1]$,}
 \\
 &
 \label{cvU3}
 \pwl \uu{\tau_k} \to \uu  && \text{ in $\mathrm{C}^0([0,T];H^{1-\epsilon}(\Omega;\R^d))  $ for all $\epsilon \in (0,1]$,}
 \\
  &
 \label{cvU3-bis}
\pwwll{u}{\tau_k} \to \dot {\uu}  && \text{ in $\mathrm{C}_{\mathrm{weak}}^0([0,T];L^2(\Omega;\R^d)) \cap L^2(0,T; H^{1-\epsilon}(\Omega;\R^d))$ for all $\epsilon \in (0,1]$,}
\\
&
\label{cvU4}
\partial_t \pwwll {\uu}{\tau_k} \weakto \ddot u &&
 \text{ in $L^{\gamma/(\gamma-1)}(0,T;W^{1,\gamma}(\Omega;\R^d)^*) $,}
 \\
&
\label{cvE1}
\pwc e{\tau_k} \weaksto e && \text{ in $L^\infty(0,T; L^2(\Omega;\mt_\sym^{d\times d}))$,}
\\
&
\label{cvE2}
\pwl e{\tau_k} \weakto e && \text{ in $H^1(0,T; L^2(\Omega;\mt_\sym^{d\times d}))$,}
\\
&
\label{cvE3-bis}
\pwl e{\tau_k} \to e && \text{ in $\rmC_{\mathrm{weak}}^0([0,T]; L^2(\Omega;\mt_\sym^{d\times d}))$,}
\\
&
\label{cvE3}
\tau|\pwc e{\tau_k}|^{\gamma-2}  \pwc e{\tau_k} \to 0 && \text{ in $L^{\infty}(0,T; L^{\gamma/(\gamma{-}1)}(\Omega; \mt_\sym^{d\times d}))$,}
 \\
&
\label{cvP1}
\pwc p{\tau_k} \weaksto p && \text{ in $L^\infty(0,T; L^2(\Omega;\mt_\sym^{d\times d}))$,}
\\
&
\label{cvP2}
\pwl p{\tau_k} \weakto p && \text{ in $H^1(0,T; L^2(\Omega;\mt_\sym^{d\times d}))$,}
\\
&
\label{cvP3-bis}
\pwl p{\tau_k} \to p && \text{ in $\rmC_{\mathrm{weak}}^0([0,T]; L^2(\Omega;\mt_\dev^{d\times d}))$,} \\
&
\label{cvP3}
\tau|\pwc p{\tau_k}|^{\gamma-2}  \pwc p{\tau_k} \to 0 && \text{ in $L^{\infty}(0,T;   L^{\gamma/(\gamma{-}1)}(\Omega; \mt_\dev^{d\times d}))$,}
\\
 &
 \label{cvT1}
 \pwc \teta{\tau_k}\weakto \teta && \text{ in $L^2 (0,T; H^1(\Omega))$},
 \\
&
\label{cvT2}
\log(\pwc\teta{\tau_k})  \weaksto  \log(\teta)  &&
\text{ in } L^2 (0,T; H^1(\Omega))  \cap  L^\infty (0,T; W^{1,d+\epsilon}(\Omega)^*)   \quad \text{for every } \epsilon>0,
\\
&
\label{cvT4}
 \log(\pwc\teta{\tau_k}(t)) \weakto \log(\teta(t))   &&   \text{ in $H^1(\Omega)$ for almost all $t \in (0,T)$,}  
\\
&
\label{cvuT4}
 \log(\upwc\teta{\tau_k}(t)) \weakto \log(\teta(t))   &&   \text{ in $H^1(\Omega)$ for almost all $t \in (0,T)$,}  
\\
&
\label{cvT5}
\pwc\teta{\tau_k}\to \teta &&  \text{  in $L^h(Q)$
for all $h\in [1,8/3) $ for $d=3$ and all $h\in [1, 3)$ if $d=2$,}
\\
&
\label{cvuT5}
\upwc\teta{\tau_k}\to \teta &&  \text{  in $L^h(Q)$
for all $h\in [1,8/3) $ for $d=3$ and all $h\in [1, 3)$ if $d=2$,}
\\
&
\label{cvT8}
(\pwc\teta{\tau_k})^{(\mu+\alpha)/2}\weakto \teta^{(\mu+\alpha)/2} &&  \text{  in $L^2(0,T;H^1(\Omega))$  for every $\alpha \in [(2{-}\mu)^+,1)$, } 
\\
&
\label{cvT9}  (\pwc\teta{\tau_k})^{(\mu-\alpha)/2}\weakto \teta^{(\mu-\alpha)/2} &&   \text{  in $L^2(0,T;H^1(\Omega))$ for every $\alpha \in [(2{-}\mu)^+,1)$, }
\\
& 
\label{cvZ}
\pwc\zeta{\tau_k} \weaksto \zeta && \text{ in $L^\infty (Q;\mt_\dev^{d \times d})$}.
\end{align}
The triple $(u,e,p)$ complies with the kinematic admissibility condition  \eqref{kin-admis}, while $\teta$  also fulfills
\begin{equation}
\label{additional-teta}
\teta \in L^\infty (0,T; L^1(\Omega)) \text{ and }
 \teta \geq \bar{\teta} \text{ a.e.\ in $Q$}
 \end{equation}
  with $\bar{\teta}$  from \eqref{discr-strict-pos}.
\par
Furthermore, under condition \eqref{hyp-K-stronger} we also have $\teta \in \mathrm{BV} ([0,T]; W^{1,\infty} (\Omega)^*)$, and
\begin{align}
& \label{cvT6}
\pwc\teta{\tau_k} \to \teta && \text{ in } L^2 (0,T; Y) \text{ for all $Y$ such that $H^1(\Omega) \Subset Y \subset W^{1,\infty} (\Omega)^*$},
\\
&
\label{cvT7}
\pwc\teta{\tau_k}(t) \weaksto \teta(t) && \text{ in } W^{1,\infty} (\Omega)^* \text{ for all } t \in [0,T].
\end{align}
\end{subequations}
\end{lemma}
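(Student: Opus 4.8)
\textbf{Proof plan for Lemma \ref{l:compactness}.}
The strategy is to extract convergent subsequences from each family of interpolants by invoking the a priori bounds of Proposition \ref{prop:aprio} together with classical weak/weak$^*$ compactness, the Aubin--Lions--Simon lemma, and (crucially) Theorem \ref{th:mie-theil} applied to $\ell_k = \log(\pwc\teta{\tau_k})$, and then to identify all limits and reconcile the piecewise-constant and piecewise-linear interpolants. First I would treat the displacement: from \eqref{aprioU2}--\eqref{aprioU3} and \eqref{aprioU1} one gets \eqref{cvU1} and \eqref{cvU4} by weak/weak$^*$ compactness; then \eqref{cvU3} and \eqref{cvU3-bis} follow from Aubin--Lions (the bound on $\pwl u{\tau_k}$ in $H^1(0,T;H^1)$ plus $W^{1,\infty}(0,T;L^2)$, and on $\pwwll u{\tau_k}$ in $L^2(0,T;H^1)\cap W^{1,\gamma/(\gamma-1)}(0,T;W^{1,\gamma\,*})$), using the compact embeddings $H^1(\Omega)\Subset H^{1-\epsilon}(\Omega)\subset L^2(\Omega)$; \eqref{cvU2} then comes from the standard estimate $\|\pwc u{\tau_k}-\pwl u{\tau_k}\|_{L^\infty(0,T;L^2)}\le \tau_k\|\pwl{\dot u}{\tau_k}\|_{L^\infty(0,T;L^2)}\to 0$ combined with interpolation between the $L^\infty(0,T;H^1)$ bound and the $L^\infty(0,T;L^2)$ convergence. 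The analogous argument, using \eqref{aprioE1}--\eqref{aprioE3} and \eqref{aprioP1}--\eqref{aprioP3}, yields \eqref{cvE1}--\eqref{cvE3} and \eqref{cvP1}--\eqref{cvP3}: here \eqref{cvE3-bis}, \eqref{cvP3-bis} follow from boundedness in $H^1(0,T;L^2)$ (hence in $\rmC^0([0,T];L^2)$ and, after a subsequence, $\rmC^0_{\mathrm{weak}}$), and the vanishing of the regularizing terms in \eqref{cvE3}, \eqref{cvP3} from $\tau_k^{1/\gamma}\|\pwc e{\tau_k}\|_{L^\infty(0,T;L^\gamma)}\le S$ giving $\|\tau_k|\pwc e{\tau_k}|^{\gamma-2}\pwc e{\tau_k}\|_{L^\infty(0,T;L^{\gamma/(\gamma-1)})} = \tau_k\|\pwc e{\tau_k}\|_{L^\infty(0,T;L^\gamma)}^{\gamma-1}\le \tau_k^{1/\gamma} S^{\gamma-1}\to 0$, similarly for $p$. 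Kinematic admissibility \eqref{kin-admis} passes to the limit because $\sig{\pwc u{\tau_k}} = \pwc e{\tau_k}+\pwc p{\tau_k}$ and all three terms converge at least weakly in $L^2$, the trace condition surviving by continuity of the trace operator; \eqref{cvZ} is immediate from \eqref{est-M-indep4} (i.e. the uniform $L^\infty(Q)$ bound on $\pwc\zeta{\tau_k}$).

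The temperature is the delicate part. From \eqref{aprio6-discr} and \eqref{log-added} one gets \eqref{cvT1} and the weak$^*$ $L^2(0,T;H^1)$ part of \eqref{cvT2} for $\log(\pwc\teta{\tau_k})$; the point is to upgrade this. I would apply Theorem \ref{th:mie-theil} with $\bsV = H^1(\Omega)$, $\bsY = W^{1,d+\epsilon}(\Omega)$, $p=2$, and $\ell_k = \log(\pwc\teta{\tau_k})$: the hypothesis \eqref{ell-n-0} holds since $\log(\pwc\teta{\tau_k}(0)) = \log(\teta_0)$ is fixed, and \eqref{BV-bound} is exactly the generalized-$\BV$ estimate \eqref{aprio_Varlog}. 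Theorem \ref{th:mie-theil} then delivers \eqref{cvT2} (the $L^\infty(0,T;W^{1,d+\epsilon\,*})$ part) and the pointwise weak convergence \eqref{cvT4}. To identify the limit as $\log(\teta)$ with $\teta$ the limit of \eqref{cvT1}, and to get strong convergence \eqref{cvT5}, I would argue as follows: \eqref{aprio6-discr} gives a bound for $\pwc\teta{\tau_k}$ in $L^2(0,T;H^1)$, and a comparison/compactness argument (in $d=3$ using Aubin--Lions with the $\BV([0,T];W^{1,\infty\,*})$ bound \eqref{aprio7-discr} under \eqref{hyp-K-stronger}; in the general case one instead uses that $\pwc\teta{\tau_k}$ is bounded in $L^h(Q)$, $h = 8/3$ or $3$, by the interpolation \eqref{Gagliardo-Nir}, and extracts strong $L^1(Q)$ convergence of a subsequence via the compactness afforded by the $L^2(0,T;H^1)$ bound together with an Aubin--Lions-type argument using a very weak time-derivative bound obtained by comparison in \eqref{discrete-heat}) yields $\pwc\teta{\tau_k}\to\teta$ a.e.\ in $Q$ along a subsequence; then by the strict positivity \eqref{discr-strict-pos} ($\pwc\teta{\tau_k}\ge\bar\teta$) the map $\log$ is Lipschitz on $[\bar\theta,\infty)$, so $\log(\pwc\teta{\tau_k})\to\log(\teta)$ a.e., identifying the two limits and giving $\teta\ge\bar\theta$ a.e.\ and, by Fatou/the $L^\infty(0,T;L^1)$ bound, $\teta\in L^\infty(0,T;L^1(\Omega))$, i.e.\ \eqref{additional-teta}. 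Strong $L^h(Q)$ convergence \eqref{cvT5} for $h<8/3$ (resp. $<3$) then follows from the a.e.\ convergence plus the uniform $L^{8/3}(Q)$ (resp. $L^3(Q)$) bound \eqref{Gagliardo-Nir} via Vitali. Convergences \eqref{cvT8}, \eqref{cvT9} follow by combining the $L^2(0,T;H^1)$ bounds \eqref{est-temp-added?-bis} on $(\pwc\teta{\tau_k})^{(\mu\pm\alpha)/2}$ (weak compactness) with the a.e.\ convergence of $\pwc\teta{\tau_k}$ (to identify the limit via continuity of $r\mapsto r^{(\mu\pm\alpha)/2}$).

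It remains to pass from $\pwc\teta{\tau_k}$ to $\upwc\teta{\tau_k}$, i.e.\ \eqref{cvuT4} and \eqref{cvuT5}, and to establish \eqref{cvT6}--\eqref{cvT7} under \eqref{hyp-K-stronger}. For \eqref{cvuT5} I would note $\upwc\teta{\tau_k}(t) = \pwc\teta{\tau_k}(t-\tau_k)$ for $t>\tau_k$, so the a.e.\ convergence of $\pwc\teta{\tau_k}$ together with $\pwc\teta{\tau_k}\to\teta$ strongly in $L^h(Q)$ and the (uniform in $k$) bound transfer the same strong $L^h(Q)$ limit to $\upwc\teta{\tau_k}$; this is where the convergence property \eqref{enhSav} of Theorem \ref{th:mie-theil} is used to get the pointwise-in-$t$ weak-$H^1$ statement \eqref{cvuT4}: for a.e.\ $t$, with $t_k := \upwc{\mathsf t}{\tau_k}(t)\to t$, \eqref{enhSav} gives $\log(\pwc\teta{\tau_k}(t_k)) = \log(\upwc\teta{\tau_k}(t))\weakto\log(\teta(t))$ in $W^{1,d+\epsilon\,*}$, which combined with the $L^2(0,T;H^1)$ bound on $\log(\upwc\teta{\tau_k})$ upgrades to weak-$H^1$ convergence for a.e.\ $t$. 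Under \eqref{hyp-K-stronger}, \eqref{aprio7-discr} gives $\pwl\teta{\tau_k}$ bounded in $\BV([0,T];W^{1,\infty\,*})$; since $\|\pwc\teta{\tau_k}-\pwl\teta{\tau_k}\|_{L^\infty(0,T;W^{1,\infty\,*})}\to 0$ (by the $\BV$ estimate and the estimate $\|\pwc\teta{\tau_k}(t)-\pwl\teta{\tau_k}(t)\|\le$ variation over one subinterval), $\pwc\teta{\tau_k}$ is also of bounded variation in $W^{1,\infty\,*}$; an Aubin--Lions--type / Helly argument with $H^1(\Omega)\Subset Y\subset W^{1,\infty\,*}$ then yields \eqref{cvT6}, and Helly's selection theorem (applied in $W^{1,\infty\,*}$, after noting the $L^\infty(0,T;L^1)$ bound controls the pointwise values) gives the everywhere-in-$t$ weak$^*$ convergence \eqref{cvT7}, with $\teta\in\BV([0,T];W^{1,\infty\,*})$. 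I expect the main obstacle to be the rigorous derivation of the a.e.\ (or strong $L^1(Q)$) convergence of $\pwc\teta{\tau_k}$ \emph{without} assuming \eqref{hyp-K-stronger}: one has only the $L^2(0,T;H^1)$ spatial bound and the very weak generalized-$\BV$ temporal information \eqref{aprio_Varlog}, so the compactness must be squeezed out of Theorem \ref{th:mie-theil} applied to $\log(\pwc\teta{\tau_k})$ (yielding strong convergence of $\log(\pwc\teta{\tau_k})$ in, say, $L^2(0,T;L^2)$ via Aubin--Lions between $H^1$ and $W^{1,d+\epsilon\,*}$ using the $\BV$-type bound), and then transferred back to $\pwc\teta{\tau_k}$ using that $\exp$ is Lipschitz on the range $\log([\bar\theta,\infty))$ bounded above by the $L^\infty(0,T;L^1)$-plus-$L^h(Q)$ control — this back-and-forth between $\teta$ and $\log\teta$ is the technical heart and follows the template of \cite[Prop.\ 4.10 and Thm.\ A.5]{Rocca-Rossi}, to which I would refer for the remaining details.
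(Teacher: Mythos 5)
Your proposal follows the paper's proof in all essentials: classical weak/weak$^*$ compactness and Aubin--Lions for $(u,e,p,\zeta)$ together with the interpolant-stability estimates, and Theorem \ref{th:mie-theil} applied to $\ell_k=\log(\pwc\teta{\tau_k})$ with $\bsV=H^1(\Omega)$, $\bsY=W^{1,d+\epsilon}(\Omega)$ as the engine for the temperature, with \eqref{enhSav} used exactly as you describe to transfer the identification to $\upwc\teta{\tau_k}$, and the genuine $\BV([0,T];W^{1,\infty}(\Omega)^*)$ bound plus generalized Helly only under \eqref{hyp-K-stronger}.

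Two technical points in your temperature argument need correcting, although your final paragraph already routes around the first. First, the alternative you sketch for the general case --- ``an Aubin--Lions-type argument using a very weak time-derivative bound obtained by comparison in \eqref{discrete-heat}'' --- is a dead end: under \eqref{hyp-K} alone the term $\condu(\pwc\teta\tau)\nabla\pwc\teta\tau$ is not controlled in any fixed dual space uniformly in $\tau$, so no time-derivative bound on $\pwc\teta\tau$ is available; this is precisely why the paper works with $\log(\pwc\teta\tau)$ and with the estimate \eqref{aprio_Varlog}. Moreover, \eqref{aprio_Varlog} is \emph{strictly weaker} than a bound on $\mathrm{Var}_{W^{1,d+\epsilon}(\Omega)^*}(\log(\pwc\teta\tau);[0,T])$ (the supremum over $\varphi$ sits outside the sum over partitions), so classical Helly or Aubin--Lions for $\BV$ functions does not apply either; the a.e.\ convergence must be extracted from the pointwise weak-$H^1(\Omega)$ convergence \eqref{weak-ptw-B} furnished by Theorem \ref{th:mie-theil}, which by the compact embedding $H^1(\Omega)\Subset L^2(\Omega)$ gives strong $L^2(\Omega)$ convergence of $\log(\pwc\teta{\tau_k}(t))$ for a.a.\ $t$, hence a.e.\ convergence in $Q$ up to a further subsequence. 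Second, the Lipschitz continuity of $\exp$ is neither available on $[\log\bar\teta,+\infty)$ nor needed: plain continuity of $\exp$ converts the a.e.\ convergence of $\log(\pwc\teta{\tau_k})$ into a.e.\ convergence of $\pwc\teta{\tau_k}$, and the uniform $L^h(Q)$ bound \eqref{Gagliardo-Nir} plus Vitali then yields \eqref{cvT5}, exactly as in the paper.
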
  Let us  mention beforehand  that, in the proof of Thm.\ \ref{mainth:1} we will obtain further convergence properties for the sequences of approximate solutions,
 cf.\ also Remark \ref{rmk:energy-conv} ahead. 
\begin{proof}[Sketch of the proof]
Convergences \eqref{cvU1}--\eqref{cvU3}, \eqref{cvE1}--\eqref{cvE3-bis},  \eqref{cvP1}--\eqref{cvP3-bis}, and \eqref{cvZ} follow from the a priori estimates in Proposition \ref{prop:aprio} via well known weak and strong  compactness results (cf.\ e.g.\ \cite{Simon87}), also taking into account
that 
\begin{equation}
\label{stability-e-k}
\| \pwc e{\tau_k} {-} \pwl e{\tau_k}\|_{L^\infty (0,T; L^2(\Omega;\R^d))} \leq C \tau_k^{1/2} \to 0  \qquad \text{ as $k\to\infty$},
\end{equation}
and the analogous relations involving $\pwc p{\tau_k},\, \pwl p{\tau_k}$, etc.
 Passing to the limit as $k\to\infty$ in the discrete kinematic admissibility condition $(\pwc u{\tau_k}(t), \pwc e{\tau_k}(t), \pwc p{\tau_k}(t)) \in \calA(\pwc w{\tau_k}(t))$ for  a.a.\ $t \in (0,T)$,  also in view of convergence \eqref{converg-interp-w} for $\pwc w{\tau_k}$, we conclude that the triple $(u,e,p)$ is admissible. 
In view of estimate \eqref{aprioU3}  for  $(\pwwll u{\tau_k})_k$,
again by the Aubin-Lions type compactness results from \cite{Simon87} we conclude that  there exists $v$  such that $\pwwll u{\tau_k} \to v$ in $ L^2 (0,T;H^{1-\epsilon}(\Omega;\R^d)) \cap  \rmC_{\mathrm{weak}}^0 ([0,T]; L^2(\Omega;\R^d))$ for every $\epsilon \in (0,1]$. Taking into account that
\begin{equation}
\label{stability}
 \| \pwwll \uu{\tau_k} - \pwl {\dot \uu}{\tau_k}\|_{L^\infty
(0,T;  W^{1,\gamma}(\Omega;\R^d)^* )}\leq \tau_k^{1/\gamma}  \| \partial_t \pwwll
 {\uu}{\tau_k} \|_{L^{\gamma/(\gamma{-}1)} (0,T;W^{1,\gamma}(\Omega;\R^d)^*)} \leq S {\tau_k}^{1/\gamma},
 \end{equation} 
 we conclude that $v = \dot u$, whence \eqref{cvU3-bis}.
 It then follows from \eqref{stability}
that 
\begin{equation}
\label{dotu-quoted-later}
\pwl {\dot \uu}{\tau_k}(t) \weakto \dot{u}(t) \qquad \text{in }L^2(\Omega;\R^d) \quad \text{for every } t \in [0,T]. 
\end{equation}
  Moreover, thanks to  \eqref{stability} we 
  identify the weak limit  of $\partial_t \pwwll {\uu}{\tau_k}$  in  $L^{\gamma/(\gamma-1)}(0,T;W^{1,\gamma}(\Omega;\R^d)^*) $  with $\ddot u$, and \eqref{cvU4} ensues.   In order to prove \eqref{cvE3} (an analogous argument yields \eqref{cvP3}), it is sufficient to observe that 
 \[
 \| \tau |\pwc{e}{\tau_k}|^{\gamma-2}  \pwc{e}{\tau_k} \|_{L^\infty(0,T; L^{\gamma/(\gamma{-}1)}(\Omega; \mt_\sym^{d\times d}))}  = \tau^{1/\gamma} \left( \tau^{1/\gamma}  \| \pwc{e}{\tau_k} \|_{L^\infty(0,T; L^{\gamma}(\Omega; \mt_\sym^{d\times d}))} \right)^{\gamma{-}1} \to 0
 \]
 thanks to estimate \eqref{aprioE3}.
 \par
 For the convergences of the functions $(\pwc \teta{\tau_k})_k$, we briefly recap the arguments from the proof of \cite[Lemma 5.1]{Rocca-Rossi}. On account of estimates \eqref{log-added} and \eqref{aprio_Varlog} we can  apply the compactness 
 Theorem \ref{th:mie-theil} to the functions $\ell_k = \log(\pwc\teta{\tau_k})$, in the setting of  the spaces $\bsV= H^1(\Omega)$,  $\bsY= W^{1,d+\epsilon}(\Omega)$, and with $p=2$.
Hence we conclude that, up to a subsequence  the functions $\log(\pwc\teta{\tau_k})$ weakly$^*$  converge
to some $\lambda  $ in $  L^2 (0,T; H^1(\Omega)) \cap L^\infty(0,T; W^{1,d+\epsilon}(\Omega)^*)$ for all $\epsilon>0$,  i.e.\  \eqref{cvT2}, and that
$\log(\pwc\teta{\tau_k}(t)) \weakto \lambda(t)$ in $H^1(\Omega)$ for almost all $t\in (0,T)$, i.e.\  \eqref{cvT4}. Therefore, up to a further subsequence
we have  $\log(\pwc\teta{\tau_k}) \to \lambda$ almost everywhere in $Q$.  Thus,
$
\pwc\teta{\tau_k} \to \teta:= e^{\lambda} 
$ almost everywhere in $Q$. Convergences \eqref{cvT1} and \eqref{cvT5} then  follow from estimates \eqref{aprio6-discr} and \eqref{Gagliardo-Nir}, respectively. 
An immediate lower semicontinuity argument  combined with estimate  \eqref{aprio6-discr}  allows us to conclude \eqref{additional-teta}; the strict positivity of $\teta$ follows from \eqref{discr-strict-pos}. 	
Concerning convergence \eqref{cvT8},  we use \eqref{cvT5} to deduce that $(\pwc \teta{\tau_k})^{(\mu+\alpha)/2} \to \teta^{(\mu+\alpha)/2} $ in $L^{2h/(\mu+\alpha)}(Q)$ for  $h$ as in \eqref{cvT5}. Since $(\pwc \teta{\tau_k})^{(\mu+\alpha)/2} $ is itself bounded in $L^2(0,T;H^1(\Omega)$ by estimate \eqref{est-temp-added?-bis},  \eqref{cvT8} ensues, and so does \eqref{cvT9} by a completely analogous argument. 
\par Let us now address  convergences \eqref{cvuT4} and  \eqref{cvuT5}  for the sequence   $(\upwc \teta{\tau_k})_k$. On the one hand, 
observe that estimates \eqref{log-added}--\eqref{aprio_Varlog} also hold for  $(\upwc \teta{\tau_k})_k$. Therefore, we may apply 
  Thm.\  \ref{th:mie-theil} to the functions $ \log(\upwc\teta{\tau_k}) $ and conclude that there exists $\underline \lambda $ such that 
$\log(\upwc\teta{\tau_k})  \weaksto \underline \lambda$ in $ L^2 (0,T; H^1(\Omega)) \cap L^\infty(0,T; W^{1,d+\epsilon}(\Omega)^*)$ for all $\epsilon>0$, as well as 
$\log(\upwc\teta{\tau_k}(t)) \weakto \underline\lambda(t)$ in $H^1(\Omega)$ for almost all $t\in (0,T)$. 
On the other hand, 
since $\upwc \teta{\tau_k} (t) = \pwc \teta{\tau_k}(t-\tau_k)$  for almost all $ t \in (0,T)$, from \eqref{enhSav} 
 we conclude that $ \log(\upwc \teta{\tau_k} (t)  )  \weakto \log(\teta(t)) $ in $W^{1,d+\epsilon}(\Omega)^*$ for almost all $t \in (0,T)$. Hence we identify $\underline\lambda(t) = \log(\teta(t))$  for almost all $t \in (0,T)$. Then, convergences   \eqref{cvuT4} and  \eqref{cvuT5} ensue from the very same arguments as for the sequence $(\pwc\teta{\tau_k})_k$ (in fact, the analogue
  of \eqref{cvT2} also holds for $\log(\upwc \teta{\tau_k})_k$). 
\par
Finally, under condition \eqref{hyp-K-stronger},  we  can also count on the  $\mathrm{BV}$-estimate \eqref{aprio7-discr} for $(\pwc\teta\tau)_\tau$. 
We may then apply \cite[Lemma 7.2]{DMDSMo06QEPL}, which generalizes the classical Helly Theorem to functions with values in the dual of a separable Banach  space,  and conclude  the pointwise convergence \eqref{cvT7}. 
Convergence \eqref{cvT6} follows from  estimate
 \eqref{aprio7-discr}  combined with \eqref{aprio6-discr},  via   an Aubin-Lions type compactness result for $\BV$-functions (see, e.g.,  \cite[Chap.\ 7, Cor.\ 4.9]{Roub05NPDE}).
\end{proof} 
\par
We are now in the position to develop the \underline{\bf proof of Theorem \ref{mainth:1}}.   Let $(\tau_k)$ be a  vanishing   sequence of time steps,  and let 
\[
(\pwc\teta{\tau_k},  \upwc\teta{\tau_k}, \pwl{\teta}{\tau_k}, \pwc u{\tau_k}, \pwl u{\tau_k}, \pwwll  u{\tau_k},  \pwc e{\tau_k}, \pwl e{\tau_k}, \pwc p{\tau_k}, \pwl p{\tau_k}, \pwc \zeta{\tau_k})_k,
\]
be a sequence of  solutions to the approximate PDE system \eqref{syst-interp} for which the convergences stated in Lemma \ref{l:compactness} hold to a quintuple $(\teta,u,e,p,\zeta)$.
We  will pass to the limit in the time-discrete versions of the momentum balance and of the plastic flow rule, in the discrete entropy inequality and in the discrete total energy inequality, to conclude that $(\teta,u,e,p)$ is an entropic solution to the thermoviscoplastic system
in the sense of Def.\ \ref{def:entropic-sols}.
\par
\noindent
\emph{Step $0$: ad the initial conditions \eqref{initial-conditions} and the kinematic admissibility \eqref{kin-admis}.} It was shown in Lemma \ref{l:compactness} that the limit triple $(u,e,p)$ is kinematically admissible. Passing to the limit in the initial conditions \eqref{discr-Cauchy}, on account of \eqref{complete-approx-e_0}   and of  the pointwise convergences \eqref{cvU3}, 
 \eqref{cvE3-bis},  \eqref{cvP3-bis},  and \eqref{dotu-quoted-later},
 we conclude that the triple $(u,e,p)$ comply with initial conditions 
 \eqref{initial-conditions}.
\par\noindent
\emph{Step $1$: ad the momentum balance \eqref{w-momentum-balance}.} Thanks to convergences \eqref{cvE1}--\eqref{cvE3} and \eqref{cvT1} we have that 
\begin{equation}
\label{cvS}
\pwc \sigma{\tau_k} = \bbD \pwl{\dot e}{\tau_k} + \bbC \pwc e{\tau_k} + \tau |\pwc e{\tau_k}|^{\gamma-2} \pwc e{\tau_k}  - \pwc{\teta}{\tau_k} \bbB \weakto \sigma = \bbD \dot e +\bbC e - \teta  \bbB  \qquad \text{ in 
$L^{\gamma/(\gamma-1)} (Q;\mt_\sym^{d\times d})$.}
\end{equation}
 Combining this with convergence \eqref{cvU4} and with \eqref{converg-interp-L} for $(\pwc \calL{\tau_k})_k$, we pass to the limit in the discrete momentum balance \eqref{eq-u-interp} and conclude that $(\teta,u,e)$ fulfill \eqref{w-momentum-balance} with test functions in $W_\Dir^{1,\gamma}(\Omega;\R^d)$.
 By comparison in  \eqref{w-momentum-balance}  we conclude that $\ddot{u} \in L^2(0,T; H_{\Dir}^1(\Omega;\R^d)^*)$, whence \eqref{reg-u}.  Moreover, a density argument yields that 
 \eqref{w-momentum-balance} holds with test functions in $H_{\Dir}^1(\Omega;\R^d)$. This concludes the proof of  the momentum
 balance. 
 \par\noindent 
\emph{Step $2$: ad the plastic flow rule \eqref{pl-flow}.}  Convergences \eqref{cvP2}--\eqref{cvP3}, \eqref{cvZ}, and \eqref{cvS} ensure that the functions $(\teta, e,p,\zeta)$ fulfill 
\begin{equation}
\label{prelim-flow-rule}
\zeta +\dot p  =  \sigma_\dev \qquad \aein\, Q.
\end{equation}
In order to conclude \eqref{pl-flow} it remains to show that $\zeta \in \partial_{\dot p} \mathrm{R}(\teta, \dot p)$ a.e.\ in $Q$, which  can be reformulated  via  \eqref{characterization-subdiff}.  In turn, the latter relations are equivalent to 
\begin{equation}
\label{def-subdiff}
\begin{cases}
\iint_Q \zeta{:} \eta \dd x \dd t \leq \int_0^T \calR(\teta(t), \eta(t)) \dd t \quad \text{for all } \eta \in L^2(Q;\mt_{\dev}^{d\times d}), 
\\
   \iint_Q \zeta{:} \dot{p} \dd x \dd t  \geq \int_0^T \calR(\teta(t), \dot{p}(t)) \dd t. 
   \end{cases}
\end{equation} 
To obtain \eqref{def-subdiff} we will pass to the limit in the analogous relations satisfied at level $k$, namely
\begin{equation}
\label{def-subdiff-k}
\begin{cases}
\iint_Q  \pwc\zeta{\tau_k} {:} \eta \dd x \dd t \leq \int_0^T   \calR(\upwc\teta{\tau_k}(t),  \eta(t)) \dd t \quad \text{for all } \eta \in L^2(Q;\mt_{\dev}^{d\times d}), \\  \iint_Q  \pwc\zeta{\tau_k}  {:}  \pwl { \dot p}{\tau_k}  \dd x \dd t  \geq \int_0^T  \calR(\upwc\teta{\tau_k}(t), \pwl { \dot p}{\tau_k}(t)) \dd t. \end{cases}
\end{equation}
 With this aim, we use conditions \eqref{hypR} on the dissipation metric $\mathrm{R}$.   In order to pass to the limit in the first  of \eqref{def-subdiff-k}  for a  fixed $\eta \in L^2(Q;\mt_{\dev}^{d\times d})$,   we use convergence \eqref{cvZ} for $(\pwc\zeta{\tau_k})_k$,   and the fact that 
\[
\lim_{k\to\infty} \iint_Q \mathrm{R}(\upwc \teta{\tau_k}, \eta) \dd x \dd t =   \iint_Q \mathrm{R}(\teta, \eta) \dd x \dd t.
\]
  The latter limit passage follows from convergence \eqref{cvuT5} for $\upwc\teta{\tau_k}$ which, combined with the continuity property \eqref{hypR-cont}, gives that 
$\mathrm{R}(\upwc \teta{\tau_k}, \eta) \to \mathrm{R}(\teta,\eta)$ almost everywhere in $Q$. Then we use 
 the dominated convergence theorem, taking into account that for every $k\in \N$ we have  $ \mathrm{R}(\upwc \teta{\tau_k}, \eta) \leq C_R |\eta|$ a.e.\ in $Q$ thanks to \eqref{linear-growth}. 
\par
As for the second inequality in \eqref{def-subdiff-k}, 
   we use  \eqref{hypR-lsc}  and the convexity of the map $\dot p \mapsto \mathrm{R}(\teta, \dot p)$, combined with convergences  \eqref{cvP2} and \eqref{cvuT5},  to conclude via the Ioffe theorem \cite{Ioff77LSIF} that 
\begin{equation}\label{lscR}
\liminf_{k\to \infty} \int_0^T \calR (\upwc \teta{\tau_k}(t), \pwl {\dot p}{\tau_k}(t)) \dd t \geq \int_0^T \calR(\teta(t), \dot{p}(t)) \dd t\,.
\end{equation}
Secondly, we show that 
\begin{equation} \label{limsup-cond}
\limsup_{k\to \infty}  \iint_Q   \pwc\zeta{\tau_k} {:} \pwl { \dot p}{\tau_k} \dd x \dd t \leq  \iint_Q   \zeta {:} { \dot p} \dd x \dd t.
\end{equation}
For \eqref{limsup-cond} we repeat the same argument developed to obtain \eqref{identifications-to-show} in the proof of Lemma \ref{l:3.6}, and observe that 
\[
\begin{aligned}  & 
\limsup_{k\to \infty}  \left( \iint_Q    \pwc\zeta{\tau_k} {:} \pwl { \dot p}{\tau_k} \dd x \dd t + \iint_Q   |\pwl { \dot p}{\tau_k}|^2 \dd x \dd t +  
\frac{\tau_k}\gamma \int_\Omega |\pwc p{\tau_k}(T)|^\gamma \dd x +
 \iint_Q  \bbD \pwl{\dot e}{\tau_k}{:}   \pwl{\dot e}{\tau_k}\dd x \dd t \right) 
\\ & \stackrel{(1)}{=}  \limsup_{k\to \infty}  \left(  \iint_Q \left( \bbD  \pwl{\dot e}{\tau_k} + \bbC \pwc e{\tau_k} +\tau  | \pwc e{\tau_k}|^{\gamma-2}  \pwc e{\tau_k} - \pwc\teta{\tau_k} \bbB  \right){:}  \pwl { \dot p}{\tau_k} \dd x \dd t  +   \iint_Q  \bbD \pwl{\dot e}{\tau_k}{:}   \pwl{\dot e}{\tau_k}\dd x \dd t \right)  + \dddn{\lim_{k\to\infty} \frac{\tau_k}{\gamma} \int_\Omega   |p_{\tau_k}^0|^\gamma \dd x }{$=0$}
\\ & 
\stackrel{(2)}\leq  \limsup_{k\to \infty}   \dddn{\iint_Q \left( \bbD  \pwl{\dot e}{\tau_k} + \bbC \pwc e{\tau_k} +\tau  | \pwc e{\tau_k}|^{\gamma-2}  \pwc e{\tau_k} - \pwc\teta{\tau_k} \bbB  \right){:}  \sig{\pwl { \dot u}{\tau_k} - \pwl {\dot w}{\tau_k}}  \dd x \dd t }{$=    \int_0^T \pairing{}{H_\Dir^1(\Omega;\R^d)^*}{ \pwc \calL{\tau_k} }{ \pwl {\dot u}{\tau_k} {-}\pwl {\dot w}{\tau_k}}\dd t  - \rho\iint_Q \partial_t \pwwll u{\tau_k} ( \pwl {\dot u}{\tau_k} {-}\pwl {\dot w}{\tau_k})) \dd x\dd t $  }+  \limsup_{k\to \infty}  \iint_Q \pwc{\sigma}{\tau_k} {:} \sig{\pwl {\dot w}{\tau_k}} \dd x \dd t   \\ & \quad -   \liminf_{k\to \infty}  \iint_Q \left(   \bbC \pwc e{\tau_k} +\tau  | \pwc e{\tau_k}|^{\gamma-2}  \pwc e{\tau_k} - \pwc\teta{\tau_k} \bbB  \right) {:}  \pwl { \dot e}{\tau_k} \dd x \dd t  
\\ & \stackrel{(3)}\leq \int_0^T \pairing{}{H_\Dir^1(\Omega;\R^d)^*}{ \calL}{ \dot{u} {-} \dot w} \dd t - \iint_Q\left( \rho    \ddot{u} ( \dot u {-}\dot w) {+} \sigma{:} \sig{\dot w} {+} \bbC e {:} \dot e {- }\teta \bbB {:} \dot e \right) \dd x \dd t\\ &  \stackrel{(4)}= \iint_Q \zeta{:} \dot p  \dd x \dd t + 
\iint_Q |\dot p|^2 \dd x \dd t + \iint_Q \bbD \dot e{:} \dot e \dd x \dd t, \end{aligned} 
\]
where (1) follows from testing the discrete flow rule \eqref{eq-p-interp} by $\pwl {\dot p}{\tau_k}$,  (2) from the kinematic admissibility condition, yielding $\pwl {\dot p}{\tau_k} = \sig{\pwl{\dot u}{\tau_k}} - \pwl {\dot e}{\tau_k} = \sig{\pwl{\dot u}{\tau_k}{-} \pwl{\dot w}{\tau_k}} - \pwl {\dot e}{\tau_k} + \sig{\pwl{\dot u}{\tau_k}}$, which also leads to the cancellation of the term $\iint_Q  \bbD \pwl{\dot e}{\tau_k}{:}   \pwl{\dot e}{\tau_k}$,
and from condition  \eqref{approx-e_0} on the sequence $(p_{\tau_k}^0)_k$. 
 The limit passage in  (3)  follows
 \begin{itemize}
 \item
  from convergence \eqref{converg-interp-L} for $(\pwc \calL{\tau_k})_k$,
  \item
   from \eqref{cvU1},
   \item
 from convergence   \eqref{converg-interp-w} for $(\pwl w{\tau_k})_k$, 
   combined with the stress convergence \eqref{cvS} and with \eqref{cvE3}, which yield
    \[
     \int_{0}^{\pwc{\mathsf{t}}{\tau}(t)} \int_\Omega  \tau |\pwc e{\tau_k}|^{\gamma-2} \pwc e{\tau_k} : \sig{\pwl {\dot w}{\tau_k}} \dd d \dd r
     = \int_{0}^{\pwc{\mathsf{t}}{\tau}(t)} \int_\Omega  \tau^{1-\alpha_w} |\pwc e{\tau_k}|^{\gamma-2} \pwc e{\tau_k} : \tau^{\alpha_w}\sig{\pwl {\dot w}{\tau_k}} \dd d \dd r  \to 0,
     \] 
so that 
    \begin{equation}
    \label{houston}
     \lim_{k\to\infty}   \int_{0}^{\pwc{\mathsf{t}}{\tau}(t)} \int_\Omega 
     \pwc \sigma{\tau_k} {:} \sig{\pwl {\dot w}{\tau_k}} \dd x \dd t 
     = \int_0^t \int_\Omega \sigma {:} \sig{\dot w} \dd x \dd t\,,
    \end{equation}
\item
  from \eqref{A-below}:
\begin{equation}
\label{A-below}
\begin{aligned}
& 
\limsup_{k\to\infty}\left(  -\iint_Q \rho \partial_t \pwwll u{\tau_k}  ( \pwl {\dot u}{\tau_k} {-} \pwl {\dot w}{\tau_k} )   \dd x\dd t \right) \\ &  \leq   - \liminf_{k\to\infty} \tfrac\rho2 \int_\Omega  |\pwl {\dot u}{\tau_k} (T)|^2 \dd x  +\tfrac\rho2 \int_\Omega  |\pwl {\dot u}{\tau_k} (0)|^2 \dd x  -  \lim_{k\to\infty} \rho \iint_Q   \partial_t \pwwll u{\tau_k}   \pwl {\dot w}{\tau_k}    \dd x\dd  t   \\ & \stackrel{(A)}{\leq} -\tfrac\rho2 \int_\Omega | \dot u(T)|^2 \dd x +\rho \int_\Omega |\dot{u}_0|^2 \dd x  -\rho \iint_Q \ddot{u} \dot w \dd x \dd t 
\end{aligned}
\end{equation}
with (A) due to \eqref{cvU4},   \eqref{converg-interp-w}, and \eqref{dotu-quoted-later},
\item from    \eqref{B-below}:
 \begin{equation}
\label{B-below}
\begin{aligned}
& 
\begin{aligned}
-\liminf_{k\to\infty}
\iint_Q    \bbC \pwc e{\tau_k}   {:}  \pwl { \dot e}{\tau_k} \dd x \dd t   &  \leq -  \liminf_{k\to\infty} \int_\Omega \tfrac12  \bbC  \pwc e{\tau_k} (T){:} \pwc e{\tau_k} (T) \dd x +   \int_\Omega \tfrac12  \bbC e_0{:} e_0 \dd x \\ &  \stackrel{(B)}{\leq}- \int_\Omega \tfrac12  \bbC  e(T){:} e(T) \dd x +   \int_\Omega \tfrac12  \bbC e_0{:} e_0, 
\end{aligned}
\\
& 
-\liminf_{k\to\infty}
\iint_Q     \tau  | \pwc e{\tau_k}|^{\gamma-2}  \pwc e{\tau_k}   {:}  \pwl { \dot e}{\tau_k} \dd x \dd t   \leq -  \liminf_{k\to\infty} \int_\Omega \tfrac\tau\gamma  | \pwc e{\tau_k} (T)|^\gamma\dd x +  \lim_{k \to\infty} \int_\Omega \tfrac{\tau_k}\gamma  | e_{\tau_k}^0|^\gamma  \dd x  \stackrel{(C)}{\leq} 0,
\\
& \lim_{k\to\infty} \iint_Q  \pwc\teta{\tau_k} \bbB   {:}  \pwl { \dot e}{\tau_k} \dd x \dd t \stackrel{(D)}{ = } \iint_Q \teta \bbB {:} \dot e \dd x \dd t,
\end{aligned}
\end{equation}
with (B) due to \eqref{cvE3-bis}, (C) due to  \eqref{cvE3} and \eqref{approx-e_0},  and (D) due to \eqref{cvE2} and \eqref{cvT5}. 
\end{itemize}
 Finally, (4) follows from testing \eqref{w-momentum-balance} by $\dot u - \dot w$, and \eqref{prelim-flow-rule} by $\dot p$. 
From the thus obtained $\limsup$-inequality, arguing in the very same way as in the proof of Lemma \ref{l:3.6},  we conclude that
\begin{equation}
\label{strong-convergences}
\begin{aligned}
& 
\lim_{k \to\infty} \iint_Q    \pwc\zeta{\tau_k} {:} \pwl { \dot p}{\tau_k} \dd x \dd t =   \iint_Q    \zeta {:} { \dot p} \dd x \dd t,
\\	
& 
\pwl{\dot p}{\tau_k} \to \dot p && \text{ in } L^2(Q;\mt_\dev^{d\times d}), 
\\
& 
\pwl{\dot e}{\tau_k} \to \dot e && \text{ in } L^2(Q;\mt_\sym^{d\times d})\,.
\end{aligned}
\end{equation}	
Hence, combining the first of \eqref{strong-convergences} with \eqref{lscR}, we take the limit in the second inequality in \eqref{def-subdiff-k}. 
  All in all, we deduce \eqref{def-subdiff}. Hence, the functions $(\teta, e,p,\zeta)$ fulfill the plastic flow rule  \eqref{pl-flow}. 
\par\noindent 
\emph{Step $3$:  enhanced convergences.}
For later use, observe that \eqref{strong-convergences}  give 
\begin{equation}
\label{cvE3-quater}
\pwl{e}{\tau_k} \to  e \quad  \text{ in } H^1(0,T;L^2(\Omega; \mt_\sym^{d\times d}))\,, \qquad 
\pwl{p}{\tau_k} \to  p \quad  \text{ in } H^1(0,T;L^2(\Omega; \mt_\dev^{d\times d}))\,.
\end{equation}
Moreover, by the kinematic admissibility condition we deduce the strong convergence of $\sig{\pwl {\dot u}{\tau_k}}$ in $L^2(Q;\mt_\sym^{d\times d})$,
 hence, by Korn's inequality,  
 \begin{equation}
\label{cvU-quater}
\pwl u{\tau_k} \to u \quad  \text{ in } H^1(0,T; H^1(\Omega;\R^d)).
\end{equation}
Finally, repeating the $\limsup$ argument leading to \eqref{strong-convergences} on a generic interval $[0,t]$, we find that 
\begin{equation}
\label{cvU-quinque}
\pwl{\dot u}{\tau_k}(t) \to \dot{u}(t) \quad \text{ in } L^2(\Omega;\R^d) \quad \text{ for every } t \in [0,T].
\end{equation}
All in all, also on account of \eqref{cvE3} and \eqref{cvP3},   we have that convergence \eqref{cvS} improves to a  strong one. 
	Therefore, from \eqref{eq-p-interp} we deduce that 
\[
\pwc \zeta{\tau_k} = (\pwc \sigma{\tau_k})_\dev - \pwl{\dot{p}}{\tau_k} - \tau_k |\pwc p{\tau_k}|^{\gamma-2} \pwc p{\tau_k} \to \sigma_\dev - \dot p = \zeta \qquad \text{a.e.\ in } Q.
\]
We will use this to pass to the limit in the pointwise inequality 
\[
\pwc \zeta{\tau_k}(t,x){:} \left( \dot{p}(t,x){-} \pwl{\dot{p}}{\tau_k}(t,x) \right) +\mathrm{R}(\upwc \teta{\tau_k}(t,x),  \pwl{\dot{p}}{\tau_k}(t,x)) \leq  \mathrm{R}(\upwc \teta{\tau_k}(t,x),  \dot p(t,x)) \qquad \foraa (t,x)\in Q.
\]
Indeed, in view of \eqref{strong-convergences}, which gives $\lim_{k\to\infty} \pwc \zeta{\tau_k}{:} ( \dot{p}{-} \pwl{\dot{p}}{\tau_k}) =0 $ a.e.\ in $Q$,  of convergence \eqref{cvuT5} for $\upwc \teta{\tau_k}$, and of  the continuity property \eqref{hypR-cont}, from the above inequality we conclude that 
\[
\limsup_{k\to\infty}  \mathrm{R}(\upwc \teta{\tau_k}(x,t),  \pwl{\dot{p}}{\tau_k}(x,t)) \leq  \mathrm{R}(\teta(x,t),  \dot p(x,t)) \qquad \foraa (x,t) \in Q.
\]
Combining this with the lower semicontinuity inequality which derives from \eqref{hypR-lsc}, we  ultimately have that $\mathrm{R}(\upwc \teta{\tau_k},  \pwl{\dot{p}}{\tau_k}) \to  \mathrm{R}(\teta,  \dot p) $ a.e.\ in $Q$, hence
\begin{equation}
\label{dominated-R}
   \mathrm{R}(\upwc\teta{\tau_k}, \pwl {\dot p}{\tau_k}) \to \mathrm{R}(\teta, \dot p) \quad \text{ in } L^2 (Q)
   \end{equation}
   by the dominated convergence theorem.
\par\noindent 
\emph{Step $4$: ad the entropy inequality \eqref{entropy-ineq}.} Let us fix a positive test  function $\varphi \in \rmC^0 ([0,T]; W^{1,\infty}(\Omega)) \cap H^1(0,T; L^{6/5}(\Omega))$ for \eqref{entropy-ineq}, and approximate it with the discrete test functions from \eqref{discrete-tests-phi}: their interpolants $\pwc \varphi\tau, \, \pwl \varphi\tau$ comply with convergences \eqref{convergences-test-interpolants} and with the discrete entropy inequality \eqref{entropy-ineq-discr}, where we pass to the limit. We take the limit of the first integral term on the left-hand side of  \eqref{entropy-ineq-discr} based on convergence \eqref{cvT2} for $\log(\upwc\teta{\tau_k})$.  
\par For the second integral term, 
we will prove that 
\begin{equation}
\label{weak-nabla-logteta}
\condu (\pwc\teta{\tau_k}) \nabla \log(\pwc\teta{\tau_k}) \weakto \condu(\teta) \nabla \log(\teta) \qquad \text{ in } 
  L^{1+\bar\delta}(Q;\R^d)   \text{ with   $ \bar\delta = \frac{\alpha}\mu $ 
and $\alpha \in [(2-\mu)^+, 1)$.}  \end{equation}
 First of all,  let us prove that $(\condu (\pwc\teta{\tau_k}) \nabla \log(\pwc\teta{\tau_k}))_k $  is bounded in $L^{1+\bar\delta}(Q;\R^d)  $. To this aim, we argue as in the proof of the \emph{Fifth a priori estimate} from Prop.\ \ref{prop:aprio} and observe that 
\[
\left| \condu (\pwc\teta{\tau_k}) \nabla \log(\pwc\teta{\tau_k}) \right|  \leq C 
\left(|\pwc\teta{\tau_k}|^{\mu-1} +\frac1{\bar \teta} \right)    |\nabla \pwc\teta{\tau_k} | \qquad \aein \, Q,
\]
by the growth condition \eqref{hyp-K} and the positivity \eqref{strong-strict-pos}. Let us now focus on the first term on  the r.h.s.:  with H\"older's inequality we   have that, for a positive exponent $r$, 
\[
\begin{aligned} 
\iint_Q  \left(|\pwc\teta{\tau_k}|^{\mu-1}     |\nabla \pwc\teta{\tau_k} |  \right)^r \dd x \dd t
& 
 \leq \| ( |\pwc\teta{\tau_k}|^{(\mu-\alpha)/2} )^r\|_{L^{2/(2{-}r)}(Q)} 
\|(   |\pwc\teta{\tau_k}|^{(\mu+\alpha-2)/2} |\nabla\pwc\teta{\tau_k}| )^r\|_{L^{2/r}(Q;\R^d)}  \\ & \leq  C  \| ( |\pwc\teta{\tau_k}|^{(\mu-\alpha)/2} )^r\|_{L^{2/(2{-}r)}(Q)},
\end{aligned}
\]
where the second inequality follows from the estimate for 
$ |\pwc\teta{\tau_k}|^{(\mu+\alpha-2)/2} \nabla\pwc\teta{\tau_k} $ in $L^2(Q;\R^d)$ thanks to \eqref{est-temp-added?-bis}. The latter also yields a bound for $ (\pwc\teta{\tau_k})^{(\mu+\alpha)/2} $ in $L^{2}(Q)$,  hence an estimate for  $ (\pwc\teta{\tau_k})^{(\mu-\alpha)/2} $ in $L^{2(\mu+\alpha)/(\mu-\alpha)}(Q)$. Therefore, for $r = (\mu+\alpha)/\mu = 1+\alpha/\mu$ we obtain that $  \| ( |\pwc\teta{\tau_k}|^{(\mu-\alpha)/2} )^r\|_{L^{2/(2{-}r)}(Q)} \leq C$, and the estimate for $\condu (\pwc\teta{\tau_k}) \nabla \log(\pwc\teta{\tau_k}) $ follows. 
For  the proof of  convergence \eqref{weak-nabla-logteta}, relying on convergences \eqref{cvT1}--\eqref{cvT5},   we refer to 
\cite[Thm.\ 1]{Rocca-Rossi}.  
Therefore we conclude the first of \eqref{further-logteta}. 
\par
To take the limit in the right-hand side terms in  the entropy inequality \eqref{entropy-ineq-discr}, for the first two integrals we use convergence \eqref{cvT4} combined with \eqref{convergences-test-interpolants}. A lower semicontinuity argument also based on the Ioffe theorem \cite{Ioff77LSIF} and on convergences \eqref{convergences-test-interpolants},   \eqref{cvT2},   and \eqref{cvT5} gives that
\[
\begin{aligned} 
\limsup_{k\to\infty} \left(  -  \int_{\pwc{\mathsf{t}}{\tau_k}(s)}^{\pwc{\mathsf{t}}{\tau_k}(t)}  \int_\Omega \condu(\pwc \teta{\tau_k}) \frac{\pwc\varphi{\tau_k}}{\pwc \teta{\tau_k}}
\nabla \log(\pwc \teta{\tau_k})  \nabla \pwc \teta{\tau_k} \dd x \dd r \right) 
 & = - \liminf_{k\to\infty} \int_{\pwc{\mathsf{t}}{\tau_k}(s)}^{\pwc{\mathsf{t}}{\tau_k}(t)}  \int_\Omega \condu(\pwc \teta{\tau_k}) \pwc\varphi{\tau_k} | \nabla  \log(\pwc \teta{\tau_k}) |^2 \dd x \dd r
\\
  & \leq  -  \int_s^t \int_\Omega \condu(\teta) \varphi
|\nabla \log(\teta)|^2\dd x \dd r,
\end{aligned}
\] 
which allows us to deal with  the third integral term on the r.h.s.\ of   \eqref{entropy-ineq-discr}. 
 We take the limit of the fourth  integral term taking into account convergences \eqref{converg-interp-g},    \eqref{cvT5}, 
 which yields
 \[
 \frac1{\pwc\teta{\tau_k}} \to \frac1\teta \qquad \text{ in } L^p(Q) \quad \text{for all } 1 \leq p<\infty,
 \]
 since $\left| \frac1{\pwc\teta{\tau_k}} \right| \leq \frac1{\bar\teta}$ a.e.\ in $Q$,  as well as 
 the previously established strong  convergences \eqref{strong-convergences} and \eqref{dominated-R}. 
 Finally,
 since $\nabla \left( \tfrac1{\pwc\teta{\tau_k}} \right) =
 \tfrac{\nabla  \pwc\teta{\tau_k}}{|\pwc\teta{\tau_k}|^2}$, combining \eqref{strong-strict-pos} with estimate 
 \eqref{aprio6-discr} we infer that $( \tfrac1{\pwc\teta{\tau_k}} )_k$ is  bounded in $L^2(0,T;H^1(\Omega))$. 
 All in all, we have 
  \begin{equation}
 \label{weak-1-teta}
  \frac1{\pwc\teta{\tau_k}} \weakto \frac1\teta \qquad \text{ in } L^2(0,T;H^1(\Omega)),
\end{equation}
which allows us to pass to the limit in the fifth integral term, in combination with convergence \eqref{converg-interp-h}.
 \par
 Ultimately, we establish the summability property $\condu(\teta) \nabla \log(\teta)$ in $L^{1}(0,T;X)$, with $X$ from  \eqref{further-logteta}, by combining the  facts that $\teta^{(\mu+\alpha-2)/2}\nabla\teta \in L^2(Q;\R^d)$ thanks to convergence \eqref{cvT8}, with the information that $\teta^{(\mu-\alpha)/2} \in L^2(0,T; H^1(\Omega))$ by \eqref{cvT9},  and by arguing in the very same way as in the proof of the \emph{Fifth a priori estimate} from Prop.\ \ref{prop:aprio}.  In view of  \eqref{further-logteta}, the entropy inequality \eqref{entropy-ineq}
  in fact makes sense for all positive test functions $\varphi $ in $H^1(0,T; L^{6/5}(\Omega)) \cup L^\infty(0,T;W^{1,d+\epsilon}(\Omega))$ with $\epsilon>0$. Therefore, with a density argument we conclude it for this larger test space.
 \par\noindent \emph{Step $5$: ad the total energy inequality \eqref{total-enineq}.} It is deduced by passing to the limit in the discrete total energy inequality \eqref{total-enid-discr}. For  the first integral term on the left-hand side, we use  that $\pwl {\dot u}{\tau_k}(t) \weakto \dot u(t)$  in $L^2(\Omega;\R^d)$ for all $t\in [0,T]$, cf.\ \eqref{dotu-quoted-later}.   For the second term we observe that 
 $\liminf_{k\to\infty} \calE_{\tau_k} (\pwc\teta{\tau_k}(t), \pwc e{\tau_k}(t)) \geq \calE (\teta(t), e(t))$  for \emph{almost all} $t\in (0,T)$ by convergence \eqref{cvT5} for $\pwc\teta{\tau_k}$ and 
by \eqref{cvE3-quater}, combined with \eqref{stability-e-k}.  The limit passage on the right-hand side, for almost all $s \in (0,t)$,  follows from \eqref{cvU-quater},  again \eqref{cvT5} and  \eqref{cvE3-quater}, from \eqref{houston},
and from convergences
\eqref{convs-interp-data} and 
for the interpolants $(\pwc H{\tau_k})_k, \, (\pwc h{\tau_k})_k, \, (\pwc \calL{\tau_k})_k, \, (\pwc w{\tau_k})_k$. 
\par\noindent
This concludes the proof of Theorem \ref{mainth:1}.
   \QED
   \medskip 
   \par
   \noindent
   We now briefly sketch the   \underline{\bf proof of Theorem \ref{mainth:2}}.  The limit passage in the discrete momentum balance and in the plastic flow rule, cf.\ \eqref{eq-u-interp}
   and \eqref{eq-p-interp},  follows from the arguments in the proof of Thm.\ \ref{mainth:1}.
   \par
   As for the heat equation, we shall  as a  first step prove that the limit quadruple $(\teta, u,e,p)$ complies with 
   \begin{equation}
\begin{aligned}
\label{eq-teta-interm}   &
\pairing{}{W^{1,\infty}(\Omega)}{\teta(t)}{\varphi(t)}
-\int_0^t\int_\Omega \teta \varphi_t \dd x \dd s     +\int_0^t \int_\Omega \condu(\teta) \nabla \teta\nabla\varphi \dd
x \dd s  
\\ &\quad  = \int_\Omega \teta_0 \varphi(0) \dd x  +
\int_0^t\int_\Omega \left(H+ \mathrm{R}(\teta, \dot p)+ |\dot p|^2 +\bbD \dot e {:} \dot e -\teta \bbB \dot e \right)\varphi \dd x \dd s  +  \int_0^t \int_\Omega \int_{\partial\Omega} h \varphi \dd S \dd s\,.
\end{aligned}
\end{equation}
for all   test functions $
\varphi\in \rmC^0([0,T]; W^{1,\infty}(\Omega))\cap
H^1(0,T;L^{6/5}(\Omega))  $ and for all  $ t\in (0,T]. $
With this aim,  we  pass to the limit in the approximate temperature equation \eqref{eq-teta-interp}, tested by the approximate test functions from \eqref{discrete-tests-phi}, where we integrate by parts in time   the term $\int_0^t \int_\Omega \pwl{\dot \teta}{\tau_k} \pwc{\varphi}{\tau_k}\dd x \dd r$. For this limit passage, we exploit  convergences \eqref{convergences-test-interpolants}
    as well as 
   \eqref{cvT6} for $(\upwc \teta{\tau_k})_k$  and \eqref{cvT7}.
   \par
  For the limit passage in the term $\iint_Q \condu(\pwc \teta{\tau_k}) \nabla  \pwc \teta{\tau_k} \nabla \pwc \varphi{\tau_k} \dd x \dd t$ we prove that 
   $\condu(\pwc \teta{\tau_k}) \nabla  \pwc \teta{\tau_k}  \weakto \condu(\teta) \nabla \teta$ in $L^{1+\tilde\delta}(Q;\R^d)$, with $\tilde \delta>0$ given by \eqref{further-k-teta}.  Let us check the bound 
   \begin{equation}
   \label{bound-tilde-delta}
   \| \condu(\pwc \teta{\tau_k}) \nabla  \pwc \teta{\tau_k} \|_{L^{1+\tilde\delta}(Q;\R^d)} \leq C, 
   \end{equation}
by again resorting to estimates \eqref{aprio6-discr} and \eqref{est-temp-added?-bis}.  Indeed, by \eqref{hyp-K} we have that 
   \begin{equation}
   \label{4.47}
   | \condu(\pwc \teta{\tau_k}) \nabla  \pwc \teta{\tau_k} | \leq C| \pwc \teta{\tau_k}|^{(\mu-\alpha+2)/2}  | \pwc \teta{\tau_k}|^{(\mu+\alpha-2)/2}  |\nabla  \pwc \teta{\tau_k} |+ C | \nabla  \pwc \teta{\tau_k} | \qquad \aein\, Q,
   \end{equation}
and  we estimate the first term on the r.h.s.\ by observing that  $| \pwc \teta{\tau_k}|^{(\mu+\alpha-2)/2}  |\nabla  \pwc \teta{\tau_k} |$ is bounded in $L^2(Q)$ thanks to   \eqref{est-temp-added?-bis}. On the other hand, in the case $d=3$, to which we confine the discussion,  by  interpolation arguments $\pwc\teta{\tau_k}$ is bounded in $L^h(Q)$ for every $1 \leq h<\frac83$. Therefore, for $\alpha>\mu-\frac23$ (so that  $\mu-\alpha+2<\frac83$), the functions   $(| \pwc \teta{\tau_k}|^{(\mu-\alpha+2)/2})_k$ are bounded in $L^r(Q)$ 
with $1\leq r< \frac{16}{3(\mu-\alpha+2)}$. Then, \eqref{bound-tilde-delta} follows from  \eqref{4.47} via the H\"older inequality. The corresponding weak convergence can be  proved
 arguing in the very same way as in the proof of \cite[Thm.\ 2]{Rocca-Rossi}, to which we refer the reader.  
 Therefore we conclude that 
 $\condu(\teta) \nabla \teta \in L^{1+\tilde\delta}(Q;\R^d)$. Observe that $\condu(\teta) \nabla \teta = \nabla (\hat{\condu}(\teta))$ thanks to \cite{Marcus-Mizel}.  Since $\hat{\condu}(\teta)$ itself is a function in $L^{1+\tilde\delta}(Q) $ (for $d=3$, this follows from the fact that $\hat{\condu}(\teta) \sim \teta^{\mu+1} \in L^{h/(\mu+1)}(Q)$ for  every $1 \leq h<\frac83$), we conclude \eqref{further-k-teta}.
 \par
 The limit passage on the r.h.s.\ of  the discrete heat equation \eqref{eq-teta-interp} results from   \eqref{converg-interp-g}, from \eqref{cvT6},  the strong convergences \eqref{strong-convergences}, and \eqref{dominated-R}. 
 \par
 All in all, we obtain \eqref{eq-teta-interm}, whence  for every $\varphi \in W^{1,\infty}(\Omega)$  and every $0 \leq s \leq t \leq T$
 \[
 \begin{aligned}
 &
\pairing{}{W^{1,\infty}(\Omega)}{\teta(t)-\teta(s)}{\varphi}  \\ &   = 
-\int_s^t \int_\Omega \condu(\teta) \nabla \teta\nabla\varphi \dd
x \dd r
+
\int_s^t\int_\Omega \left(H+ \mathrm{R}(\teta, \dot p)+ |\dot p|^2 +\bbD \dot e {:} \dot e -\teta \bbB \dot e \right) \varphi \dd x \dd r +  \int_s^t \int_\Omega \int_{\partial\Omega} h \varphi \dd S \dd r\,.
\end{aligned}
\]
 From this we easily conclude 
 the enhanced regularity \eqref{enh-teta-W11}.  Thanks to \cite[Thm.\ 7.1]{DMDSMo06QEPL}, the absolutely continuous function $\teta:  [0,T] \to  W^{1,\infty}(\Omega)^*$ admits at almost all $t\in (0,T)$  the derivative $\dot{\teta}(t)$, which turns out to be the limit  as $h \to 0$ of the incremental quotients $\frac{\teta(t+h)-\teta(t)}h$, w.r.t.\ the  weak$^*$-topology of $W^{1,\infty}(\Omega)^*$. Therefore, the enhanced weak formulation of the heat equation \eqref{eq-teta} follows.
 \par
 Recall (cf.\ the comments following the statement of Thm.\ \ref{mainth:2}) that $\tilde{\delta}$ is small enough as to ensure that $W^{1,1+1/\tilde{\delta}}(\Omega) \subset L^\infty(\Omega)$. Therefore, the terms on the r.h.s.\ of  the heat equation \eqref{heat} can be multiplied by test functions $\varphi \in W^{1,1+1/\tilde{\delta}}(\Omega)$. 
 Thanks to  \eqref{further-k-teta}, also the second term on the l.h.s.\ of   \eqref{heat} admits such test functions. Therefore by comparison we conclude that $\dot{\teta} \in L^1(0,T;   W^{1,1+1/\tilde{\delta}}(\Omega)^*)$ and,  with a density argument, extend  the weak formulation \eqref{eq-teta} to this (slightly) larger  space of test functions. 
This finishes the proof of Theorem \ref{mainth:2}. 
\QED
\begin{remark}[Energy convergences for the approximate solutions]
\label{rmk:energy-conv}
\upshape
As a by-product of the proofs of Theorems \ref{mainth:1} and \ref{mainth:2}, we  improve  convergences \eqref{convergences-cv}  of the approximate solutions  to an entropic/weak energy solution of the thermoviscoplastic system. More specifically, it follows from \eqref{cvT5}  and \eqref{strong-convergences}--\eqref{cvU-quinque} that we have the convergence of the kinetic energies
\[
\frac{\varrho}2 \int_\Omega |\pwl {\dot u}{\tau_k}(t)|^2 \dd x \to  \frac{\varrho}2 \int_\Omega  |\dot u(t)|^2 \dd x \qquad \text{for all } t \in [0,T],
\]
of the 
dissipated energies
\[
\int_0^T \int_\Omega \bbD \pwl {\dot e}{\tau_k}{:} \pwl {\dot e}{\tau_k} \dd x  \dd t \to  \int_0^T \int_\Omega \bbD \dot e{:} \dot e \dd x  \dd t, \qquad \int_0^T \calR (\upwc \teta{\tau_k}, \pwl {\dot p}{\tau_k}) \dd t \to \int_0^T \calR (\teta, \dot p) \dd t,
\] and of the thermal and mechanical energies
\[
\calF(\pwc \teta{\tau_k}(t)) \to \calF(\teta(t)) \quad \foraa t \in (0,T), \qquad \calQ(\pwc e{\tau_k}) \to \calQ(e) \qquad \text{uniformly in } [0,T]. 
\]
\end{remark}
\section{\bf Setup for the perfectly plastic system}
\label{s:5}
As already mentioned in the Introduction,  in the vanishing-viscosity limit  of the thermoviscoplastic system  we will obtain the \emph{(global) energetic formulation} for the perfectly plastic system,  coupled with  the stationary limit of the heat equation. Prior to performing this asymptotic analysis, in this section we gain further insight into the concept of energetic solution for  perfect plasticity. 
\par
For the energetic formulation to be fully meaningful,  in Sec.\ \ref{ss:5.1} we need to strengthen the  assumptions,  previously given in Section \ref{ss:2.1},
on the reference configuration $\Omega$,
 on the elasticity tensor $\bbC$,  
 and on the elastic domain $
x  \in \Omega \rightrightarrows K(x) \subset \mt_\dev^{d\times d}$ 
(indeed,
 we will drop the dependence of $K$ on the -spatially and temporally- nonsmooth variable $\teta$). 
 Instead,  we will weaken the regularity requirements on the Dirichlet loading $w$. 
 \par
Preliminarily, let us recall some basic facts about the space of functions with bounded deformation in  $\Omega$.
\paragraph{{\em The space $\BD(\Omega;\R^d)$}} It  is defined by
\begin{equation}
\BD(\Omega;\R^d): = \{ u \in L^1(\Omega;\R^d)\, : \ \sig{u} \in\rmM(\Omega;\mt_\sym^{d\times d}) \},
\end{equation}
with $\rmM(\Omega;\mt_\sym^{d\times d})$ the space of Radon measures on $\Omega$ with values in $\mt_\sym^{d\times d}$,  with norm $\| \lambda\|_{\rmM(\Omega;\mt_\sym^{d\times d}) }: = |\lambda|(\Omega)$ and  $|\lambda|$ the variation of the measure. Recall that, by the Riesz representation theorem, 
 $\rmM(\Omega;\mt_\sym^{d\times d})$ can be identified with the dual of the space $\mathrm{C}_0(\Omega;\mt_\sym^{d\times d})$ of the continuous functions $\varphi: \Omega \to \mt_\sym^{d\times d}$ such that the sets $\{ |\varphi|\geq c\}$ are compact for every $c>0$. 
The space $ \BD(\Omega;\R^d)$ is endowed with the graph norm
\[
\| u \|_{\BD(\Omega;\R^d)}: = \| u \|_{L^1(\Omega;\R^d)}+ \| \sig{u}\|_{\rmM(\Omega;\mt_\sym^{d\times d}) },
\]
which makes it a Banach space. It turns out that  $\BD(\Omega;\R^d)$ is the dual of a normed space, cf.\ \cite{Temam-Strang80}.
\par
In addition to the strong convergence induced by $\norm{\cdot}{\BD(\Omega;\R^d)}$, this duality defines  a notion of weak$^*$ convergence on $\BD(\Omega;\R^d)$ :
 a sequence $(u_k)_k$ converges weakly$^*$ to $u$ in $\BD(\Omega;\R^d)$ if $u_k\weakto u$ in $L^1(\Omega;\R^d)$ and $\sig{u_k}\weaksto \sig{u}$ in $ \rmM(\Omega;\mt_\sym^{d\times d})$. Every bounded sequence in $\BD(\Omega;\R^d)$ has  a weakly$^*$ converging subsequence and, furthermore, a subsequence converging weakly in $L^{d/(d{-}1)}(\Omega;\R^d)$ and strongly in $L^{p}(\Omega;\R^d)$  for every $1\leq p <\frac d{d-1}$. 
\par
 Finally, we recall that for  every $u \in \BD(\Omega;\R^d)$  the trace $u|_{\partial\Omega}$ is well defined as an element in $L^1(\partial\Omega;\R^d)$, and that (cf.\ \cite[Prop.\ 2.4, Rmk.\ 2.5]{Temam83}) a Poincar\'e-type inequality holds:
\begin{equation}
\label{PoincareBD}
\exists\, C>0  \ \ \forall\, u \in \BD(\Omega;\R^d)\, : \ \ \norm{u}{L^1(\Omega;\R^d)} \leq C \left( \norm{u}{L^1(\Gamma_\Dir;\R^d)} + \norm{\sig u}{\rmM(\Omega;\mt_\sym^{d\times d})}\right)\,.
\end{equation}
\subsection{Setup}
\label{ss:5.1}
\par
Let us now detail the basic assumptions on the data of the perfectly plastic system. We postpone to the end of Section \ref{ss:5.2} a series of   comments on the outcome of the conditions given below, as well as on  the possibility of weakening some of them.
\paragraph{{\em The reference configuration}.} 
For technical reasons related to the definition of the stress-strain duality,  cf.\ Remark \ref{rmk:stress-strain-dual} later on,  in addition to  conditions \eqref{Omega-s2} required in Sec.\ \ref{ss:2.1}, we will suppose from now on that 
\begin{equation}
\label{bdriesC2}
\tag{5.$\Omega$}
\partial\Omega \text{ and } \partial\Gamma \text{ are of class } \rmC^2\,.
\end{equation}
The latter requirement means that for every $x \in \partial\Gamma$ there exists a $ \rmC^2$-diffeomorphism defined in an open neighborhood of $x$ that maps $\partial\Omega$ into a $(d{-}1)$-dimensional plane, and $\partial\Gamma$ into a $(d{-}2)$-dimensional plane.
\paragraph{{\em Kinematic admissibility and stress}.}  Given a function $w\in H^1(\Omega;\R^d)$, we say that a triple $(u,e,p)$ is 
 \emph{kinematically admissible with boundary datum $w$ for the perfectly plastic system} (\emph{kinematically admissible}, for short), and write $(u,e,p) \in \mathcal{A}_{\BD}(w)$,  if
\begin{subequations}
\label{kin-adm-BD}
\begin{align}
&
\label{kin-adm-BD-a}
u \in \BD(\Omega;\R^d), \quad e \in L^2(\Omega;\mt_\sym^{d\times d}), \quad p \in \rmM(\Omega {\cup} \Gamma_\Dir;\mt_\dev^{d\times d}),
\\
& 
\label{kin-adm-BD-b}
\sig u = e+p,
\\
\label{kin-adm-BD-c}
& 
p = (w -u) \odot \nu \mathscr{H}^{d-1} \quad \text{on } \Gamma_\Dir. 
\end{align}
\end{subequations} 
  Observe that 
  \eqref{kin-adm-BD-a}
reflects the fact that the plastic strain is now a measure that can concentrate on Lebesgue-negligible sets. 
Furthermore,   \eqref{kin-adm-BD-c} relaxes the Dirichlet condition $w=u$ on $\Gamma_\Dir$ imposed  by the kinematic admissibility condition \eqref{kin-adm} and represents a plastic slip (mathematically described by the singular part of the measure $p$) occurring on 
$\Gamma_\Dir$. It can be checked that  $\calA(w) \subset \calA_{\BD}(w)$.
In the proof of Theorem \ref{mainth:3} we will make use of the following closedness property, proved in \cite[Lemma 2.1]{DMDSMo06QEPL}.
\begin{lemma}
\label{l:closure-kin-adm}
Assume \eqref{Omega-s2}  and \eqref{bdriesC2}. Let $(w_k)_k\subset H^1(\Omega;\R^d)$ and $(u_k,e_k,p_k )\in  \mathcal{A}_{\BD}(w_k)$ for every $k\in \N$. Assume that 
\begin{equation}
\label{ptw-convs}
\begin{aligned}
&
w_k\weakto w_\infty \text{ in } H^1(\Omega;\R^d), \qquad u_k\weaksto u_\infty \text{ in } \BD(\Omega;\R^d), 
\\
& e_k\weakto e_\infty \text{ in } L^2(\Omega;\mt_\sym^{d\times d}), \qquad p_k \weaksto p_\infty \text{ in }  \rmM(\Omega \cup \Gamma_\Dir;\mt_\dev^{d\times d}).
\end{aligned}
\end{equation}
Then, $(u_\infty, e_\infty, p_\infty) \in \mathcal{A}_{\BD}(w_\infty)$.
\end{lemma}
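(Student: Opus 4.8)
The statement is a closedness result for the set-valued constraint $\mathcal{A}_{\BD}$ under the prescribed weak-type convergences, and the plan is to check the three defining relations \eqref{kin-adm-BD-a}--\eqref{kin-adm-BD-c} for the limit triple $(u_\infty,e_\infty,p_\infty)$. The regularity \eqref{kin-adm-BD-a} is immediate from the assumed convergences: each $u_k\in \BD(\Omega;\R^d)$, and weak$^*$ convergence in $\BD(\Omega;\R^d)$ together with the weak convergence of $(e_k)_k$ in $L^2$ and the weak$^*$ convergence of $(p_k)_k$ in $\rmM(\Omega\cup\Gamma_\Dir;\mt_\dev^{d\times d})$ directly give the function-space membership of $(u_\infty,e_\infty,p_\infty)$. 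So the content lies in passing to the limit in the two algebraic identities \eqref{kin-adm-BD-b} and \eqref{kin-adm-BD-c}.

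For the interior relation \eqref{kin-adm-BD-b}, I would test $\sig{u_k} = e_k + p_k$ against an arbitrary $\varphi \in \rmC_0^\infty(\Omega;\mt_\sym^{d\times d})$, interpreting $\sig{u_k}$ as a measure in $\rmM(\Omega;\mt_\sym^{d\times d})$. The left-hand side $\int_\Omega \varphi : \dd \sig{u_k}$ converges to $\int_\Omega \varphi : \dd \sig{u_\infty}$ by the weak$^*$ convergence $\sig{u_k}\weaksto \sig{u_\infty}$ (which is part of the definition of weak$^*$ convergence in $\BD$); the term $\int_\Omega \varphi : e_k \dd x \to \int_\Omega \varphi : e_\infty \dd x$ by weak $L^2$-convergence; and $\int_\Omega \varphi : \dd p_k \to \int_\Omega \varphi : \dd p_\infty$ because $\varphi$ restricted to the open set $\Omega$ is a legitimate test function for weak$^*$ convergence in $\rmM(\Omega\cup\Gamma_\Dir;\mt_\dev^{d\times d})$ (it vanishes near $\Gamma_\Dir$). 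Hence $\sig{u_\infty} = e_\infty + p_\infty$ as measures on $\Omega$, which is \eqref{kin-adm-BD-b}.

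The main obstacle is the boundary relation \eqref{kin-adm-BD-c}, since it involves traces of $\BD$-functions, which are only weakly continuous under additional care, and the identification of the part of the measure $p_\infty$ living on $\Gamma_\Dir$. Here the strategy is to exploit the $\rmC^2$-regularity \eqref{bdriesC2}: it guarantees that one can choose test functions $\varphi \in \rmC^0(\overline\Omega;\mt_\sym^{d\times d})$ that do not vanish on $\Gamma_\Dir$ and integrate the identity $\sig{u_k}\llcorner(\Omega\cup\Gamma_\Dir) = e_k + p_k$ by parts, using the boundary-trace integration formula for $\BD$ to rewrite the boundary contribution in terms of $(w_k-u_k)\odot\nu\,\mathscr{H}^{d-1}$. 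One then passes to the limit: the bulk terms converge as before, while on $\Gamma_\Dir$ the trace of $u_k$ converges to the trace of $u_\infty$ in $L^1(\Gamma_\Dir;\R^d)$ — this is where the uniform $\BD$-bound and the geometry of $\partial\Gamma$ enter, via the continuity of the trace operator together with a compactness argument, or alternatively via lower/upper semicontinuity of the boundary integral — and $w_k\to w_\infty$ in $L^2(\Gamma_\Dir;\R^d)$ by compact trace embedding of $H^1$. Matching the limits yields $p_\infty\llcorner\Gamma_\Dir = (w_\infty - u_\infty)\odot\nu\,\mathscr{H}^{d-1}$, i.e.\ \eqref{kin-adm-BD-c}. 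Since this is precisely \cite[Lemma 2.1]{DMDSMo06QEPL}, I would ultimately present the argument in abbreviated form, referring to that paper for the delicate trace estimates on $\Gamma_\Dir$, and only spelling out the limit passages in detail to the extent they differ from the cited source.
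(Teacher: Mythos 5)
The paper offers no proof of this lemma: it is quoted verbatim from \cite[Lemma 2.1]{DMDSMo06QEPL}, so the only benchmark is the argument in that reference. Your handling of \eqref{kin-adm-BD-a} and of the interior identity \eqref{kin-adm-BD-b} (testing against $\varphi\in\rmC_0^\infty(\Omega;\mt_\sym^{d\times d})$, which are admissible test functions for weak$^*$ convergence in $\rmM(\Omega\cup\Gamma_\Dir;\mt_\dev^{d\times d})$ as well) is fine. The gap is in the boundary condition \eqref{kin-adm-BD-c}: you assert that the traces of $u_k$ on $\Gamma_\Dir$ converge in $L^1(\Gamma_\Dir;\R^d)$ to the trace of $u_\infty$. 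This is false under mere weak$^*$ convergence in $\BD(\Omega;\R^d)$: the trace operator is continuous for the strict (intermediate) topology but not for the weak$^*$ one. A one-dimensional caricature already shows it: $u_k=\chi_{(1/k,1)}$ converges weakly$^*$ in $\BV((0,1))$ to $u_\infty\equiv 1$ (the derivatives $\delta_{1/k}$ converge to $0$ in $\rmM((0,1))$, since admissible test functions vanish at the endpoints), yet $u_k(0^+)=0\neq 1=u_\infty(0^+)$. This loss of trace at the boundary is precisely the phenomenon that the relaxed condition \eqref{kin-adm-BD-c} is designed to record, so a proof that presupposes trace convergence begs the question; neither semicontinuity of a boundary integral nor compactness of the $H^1$-traces of $w_k$ repairs this, because what is needed is an \emph{identity} for the measure $p_\infty\llcorner\Gamma_\Dir$, not an inequality.

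The mechanism in \cite[Lemma 2.1]{DMDSMo06QEPL} avoids traces altogether. Using \eqref{bdriesC2} one chooses an open set $\tilde\Omega\supset\Omega$ with $\tilde\Omega\cap\partial\Omega=\Gamma_\Dir$ and extends $u_k$ by $w_k$ on $\tilde\Omega\setminus\overline\Omega$. The extension $\tilde u_k$ belongs to $\BD(\tilde\Omega;\R^d)$ and, by the admissibility of $(u_k,e_k,p_k)$, satisfies the purely \emph{interior} identity $\sig{\tilde u_k}= e_k\mathcal{L}^d\llcorner\Omega + p_k + \sig{w_k}\mathcal{L}^d\llcorner(\tilde\Omega\setminus\overline\Omega)$ in $\rmM(\tilde\Omega;\mt_\sym^{d\times d})$: the boundary condition \eqref{kin-adm-BD-c} has been absorbed into the jump part of $\sig{\tilde u_k}$ across $\Gamma_\Dir$. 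Every term on the right converges weakly$^*$ by \eqref{ptw-convs}, $\tilde u_k\weakto\tilde u_\infty$ in $L^1(\tilde\Omega;\R^d)$, and the distributional closedness of $u\mapsto\sig{u}$ gives the same identity in the limit; restricting to $\Omega$ yields \eqref{kin-adm-BD-b}, and identifying the jump part of $\sig{\tilde u_\infty}$ on $\Gamma_\Dir$ yields \eqref{kin-adm-BD-c}. If you intend to defer to the reference anyway, defer for the whole boundary argument rather than for ``trace estimates'' that the reference does not in fact use.
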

\par
In the perfectly plastic system, the stress is given by $\sigma = \bbC e$, cf.\ \eqref{stress-RIP}. Following \cite{DMDSMo06QEPL, Sol09, FraGia2012, Sol14},  in addition to \eqref{elast-visc-tensors}, we suppose that    for almost all  $x\in \Omega$ the  elastic tensor $\bbC(x) \in \mathrm{Lin}(\mt_\sym^{d\times d})$ maps the orthogonal spaces $\mt_\dev^{d\times d}$ and $\R I$ into themselves. Namely, there exist functions
\begin{equation}
\label{bbC-s:5}
\tag{5.$\mathrm{T}$}
\bbC_\dev \in L^\infty (\Omega;  \mathrm{Lin}(\mt_\sym^{d\times d}) )  \text{ and }  \eta \in L^\infty(\Omega;\R^+) \text{ s.t. }  \forall\, A \in \mt_\sym^{d\times d} \ \ 
\bbC(x)A = \bbC_\dev(x) A_\dev + \eta(x) \mathrm{tr}(A) I,
\end{equation}
with $I $ the identity matrix. 
\par 
\paragraph{{\em Body force, traction, and Dirichlet loading}.}  Along the footsteps of  \cite{DMDSMo06QEPL}, we  enhance our conditions on $F$ and $g$ (cf.\ \eqref{data-displ}), by requiring that 
\begin{equation}
\label{data-displ-s5}
 \tag{5.$\mathrm{L}_1$}
 F 
\in \mathrm{AC} ([0,T]; L^d(\Omega;\R^d)),\qquad g \in \AC ([0,T]; L^\infty (\Gamma_\Neu;\R^d)).
\end{equation}
Therefore, for every $t\in [0,T]$  the element $\calL(t)$ defined by \eqref{total-load} belongs to $\BD(\Omega;\R^d)^*$, and moreover (see \cite[Rmk.\ 4.1]{DMDSMo06QEPL})
$\dot{\calL}(t)$ exists in  $ \BD(\Omega;\R^d)^*$
 for almost all $t\in (0,T)$.
 Furthermore,  we shall strengthen the   safe load condition from \eqref{safe-load} to
 \begin{equation}
 \label{safe-load-s5}
 \varrho \in W^{1,1}(0,T; L^2(\Omega;\mt_\sym^{d\times d})) \quad \text{ and  } \varrho_\dev \equiv 0,
 \end{equation}
 cf.\ Remark \ref{rmk:honest} below. 
 \begin{remark}
 \label{rmk:honest}
 \upshape
 The second of \eqref{safe-load-s5} is required only for consistency with 
  the upcoming conditions \eqref{safe-load-eps}  on the stresses $(\varrho_\eps)_\eps$, associated    via the safe load condition with  the 
forces $(F_\eps)_\eps$ and $(g_\eps)_\eps$  for the thermoviscoplastic systems approximating the perfectly plastic one. 
The feasibility of \eqref{safe-load-s5} is completely open, though. Hence, we might as well 
 confine the discussion to the case the body force $F$ and the assigned traction $g$ are null. We have chosen not to do so because
 \eqref{safe-load-s5} is the natural counterpart to  \eqref{safe-load-eps}.
 \end{remark}
Further, we consider  the body to be   solicited by a hard device $w$ on the Dirichlet boundary $\Gamma_\Dir$, for which  we suppose
\begin{equation}
\label{Dir-load-Sec5}
\tag{5.$\mathrm{W}$}
w \in \mathrm{AC}([0,T]; H^1(\Omega;\R^d)),
\end{equation}
which is  a weaker requirement than \eqref{Dirichlet-loading}.
\paragraph{{\em The plastic dissipation.}} Since the plastic strain (and, accordingly, the plastic strain rate) is  a measure on $\Omega \cup \Gamma_
\Dir$, from now on we will suppose that the multifunction $K$ is defined on $\Omega \cup \Gamma_\Dir$. Furthermore, 
following \cite{Sol09}, we will require that 
\begin{equation}
\label{ass-K1-sec-5}
\tag{5.$\mathrm{K}_1$}
K : \Omega \cup \Gamma_\Dir \rightrightarrows \mt_\dev^{d\times d}   \text{ is continuous }
\end{equation}
in the sense specified by \eqref{multifuncts-props} and that 
\begin{equation}
\label{ass-K2-sec-5}
\tag{5.$\mathrm{K}_2$}
\begin{aligned}
&
   K(x)  \  \text{ is a convex and compact subset of $\mt_\dev^{d\times d}$ for all } x \in \Omega \cup \Gamma_\Dir \text{ and} \\ 
&
\exists\, 0<c_r<C_R \  \ \ \forall\, x \in   \Omega \cup \Gamma_\Dir\, :   \ \ \  B_{c_r}(0) \subset K(x) \subset B_{C_R}(0).
 \end{aligned}
\end{equation}
In order to state the stress constraint $\sigma_\dev \in K$ a.e.\ in $\Omega$ 
 in a more compact form, we also introduce the set
\begin{equation}
\label{realization-K}
\calK(\Omega): = \{ \zeta \in L^2(\Omega; \mt_\dev^{d\times d})\, : \zeta(x) \in K(x)  \  \foraa x \in \Omega\}.
\end{equation}
We will denote by $\mathrm{P}_{\mathcal{K}(\Omega)} :  L^2(\Omega; \mt_\dev^{d\times d}) \to  L^2(\Omega; \mt_\dev^{d\times d})$ the projection operator 
onto the closed convex  set $\mathcal{K}(\Omega)$
induced by 
the projection operators onto the sets $K(x)$, namely, for a given $\sigma \in  L^2(\Omega; \mt_\dev^{d\times d})$,
\begin{equation}
\label{proj-K-Omega}
\xi = \mathrm{P}_{\mathcal{K}(\Omega)}(\sigma) \qquad \text{if and only if} \qquad \xi(x) = \mathrm{P}_{K(x)}(\sigma(x)) \ \foraa x \in \Omega.
\end{equation}
\par
We introduce the support function $ \mathrm{R}: \Omega \times  \mt_\dev^{d\times d}  \to [0,+\infty) $ associated with the multifunction $K$. In order to define the related dissipation potential, we have to resort to the theory of convex function of measures   \cite{Goffman-Serrin64,Reshetnyak68}, since the tensor $p$  and its rate $\dot p$
are  now  Radon measures on $\Omega \cup \Gamma_\Dir$. Therefore, with  every $\dot{p} \in \mathrm{M}(\Omega{\cup}  \Gamma_\Dir; \mt_\sym^{d\times d})$ (for convenience, we will keep to the notation $\dot p$ for the independent variable in the plastic dissipation potential) we associate the nonnegative Radon measure  $\overline{\mathrm{R}}(\dot{p})$ defined by 
\begin{equation}
\label{Radon-meas-R}
\overline{\mathrm{R}}(\dot{p})(B): = \int_B  \mathrm{R} \left(x, \frac{\dot p}{|\dot p|} (x) \right)  \dd |\dot p|(x) \qquad \text{for every  Borel set } B \subset  \Omega{\cup}  \Gamma_\Dir,
\end{equation}
with ${\dot p}/|\dot p|$ the Radon-Nykod\'ym derivative of the measure $\dot p$ w.r.t.\ its variation $|\dot p|$. We then consider the \emph{plastic dissipation potential} 
\begin{equation}
\label{pl-diss-pot}
\calR:  \mathrm{M}(\Omega{\cup}  \Gamma_\Dir; \mt_\sym^{d\times d}) \to [0,+\infty) \text{ defined by }  \calR(\dot p): =  \overline{\mathrm{R}}(\dot{p})(\Omega{\cup} \Gamma_\Dir).
\end{equation}
Observe that the definition of the functional $\calR$ on $\mathrm{M}(\Omega{\cup}  \Gamma_\Dir; \mt_\sym^{d\times d}) $ is consistent with that given on 
$ L^1(\Omega;\mt_\sym^{d\times d})$ in \eqref{plastic-dissipation-functional} (in the case the yield surface $K$ does not depend on $\teta$), namely
  \begin{equation}
\label{consistency-measure-L1} \calR(\dot p) = \int_\Omega \mathrm{R}(x,\dot p(x)) \dd x  \qquad \text{ if }  \dot p \in L^1(\Omega;\mt_\sym^{d\times d}).
\end{equation}
This justifies the abuse in the  notation for $\mathcal{R}$. 
\par
It follows from the lower semicontinuity of $x \rightrightarrows K(x)$ that its support function 
$ \mathrm{R} $ is   lower semicontinuous on $\Omega \times  \mt_\dev^{d\times d}$.
Since $\mathrm{R}(x,\cdot)$ is also convex and $1$-homogeneous, Reshetnyak's Theorem (cf., e.g., \cite[Thm.\ 2.38]{AmFuPa05FBVF}) applies to ensure that the functional $\calR$ from \eqref{pl-diss-pot} is lower semicontinuous w.r.t.\  the weak$^*$-topology on   $\mathrm{M}(\Omega{\cup}  \Gamma_\Dir; \mt_\sym^{d\times d}) $. Accordingly, the induced  total variation functional, defined for every function $p : [0,T] \to  \mathrm{M}(\Omega{\cup}  \Gamma_\Dir; \mt_\sym^{d\times d})$ by 
\begin{equation}
\mathrm{Var}_{\calR}(p; [a,b]) : = \sup\left \{  \sum_{i=1}^N \calR (p(t_i) - p(t_{i-1})) \, : \ a= t_0<t_1< \ldots< t_{N-1} = t_N = b \right\}
\end{equation}
for  $ [a,b] \subset [0,T]$,
is lower semicontiuous  w.r.t.\ the pointwise (in time) convergence of $p$ in  the weak$^*$ topology of $\mathrm{M}(\Omega{\cup}  \Gamma_\Dir; \mt_\sym^{d\times d})$.
\begin{remark}
\upshape
\label{rmk:added-sol}
The dependence of the constraint set on a (spatially and/or temporally) discontinuous additional variable poses considerable difficulties in 
 handling  the plastic dissipation potential, in that Reshetnyak's Theorem is no longer applicable. To bypass this, in the non-associative plasticity models considered in, e.g., \cite{BabFraMor12,DalDesSol11}, such additional variable has been mollified; very recently, in \cite{CO17} a  Reshetnyak-type lower semicontinuity result has been obtained in the case plastic dissipation potential also depends on a damage variable $z\in \mathrm{B}([0,T];W^{1,d}(\Omega))$ (with $d$ the space dimension of the problem).
\par
Here we  prefer to avoid mollifying the temperature variable, and the result from  \cite{CO17}  does not apply due to the poor regularity of $\teta$. That is why, we have dropped the dependence of $K$ on $\teta$. 
\end{remark}
\par
Finally, we  recall \cite[Thm.\ 7.1]{DMDSMo06QEPL}, stating that for every $p \in \mathrm{AC}([0,T]; \mathrm{M}(\Omega{\cup}  \Gamma_\Dir; \mt_\sym^{d\times d})) $,  there exists 
\[
\dot{p}(t): = \mathrm{weak}^*-\lim_{h \to 0} \frac{p(t+h) - p(t)}{h} \qquad \foraa t \in (0,T), 
\] 
where the limit is w.r.t.\ the weak$^*$-topology of $\mathrm{M}(\Omega{\cup}  \Gamma_\Dir; \mt_\sym^{d\times d})$. Moreover, there holds
\begin{equation}
\label{consistency-dotp}
\mathrm{Var}_{\calR}(p; [a,b]) = \int_a^b  \calR(\dot p(t)) \dd t \qquad \text{for all } [a,b] \subset [0,T],
\end{equation}
cf.\  \cite[Thm.\ 7.1]{DMDSMo06QEPL} and \cite[Thm.\ 3.6]{Sol09}.
\paragraph{{\em Cauchy data}.} We will supplement the perfectly plastic  system with 
initial data
\begin{subequations}
\label{Cauchy-data-s5}
\begin{align}
&
\label{initial-u-s5}
u_0 \in \mathrm{BD}(\Omega;\R^d),
\\
& 
\label{initial-p-s5}
e_0 \in   L^2(\Omega;\mt_\sym^{d\times d}), \quad  p_0 \in \mathrm{M}(\Omega{\cup}\Gamma_\Dir;\mt_\dev^{d\times d}) \quad \text{such that } (u_0, e_0, p_0 ) \in \mathcal{A}_{\BD}(w(0))\,.
\end{align}
\end{subequations}
\subsection{Energetic solutions to the perfectly plastic system}
\label{ss:5.2}
Throughout this section we will tacitly suppose the validity of conditions \eqref{bdriesC2}, \eqref{bbC-s:5}, 
\eqref{data-displ-s5},  \eqref{Dir-load-Sec5}, and
 \eqref{ass-K1-sec-5}--\eqref{ass-K2-sec-5}.  
We are now in the position to give the notion of energetic solution (or \emph{quasistatic evolution}) for the perfectly plastic system (in the isothermal case). 
\begin{definition}[Global energetic solutions to the perfectly plastic system]
\label{def:en-sols-PP}
Given initial data $(u_0,e_0,p_0)$ fulfilling \eqref{Cauchy-data-s5},  we call a triple $(u,e,p)$ a \emph{global energetic solution} to the Cauchy problem for system 
\eqref{RIP-PDE}, with boundary datum $w$ on $\Gamma_\Dir$,  if
\begin{subequations}
\label{reg-eneerg-sols}
\begin{align}
& 
\label{reg-u-RIP}
u \in \mathrm{BV}([0,T]; \BD(\Omega;\R^d)),
\\
& 
\label{reg-e-RIP}
e \in \mathrm{BV}([0,T]; L^2(\Omega;\mt_\sym^{d\times d})), \\
& 
\label{reg-p-RIP}
p \in \mathrm{BV}([0,T]; \mathrm{M}(\Omega; \mt_\dev^{d\times d})),
\end{align}
\end{subequations}
$(u,e,p)$ comply with the initial conditions
\begin{equation}
\label{Cauchy-RIP}
u(0,x)= u_0(x), \qquad  e(0,x)= e_0(x), \qquad  p(0,x)= p_0(x) \qquad \foraa x \in \Omega,
\end{equation}
and with the following conditions \emph{for every} $t \in [0,T]$: 
\begin{itemize}
\item[-] \emph{kinematic admissibility}: $(u(t), e(t), p(t)) \in  \mathcal{A}_\BD(w(t))$; 
\item[-]
 \emph{global stability}:
\begin{equation}
\label{glob-stab-5}
\tag{$\mathrm{S}$}
\calQ(e(t)) - \pairing{}{\BD(\Omega;\R^d)}{\calL(t)}{u(t)} \leq \calQ(\tilde e) + \calR(\tilde p - p(t))   - \pairing{}{\BD(\Omega;\R^d)}{\calL(t)}{\tilde u}  \qquad \text{for all } (\tilde u, \tilde e, \tilde p) \in \mathcal{A}_\BD(w(t))
\end{equation}
 (recall definition \eqref{total-load}  of  the total loading function $\calL$); 
\item[-]
 \emph{energy balance}:
\begin{equation}
\label{enbal-5}
\tag{$\mathrm{E}$}
\begin{aligned}
\calQ(e(t)) + \mathrm{Var}_\calR (p; [0,t]) &   = \calQ(e_0) 
+  \int_0^t \int_\Omega \sigma : \sig{\dot{w}} \dd x \dd s - \int_0^t  \pairing{}{\BD(\Omega;\R^d)}{\calL}{\dot w}  \dd s 
\\
&  \quad + \pairing{}{\BD(\Omega;\R^d)}{\calL(t)}{u(t)} -  \pairing{}{\BD(\Omega;\R^d)}{\calL(0)}{u_0} - \int_0^t  \pairing{}{\BD(\Omega;\R^d)}{\dot{\calL}}{u}  \dd s  
 \end{aligned}
\end{equation}
with the stress $\sigma$ given by $\sigma(t)  = \bbC e(t)$ for every $t \in [0,T]$.
\end{itemize}
\end{definition}
\begin{remark}
\label{rmk:save-the-day}
\upshape
It follows from  \cite[Thm.\ 4.4]{DMDSMo06QEPL}  (cf.\ also \cite[Rmk.\ 5]{DMSca14QEPP}), that \eqref{enbal-5} is equivalent to the condition that 
\begin{equation}
\label{alternat-enbal-RIS}
\begin{aligned}
&
p \in \mathrm{BV}([0,T]; \mathrm{M}(\Omega; \mt_\dev^{d\times d})),
\\
&
\begin{aligned}
&
\calQ(e(t)) + \mathrm{Var}_\calR (p; [0,t]) 
- \int_\Omega \varrho(t) : (e(t) {-} \sig{w(t)} ) \dd x
\\
 &   = \calQ(e_0)  - \int_\Omega \varrho(0) : (e_0 {-} \sig{w(0)} ) \dd x + \int_0^t \int_\Omega \sigma : \sig{\dot{w}} \dd x \dd s
-\int_0^t\int_\Omega \dot{\varrho}: (e{-} \sig{w}) \dd x \dd s
\end{aligned}
\end{aligned}
\end{equation}
for every $t\in [0,T]$.
\end{remark}
\noindent In the above definition,  for consistency with the standard concept of energetic solution, we have required only $\BV$-time regularity for the functions $(e,p)$ (and, accordingly, for $u$). On the other hand, an important feature of perfect plasticity is that, due to its \emph{convex character}, the maps $t\mapsto u(t), \, t \mapsto e(t), \, t \mapsto p(t)$ are ultimately \emph{absolutely continuous} on $[0,T]$. 
In fact,
 the following result ensures that, if  a triple $(u,e,p)$  complies with  \eqref{glob-stab-5} and \eqref{enbal-5} \emph{at almost all} $t \in (0,T)$, then it satisfies said conditions \emph{for every} $t\in [0,T]$, and in addition  the maps $t\mapsto u(t), \, t \mapsto e(t), \, t \mapsto p(t)$ are absolutely continuous. The proof of Thm.\ \ref{th:dm-scala} 
 below follows from that for
  \cite[Thm.\ 5]{DMSca14QEPP}, since the argument  carries over to the present case of a spatially-dependent dissipation metric $\mathrm{R}$.
\begin{theorem}
\label{th:dm-scala}
Let $S \subset [0,T]$ be a set of full measure containing $0$. Let $(u,e,p) : S \to  \BD(\Omega;\R^d) \times  L^2(\Omega;\mt_\sym^{d\times d})\times  \mathrm{M}(\Omega; \mt_\dev^{d\times d})$ be measurable and bounded functions satisfying the Cauchy conditions \eqref{Cauchy-RIP} with a triple $(u_0, e_0, p_0) $ as in  \eqref{Cauchy-data-s5} and fulfilling the stability condition
\eqref{glob-stab-5} at time $t=0$, as well as the kinematic admissibility, the global stability  condition, and the energy balance \emph{for every} $t \in S$. Suppose in addition that $p\in \BV([0,T];  \mathrm{M}(\Omega; \mt_\dev^{d\times d})).$
\par
Then, the pair $(u,e)$ extends to an absolutely continuous function $(u,e) \in \AC ([0,T]; \BD(\Omega;\R^d) \times  L^2(\Omega;\mt_\sym^{d\times d}))$. Moreover, $p \in  \AC([0,T];  \mathrm{M}(\Omega; \mt_\dev^{d\times d}))$ and the triple $(u,e,p)$ is a global energetic solution to the perfectly plastic system in the sense of Definition \ref{def:en-sols-PP}. 
\end{theorem}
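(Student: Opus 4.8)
\textbf{Proof plan for Theorem \ref{th:dm-scala}.}
The plan is to mimic the strategy of \cite[Thm.\ 5]{DMSca14QEPP}, checking at each step that the spatial dependence of the dissipation metric $\mathrm{R}$ (hence of $\calR$) does not affect the argument: all the relevant properties of $\calR$ --- convexity, $1$-positive homogeneity, the two-sided bound $c_r |\cdot| \le \mathrm{R}(x,\cdot) \le C_R |\cdot|$ from \eqref{ass-K2-sec-5}, the triangle inequality $\calR(\mu_1 + \mu_2)\le \calR(\mu_1)+\calR(\mu_2)$ on $\mathrm{M}(\Omega{\cup}\Gamma_\Dir;\mt_\dev^{d\times d})$, and the weak$^*$ lower semicontinuity via Reshetnyak's theorem --- are exactly the ingredients used in \cite{DMDSMo06QEPL, DMSca14QEPP}, and they all hold in our setting.

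First I would establish the \emph{absolute continuity in time}. The key estimate is the standard one for uniformly convex energies driven by a $1$-homogeneous dissipation (cf.\ \cite[Thm.\ 4.4 and Thm.\ 5.2]{DMDSMo06QEPL}, \cite[Prop.\ 7]{MRS09}): for $s<t$ in $S$, one tests the stability \eqref{glob-stab-5} at time $s$ with a competitor built from $(u(t),e(t),p(t))$ suitably corrected by the increment of the boundary datum $w$, and symmetrically the stability at $t$ with a competitor built from $(u(s),e(s),p(s))$; adding the two inequalities and using the parallelogram identity for the quadratic form $\calQ$ yields
\[
C_\bbC^1 \| e(t) - e(s) \|_{L^2}^2 \le C \Big( \| w(t) - w(s)\|_{H^1} + \int_s^t \| \dot{\calL} \|_{\BD^*}\dd r + \dots \Big) \big( \| e(t)-e(s)\|_{L^2} + \dots \big),
\]
which, together with the $\BV$-bound on $p$ assumed in the statement and the safe-load reformulation \eqref{alternat-enbal-RIS}, upgrades $e$ (and then $p$, and then $u$ via kinematic admissibility and the Poincar\'e inequality \eqref{PoincareBD}) to absolutely continuous functions of $t$ on $S$; being uniformly continuous, they extend to $[0,T]$. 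Here the assumptions \eqref{data-displ-s5} and \eqref{Dir-load-Sec5} (giving $\calL \in \AC([0,T];\BD^*)$ and $w\in\AC([0,T];H^1)$) are exactly what makes the right-hand side an $L^1$-in-time modulus.

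Second, I would show that the \emph{extended} triple satisfies kinematic admissibility, \eqref{glob-stab-5}, and \eqref{enbal-5} \emph{for every} $t\in[0,T]$, not merely on $S$. Kinematic admissibility passes to the limit along $S\ni t_k\to t$ by the closedness Lemma \ref{l:closure-kin-adm}, using the just-proved continuity. For global stability: fix $t\notin S$, pick $t_k\in S$ with $t_k\to t$; the stability inequality at $t_k$ holds for all competitors in $\calA_\BD(w(t_k))$; given a competitor $(\tilde u,\tilde e,\tilde p)\in\calA_\BD(w(t))$ one transports it to $\calA_\BD(w(t_k))$ by adding $w(t_k)-w(t)$, passes to the limit using continuity of $e,p,u$ and of $\calL$ and $w$, and uses continuity of $\calR$ under the strong corrections; this gives \eqref{glob-stab-5} at $t$. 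For the energy balance one uses that both sides of \eqref{alternat-enbal-RIS} are continuous in $t$ (the dissipation term $\mathrm{Var}_\calR(p;[0,\cdot])$ is continuous because $p\in\AC$ and \eqref{consistency-dotp} holds), so the identity, valid on the dense set $S$, extends to all of $[0,T]$. Finally $\mathrm{Var}_\calR(p;[0,T])<\infty$ together with $p\in\AC$ and \eqref{consistency-dotp} gives $p\in\AC([0,T];\mathrm{M}(\Omega;\mt_\dev^{d\times d}))$, and the extended triple is then a global energetic solution in the sense of Definition \ref{def:en-sols-PP}.

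I expect the main obstacle to be the first step --- deriving the quantitative continuity estimate for $e$ --- and in particular the bookkeeping of the boundary and loading terms: one must be careful that the competitor used in \eqref{glob-stab-5} genuinely lies in $\calA_\BD(w(s))$ (which forces the boundary correction $w(t)-w(s)$ to be built into $\tilde u$ and hence into $\tilde e$ and $\tilde p$ through \eqref{kin-adm-BD-c}), and that after the parallelogram manipulation the residual terms are controlled by the $\AC$-moduli of $w$, $\calL$, $\varrho$ rather than by their pointwise values. Once this estimate is in place, everything else is a routine limit-passage argument relying on Lemma \ref{l:closure-kin-adm}, the weak$^*$ lower semicontinuity of $\calR$, and \cite[Thm.\ 7.1]{DMDSMo06QEPL}; since none of these tools is sensitive to the $x$-dependence of $K$, the proof of \cite[Thm.\ 5]{DMSca14QEPP} carries over verbatim.
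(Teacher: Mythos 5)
Your proposal is correct and follows essentially the same route as the paper, which simply observes that the proof of \cite[Thm.\ 5]{DMSca14QEPP} carries over verbatim because the spatial dependence of $\mathrm{R}$ does not affect any of the ingredients (convexity, $1$-homogeneity, the bounds from \eqref{ass-K2-sec-5}, Reshetnyak lower semicontinuity). Your reconstruction of that argument — the quantitative continuity estimate for $e$ from stability plus uniform convexity of $\calQ$, followed by extension of admissibility, stability and energy balance from $S$ to all of $[0,T]$ via Lemma \ref{l:closure-kin-adm} and continuity — is exactly what that reference does.
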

In the proof of  the forthcoming Theorem \ref{mainth:3}, we will also make use of the following result, first obtained in \cite[Thm.\ 3.6]{DMDSMo06QEPL} in the homogeneous case, and extended to a spatially-dependent yield surface in \cite[Thm.\ 3.10]{Sol09}.
\begin{lemma}
\label{l:for-stability}
Let $S \subset [0,T]$ be a set of full measure containing $0$. Let $(u,e,p) : S \to  \BD(\Omega;\R^d) \times  L^2(\Omega;\mt_\sym^{d\times d})\times  \mathrm{M}(\Omega; \mt_\dev^{d\times d})$  fulfill the kinematic admissibility   at $t \in S$. Then, the following conditions are equivalent 
\begin{compactitem}
\item[-] $(u(t), e(t),p(t))$ comply with the global stability condition \eqref{glob-stab-5} at $t$; 
\item[-] the stress  $\sigma(t) = \bbC e(t)$  satisfies $\sigma(t) \in \calK(\Omega)$ and  the boundary value problem
\begin{equation}
\label{BVprob-stress}
 \begin{cases}
 -\mathrm{div}_{\Dir}(\sigma(t)) = F(t)  & \text{in } \Omega, 
 \\
 \sigma(t) \nu = g(t) & \text{on } \Gamma_\Neu,
 \end{cases}
 \end{equation}
 with the operator $-\mathrm{div}_{\Dir}$ from   \eqref{div-Gdir}.  
 \end{compactitem}
\end{lemma}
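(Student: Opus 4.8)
\textbf{Plan of proof for Lemma \ref{l:for-stability}.}
The statement is an equivalence between the global stability condition \eqref{glob-stab-5} and the conjunction of the constraint $\sigma(t)\in\calK(\Omega)$ and the equilibrium problem \eqref{BVprob-stress}. Since $t$ is fixed throughout, I will drop it from the notation. The plan is to establish the two implications separately, working from the variational inequality \eqref{glob-stab-5} by making judicious choices of competitors $(\tilde u,\tilde e,\tilde p)\in\calA_\BD(w)$, and conversely by exploiting the stress-strain duality pairing together with the characterization \eqref{characterization-subdiff} of $\partial_{\dot p}\mathrm{R}$ and the support-function identity $\mathcal R(\dot p)=\overline{\mathrm R}(\dot p)(\Omega\cup\Gamma_\Dir)$.

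\emph{From \eqref{glob-stab-5} to \eqref{BVprob-stress} and $\sigma\in\calK(\Omega)$.} First I would test \eqref{glob-stab-5} with competitors of the form $(\tilde u,\tilde e,\tilde p)=(u+\phi,e+\sig\phi,p)$ for an arbitrary $\phi\in H^1_\Dir(\Omega;\R^d)$, which is kinematically admissible with the same boundary datum and the same plastic strain, so that $\calR(\tilde p-p)=\calR(0)=0$. This yields $\calQ(e)-\pairing{}{}{\calL}{u}\le\calQ(e+\sig\phi)-\pairing{}{}{\calL}{u+\phi}$ for all $\phi$; expanding $\calQ$ and replacing $\phi$ by $\pm\lambda\phi$ and letting $\lambda\downarrow0$ gives $\int_\Omega\bbC e:\sig\phi\,\dd x=\pairing{}{H^1_\Dir(\Omega;\R^d)}{\calL}{\phi}$ for all $\phi\in H^1_\Dir(\Omega;\R^d)$, which is precisely the weak form of \eqref{BVprob-stress} by \eqref{div-Gdir} and \eqref{total-load}. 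For the stress constraint, I would instead keep $u,e$ essentially fixed but perturb the plastic strain: given any $\mu\in\rmM(\Omega;\mt_\dev^{d\times d})$ take $\tilde p=p+\mu$ (extended by the appropriate boundary term so as to stay in $\calA_\BD(w)$, adjusting $\tilde e$ and $\tilde u$ accordingly via kinematic admissibility, so that $\calQ(\tilde e)-\calQ(e)$ is controlled); then the stability inequality reduces, after a Young/scaling argument (replace $\mu$ by $\lambda\mu$, divide by $\lambda$, send $\lambda\downarrow0$) to $\int_{\Omega}\sigma_\dev:\dd\mu\le\calR(\mu)=\overline{\mathrm R}(\mu)(\Omega)$ for all $\mu$, and by the very definition of $\mathrm R$ as the support function of $K(x)$ this forces $\sigma_\dev(x)\in K(x)$ for a.a.\ $x$, i.e.\ $\sigma\in\calK(\Omega)$.

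\emph{From \eqref{BVprob-stress} and $\sigma\in\calK(\Omega)$ to \eqref{glob-stab-5}.} Here I would fix an arbitrary competitor $(\tilde u,\tilde e,\tilde p)\in\calA_\BD(w)$ and set $\eta:=\tilde p-p$, $\zeta:=\tilde e-e$, $v:=\tilde u-u$, so that $\sig v=\zeta+\eta$ in the sense of measures and $v\odot\nu\,\mathscr H^{d-1}=-\eta$ on $\Gamma_\Dir$. The elementary convexity inequality $\calQ(\tilde e)\ge\calQ(e)+\int_\Omega\bbC e:\zeta\,\dd x$ reduces the claim to showing
\begin{equation}
\label{reduced-claim}
\int_\Omega\sigma:\zeta\,\dd x - \pairing{}{\BD(\Omega;\R^d)}{\calL}{v}\ \le\ \calR(\eta).
\end{equation}
The left-hand side is handled through the stress-strain duality for the admissible stress $\sigma$ (satisfying $\div_\Dir\sigma=-F$, $\sigma\nu=g$) paired with $v\in\BD(\Omega;\R^d)$: by the integration-by-parts formula for such dualities (available under \eqref{bdriesC2}), $\pairing{}{\BD}{\calL}{v}=\int_\Omega\sigma:\zeta\,\dd x+\langle\sigma_\dev,\eta\rangle$, where $\langle\sigma_\dev,\eta\rangle$ denotes the measure-theoretic pairing of $\sigma_\dev\in L^\infty$ (by the constraint) with $\eta\in\rmM(\Omega\cup\Gamma_\Dir;\mt_\dev^{d\times d})$. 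Hence \eqref{reduced-claim} becomes $-\langle\sigma_\dev,\eta\rangle\le\calR(\eta)$, and since $\sigma_\dev(x)\in K(x)=-K(x)$... more precisely $\langle-\sigma_\dev,\eta\rangle=\int\!\big({-}\sigma_\dev\big):\frac{\dd\eta}{\dd|\eta|}\,\dd|\eta|\le\int\mathrm R\big(x,\tfrac{\dd\eta}{\dd|\eta|}(x)\big)\dd|\eta|=\overline{\mathrm R}(\eta)(\Omega\cup\Gamma_\Dir)=\calR(\eta)$, using that $\mathrm R(x,\cdot)$ is the support function of $K(x)$ and $-\sigma_\dev(x)\in -K(x)$ — the sign works out because $B_{c_r}(0)\subset K(x)$ is symmetric, so $\sup_{\pi\in K(x)}\pi:A=\sup_{\pi\in K(x)}(-\pi):A$ and in particular $(-\sigma_\dev(x)):A\le\mathrm R(x,A)$. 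This proves \eqref{glob-stab-5}.

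\emph{Main obstacle.} The delicate point is the duality/integration-by-parts step in the second implication: the pairing $\int_\Omega\sigma:\sig v$ is not literally defined when $\sig v$ is only a measure and $\sigma$ only in $L^2$, so one must invoke the Kohn--Temam stress-strain duality (this is exactly why hypothesis \eqref{bdriesC2} on the $\rmC^2$-regularity of $\partial\Omega$ and $\partial\Gamma$ is imposed, cf.\ Remark \ref{rmk:stress-strain-dual}) and carefully track the boundary contribution on $\Gamma_\Dir$, where the relaxed admissibility \eqref{kin-adm-BD-c} produces the singular term $v\odot\nu\,\mathscr H^{d-1}$. Everything else is a routine perturbation argument. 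Since the result is quoted verbatim from \cite[Thm.\ 3.6]{DMDSMo06QEPL} and \cite[Thm.\ 3.10]{Sol09}, in the paper I would simply refer to those sources rather than reproduce the duality computations; the sketch above indicates why the equivalence holds.
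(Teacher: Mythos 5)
The paper gives no proof of this lemma at all: it is imported verbatim from \cite[Thm.\ 3.6]{DMDSMo06QEPL} and \cite[Thm.\ 3.10]{Sol09}, so your decision to defer the duality machinery to those sources matches the paper's treatment, and your outline of the two implications (perturbation of competitors for the forward direction, convexity of $\calQ$ plus stress--strain duality for the converse) is the standard route.

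That said, your sketch of the converse implication contains a sign slip with a real consequence. Starting from \eqref{glob-stab-5} and using $\calQ(\tilde e)\geq \calQ(e)+\int_\Omega \sigma{:}\zeta\,\dd x$, the inequality to be verified is $\pairing{}{\BD(\Omega;\R^d)}{\calL}{v}-\int_\Omega\sigma{:}\zeta\,\dd x\leq\calR(\eta)$, i.e.\ the \emph{negative} of your reduced claim. Combined with your (correctly signed) integration-by-parts identity this leaves exactly $[\sigma_\dev{:}\eta](\Omega{\cup}\Gamma_\Dir)\leq\overline{\mathrm{R}}(\eta)(\Omega{\cup}\Gamma_\Dir)$, which is \eqref{Radon-meas-ineqs} and follows from $\sigma_\dev(x)\in K(x)$ together with $\mathrm{R}(x,\cdot)$ being the support function of $K(x)$ --- no symmetry of $K$ is needed. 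Because of the flipped sign you instead arrive at $-\langle\sigma_\dev,\eta\rangle\leq\calR(\eta)$ and rescue it by asserting $\sup_{\pi\in K(x)}\pi{:}A=\sup_{\pi\in K(x)}(-\pi){:}A$; this is false in general, since \eqref{ass-K2-sec-5} only sandwiches $K(x)$ between two balls and does not give $K(x)=-K(x)$. Getting the sign right removes the need for this step entirely. A second, smaller point: writing $\langle\sigma_\dev,\eta\rangle$ as $\int\sigma_\dev{:}\tfrac{\dd\eta}{\dd|\eta|}\,\dd|\eta|$ is not legitimate when $\eta$ has a singular part, because $\sigma_\dev$ is only defined Lebesgue-a.e.; the comparison of the \emph{measures} $[\sigma_\dev{:}\eta]$ and $\overline{\mathrm{R}}(\eta)$ is precisely the nontrivial content of \eqref{Radon-meas-ineqs}, valid under the continuity assumption \eqref{ass-K1-sec-5}. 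You correctly flag this as the main obstacle, but the displayed computation should not present it as a pointwise estimate.
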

\begin{remark}
\label{rmk:stress-strain-dual} 
\upshape
In the proof of Thm.\ \ref{mainth:3}, Lemma \ref{l:for-stability} will play a pivotal role in the argument for the global stability condition \eqref{glob-stab-5}.
In the spatially homogeneous case addressed in \cite{DMDSMo06QEPL}, the proof of the  analogue of Lemma \ref{l:for-stability}  relies on a careful definition of the duality between the (deviatoric part of the) stress $\sigma$, which is typically not continuous as a function of the space variable, and the strain $\sig u$, as well as the plastic strain $p$, which  in turn are just measures. In particular, the regularity conditions \eqref{bdriesC2} on $\partial\Omega$ and $\partial\Gamma$ entail the validity of a by-part integration formula (cf.\ 
\cite[Prop.\ 2.2]{DMDSMo06QEPL}), which is at the core of the proof of 
\cite[Thm.\ 3.6]{DMDSMo06QEPL}. Another crucial point is the validity of the inequality  (between measures)
\begin{equation}
\label{Radon-meas-ineqs}
\overline{\mathrm{R}}(\dot{p}) \geq [\sigma_\dev{:} \dot{p}],
\end{equation}
where $\overline{\mathrm{R}}(\dot{p})$ is the Radon measure defined by \eqref{Radon-meas-R}, and the measure $ [\sigma_\dev{:} \dot{p}]$  (we refer to \cite[Sec.
 2]{DMDSMo06QEPL}  for its definition),  `surrogates' the duality between $\sigma_\dev$ and $\dot{p}$. 
 \par
 In \cite{Sol09} it was shown that, if  $K: \Omega{\cup}\Gamma_\Dir \rightrightarrows \mt_\dev^{d\times d}$ is continuous, then \eqref{Radon-meas-ineqs} holds also in the spatially heterogeneous case and, based on that,
  Lemma \ref{l:for-stability}
  (cf.\ \cite[Thm.\ 3.10]{Sol09}), 
   was derived. 
   \par
     However, it was observed in  \cite{FraGia2012} that the continuity of $K$ is a quite restrictive condition for applications to heterogeneous materials.   The authors carried out the analysis under a   much weaker, and more mechanically feasible, set of conditions on the multifunction $K$ by 
  adopting a slightly different approach to the proof of existence of `quasistatic evolutions'.  In particular, 
  their argument for obtaining the stability condition  \eqref{glob-stab-5} did not rely on 
    Lemma \ref{l:for-stability}. Rather, it was based on the construction of a suitable recovery sequence in the time discrete-to-continuous limit passage. 
    Unfortunately, it seems to us that,  for the asymptotic analysis developed in the upcoming Section 
\ref{s:6}, this argument could not be exploited to recover \eqref{glob-stab-5} in the vanishing-viscosity and inertia limit. That is why, we have to stay with 
the  continuity requirement \eqref{ass-K1-sec-5} on $K$. 
\end{remark}
\section{From the thermoviscoplastic to the perfectly plastic system}
\label{s:6}
In this section we address  the limiting behavior of  \emph{weak energy solutions} to the thermoviscoplastic system  (\ref{plast-PDE}, \ref{bc})  as  the rate of the external loads $F,\, g, \, w$ and of the heat sources $H,\, h$ becomes slower and slower. Accordingly, we will rescale time by a factor $\eps>0$. Before detailing our conditions on the data $F,\, g,\, w,\, H $, and $h$ for performing the vanishing-viscosity analysis of  system 
 (\ref{plast-PDE}, \ref{bc}), let us  specify that, already on the ``viscous'' level, we will confine the discussion to   the  case in which
  \[
  \text{ the elastic domain $K$ does not depend on  $\teta$ but only on $x \in \Omega$,  and  fulfills \eqref{ass-K1-sec-5}--\eqref{ass-K2-sec-5},}
   \]
   cf.\ Remark \ref{rmk:added-sol}. 
   Moreover,    we will suppose that the 
   thermal expansion tensor 
   also depends on $\eps$,  i.e.\ 
   $\bbE =  \bbE_\eps$, with the scaling given by  \eqref{scaling-intro}. 
%
   Hence, the tensors $\bbB_\eps: = \bbC \bbE_\eps$ have the form
   \begin{equation}
   \label{scalingbbB}
    \bbB_\eps = \eps^{\beta} \bbB \qquad \text{with } \beta >\frac12 \text{ and }  \bbB \in L^\infty (\Omega; \R^{d\times d}). 
   \end{equation}
   We postpone to Remark \ref{rmk:conds-forces}
   ahead
   some comments on the role of condition \eqref{scalingbbB}. 
\par
Let  $(H_\eps, h_\eps,F_\eps, g_\eps,  w_\eps)_\eps$  be a family of data for system   (\ref{plast-PDE}, \ref{bc}),  and let us  rescale them by the factor $\eps>0$, thus introducing 
\[
H^\eps(t) : =  H_\eps \left( \eps t \right), \quad 
 h^\eps(t) : =  h_\eps \left( \eps t \right), \quad 
F^\eps(t) : =  F_\eps \left( \eps t \right), \quad 
g^\eps(t) : =  g_\eps \left( \eps t \right),\quad
 w^\eps(t) : =  w_\eps \left( \eps t \right) \qquad t \in \left[0,\frac T\eps \right].
\]
Correspondingly, we denote by $(\teta^\eps, u^\eps, e^\eps, p^\eps)$  a weak energy solution to the thermoviscoplastic  system, with the tensor  $ \bbB_\eps $ from 
\eqref{scalingbbB}, starting from initial data $(\teta^0_\eps, u^0_\eps, e^0_\eps, p^0_\eps)$: under the conditions on the functions $(H_\eps, h_\eps,F_\eps, g_\eps,  w_\eps)_\eps$ and  the data   $(\teta^0_\eps, u^0_\eps, e^0_\eps, p^0_\eps)_\eps$ stated in Sec.\ \ref{ss:2.1},  the existence of $(\teta^\eps, u^\eps, e^\eps, p^\eps)$   is ensured by Thm.\ \ref{mainth:2}. We further rescale the functions $(\teta^\eps, u^\eps, e^\eps, p^\eps)$  in such a way as to have them defined on the interval $[0,T]$, by  setting 
\[
\teta_\eps(t): = \teta^\eps \left( \frac t \eps\right), \quad  u_\eps(t): = u^\eps \left( \frac t \eps\right),  \quad e_\eps(t): = e^\eps \left( \frac t \eps\right), \quad   p_\eps(t): = p^\eps \left( \frac t \eps\right), \qquad t \in [0,T].
\]
\par
For later reference, here we state  the defining properties  of weak energy solutions in terms of the rescaled quadruple $(\teta_\eps, u_\eps, e_\eps, p_\eps)$, taking into account the improved formulation of the heat equation provided in Theorem \ref{mainth:2}.  In addition to the kinematic admissibility $\sig{u_\eps}= e_\eps + p_\eps$, we have:
\begin{subequations}
\label{weak-form-eps}
\begin{compactitem}
\item[-] strict positivity: $\teta_\eps \geq \bar\teta>0$ a.e.\ in $\Omega$, with $\bar\teta$ given by \eqref{strong-strict-pos};
\item[-] weak formulation of the heat equation, for almost all $t \in (0,T)$ and all test functions $\varphi \in  W^{1,1+1/\tilde{\delta}}(\Omega)$, with $\tilde\delta>0$ such that \eqref{further-k-teta} holds:
\begin{equation}
\label{eq-teta-eps}
\begin{aligned}
 &\eps \pairing{}{W^{1,1+1/\tilde{\delta}}(\Omega)}{\dot{\teta}_\eps(t)}{\varphi}
+ \int_\Omega \condu(\teta_\eps(t)) \nabla \teta_\eps(t)\nabla\varphi \dd
x
\\
& = \int_\Omega \left(H_\eps(t)+
\eps\mathrm{R}(x,\dot p_\eps (t)) + \eps^2 \dot{p}_\eps(t) : \dot{p}_\eps(t)  + \eps^2\mathbb{D} \dot{e}_\eps (t) :\dot{e}_\eps (t) -\eps \teta_\eps (t) \bbB_\eps :  \dot{e}_\eps (t) \right) \varphi  \dd x  \\ & \quad  + \int_{\partial\Omega} h_\eps (t)  \varphi   \dd S;
\end{aligned}
\end{equation}
\item[-] weak momentum balance  for almost all $t \in (0,T)$ and all test functions $v \in H_\Dir^1(\Omega;\R^d)$
\begin{equation}
\label{w-momentum-balance-eps}
\begin{aligned}
\rho \eps^2 \int_\Omega \ddot{u}_\eps(t) v \dd x + \int_\Omega \left(\bbD \eps \dot{e}_\eps(t) + \bbC e_\eps(t) - \teta_\eps(t) \bbB_\eps \right): \sig v \dd x   &  = \pairing{}{H_\Dir^1(\Omega;\R^d)}{\calL_\eps(t)}{v} 
\end{aligned}
\end{equation}  
for almost all $t\in (0,T)$,
with $\calL_\eps$ defined from $F_\eps$ and $g_\eps$ as in \eqref{total-load};
\item[-] the plastic flow rule  \eqref{flow-rule-rescal}, rewritten (cf.\ \eqref{flow-rule-rewritten}) for later use  as  
\begin{equation}
\label{pl-flow-rule-eps}
\sie -\eps {\dot p}_\eps = \mathrm{P}_{\mathcal{K}(\Omega)}(\siedev) \qquad \aein\, Q,
\end{equation}
with $\sie =  \eps\bbD  \dot{e}_\eps + \bbC e_\eps - \teta_\eps \bbB_\eps$ and the projection operator  $ \mathrm{P}_{\mathcal{K}(\Omega)}   $ from \eqref{proj-K-Omega}.
 \end{compactitem}
We also record the 
\begin{compactitem}
\item[-] (rescaled) mechanical energy balance
\begin{equation}
\label{mech-enbal-rescal}
\begin{aligned}
& 
\frac{\rho \eps^2}2 \int_\Omega |\dot{u}_\eps(t)|^2 \dd x +   \eps\int_0^t\int_\Omega   \bbD \dot{e}_\eps: \dot{ e}_\eps   \dd x \dd r 
+ \frac{\eps}2 \int_0^t\int_\Omega  |\dot{p}_\eps|^2   \dd x \dd r  + \frac1{2\eps} \int_0^t d^2(\siedev, \calK(\Omega)) \dd t  \\ 
& \quad 
+ \int_0^t \calR( \dot{p}_\eps) \dd r 
+ \calQ(e_\eps(t))
\\
&  =  \frac{\rho \eps^2}2 \int_\Omega |\dot{u}_\eps^0|^2 \dd x + \calQ(e_\eps^0)  + \int_0^t \pairing{}{H_\Dir^1 (\Omega;\R^d)}{\mathcal{L}_\eps}{\dot{u}_\eps{-} \dot{ w}_\eps} \dd r  +   \int_0^t \int_\Omega \teta_\eps \bbB_\eps : \dot{ e}_\eps \dd x \dd r 
 \\
& \quad   +\rho\eps^2 \left( \int_\Omega \dot{u}_\eps(t) \dot{w}_\eps(t) \dd x -  \int_\Omega \dot{u}_\eps^0 \dot{w}_\eps(0) \dd x - \int_0^t \int_\Omega \dot{u}_\eps\ddot{w}_\eps \dd x \dd r \right)     + 
\int_0^t \int_\Omega  \sie  : \sig{\dot{w}_\eps} \dd x \dd r 
 \end{aligned}
\end{equation}
for every $t \in [0,T]$,
where we have used \eqref{pl-flow-rule-eps},  yielding that 
\[
\eps | \dot{p}_\eps|^2 =  \frac{\eps}2 | \dot{p}_\eps|^2  +  \frac{1}{2\eps}\dddn{ | \sie {-} \mathrm{P}_{\mathcal{K}(\Omega)}(\siedev) |^2}{$=  d^2(\siedev, \calK(\Omega)) $ }  \qquad \aein \, Q,
\]
with $d(\cdot, \calK(\Omega))$  the distance function from the closed and convex set $\calK(\Omega)$.
\end{compactitem}
Finally,  adding \eqref{mech-enbal-rescal} with \eqref{eq-teta-eps} tested by $\frac1{\eps}$ we obtain the 
\begin{compactitem}
\item[-] (rescaled) total energy balance  
 \begin{equation}
\label{total-enbal-rescal}
\begin{aligned}
& 
\frac{\rho \eps^2}2 \int_\Omega |\dot{u}_\eps(t)|^2 \dd x +\pairing{}{W^{1,\infty}}{\teta_\eps(t)}{1} +\frac12 \int_\Omega \bbC  e_\eps(t){:}   e_\eps(t) \dd x 
\\
&  = \frac{\rho\eps^2}2 \int_\Omega |\dot{u}_\eps^0|^2 \dd x +\mathcal{E}(\teta_\eps^0, e_\eps^0)  + \int_0^t \pairing{}{H_\Dir^1 (\Omega;\R^d)}{\mathcal{L}_\eps}{\dot {u}_\eps{-} \dot{w}_\eps} \dd r    +\int_0^t \int_\Omega\frac1\eps H_\eps \dd x \dd r + \int_0^t \int_{\partial\Omega} \frac1\eps  h_\eps \dd S \dd r
\\
& \quad   +\rho\eps^2 \left( \int_\Omega \dot{u}_\eps(t) \dot{w}_\eps(t) \dd x -  \int_\Omega \dot{u}_\eps^0 \dot{w}_\eps(0) \dd x - \int_0^t \int_\Omega \dot{u}_\eps\ddot{w}_\eps \dd x \dd r \right)     + 
\int_0^t \int_\Omega  \sie  : \sig{\dot{w}_\eps} \dd x \dd r
\end{aligned}
\end{equation}
for every $t \in [0,T]$.
\end{compactitem}
\end{subequations}
Indeed, 
as in the proof of Theorems \ref{mainth:1} and \ref{mainth:2},  
\eqref{total-enbal-rescal} will be the starting point in the derivation of the  priori estimates, \emph{uniform} w.r.t.\ the parameter $\eps$, on the functions $(\teta_\eps, u_\eps, e_\eps, p_\eps)_\eps$, under the following
\paragraph{\emph{Hypotheses on the data $(H_\eps, h_\eps, F_\eps, g_\eps, w_\eps)_\eps$ and on the initial data $(\teta_\eps^0, u_\eps^0,  \dot{u}_\eps^0, e_\eps^0, p_\eps^0)_\eps$.} }
Since it will be  necessary to  start with  \eqref{total-enbal-rescal} in this temperature-dependent setting, we shall have to assume that
the families of data $(H_\eps)_\eps$ and   $(h_\eps)_\eps$ converge to zero in the sense
 that there exists a constant $\overline{C}>0$ such that for every $\eps>0$
 \[
 \|H_\eps\|_{L^1(0,T; L^1(\Omega))} \leq \overline{C}\eps, \qquad  \|h_\eps\|_{L^1(0,T; L^1(\partial\Omega))} \leq \overline{C}\eps.
 \]
 We will in fact need to strengthen this in order to pass to the limit, as $\eps\down 0$, in \eqref{total-enbal-rescal}, by assuming that there exist $\mathsf{H}\in L^1(0,T; L^1(\Omega)), \ \mathsf{h} \in L^1(0,T; L^1(\partial\Omega))$ such that 
\begin{subequations}
\label{data-eps}
\begin{equation}
\label{heat-sources-eps}
 \frac{1}{\eps} H_\eps \weakto \mathsf{H} \text{ in }  L^1(0,T; L^1(\Omega)), \quad  \frac{1}{\eps} h_\eps \weakto \mathsf{h} \text{ in }  L^1(0,T; L^1(\partial\Omega)).
\end{equation}
As for the body and surface forces, for every $\eps>0$ the functions $F_\eps$ and $g_\eps$  have to comply with \eqref{data-displ} and the safe-load condition \eqref{safe-load}, with associated stresses $\varrho_\eps$.
We impose that there exist $F$ and $g $ as in \eqref{data-displ-s5} to which  $(F_\eps)_\eps$ and $(g_\eps)_\eps$   converge in topologies that we choose not to specify, and that the sequence $(\varrho_\eps)_\eps$ suitably  converge to the stress $\varrho$ from \eqref{safe-load-s5}  (hence, with $\varrho_\dev \equiv 0$) associated with $F$ and $g$, namely
\begin{equation}
\label{safe-load-eps} 
\begin{aligned}
 & 
  \varrho_\eps \to \varrho   \qquad \text{in } 
W^{1,1}(0,T; L^2(\Omega; \mt_\sym^{d\times d})),
  \\ & 
\| (\varrho_\eps )_\dev \|_{L^1(0,T; L^\infty (\Omega; \mt_\sym^{d\times d}))} \leq  \overline{C}\eps, \qquad 
\| ({\varrho}_\eps)_\dev \|_{W^{1,1}(0,T; L^2 (\Omega; \mt_\sym^{d\times d}))} \leq  \overline{C}\eps\,.
\end{aligned}
\end{equation}
For later use, let us record here that, since $\pairing{}{H_\Dir^1(\Omega;\R^d)}{\calL_\eps(t)}{v} = \int_\Omega \varrho_\eps(t) {:} \sig{v} \dd x $ for every $v \in H_\Dir^1(\Omega;\R^d)$ by the safe load condition,  and analogously for  $\dot{\calL}_\eps$, it follows from \eqref{safe-load-eps} that
\begin{equation}
\label{total-load-eps-bound}
\calL_\eps \to \calL  \quad \text{in } W^{1,1} (0,T;H_\Dir^1(\Omega;\R^d)^*),
\end{equation}
with $\calL$ the total load associated with $F$ and $g$. 
\par
Furthermore, we impose that the Dirichlet loadings $(w_\eps)_\eps \subset  L^1(0,T; W^{1,\infty} (\Omega;\R^d)) \cap W^{2,1} (0,T;H^1(\Omega;\R^d)) \cap H^2(0,T; L^2(\Omega;\R^d)) $ (cf.\ \eqref{Dirichlet-loading}),  fulfill
\begin{equation}
\label{est-w-eps}
\eps \|  \dot{w}_\eps \|_{L^\infty (0,T;H^1(\Omega;\R^d))} \leq \overline{C}, \quad  \eps \|\ddot{w}_\eps  \|_{L^1(0,T;H^1(\Omega;\R^d))} \leq \overline{C}, \quad  \eps^{1/2} \|  \sig{\dot{w}_\eps}  \|_{L^1(0,T;L^\infty(\Omega;\mt_\sym^{d\times d}))} \leq \overline{C}, 
\end{equation}
and that there exists $w \in H^1(0,T;  H^1(\Omega;\R^d))$ (cf.\ \eqref{Dir-load-Sec5}) such that 
\begin{equation}
\label{conv-Dir-loads}
w_\eps \to w \quad \text{in } H^1(0,T;  H^1(\Omega;\R^d)).
\end{equation}
Finally, for the Cauchy data $(\teta_\eps^0, u_\eps^0, \dot{u}_\eps^0,  e_\eps^0, p_\eps^0)_\eps$ we impose 
 the  convergences
\begin{equation}
\label{convs-init-data}
\begin{aligned} 
& 
\eps \|  \dot{u}_\eps^0\|_{L^2(\Omega;\R^d)} \to 0,   \quad 
\teta_\eps^0 \weakto \teta_0 \text{ in } L^1(\Omega),
\\
& u_\eps^0 \weaksto u_0 \text{ in } \BD(\Omega;\R^d), \qquad e_\eps^0 \to e_0 \text{ in } L^2(\Omega;\mt_\sym^{d\times d}), \qquad   p_\eps^0 \weaksto p_0 \text{ in } \mathrm{M}(\Omega{\cup}\Gamma_\Dir;\mt_\dev^{d\times d}).
\end{aligned}
\end{equation}
Observe that, since  $(u_\eps^0, \dot{u}_\eps^0,  e_\eps^0, p_\eps^0) \in \calA(w_\eps(0)) \subset  \calA_{\BD}(w_\eps(0))$,  convergences \eqref{conv-Dir-loads} and \eqref{convs-init-data}, combined with Lemma  \ref{l:closure-kin-adm},    ensure that the  triple $(u_0,e_0,p_0) $ is in $ \calA_{\BD}(w(0))$.
\end{subequations}
\par
We are now in the position to give our asymptotic result, stating the convergence (along a sequence $\eps_k \downarrow 0$) of a family of solutions to the thermoviscoplastic system, to a quadruple $(\limte, u,e,p)$ such that $(u,e,p)$ is an energetic solution to the plastic system, while the limit temperature $\limte$ is constant in space, but still time-dependent. Furthermore, 
we find that  $(\limte,u,e,p)$ fulfill a further energy balance, cf.\ \eqref{anche-la-temp} ahead, from which we deduce a balance between the energy dissipated by the plastic strain, and the thermal energy. 
 \begin{maintheorem}
\label{mainth:3}
Let the reference configuration $\Omega$ and the elasticity tensor $\bbC $ comply with
\eqref{Omega-s2},
   \eqref{bdriesC2} and  \eqref{elast-visc-tensors}, \eqref{bbC-s:5}, respectively.
Moreover, assume 
 \eqref{ass-K1-sec-5} and \eqref{ass-K2-sec-5}. 
  Let $(\teta_\eps, \ue,e_\eps,\pe)_\eps$ be a family of \emph{weak energy solutions} to the  rescaled thermoviscoplastic systems 
(\ref{plast-PDE-rescal}, \ref{bc}), with heat conduction coefficient $\condu$ fulfilling \eqref{hyp-K} and \eqref{hyp-K-stronger}, tensors $\bbB_\eps$ satisfying  \eqref{scalingbbB},  with data $(H_\eps, h_\eps, F_\eps, g_\eps, w_\eps)_\eps$ fulfilling 
conditions \eqref{data-eps}, and initial data $(\teta_\eps^0, u_\eps^0, e_\eps^0, p_\eps^0)_\eps$   converging 
as in \eqref{convs-init-data}
to a triple $(u_0,e_0,p_0)$ fulfilling the stability condition
\eqref{glob-stab-5} at time $t=0$.
\par
Then, for every vanishing sequence $(\eps_k )_k $ there exist a (not relabeled) subsequence $(\teta_{\eps_k}, \uek,e_{\eps_k},\pek)_k$  and  
 $\Theta \in L^\infty(0,T;L^\infty(\Omega))$, 
 $  u \in  L^\infty (0,T; \BD(\Omega;\R^d) )$, $ e \in L^\infty(0,T; L^2(\Omega;\R^d))$, $  p \in \BV([0,T];  \mathrm{M}(\Omega{\cup} \Gamma_\Dir;\mt_\dev^{d\times d}) )$ such that 
\begin{enumerate}
\item the following convergences hold as $k\to\infty$:
\begin{subequations}
\label{convs-eps}
\begin{align}
\label{convs-eps-teta}
&
\tetaek \weakto \limte&&  \text{ in } 
L^h(Q)  
 &&  \text{ for every } h 
 \in \begin{cases}
 [1,3] & \text{ if } d =2,
 \\
 [1,8/3] & \text{ if } d=3, 
 \end{cases}
\\
\label{convs-eps-u}
& 
\uek(t) \weaksto u(t)  &&  \text{ in } \BD(\Omega;\R^d)  && \foraa t\in (0,T), 
\\
\label{convs-eps-e}
& 
e_{\eps_k}(t) \to e(t)  &&  \text{ in } L^2(\Omega;\mt_\sym^{d\times d})  && \foraa t\in (0,T), 
\\
\label{convs-eps-p}
& 
p_{\eps_k}(t) \weaksto p(t)  &&  \text{ in } \mathrm{M}(\Omega{\cup} \Gamma_\Dir;\mt_\dev^{d\times d})  && \text{for every } t\in [0,T];
\end{align}
\end{subequations}
\item  $\limte $ is strictly positive and  constant in space;
\item $(u,e,p)$ is a \emph{global energetic solution to the perfectly plastic system}, with initial  and boundary   data $(u_0,e_0,p_0)$ and $w$, and the enhanced time regularity
\begin{equation}
\label{sono-AC}
u\in \AC ([0,T]; \BD(\Omega;\R^d)), \qquad e \in \AC ([0,T]; L^2(\Omega;\mt_{\sym}^{d\times d})), \qquad p \in  \AC ([0,T]; 
 \mathrm{M}(\Omega{\cup} \Gamma_\Dir;\mt_\dev^{d\times d}));
\end{equation}
\item the quadruple $(\teta,u,e,p)$ fulfills the additional  energy balance
\begin{equation}
\label{anche-la-temp}
\begin{aligned}
\calE(\limte(t), e(t)) & = \calE(\teta_0, e_0) +\int_0^t \int_\Omega \mathsf{H} \dd x \dd s +\int_0^t \int_{\partial\Omega} \mathsf{h} \dd S \dd s 
+ \int_0^t \int_\Omega \sigma : \sig{\dot{w}} \dd x \dd s
\\
& \quad
%
-\int_0^t \pairing{}{H^1(\Omega;\R^d)}{\calL}{\dot w} \dd s
+ \pairing{}{\BD(\Omega;\R^d)}{\calL(t)}{u(t)} 
\\
& \quad - \pairing{}{\BD(\Omega;\R^d)}{\calL(0)}{u_0} - \int_0^t \pairing{}{\BD(\Omega;\R^d)}{\dot{\calL}}{u} \dd s,
 \end{aligned}
\end{equation}
for almost all $t\in (0,T)$, 
and therefore there holds
\begin{equation}
\label{balance-of-dissipations}
\begin{aligned}
&
|\Omega| (\Theta(t) - \Theta(s))= \calF(\limte(t)) -  \calF(\limte(s)) 
\\
&
=  \mathrm{Var}_{\calR}(p;[s,t]) +\int_s^t \int_\Omega \mathsf{H} \dd x \dd r +\int_s^t \int_{\partial\Omega} \mathsf{h} \dd S \dd r 
 \qquad \text{for almost all } s, t \in (0,T) \text{ with } s \leq t. 
 \end{aligned}
\end{equation}
\end{enumerate}
\end{maintheorem}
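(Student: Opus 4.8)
The plan is to derive uniform-in-$\eps$ a priori estimates from the rescaled total energy balance \eqref{total-enbal-rescal}, extract converging subsequences, and then pass to the limit in the rescaled system \eqref{weak-form-eps} and in the associated energy (in)equalities. First I would test \eqref{total-enbal-rescal} and exploit the scaling \eqref{scalingbbB} of $\bbB_\eps$ together with the safe-load conditions \eqref{safe-load-eps} and the Dirichlet-loading bounds \eqref{est-w-eps}: the crucial point is that the term $\int_0^t\int_\Omega \teta_\eps \bbB_\eps{:}\sig{\dot w_\eps}$ now carries a factor $\eps^\beta$, and that the term $\int_0^t\pairing{}{}{\calL_\eps}{\dot u_\eps{-}\dot w_\eps}$ can be rewritten via $\varrho_\eps$ and integrated by parts, precisely as in the proof of Proposition \ref{prop:aprio}, but keeping track of powers of $\eps$. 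This should yield, via a Gronwall argument, the bounds $\eps\|\dot u_\eps\|_{L^2}^2 \leq C$ (whence $\eps^2\|\ddot u_\eps\|\to0$ in a dual norm by comparison in \eqref{w-momentum-balance-eps}), $\|e_\eps\|_{L^\infty(0,T;L^2)}\leq C$, $\eps\|\dot e_\eps\|_{L^2(Q)}^2\leq C$, $\eps\|\dot p_\eps\|_{L^2(Q)}^2\leq C$, $\frac1\eps\int_0^T d^2(\siedev,\calK(\Omega))\leq C$, $\mathrm{Var}_\calR(p_\eps;[0,T])\leq C$ (hence $\|\dot p_\eps\|_{L^1(0,T;\mathrm M)}\leq C$), $\|\teta_\eps\|_{L^\infty(0,T;L^1)}\leq C$, and — reusing the \emph{Second a priori estimate} of Proposition \ref{prop:aprio} essentially verbatim, now with the extra $\eps$-factors harmless — $\|\teta_\eps\|_{L^2(0,T;H^1)\cap L^h(Q)}\leq C$. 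Kinematic admissibility $\sig{u_\eps}=e_\eps+p_\eps$ then controls $u_\eps$ in $L^\infty(0,T;\BD(\Omega;\R^d))$ via the Poincar\'e inequality \eqref{PoincareBD}; a $\BV$-in-time estimate for $(u_\eps,e_\eps,p_\eps)$ follows from the dissipation bound plus the momentum balance.

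Next I would extract subsequences: Helly's selection theorem (as in \cite[Lemma 7.2]{DMDSMo06QEPL}) gives pointwise weak$^*$ convergence $p_{\eps_k}(t)\weaksto p(t)$ in $\mathrm M(\Omega{\cup}\Gamma_\Dir;\mt_\dev^{d\times d})$ for every $t$, with $\mathrm{Var}_\calR$-lower semicontinuity; from $d(\siedev,\calK(\Omega))\to0$ in $L^2(Q)$ and $\sigma_\eps=\eps\bbD\dot e_\eps+\bbC e_\eps-\eps^\beta\teta_\eps\bbB\to\bbC e$ (weakly, the viscous and thermal terms vanishing), one gets $\sigma=\bbC e\in\calK(\Omega)$ a.e.; closedness of kinematic admissibility (Lemma \ref{l:closure-kin-adm}) yields $(u(t),e(t),p(t))\in\calA_\BD(w(t))$. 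The pointwise \emph{strong} convergence of $e_\eps(t)$ in $L^2$ — needed for \eqref{convs-eps-e} and for the energy balance — I would obtain by a $\limsup$-argument: combining lower semicontinuity of $\calQ$ with an upper bound coming from the global stability inequality tested against a recovery competitor, in the spirit of \cite{DMSca14QEPP, DMDSMo06QEPL}.

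For item (3), the \textbf{global stability} \eqref{glob-stab-5} at each $t$ is established through Lemma \ref{l:for-stability}: since $\sigma(t)=\bbC e(t)\in\calK(\Omega)$ and, passing to the limit in \eqref{w-momentum-balance-eps} (the inertial term $\eps^2\ddot u_\eps$ and viscous term $\eps\dot e_\eps$ disappearing, the thermal term $\eps^\beta\teta_\eps\bbB$ disappearing), $-\mathrm{div}_\Dir(\sigma(t))=F(t)$, $\sigma(t)\nu=g(t)$ hold for a.a.\ $t$, Lemma \ref{l:for-stability} gives \eqref{glob-stab-5} for a.a.\ $t$; Theorem \ref{th:dm-scala} then upgrades this to every $t$ and delivers the absolute continuity \eqref{sono-AC}. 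The \textbf{energy balance} \eqref{enbal-5} (equivalently \eqref{alternat-enbal-RIS}) is obtained by passing to the limit in the rescaled mechanical energy balance \eqref{mech-enbal-rescal}: lower semicontinuity of $\calQ(e_\eps(t))$, of $\int_0^t\calR(\dot p_\eps)$, of the nonnegative viscous/distance terms, plus the already-proven \emph{strong} convergences of $e_\eps(t)$ and $\sigma_\eps$, give the $\leq$ inequality; the reverse inequality follows, as usual in this framework, from the global stability condition integrated in time (this is exactly where continuity of $K$ and Lemma \ref{l:for-stability} are indispensable, cf.\ Remark \ref{rmk:stress-strain-dual}). Finally, for item (4): the extra balance \eqref{anche-la-temp} comes from passing to the limit in the rescaled \emph{total} energy balance \eqref{total-enbal-rescal}, using \eqref{heat-sources-eps} for $\frac1\eps H_\eps$, $\frac1\eps h_\eps$, the convergences \eqref{safe-load-eps}, \eqref{conv-Dir-loads}, \eqref{convs-init-data}, and the vanishing of all $\eps$-weighted kinetic, viscous, and thermal-expansion terms; subtracting \eqref{enbal-5} from \eqref{anche-la-temp} isolates $\calF(\limte(t))-\calF(\limte(s))=\mathrm{Var}_\calR(p;[s,t])+\int_s^t\!\int_\Omega\mathsf H+\int_s^t\!\int_{\partial\Omega}\mathsf h$, i.e.\ \eqref{balance-of-dissipations}. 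That $\limte$ is constant in space I would get by passing to the limit in the heat equation \eqref{eq-teta-eps} multiplied by $\eps$: since $\eps\dot\teta_\eps$, $\eps^2\dot p_\eps^2$, $\eps^2\bbD\dot e_\eps^2$, $\eps H_\eps/\eps=H_\eps$... wait — more carefully, dividing \eqref{eq-teta-eps} by $\eps$ is what gives the total balance; to see spatial constancy I test \eqref{eq-teta-eps} by $\varphi$ and use that $\frac1\eps\int_\Omega\condu(\teta_\eps)\nabla\teta_\eps\nabla\varphi$ must stay bounded while all other terms are $O(1)$, forcing $\condu(\teta_\eps)\nabla\teta_\eps\to0$, hence $\nabla\hat\condu(\teta_\eps)\to0$ in a suitable space and $\hat\condu(\limte)$, thus $\limte$, is $x$-independent; strict positivity $\limte\geq\bar\teta$ passes to the limit from \eqref{strong-strict-pos}.

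The main obstacle I expect is twofold and concentrated in the passage to the perfectly plastic limit of the coupling terms. First, controlling the term $\int_0^t\int_\Omega\teta_\eps\bbB_\eps{:}\sig{\dot w_\eps}$ (and its counterpart in the momentum balance feeding the flow rule) uniformly: the temperature is only bounded in $L^\infty(0,T;L^1)\cap L^2(0,T;H^1)$, $\sig{\dot w_\eps}$ only in $L^1(0,T;L^\infty)$ with an $\eps^{-1/2}$ blow-up allowed by \eqref{est-w-eps}, so it is the scaling \eqref{scalingbbB} with $\beta>\tfrac12$ that must be played against \eqref{est-w-eps} to keep the product bounded and then make it vanish — this is precisely the "technical reasons expounded at length in Section \ref{s:6}" alluded to in the introduction. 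Second, the recovery-sequence construction for the reverse energy inequality in the $\BD$/measure setting, together with the identification of the limiting stress-strain duality $[\sigma_\dev{:}\dot p]$ and the Reshetnyak lower semicontinuity of $\mathrm{Var}_\calR$; here one leans heavily on continuity of $K$ and on \cite[Thm.\ 3.10]{Sol09}, \cite[Thm.\ 4.4, 7.1]{DMDSMo06QEPL}, and on reproducing the time-discrete-to-continuous recovery argument of \cite{DMSca14QEPP} at the level of the vanishing-viscosity sequence. The temperature-dependence, having been decoupled from $K$ by assumption, does not interfere with these steps, so the new difficulty relative to \cite{DMSca14QEPP} is really only the uniform handling of the thermal-expansion coupling and the limit in the heat equation, both of which the scaling hypothesis \eqref{scalingbbB} is designed to tame.
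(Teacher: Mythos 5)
Your overall architecture coincides with the paper's (uniform estimates from the rescaled total energy balance, Helly for $p_\eps$, stability via Lemma \ref{l:for-stability}, upgrade via Theorem \ref{th:dm-scala}, and the extra balance from the limit of \eqref{total-enbal-rescal}), but two steps as you describe them would not go through.

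First, the spatial constancy of $\limte$. Your route — test \eqref{eq-teta-eps} by $\varphi$, deduce $\condu(\teta_\eps)\nabla\teta_\eps=\nabla\hat\condu(\teta_\eps)\to0$, and conclude that $\hat\condu(\limte)$ is $x$-independent — founders on the nonlinearity of $\hat\condu$: since $\teta_{\eps_k}$ converges only \emph{weakly}, the weak limit of $\hat\condu(\teta_{\eps_k})$ cannot be identified with $\hat\condu(\limte)$, so knowing that this unidentified limit has zero gradient says nothing about $\limte$. The extra $\eps$-factors in the Second a priori estimate are not merely ``harmless'': redoing the test by $\teta_\eps^{\alpha-1}$ with the rescaled equation, the terms $\frac\eps\alpha\int_\Omega\teta_\eps^\alpha$ and the $\eps^{2\beta}$-weighted thermal-expansion term yield the \emph{vanishing} bound $\|\nabla\teta_\eps\|_{L^2(Q;\R^d)}^2\leq C\eps+C'\eps^{2\beta}$, i.e.\ estimate \eqref{aprio-eps-teta} with the factor $\eps^{-1/2}$. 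Then $\nabla\teta_{\eps_k}\to0$ strongly in $L^2(Q;\R^d)$, and since the distributional gradient is linear, $\nabla\limte=0$ follows directly from $\teta_{\eps_k}\weakto\limte$. This vanishing gradient bound is the whole mechanism and it is what you need to extract from the estimate.

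Second, the kinematic admissibility of $(u(t),e(t),p(t))$, which must precede the use of Lemma \ref{l:for-stability} (and hence the global stability, the lower energy estimate, and any recovery-sequence argument). You invoke Lemma \ref{l:closure-kin-adm}, but that lemma requires \emph{pointwise-in-time} weak convergences of $u_{\eps_k}(t)$ and $e_{\eps_k}(t)$, and at this stage you only have weak$^*$ convergence in $L^\infty(0,T;\cdot)$; your proposed fix — obtain pointwise strong convergence of $e_{\eps_k}(t)$ from a $\limsup$-argument using global stability — is circular, because stability is exactly what admissibility is needed to prove. The paper resolves this with a Young measure argument (Theorem \ref{thm.balder-gamma-conv}): the limiting measure $\mu_t$ is concentrated on the set of weak limit points of $(u_{\eps_k}(t),e_{\eps_k}(t))$; every such limit point satisfies $\sig{\mathsf u}=\mathsf e+p(t)$ and the boundary relation with the \emph{same} $p(t)$ (by Helly) and $w(t)$; and since these constraints are affine, integrating them against $\mu_t$ transfers them to the barycenters $u(t)$, $e(t)$. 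Some device of this kind is indispensable; without it the chain stability $\Rightarrow$ lower energy estimate $\Rightarrow$ Theorem \ref{th:dm-scala} $\Rightarrow$ strong convergence of $e_{\eps_k}(t)$ cannot be started. The pointwise convergences \eqref{convs-eps-u}--\eqref{convs-eps-e} are then recovered only at the end, from matching the $\limsup$ and $\liminf$ of the mechanical energy balance.
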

\noindent
Observe that, by virtue of convergence \eqref{convs-eps-teta}, the limiting temperature $\limte$ inherits the strict positivity property $\limte(t) \geq \bar\teta>0$ for almost all $t\in (0,T)$. 
\begin{remark}
\label{rmk:2weak} 
\upshape
Under the same conditions as for Thm.\ \ref{mainth:3},
the vanishing-viscosity and inertia analysis for \emph{entropic} solutions to the  rescaled thermoviscoplastic system
(\ref{plast-PDE-rescal}, \ref{bc}) would lead to a considerably weaker formulation of the 
limiting system. Indeed, the energy balance \eqref{anche-la-temp} would be replaced by an upper energy estimate. Accordingly, it would no longer be possible to deduce \eqref{balance-of-dissipations}, which provides information on the evolution of the limiting temperature. 
\end{remark}
\begin{remark}[An alternative scaling condition on the heat conduction coefficient $\condu$]
\upshape
\label{rmk:alternative-scaling}
For the vanishing-viscosity and inertia analysis carried out in the frame of the damage system analyzed in \cite{LRTT}, a scaling condition  on the heat conduction coefficients $\condu_\eps$, allowed to depend on $\eps$, 
 was  exploited, in place of \eqref{scalingbbB}. Namely,  it was supposed that 
\begin{equation}
\label{scaling-lrtt}
\kappa_\eps(\teta) = \frac1{\eps^2} \kappa(\teta) \qquad \text{with } \kappa \in \rmC^0(\R^+) \text{ satisfying \eqref{hyp-K-stronger}}.
\end{equation}
This reflects the view that, for the limit system, if a change of heat is caused at some spot in the material, then heat must be conducted all over the body with infinite speed. 
In fact, \eqref{scaling-lrtt}  as well led us to show that the limit temperature is constant in space, like in the present case. 
\par
This scaling condition was combined with the requirement that the Dirichlet boundary $\Gamma_\Dir$ coincides with the whole $\partial\Omega$, and that the Dirichlet loading $w$ is null,
in order to deduce 
\begin{compactenum}
\item
the convergence (along a subsequence) of the temperatures $\teta_\eps$ to a spatially constant function $\Theta$;
\item the strong convergence $\eps e(\dot{u}_\eps) \to 0 $ in $L^2(Q;\mt_\sym^{d\times d})$, by means of a careful argument strongly relying on the homogeneous character of the Dirichlet boundary conditions.
\end{compactenum}
In this way, in \cite{LRTT} we bypassed one of the  major difficulties in the asymptotic analysis, 
  namely the presence of the 
 thermal expansion term $
\iint  \tetae \bbB{:}\dot{e}_\eps 
\dd x \dd t $ on the r.h.s.\ of the rescaled mechanical energy balance, which in turn  is the starting point for the derivation of  a priori estimates uniform w.r.t.\ $\eps$ for  the dissipative variables $\dot{e}_\eps$ and $\dot{p}_\eps$.
\par
In the present context, we have decided not to develop  the approach  based on condition \eqref{scaling-lrtt}. In fact, it would have forced us to take null Dirichlet loadings for the limit perfectly plastic system,  and this, in combination with the strong safe load condition \eqref{safe-load-s5}, would have been too restrictive. 
\end{remark}
We will develop the proof of Theorem \ref{mainth:3} in the ensuing Sec.\ \ref{ss:6.1}.
\subsection{Proof of Theorem \ref{mainth:3}} \label{ss:6.1}
We start by deriving a series of a priori estimates, 
\emph{uniform} w.r.t.\ the parameter $\eps$, for  \emph{a distinguished class} of weak energy solutions to system (\ref{plast-PDE-rescal}, \ref{bc}).  In fact, 
in the derivation of these estimates we will perform the same tests as in the proof of  Prop.\ \ref{prop:aprio}, in particular the test of the heat equation by $\tetae^\alpha$, with $\alpha \in [2-\mu, 1)$, $\alpha>0$.  Since $\tetae^\alpha$ is not an admissible test function for  the rescaled heat equation \eqref{eq-teta-eps} due to its insufficient spatial regularity, the calculations related to this test can  be rendered rigorously only on the time discrete level, and the resulting a priori estimates in fact only hold for the \underline{weak energy solutions 
 arising from the time discretization scheme}. That is why, in Proposition \ref{prop:aprio-eps} below we will only claim that \emph{there exist} a family of weak energy solutions for which suitable a priori estimates hold. 
\begin{proposition}[A priori estimates uniform w.r.t.\ $\eps$]
\label{prop:aprio-eps}
Assume \eqref{Omega-s2},
   \eqref{bdriesC2} and  \eqref{elast-visc-tensors}, \eqref{bbC-s:5}.  Assume  conditions  \eqref{hyp-K} and \eqref{hyp-K-stronger} on $\condu$,   \eqref{scalingbbB} on the tensors $\bbB_\eps$, and \eqref{data-eps} on the  data $(H_\eps, h_\eps, F_\eps, g_\eps, w_\eps)_\eps$  and  $(\teta_\eps^0, u_\eps^0, e_\eps^0, p_\eps^0)_\eps$. 
\par
Then, there exist a constant $C>0$ and a family $(\teta_\eps, \ue,e_\eps,\pe)_\eps$  of \emph{weak energy solutions} to the  rescaled thermoviscoplastic systems
 (\ref{plast-PDE-rescal}, \ref{bc}), such that the following estimates hold  uniformly w.r.t.\ the parameter $\eps>0$: 
\begin{subequations}
\label{aprio-eps}
\begin{align}
&
\label{aprio-eps-u}
\| \sig{u_\eps}\|_{L^\infty (0,T; L^1(\Omega;\mt_\sym^{d\times d}))} + \eps^{1/2} \| \sig{\dot{u}_\eps}\|_{L^2 (Q;\mt_\sym^{d\times d})} 
\\ & \qquad \nonumber + \eps \|  \dot{u}_\eps\|_{L^\infty(0,T;L^2(\Omega;\R^d))}  + \eps^2 \| \ddot{u}_\eps \|_{L^2(0,T; H_\Dir^{1}(\Omega;\R^d)^*)} \leq C, 
\\
&
\label{aprio-eps-e}
\| e_\eps\|_{L^\infty (0,T; L^2(\Omega;\mt_\sym^{d\times d}))} + \eps^{1/2} \| \dot{e}_\eps\|_{L^2 (Q;\mt_\sym^{d\times d})}  \leq C, 
\\
&
\label{aprio-eps-p}
\| p_\eps\|_{L^\infty (0,T; L^1(\Omega;\mt_\sym^{d\times d}))}  + \| \dot{p}_\eps\|_{L^1 (Q;\mt_\sym^{d\times d})}  + \eps^{1/2} \| \dot{p}_\eps\|_{L^2 (Q;\mt_\sym^{d\times d})}  \leq C, 
\\
& 
\label{aprio-eps-dist}
\frac1{\eps^{1/2}} \| d((\sie)_\dev, \mathcal{K}(\Omega)) \|_{L^2(0,T)} \leq C,
\\
&
\label{aprio-eps-teta}
\| \tetae\|_{L^\infty (0,T; L^1(\Omega))} 
+ \|\tetae\|_{L^h(Q)} + \frac1{\eps^{1/2}}\|\nabla \tetae\|_{L^2(Q;\R^d)}
 \leq C \qquad \text{for } h = \begin{cases}
 3 & \text{if } d=2,
 \\
 \frac83 & \text{if } d=3.
 \end{cases}
\end{align}
 \end{subequations}
\end{proposition}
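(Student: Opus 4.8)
The plan is to mimic the derivation of the a priori estimates in Proposition \ref{prop:aprio}, but keeping careful track of the powers of $\eps$ generated by the rescaling in \eqref{plast-PDE-rescal}, and using the strengthened scaling \eqref{scalingbbB} on $\bbB_\eps$ and the convergence hypotheses \eqref{data-eps} to absorb the boundary/forcing terms. As in that proof, everything will be done rigorously on the time-discrete level (so that the test of the heat equation by $\teta_\eps^{\alpha}$, $\alpha\in[2-\mu,1)$, and by $1$ is licit), and only afterwards passed to the continuous limit; this is why the statement only asserts the existence of \emph{some} family of weak energy solutions --- namely those produced by the scheme of Section \ref{s:3} --- satisfying \eqref{aprio-eps}.

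\textbf{Main estimate.} First I would start from the rescaled total energy balance \eqref{total-enbal-rescal} written for $s=0$, and estimate the right-hand side term by term. The term $\int_0^t\pairing{}{H_\Dir^1(\Omega;\R^d)}{\calL_\eps}{\dot u_\eps - \dot w_\eps}$ is handled exactly as the term $I_2$ in the proof of Proposition \ref{prop:aprio}: using the safe-load condition (here \eqref{safe-load-eps}) it is rewritten via kinematic admissibility $\sig{\dot u_\eps}=\dot e_\eps+\dot p_\eps$ as $\int_0^t\int_\Omega \varrho_\eps:(\dot e_\eps+\dot p_\eps)\,\dd x\,\dd r - \int_0^t\int_\Omega\varrho_\eps:\sig{\dot w_\eps}\,\dd x\,\dd r$; the $\dot e_\eps$-contribution is integrated by parts in time using the $W^{1,1}(0,T;L^2)$ bound on $\varrho_\eps$, leaving a term controllable by $\calQ(e_\eps(t))$ on the left via Young's inequality, while the $\dot p_\eps$-contribution is split through the flow rule \eqref{pl-flow-rule-eps} into a $(\varrho_\eps)_\dev$-term — here the key point is that $\|(\varrho_\eps)_\dev\|_{L^1(0,T;L^\infty)}\le\overline C\eps$ from \eqref{safe-load-eps}, which combines with the $L^\infty(0,T;L^1)$ bound on $\teta_\eps$ and with $\|\zeta_\eps\|_{L^\infty}\le C_R$ to give an $O(1)$ (Gronwall-ready) bound — and a term that cancels against $\calR(\dot p_\eps)$-type quantities. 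The thermal-expansion term $\int_0^t\int_\Omega\teta_\eps\bbB_\eps:\dot e_\eps\,\dd x\,\dd r$ appearing via $\sig{\dot w_\eps}$ and the heat sources $\tfrac1\eps H_\eps$, $\tfrac1\eps h_\eps$ are bounded using $\|\bbB_\eps\|=\eps^\beta\|\bbB\|$ with $\beta>\tfrac12$, the bounds \eqref{est-w-eps} on $\eps\dot w_\eps$, $\eps\ddot w_\eps$, $\eps^{1/2}\sig{\dot w_\eps}$, and $\|H_\eps\|_{L^1(L^1)}\le\overline C\eps$, $\|h_\eps\|_{L^1(L^1)}\le\overline C\eps$; the inertial terms $\rho\eps^2(\cdots)$ are bounded using $\eps\|\dot u_\eps^0\|_{L^2}\to0$, $\eps\|\dot w_\eps\|_{L^\infty(H^1)}\le\overline C$, and are partly absorbed into $\tfrac{\rho\eps^2}2\|\dot u_\eps(t)\|_{L^2}^2$ on the left. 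Collecting everything and applying Gronwall's lemma yields $\tfrac{\rho\eps^2}2\|\dot u_\eps(t)\|_{L^2}^2+\|\teta_\eps(t)\|_{L^1}+\calQ(e_\eps(t))\le C$ uniformly in $\eps$, which is the first line of \eqref{aprio-eps-teta}, the $\eps\|\dot u_\eps\|_{L^\infty(L^2)}$ and $\|e_\eps\|_{L^\infty(L^2)}$ parts of \eqref{aprio-eps-u}--\eqref{aprio-eps-e}.

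\textbf{Dissipative and temperature estimates.} Next I would use the rescaled mechanical energy balance \eqref{mech-enbal-rescal}: its left-hand side contains precisely $\eps\|\dot e_\eps\|_{L^2(Q)}^2$, $\tfrac\eps2\|\dot p_\eps\|_{L^2(Q)}^2$, $\tfrac1{2\eps}\|d((\sigma_\eps)_\dev,\calK(\Omega))\|_{L^2(0,T)}^2$, $\int_0^t\calR(\dot p_\eps)$, and $\calQ(e_\eps(t))$; after estimating its right-hand side by the same calculations as above (the only new term being $\int_0^t\int_\Omega\teta_\eps\bbB_\eps:\dot e_\eps$, which is $\le\delta\eps\iint|\dot e_\eps|^2 + C_\delta\eps^{2\beta-1}\iint\teta_\eps^2 \le \delta\eps\iint|\dot e_\eps|^2+C$ using $2\beta-1>0$ and the $L^2(Q)$ bound on $\teta_\eps$ obtained below, the first piece being absorbed on the left), one obtains $\eps^{1/2}\|\dot e_\eps\|_{L^2(Q)}\le C$, $\eps^{1/2}\|\dot p_\eps\|_{L^2(Q)}\le C$, $\eps^{-1/2}\|d((\sigma_\eps)_\dev,\calK(\Omega))\|_{L^2(0,T)}\le C$, and via the coercivity \eqref{linear-growth} of $\calR$ also $\|\dot p_\eps\|_{L^1(Q)}\le C$; integrating $\dot p_\eps$ in time and using \eqref{convs-init-data} gives $\|p_\eps\|_{L^\infty(0,T;L^1)}\le C$, hence $\|\sig{u_\eps}\|_{L^\infty(0,T;L^1)}\le C$ by kinematic admissibility and $e_\eps\in L^\infty(0,T;L^2)$, completing \eqref{aprio-eps-u}--\eqref{aprio-eps-p} (the $\eps^2\|\ddot u_\eps\|_{L^2(H_\Dir^{1,*})}$ bound follows by comparison in \eqref{w-momentum-balance-eps}, since the stress $\sigma_\eps=\eps\bbD\dot e_\eps+\bbC e_\eps-\teta_\eps\bbB_\eps$ is bounded in $L^2(Q;\mt_\sym^{d\times d})$ — here again $\eps^{1/2}\dot e_\eps$ bounded and $\bbB_\eps=\eps^\beta\bbB$ are used). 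Finally, for the temperature gradient estimate \eqref{aprio-eps-teta} I would reproduce the \emph{Second a priori estimate} of Proposition \ref{prop:aprio} on the discrete level: testing the (discrete) rescaled heat equation by $\teta_\eps^{\alpha-1}$ with $\alpha=2-\mu$, using the concavity inequality \eqref{inequality-concave-functions} with $\psi(\teta)=\tfrac1\alpha\teta^\alpha$, the growth condition \eqref{hyp-K} and the previously obtained bounds; the crucial bookkeeping is that the heat-equation right-hand side now carries factors $\eps$ in front of $\mathrm{R}(\cdot,\dot p_\eps)$, $\eps^2$ in front of $|\dot p_\eps|^2$ and $\bbD\dot e_\eps:\dot e_\eps$ (which are exactly $O(1)$ in $L^1(Q)$ by the dissipative bounds) and $\eps^{1+\beta}$ in front of the ``bad'' $\teta_\eps\bbB:\dot e_\eps$ term, while the left-hand side diffusive term keeps its full strength $\int_0^t\int_\Omega\condu(\teta_\eps)|\nabla(\teta_\eps^{\alpha/2})|^2$; together with the Gagliardo--Nirenberg interpolation argument of \cite[(3.8)--(3.12)]{Rocca-Rossi} this produces the $L^h(Q)$ bound and, because every source term carries an extra power of $\eps$ relative to the coercive $\tfrac1\eps$-weighted gradient term (note $\eps\dot\teta_\eps$ has coefficient $\eps$), the \emph{improved} bound $\eps^{-1/2}\|\nabla\teta_\eps\|_{L^2(Q)}\le C$.

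\textbf{Main obstacle.} The delicate point is the $\eps$-weighted temperature gradient estimate $\eps^{-1/2}\|\nabla\teta_\eps\|_{L^2(Q)}\le C$: one must check that the extra factors of $\eps$ produced by the rescaling genuinely survive the nonlinear Gagliardo--Nirenberg bootstrap of \cite{Rocca-Rossi}, i.e.\ that after testing by $\teta_\eps^{\alpha-1}$ and dividing through appropriately the ``loss'' in $\eps$ from moving the source terms is never worse than the ``gain'' $\eps^{-1}$ weighting the diffusion, and in particular that the thermal-expansion term $\eps^{1+\beta}\iint\teta_\eps\bbB:\dot e_\eps\,\teta_\eps^{\alpha-1}$ is absorbable (this is exactly where $\beta>\tfrac12$ is needed, via $\eps^{1+\beta}\|\teta_\eps\|\cdot\eps^{1/2}\|\dot e_\eps\|\cdot\eps^{-1/2}\le\eps^{\beta+1/2}\cdot O(1)$ against an $O(1)$ diffusive term, or rather the version weighted by $\eps^{-1}$). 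Once this bookkeeping is in place the remaining arguments are routine adaptations of Section \ref{s:4}.
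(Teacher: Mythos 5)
Your proposal is correct and follows essentially the same route as the paper: the rescaled total energy balance with the safe-load condition and Gronwall for the $L^\infty$-in-time bounds, the test of the (discrete) heat equation by $\teta_\eps^{\alpha-1}$ with the $\eps$-bookkeeping (the time-derivative term contributing $O(\eps)$ and the thermal-expansion term $O(\eps^{2\beta})$, whence $\|\nabla\teta_\eps\|_{L^2(Q)}^2\le C\eps+C'\eps^{2\beta}\le C''\eps$ since $\beta>\tfrac12$), the mechanical energy balance for the dissipative and distance bounds with $\bbB_\eps=\eps^\beta\bbB$ absorbing the coupling term, and comparison in the momentum balance for $\eps^2\ddot u_\eps$. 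The only deviation is that you take the mechanical estimate before the temperature estimate while invoking the $L^2(Q)$ bound on $\teta_\eps$ ``obtained below''; since that bound depends only on the first (total-energy) estimate, this is a harmless reordering — the paper simply performs the temperature estimate first to avoid the forward reference.
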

\begin{proof} 
We will follow the outline of the proof of Prop.\ \ref{prop:aprio}, referring to it for all details. 
\par \noindent 
\textbf{First a priori estimate:}  We start from the rescaled total energy balance \eqref{total-enbal-rescal} and estimate the  terms on its right-hand side. It follows from \eqref{convs-init-data} that $\eps^2 \| \dot{u}_\eps^0 \|_{L^2(\Omega;\R^d)}^2 \leq C$ and $\calE (\teta_\eps^0, e_\eps^0) \leq C$.  
As for the third term on the r.h.s., we use the safe load condition, yielding
\begin{equation}
\label{will-be-quoted}
\begin{aligned}
 \int_0^t \pairing{}{H_\Dir^1 (\Omega;\R^d)}{\mathcal{L}_\eps}{\dot {u}_\eps{-} \dot{w}_\eps}   \dd r  &  = \int_0^t \int_\Omega \varrho_\eps{:}  \left( \sig{\dot{u}_\eps}{-} \sig{\dot{w}_\eps} \right) \dd x \dd r    \\ &  \stackrel{(1)}{=}   \int_0^t \int_\Omega \varrho_\eps {:} \dot{e}_\eps \dd x \dd r  +  \int_0^t \int_\Omega \varrho_\eps : \dot{p}_\eps \dd x \dd r - \int_0^t \int_\Omega \varrho_\eps {:} \sig{\dot{w}_\eps} \dd x \dd r
 \\ &
   \stackrel{(2)}{=}  -  \int_0^t \int_\Omega \dot{\varrho_\eps} {:} e_\eps \dd x \dd r 
 + \int_\Omega \varrho_\eps(t) {:} e_\eps(t) \dd x -  \int_\Omega \varrho_\eps(0) {:} e_\eps^0 \dd x  + \int_0^t \int_\Omega (\varrho_\eps)_\dev \dot{p}_\eps \dd x \dd r
 \\
 & \qquad 
 -
  \int_0^t \int_\Omega \varrho_\eps {:} \sig{\dot{w}_\eps} \dd x \dd r
 \\ & \doteq I_1 + I_2+I_3+I_4 +I_5
 \end{aligned}
 \end{equation}
 with (1) due to  the kinematic admissibility condition, and (2) following from  integration by parts, and the fact that $ \dot{p}_\eps \in \mt_\dev^{d \times d}$ a.e.\ in $Q$.
 We  estimate 
\[
\begin{aligned}
& \left| I_1\right| \leq     \int_0^t \| \dot{\varrho_\eps}  \|_{L^2(\Omega;\mt_\sym^{d\times d})} \|  e_\eps  \|_{L^2(\Omega;\mt_\sym^{d\times d})} \dd r,
\\ & \left| I_2 \right| \stackrel{(3)} \leq  C \| e_\eps(t)\|_{L^2(\Omega;\mt_\sym^{d\times d})} \leq \frac{C_{\bbC}^1}{16}  \| e_\eps(t)\|_{L^2(\Omega;\mt_\sym^{d\times d})}^2 + C,
\\ & 
 \left| I_3 \right| \stackrel{(4)} \leq  C \| e_\eps^0\|_{L^2(\Omega;\mt_\sym^{d\times d})}\leq C, 
\end{aligned}
\] 
where (3) and (4) follow from  the bound provided for   $\| \varrho_\eps \|_{L^\infty (0,T;L^2(\Omega;\mt_\sym^{d\times d}))}$ by condition \eqref{safe-load-eps}, and from \eqref{convs-init-data}. Instead, for $I_4$ we use  the plastic flow rule \eqref{pl-flow-rule-eps}, rewritten as $ \eps \dot{p}_\eps  =( \sigma_\eps)_\dev - \zeta_\eps$, with $\zeta_\eps \in \partial_{\dot p} \mathrm{R}(\cdot, \dot{p}_\eps) $ a.e.\ in $Q$. Then,
\[
I_4 =   \int_0^t \int_\Omega \tfrac 1\eps (\varrho_\eps)_\dev{:} ( \sigma_\eps)_\dev \dd x -  \int_0^t \int_\Omega  \tfrac 1\eps  (\varrho_\eps)_\dev{:} \zeta_\eps \dd x \doteq I_{4,1} + I_{4,2},
\]
 and 
 \[
  I_{4,1} = \int_0^t \int_\Omega \tfrac 1\eps (\varrho_\eps)_\dev{:} \left( \bbD \dot{e}_\eps + \bbC e_\eps-\tetae \bbB_\eps \right)_\dev \dd x \dd r \doteq I_{4,1,1} +  I_{4,1,2} + I_{4,1,3}
\]
with 
\[
\begin{aligned}
I_{4,1,1} &  = -  \int_0^t \int_\Omega \tfrac 1\eps (\dot{\varrho}_\eps)_\dev{:} \bbD e_\eps \dd x \dd r +  \int_\Omega \tfrac 1\eps (\varrho_\eps(t))_\dev{:}\bbD e_\eps(t) \dd x   -  \int_\Omega \tfrac 1\eps (\varrho_\eps(0))_\dev{:}\bbD e_\eps^0 \dd x 
\\ & 
\leq  C \int_0^t    \tfrac 1\eps \|   (\dot{\varrho}_\eps)_\dev \|_{L^2(\Omega;\mt_\dev^{d\times d})}\|  e_\eps   \|_{L^2(\Omega;\mt_\sym^{d\times d})} \dd r +   \frac C{\eps^2} \|   ({\varrho}_\eps)_\dev \|_{L^\infty(0,T;L^2(\Omega;\mt_\dev^{d\times d}))}^2  \\ & \quad +  \frac{C_{\bbC}^1}{16}  \| e_\eps(t)\|_{L^2(\Omega;\mt_\sym^{d\times d})}^2  + C  \|  e_\eps^0  \|_{L^2(\Omega;\mt_\sym^{d\times d})}^2,
\end{aligned}
\]
and, analogously, 
\[
\begin{aligned} 
&
\left| I_{4,1,2}  \right| \leq   C  \int_0^t    \tfrac 1\eps \|   \varrho_\eps \|_{L^2(\Omega;\mt_\dev^{d\times d})}\|  e_\eps   \|_{L^2(\Omega;\mt_\sym^{d\times d})} \dd r,
\\ & 
\left| I_{4,1,3}  \right| \leq   C   \int_0^t    \tfrac 1\eps \|   \varrho_\eps \|_{L^\infty(\Omega;\mt_\dev^{d\times d})} \| \tetae\|_{L^1(\Omega)} \dd r\,.
\end{aligned}
\]
Instead, for the term $I_{4,2}$ we use that  $|  \zeta_\eps  | \leq C_R$ by \eqref{elastic-domain}, so that $|I_{4,2}| \leq \tfrac {C_R}{\eps} \|   (\varrho_\eps)_\dev \|_{L^1(Q;\mt_\dev^{d\times d})}$. 
Finally,
\[
I_5 \leq \int_0^t  \|   \varrho_\eps \|_{L^2(\Omega;\mt_\dev^{d\times d})}\|  \sig{\dot{w}_\eps}   \|_{L^2(\Omega;\mt_\sym^{d\times d})} \dd r  \leq C
\]
thanks to \eqref{safe-load-eps} and \eqref{conv-Dir-loads}. 
The fourth and the fifth terms on the r.h.s.\ of  \eqref{total-enbal-rescal} are bounded thanks to condition \eqref{heat-sources-eps}.  We estimate  the sixth 
 term by 
 \[
   \frac{\rho\eps^2}4 \int_\Omega| \dot{u}_\eps(t)|^2 \dd x  +  \rho \eps  \int_0^t \| \dot{u}_\eps\|_{L^2(\Omega;\R^d)} \eps \|\ddot{w}_\eps  \|_{L^2(\Omega;\R^d)}  \dd r  +  C +  C\eps^2 \|  \dot{w}_\eps \|_{L^\infty (0,T;L^2(\Omega;\R^d)}^2   
   \]
  where we have also used \eqref{convs-init-data}.  
  As for the last term on the r.h.s.\ of  \eqref{total-enbal-rescal}, 
  arguing in the very same way as in the proof of Prop.\ \ref{prop:aprio}, we estimate 
  \[
\int_0^t \int_
\Omega  (\eps \bbD \dot{e}_\eps + \bbC e_\eps -\tetae \bbB_\eps)   : \sig{\dot{w}_\eps} \dd x \dd r  \doteq I_{6,1}+ I_{6,2}+I_{6,3},
\]
with 
\[
\begin{aligned}
 I_{6,1} &  = -  \int_0^t \int_
\Omega \eps \bbD e_\eps  : \sig{\ddot{w}_\eps} \dd x \dd r + \int_\Omega \eps \bbD   e_\eps(t)   : \sig{\dot{w}_\eps(t)} \dd x  - \int_\Omega \eps \bbD  e_\eps^0   : \sig{\dot{w}_\eps(0)} \dd x  \\ &  \leq \int_0^T  \| e_\eps   \|_{L^2(\Omega;\mt_\sym^{d\times d})} \eps  \| \sig{\ddot{w}_\eps}\|_{L^2(\Omega;\mt_\sym^{d\times d})}  \dd r + \frac{C_{\bbC}^1}{16}  \| e_\eps(t)\|_{L^2(\Omega;\mt_\sym^{d\times d})}^2  +C \eps^2 \|  \sig{\dot{w}_\eps} \|_{L^\infty(0,T; L^2(\Omega;\mt_\sym^{d\times d}))}^2  + C,
\end{aligned}
\]
where we have again used \eqref{convs-init-data}. We also have 
\[
\begin{aligned}
&
\left|  I_{6,2}  \right| \leq C \int_0^t   \| e_\eps   \|_{L^2(\Omega;\mt_\sym^{d\times d})} \|   \sig{\dot{w}_\eps}    \|_{L^2(\Omega;\mt_\sym^{d\times d})} \dd r,
\\ & 
\left|  I_{6,3}  \right| \leq C\int_0^t \| \tetae\|_{L^1(\Omega)}  \eps^{1/2} \|  \sig{\dot{w}_\eps}  \|_{L^\infty(\Omega;\mt_\sym^{d\times d})} \dd r,
\end{aligned}
\]
thanks to  the scaling \eqref{scalingbbB} of the tensors $\bbB_\eps$. 
\par
All in all, taking into account  the bounds provided by conditions \eqref{data-eps},  we obtain  
\[
\begin{aligned}
&
\eps^2 \int_\Omega | \dot{u}_\eps(t)|^2 \dd x + \int_\Omega \tetae(t) \dd x + \int_\Omega |e_\eps(t)|^2 \dd x
\\ & 
\begin{aligned}
 \leq 
C 
+ C \int_0^t  \Big( & \| \dot{\varrho}_\eps\|_{L^2(\Omega;\mt_\sym^{d\times d})}  {+}\tfrac1{\eps} \|( \dot{\varrho}_\eps)_\dev\|_{L^2(\Omega;\mt_\dev^{d\times d})}
 {+}\tfrac1\eps \| \varrho_\eps\|_{L^2(\Omega;\mt_\sym^{d\times d})}
 \\
 &
  {+}  \|   \sig{\dot{w}_\eps}    \|_{L^2(\Omega;\mt_\sym^{d\times d})}   {+} \eps \|\sig{\ddot{w}_\eps}\|_{L^2(\Omega;\mt_\sym^{d\times d})}\Big) \| e_\eps\|_{L^2(\Omega;\mt_\sym^{d\times d})} \dd r  
  \end{aligned}
\\ & \quad + C \int_0^t \eps \| \ddot{w}_\eps\|_{L^2(\Omega;\R^d)}   \eps \| \dot{u}_\eps\|_{L^2(\Omega;\R^d)}  \dd r +  C \int_0^t \left( \tfrac1\eps \| \varrho_\eps\|_{L^\infty(\Omega;\mt_\sym^{d\times d})} + \eps^{1/2} \| \sig{\dot{w}_\eps }\|_{L^\infty(\Omega;\mt_\sym^{d\times d})} \|\right) \|\tetae\|_{L^1(\Omega)} \dd r.
\end{aligned}
\]
Applying    Gronwall's Lemma and again exploiting  \eqref{data-eps}, 
 we obtain $\eps \| \dot{u}_\eps\|_{L^\infty(0,T;L^2(\Omega;\R^d))} + \sup_{t\in [0,T]}\calE(\tetae(t), e_\eps(t)) \leq C$, whence  the third bound in \eqref{aprio-eps-u}, and the first bounds in \eqref{aprio-eps-e} and \eqref{aprio-eps-teta} .
\par \noindent	
\textbf{Second a priori estimate:} We (formally) test the rescaled heat equation \eqref{eq-teta-eps} by $\tetae^{\alpha-1}$ and integrate on $(0,t),$  thus retrieving the (formally written)  analogue of \eqref{ad-est-temp2}, namely
\begin{equation}
\label{eps-analog}
\begin{aligned}
&
c \int_0^t \int_\Omega \condu(\tetae) | \nabla (\tetae^{\alpha/2})|^2 \dd x \dd r + \eps^2 C_{\bbD}^2   \int_\Omega |\dot{e}_\eps|^2  \tetae^{\alpha-1} \dd x \dd r 
\\
&
\leq \eps \int_0^t \int_\Omega \dot{\teta}_\eps  \tetae^{\alpha-1} \dd x \dd r  + \eps \int_0^t \int_\Omega \tetae \bbB_\eps{:} \dot{e}_\eps \tetae^{\alpha-1} \dd x \dd r
\\
& = \frac\eps\alpha \int_\Omega (\tetae(t))^\alpha \dd x -  \frac\eps\alpha \int_\Omega(\teta_\eps^0)^\alpha \dd x + \eps \int_0^t \int_\Omega \tetae   \eps^\beta  \bbB{:} \dot{e}_\eps \tetae^{\alpha-1} \dd x \dd r \doteq I_1+I_2+I_3
\end{aligned}
\end{equation}
in view of the scaling \eqref{scalingbbB} for $\bbB_\eps$. 
The first two integral terms on the r.h.s.\ can be treated in the same way as in \eqref{est-temp-I1},
taking into account the previously proved bound for $\| \tetae\|_{L^\infty(0,T;L^1(\Omega))}$. We thus obtain 
\begin{equation}
\label{eps-analog-1}
I_1+I_2 \leq C\eps\,.
\end{equation}
 Again, we estimate
\[
I_3 \leq 
\frac{\eps^2 C_{\bbD}^2 }4  \int_\Omega |\dot{e}_\eps|^2 \tetae^{\alpha-1} \dd x \dd r  +  C  \eps^{2\beta}  \int_0^t \int_\Omega \tetae^{\alpha+1} \dd x \dd r\,.
\]
While the first term in the above formula is absorbed into the  l.h.s.\ of \eqref{eps-analog}, the  second one  is handled by the very same arguments in the proof of Prop.\ \ref{prop:aprio}.
In this way, also taking into account \eqref{eps-analog-1},  we obtain, 
\begin{equation}
\label{L2H1-ests}
\|\nabla  \tetae\|_{L^2(Q;\R^d)}^2 + \| \nabla(\tetae)^{(\mu{+}\alpha)/2}\|_{L^2(Q;\R^d))}^2
+ \| \nabla(\tetae)^{(\mu{-}\alpha)/2}\|_{L^2(Q;\R^d))}^2 \leq C\eps +   C'\eps^{2\beta}, 
\end{equation}
whence, in particular, the third bound in \eqref{aprio-eps-teta}. The second bound follows from interpolation, cf.\ \eqref{Gagliardo-Nir}.
\par \noindent	
\textbf{Third a priori estimate:} We now address the (rescaled) mechanical energy balance \eqref{mech-enbal-rescal}. The scaling \eqref{scalingbbB} of $\bbB_\eps$ yields for  the third integral term on the right-hand side
\begin{equation}
\label{scaling-BBeps}
\left| \int_0^t \int_\Omega \tetae \bbB_\eps \dot{e}_\eps \dd x \dd r \right|  \leq 
\int_0^t \int_\Omega \tetae |\bbB | \eps^{1/2}|\dot{e}_\eps|  \dd x \dd r  \leq\int_0^t \int_\Omega |\tetae|^2 \dd x \dd r +  \frac{\eps}4 \int_0^t \int_\Omega |\dot{e}_\eps|^2  \dd x \dd r, 
\end{equation}
so that the latter term  can be absorbed into the left-hand side. The remaining terms on  the r.h.s.\ are handled by the very same calculations developed for the \emph{First a priori estimate}. Therefore, 
from the bounds for the terms on the l.h.s.\ of  \eqref{mech-enbal-rescal},
we conclude  the second of \eqref{aprio-eps-e},  as well as  \eqref{aprio-eps-p} and thus,  by kinematic admissibility, the first two bounds in  \eqref{aprio-eps-u}. We also infer  \eqref{aprio-eps-dist}. 
\par \noindent	
\textbf{Fourth a priori estimate:}  The  last bound in \eqref{aprio-eps-u} follows from a comparison argument  in the rescaled momentum balance \eqref{w-momentum-balance-eps}, taking into account the previously proved estimates, as well as the uniform  bound \eqref{total-load-eps-bound} for $\calL_\eps$.  
\end{proof}
\begin{remark}
 \label{rmk:conds-forces}
\upshape Condition  \eqref{safe-load-eps}, imposing that  the functions $(\varrho_\eps)_\dev$ tend to zero (w.r.t.\ suitable norms) has been crucial to compensate the blowup of the bounds for $
\dot{p}_\eps$, in  the estimate of the term 
$\int_0^t \int_\Omega (\varrho_\eps)_\dev \dot{p}_\eps \dd x \dd r $ contributing to 
$
\int_0^t \pairing{}{H_\Dir^1 (\Omega;\R^d)}{\mathcal{L}_\eps}{\dot {u}_\eps}   \dd r $
on the right-hand side of the total energy balance \eqref{total-enbal-rescal}.
A close perusal at the calculations for handling $
\int_0^t \pairing{}{H_\Dir^1 (\Omega;\R^d)}{\mathcal{L}_\eps}{\dot {u}_\eps}   \dd r $ reveals that, taking  the tractions $g_\eps$ null would not have allowed us to avoid
\eqref{safe-load-eps}, either (unlike for the thermoviscoplastic system, cf.\ Remark \ref{rmk:diffic-1-test}).
\par
For estimating the term $
\iint  \tetae \bbB {:} \eps^{1/2}\dot{e}_\eps 
\dd x \dd t $ it would in fact be  just sufficient that the thermal expansion tensors $\bbB_\eps$ scale like $\eps^{1/2}$: As we will see in the proof of Theorem \ref{mainth:3}, 
the (slightly) stronger scaling condition from \eqref{scalingbbB} is necessary for the limit passage as $\eps\downarrow 0$ in the mechanical energy equality. 
\end{remark}
\subsection{Proof of Theorem \ref{mainth:3}}
\label{ss:6.2}
We split the arguments in some steps. 
\par\noindent
\emph{Step $0$: compactness.}
It follows from \eqref{aprio-eps-teta} that 
$\nabla \tetaek \to 0$ in $L^2(Q;\R^d)$. Therefore, also taking into account the other bounds in \eqref{aprio-eps-teta}, we infer that, up to a subsequence the functions $(\tetaek)_k $ weakly  converge to a spatially constant 
strictly positive
function $\limte \in L^h(Q)$, with $h $ as in 
 \eqref{convs-eps-teta}.  In fact, we find that  $\limte \in L^\infty(0,T;L^\infty(\Omega))$  
 since for every $ t \in (0,T)$ and  (sufficiently small) $r>0$
\[
\int_{t-r}^{t+r} \| \limte\|_{L^1(\Omega)} \dd s \leq \liminf_{k\to\infty} \int_{t-r}^{t+r} \| \tetaek\|_{L^1(\Omega)} \dd s \leq  2 r C,
\]
where the first inequality follows from $\tetaek \weakto \limte$ in $L^1(Q) $ and the second estimate from bound \eqref{aprio-eps-teta}. Then, it suffices to take the limit as $r\down 0$ at every Lebesgue point of the  function $t\mapsto  \| \limte(t)\|_{L^1(\Omega)}  = \limte(t) |\Omega|$. 

On account of the continuous  embedding $L^1(\Omega;\mt_\dev^{d\times d}) \subset \mathrm{M}(\Omega{\cup}\Gamma_\Dir;\mt_\dev^{d\times d})$ we gather from 
\eqref{aprio-eps-p} that the functions $\pe$ have uniformly bounded variation  in $ \mathrm{M}(\Omega{\cup}\Gamma_\Dir;\mt_\dev^{d\times d})$. Therefore, a generalization of Helly Theorem for functions with values in the dual of a separable space, cf.\  \cite[Lemma 7.2]{DMDSMo06QEPL},  yields  that there exists $p\in \BV([0,T]; \mathrm{M}(\Omega{\cup}\Gamma_\Dir;\mt_\dev^{d\times d}))$ such that convergence \eqref{convs-eps-p} holds and, by the lower semicontinuity of the variation functional $\Var_\calR$, that
\begin{equation}
\label{lsc-Var-R}
 \mathrm{Var}_{\calR}(p; [a,b])  \leq \liminf_{k\to\infty}  \mathrm{Var}_{\calR}(\pek; [a,b])  \qquad \text{for every } [a,b]\subset [0,T]. 
\end{equation}
 For later use, we remark that, in view of  estimate \eqref{aprio-eps-p}  on $(\dot{p}_\eps)_\eps$, 
\begin{equation}
\label{vanno-a-zero-p}
\eps_k \int_0^T \calR(\dot{p}_{\eps_k}) \dd t  \to 0, \qquad \eps_k  \dot{p}_{\eps_k} \to 0 \text{ in } L^2(Q; \mt_\dev^{d\times d}).
\end{equation}
In fact, we even have that 
\begin{equation}
\label{vanno-a-zero-deb-p}
 \eps_k^{1/2}  \dot{p}_{\eps_k} \weakto 0 \text{ in } L^2(Q; \mt_\dev^{d\times d}).
\end{equation}
Indeed,  by  \eqref{aprio-eps-p}   there exists $\varpi \in L^2(Q; \mt_\dev^{d\times d})$ such that   $ \eps_k^{1/2}  \dot{p}_{\eps_k} \weakto \varpi$ in $ L^2(Q; \mt_\dev^{d\times d})$. 
We now show that $\varpi \equiv 0$. With this aim, we observe that, on the one hand  the weak convergence in   $ L^2(Q; \mt_\dev^{d\times d})$ entails that
 \[
\int_\Omega \xi(x) \left( \int_0^t \eps_k^{1/2}  \dot{p}_{\eps_k}(s,x) \dd s \right)  \dd x \to \int_\Omega \xi(x) \left( \int_0^t \varpi(s,x) \dd s \right)  \dd x
\]
for every $t \in (0,T)$ and
 $\xi \in L^2(\Omega; \mt_\dev^{d\times d})$, i.e.\ $\int_0^t \eps_k^{1/2}  \dot{p}_{\eps_k} \dd s  \weakto \int_0^t \varpi \dd s $  in $L^2(\Omega; \mt_\dev^{d\times d})$. 
On the other hand, we have that 
\[
\left\| \int_0^t \eps_k^{1/2}  \dot{p}_{\eps_k} \dd r   \right\|_{L^1(\Omega; \mt_\dev^{d\times d})} =  \left\|  \eps_k^{1/2}  p_{\eps_k} (t) -  \eps_k^{1/2} p_{\eps_k}^0    \right\|_{L^1(\Omega; \mt_\dev^{d\times d})} \to 0 
\] 
in view of   
estimate \eqref{aprio-eps-p}.
 Hence,  \eqref{vanno-a-zero-deb-p} ensues.
\par
Up to a further subsequence, we have  
\begin{equation}
\label{convs-e-linfty}
e_{\eps_k} \weaksto e \quad \text{ in $L^\infty (0,T; L^2(\Omega; \mt_\sym^{d\times d}))$.}
\end{equation}
Due to  \eqref{aprio-eps-e},  with the same arguments as for \eqref{vanno-a-zero-deb-p} we have that 
\begin{equation}
\label{vanno-a-zero-e}
 \eps_k  \dot{e}_{\eps_k} \to 0 \text{ in } L^2(Q; \mt_\sym^{d\times d}), \qquad \eps_k^{1/2}  \dot{e}_{\eps_k} \weakto 0 \text{ in } L^2(Q; \mt_\sym^{d\times d}).
\end{equation}
\par
  We combine the estimate 
for $\sig{u_\eps}$ in $L^\infty (0,T;  L^1(\Omega; \mt_\sym^{d\times d}))$ with the fact that the trace of $u_\eps $  on $\Gamma_\Dir$ (i.e., the trace of $w_\eps$) is bounded in $L^\infty(0,T;L^1(\Gamma_\Dir;\R^d))$ thanks to \eqref{conv-Dir-loads}. Then, via the Poincar\'e-type inequality \eqref{PoincareBD} we conclude that $(u_\eps)_\eps$  is bounded 
in  $L^\infty (0,T;  \BD(\Omega;\R^d))$, which embeds continuosly into $L^\infty (0,T; L^{d/(d{-}1)} (\Omega;\R^d))$.  Therefore, up to a subsequence \begin{equation}
\label{convs-u-linfty}
u_{\eps_k} \weaksto u \quad \text{ in }  L^\infty (0,T; L^{d/(d{-}1)} (\Omega;\R^d)). 
\end{equation}
Again via  inequality \eqref{PoincareBD} combined with estimate \eqref{est-w-eps} on $\dot{w}_\eps$, we deduce from the estimate for  $\eps^{1/2} \sig{\dot{u}_\eps}$ in $ L^2(Q; \mt_\sym^{d\times d})
$ that $ \eps^{1/2} \dot{u}_\eps$ is bounded in $L^2 (0,T;  \BD(\Omega;\R^d))$, hence in $L^2(0,T; L^{d/(d{-}1)} (\Omega;\R^d))$. Therefore, taking into account \eqref{convs-u-linfty}, we get that  
\[
\eps^{1{/}2}  \dot{u}_{\eps_k} \weakto 0 \quad \text{ in }  L^2 (0,T; L^{d/(d{-}1)} (\Omega;\R^d)).
\]
Thus,
\eqref{aprio-eps-u}   also yields
\begin{equation}
\label{vanno-a-zero-u}
 \eps_k  \dot{u}_{\eps_k} \weaksto 0 \text{ in } L^\infty(0,T; L^2(\Omega;\R^d)), \qquad \eps_k^{2}  \ddot{u}_{\eps_k} \weakto 0 \text{ in } L^2(0,T; H_{\Dir}^1(\Omega;\R^d)^*). 
\end{equation}
\par\noindent
\emph{Step $1$: ad the global stability condition \eqref{glob-stab-5} for almost all $t\in (0,T)$.} We will exploit Lemma \ref{l:for-stability} and check that 
\begin{enumerate}
\item
the stress $\sigma$
belongs to the elastic domain $\mathcal{K}(\Omega)$;
\item
it complies with the boundary value problem \eqref{BVprob-stress};
\item the triple $(u,e,p)$ is kinematically admissible.
\end{enumerate}
\par
\textbf{Ad (1):} It follows from the scaling \eqref{scalingbbB} of the tensors $\bbB_\eps$ and from estimate \eqref{aprio-eps-teta} on $(\tetae)_\eps$ that   the term 
$\tetae \bbB_\eps $ strongly converges to $0$ in $L^2(Q;\mt_\sym^{d\times d})$. Therefore, also taking into account convergences \eqref{convs-e-linfty} and 
 \eqref{vanno-a-zero-e},  we deduce that 
 \begin{equation}
 \label{stress-eps-k}
 \sigma_{\eps_k} \weakto \sigma = \bbC e   \qquad \text{ in $L^2 (Q; \mt_\sym^{d\times d})$.}
 \end{equation}
 Hence, 
 \[
\int_0^T d^2( \sigma_\dev, \mathcal{K}(\Omega)) \dd t \leq \liminf_{k\to\infty}  \int_0^T d^2( (\sigma_{\eps_k})_\dev, \mathcal{K}(\Omega)) \dd t   =0,
 \]
 where the last equality follows from  estimate \eqref{aprio-eps-dist} deduced from the  (rescaled) mechanical energy balance \eqref{mech-enbal-rescal}. Therefore, the limit stress $\sigma$ complies with the admissibility  condition $\sigma(t) \in   \mathcal{K}(\Omega)$ for almost all $t\in (0,T)$. 
 \par
 \textbf{Ad (2):}
 Exploiting convergence \eqref{total-load-eps-bound}  for the loads $(\calL_{\eps_k})_k$ and \eqref{vanno-a-zero-u} for the inertial terms $(\ddot{u}_{\eps_k})_k$,  we can pass to the limit in the rescaled momentum balance \eqref{w-momentum-balance-eps} and deduce that $\sigma$ complies with 
 \[
 \int_\Omega \sigma(t) {:} \sig{v} \dd x = \pairing{}{H_\Dir^1(\Omega;\R^d)}{\calL(t)}v = \int_\Omega F(t) v \dd x + \int_{\Gamma_\Neu} g(t) v \dd S\qquad \foraa t \in (0,T),
 \]
 whence \eqref{BVprob-stress}. 
 \par
 \textbf{Ad (3):}
 In order to prove that $(u(t), e(t), p(t)) \in \mathcal{A}_{\BD} (w(t))$ we will make use of  the closedness property guaranteed by Lemma \ref{l:closure-kin-adm},  and pass to the limit in the condition $(\ue(t), e_{\eps}(t), \pe(t)) \in  \mathcal{A}(w_\eps(t)) \subset \mathcal{A}_{\BD} (w_\eps(t))$  for almost all $t\in (0,T)$.  However, we cannot directly apply  Lemma \ref{l:closure-kin-adm}  as, at the moment, we cannot count on \emph{pointwise-in-time} convergences for the functions $(u_{\eps_k})_k$ and $(e_{\eps_k})_k$. In order to extract more information from the weak convergences \eqref{convs-e-linfty}  and \eqref{convs-u-linfty}, we resort to the Young measure compactness result  stated in the upcoming Theorem
  \ref{thm.balder-gamma-conv}.  Indeed, up to a further extraction, with the sequence $(u_{\eps_k}, e_{\eps_k})_k$, bounded in  $L^\infty (0,T; \bsX)$ with $\bsX = L^{d/{(d{-}1)}}(\Omega;\R^d) \times L^2(\Omega;\mt_\sym^{d\times d}),$  we can associate a limiting Young measure $ \bfmu\in \mathscr{Y}(0,T; \bsX)$  such that for almost all $t\in (0,T)$ the probability measure $\mu_t$ is concentrated on the set  $\bsL_t $ 
 of the limit points of $(u_{\eps_k}(t), e_{\eps_k}(t))_k$  w.r.t.\ the weak topology of $\bsX$,  and we have the following representation formulae for the limits $u$ and $e$ (cf.\ \eqref{eq:35}) 
\[
(u(t), e(t) ) =  \int_{\bsX} (\mathsf{u}, \mathsf{e})  \dd \mu_t(\mathsf{u}, \mathsf{e}) \qquad \foraa t \in (0,T).
\]
Furthermore, for almost all $t\in (0,T)$  let us  consider  the \emph{marginals} of $\mu_t$, namely the probability measures $\mu_t^1$ on $L^{d/{(d{-}1)}}(\Omega;\R^d)$, and $\mu_t^2$ on $L^2(\Omega;\mt_\sym^{d\times d})$, defined by taking the push-forwards of $\mu_t$ through the projection maps $\pi_1: \bsX \to L^{d/{(d{-}1)}}(\Omega;\R^d)$, and $\pi_2:  \bsX \to  L^2(\Omega;\mt_\sym^{d\times d})$, i.e.\ $\mu_t^i =( \pi_i)_\#\mu_t $ for $i=1,2,$ with $( \pi_i)_\#\mu_t$ defined by $( \pi_i)_\#\mu_t(B): = \mu_t (\pi_i^{-1}(B))$ for every $B \subset L^{d/{(d{-}1)}}(\Omega;\R^d) $ and $B\subset L^2(\Omega;\mt_\sym^{d\times d})$, respectively. Therefore,
\begin{subequations}
   \label{baycenters}
      \begin{equation}
u(t) = \pi_1 \left(  \int_{\bsX} (\mathsf{u}, \mathsf{e})  \dd \mu_t(\mathsf{u}, \mathsf{e}) \right) =   \int_{\bsX} \pi_1(\mathsf{u}, \mathsf{e})  \dd \mu_t(\mathsf{u}, \mathsf{e})  = \int_{ L^{d/{(d{-}1)}}(\Omega;\R^d)} \mathsf{u} \dd \mu_t^1(\mathsf{u}),
\end{equation}
and, analogously, 
      \begin{equation}
e(t) = \int_{L^2(\Omega;\mt_\sym^{d\times d}) } \mathsf{e} \dd \mu_t^2(\mathsf{e}).
\end{equation}
\end{subequations}
By
\eqref{e:concentration} in 
 Theorem \ref{thm.balder-gamma-conv},  the measure $\mu_t^1$ ($\mu_t^2$, respectively) is concentrated on $\bsU_t: = \pi_1(\bsL_t)$, the set of the weak-$L^{d/{(d{-}1)}}(\Omega;\R^d)$ limit points of $(u_{\eps_k}(t))_k$   (on $\bsE_t: = \pi_2(\bsL_t)$, the set of the weak-$L^2(\Omega;\mt_\sym^{d\times d}) $ limit points of $(e_{\eps_k}(t))_k$,  respectively).
 We now combine \eqref{baycenters} with the following information on the sets $\bsU_t$ and $\bsE_t$. Namely,  
 we have (1):
 \begin{subequations}
 \label{viva_Y_meas}
  \begin{equation}
 \label{vYm1}
 \bsU_t \subset \BD(\Omega;\R^d) \qquad \foraa t \in (0,T). 
 \end{equation}
 Indeed, pick $\mathsf{u} \in  \bsU_t $ and a  subsequence $u_{\eps_{k_j}^t} (t)$, possibly depending on $t$, such that $u_{\eps_{k_j}^t} (t)\weakto \mathsf{u} $ in $ L^{d/{(d{-}1)}}(\Omega;\R^d)$. Since $(u_\eps)_\eps$ is bounded in $L^\infty(0,T; \BD(\Omega;\R^d))$, we may suppose that the sequence $(u_{\eps_{k_j}^t} )$ is bounded in $\BD(\Omega;\R^d)$ and, a fortiori, weakly$^*$-converges to $\mathsf{u}$ in $ \BD(\Omega;\R^d)$, whence \eqref{vYm1}.
  Ultimately, 
 \[
 u(t) =  \int_{ L^{d/{(d{-}1)}}(\Omega;\R^d)} \mathsf{u} \dd \mu_t^1(\mathsf{u}) =  \int_{\BD(\Omega;\R^d)} \mathsf{u} \dd \mu_t^1(\mathsf{u})
 \]
 Furthermore, (2):
 \begin{equation}
 \label{vYm2}
 \sig{\mathsf{u}} = \mathsf{e} + p(t)  \qquad \text{for every } (\mathsf{u}, \mathsf{e}) \in \bsU_t \times \bsE_t \text{ and } \foraa t \in (0,T). 
 \end{equation}
 This follows from passing to the limit in the kinematic admissibility condition $\sig{u_{\eps_k}(t)} = e_{\eps_k}(t)+ p_{\eps_k}(t)$, taking into account the pointwise convergence \eqref{convs-eps-p}. 
 Finally, (3): 
  \begin{equation}
 \label{vYm3}
p(t) = (w(t){-}\mathsf{u}) {\otimes} \nu\mathscr{H}^{d-1} \qquad \text{ on } \Gamma_\Dir \qquad \text{for every } \mathsf{u} \in  \bsU_t  \text{ and } \foraa t \in (0,T), 
\end{equation}
\end{subequations}
 which ensues from
 Lemma \ref{l:closure-kin-adm}, also taking into account convergence \eqref{conv-Dir-loads} for $(w_\eps)_\eps$.
 \par  Then, integrating  \eqref{vYm2} w.r.t.\ the measure $\mu_t $, using that
 \[
 \iint_{\bsX}  \sig{\mathsf{u}}  \dd \mu_t (\mathsf{u}, \mathsf{e}) = \sig{ \int_{\bsX} \mathsf{u} \dd \mu_t(\mathsf{u})} = \sig{\int_{ L^{d/{(d{-}1)}}(\Omega;\R^d)} \mathsf{u} \dd \mu_t^1(\mathsf{u}) } = \sig{u(t)}
 \]
 by the  linearity of the operator $\sig{\cdot}$, and arguing analogously for the other terms in \eqref{vYm2}, we conclude that $\sig{u(t)} = e(t)+p(t)$. The boundary condition on $\Gamma_\Dir$ follows from integrating \eqref{vYm3}.  
 This concludes the proof of  the kinematic admissibility condition, and thus of \eqref{glob-stab-5}, \emph{for almost all $t \in (0,T)$.}
 \par\noindent
\emph{Step $2$: ad the upper energy estimate in \eqref{enbal-5} for almost all $t\in (0,T)$.} We shall now prove the inequality  $\leq $ in \eqref{enbal-5}. With 
this aim, we pass to the limit in the (rescaled) mechanical energy balance \eqref{mech-enbal-rescal}, integrated on a generic interval $(a,b) \subset (0,T)$. Taking into account that the first four terms on the l.h.s.\ are positive, we have that
\begin{equation}
\label{u-e-est}
\begin{aligned}
\liminf_{k\to\infty} \int_a^b  (\text{l.h.s.\ of   \eqref{mech-enbal-rescal}}) \dd t  &  \geq \liminf_{k\to\infty} \int_a^b \int_0^t  \calR(\dot{p}_{\eps_k})  \dd s \dd t +   \liminf_{k\to\infty}   \int_a^b \calQ(e_{\eps_k}(t))  \dd t  \\ & \geq \int_a^b   \mathrm{Var}_\calR (p; [0,t]) \dd t  + \int_a^b   \calQ(e(t))  \dd t.
\end{aligned}
\end{equation}
The first $\liminf$-inequality follows from the  fact that 
\[
\liminf_{k\to\infty}  \int_0^t  \calR(\dot{p}_{\eps_k})  \dd s \stackrel{\eqref{consistency-dotp}}{ =} \liminf_{k\to\infty}  \mathrm{Var}_\calR (\pek; [0,t]) \stackrel{\eqref{lsc-Var-R}}{\geq  }\mathrm{Var}_\calR (p; [0,t]) \quad \text{for every } t \in [0,T]
\]
and from the Fatou Lemma. The second one for the elastic energy  is due to  the weak  convergence \eqref{convs-e-linfty} for the sequence $(e_{\eps_k})_k$. 
\par
As for the r.h.s.\ of   \eqref{mech-enbal-rescal}, we have that 
\begin{equation}
\label{l-e-est}
\begin{aligned}
\lim_{k\to\infty}  \int_a^b  (\text{r.h.s.\ of   \eqref{mech-enbal-rescal}}) \dd t  & =   \int_a^b \Big( \calQ(e_0)  - \int_\Omega \varrho(0) : (e_0 {-} \sig{w(0)} ) \dd x + \int_0^t \int_\Omega \sigma : \sig{\dot{w}} \dd x \dd s   
\\
 &  \qquad   + \int_\Omega \varrho(t) : (e(t) {-} \sig{w(t)} ) \dd x 
-\int_0^t\int_\Omega \dot{\varrho}: (e{-} \sig{w}) \dd x \dd s
\Big) \dd t\,.
\end{aligned}
\end{equation}
In fact, the  term $ \tfrac{\rho \eps_k^2}2 \int_\Omega |\dot{u}_{\eps_k}^0|^2 \dd x $ on the r.h.s.\ of \eqref{mech-enbal-rescal} tends to zero by  \eqref{convs-init-data}. For the term  $\iint  \pairing{}{}{\calL_{\eps_k}}{\dot{u}_{\eps_k} {-} \dot{w}_{\eps_k}}   $ we use the safe-load condition, yielding
\[
\int_a^b \int_0^t \pairing{}{H_\Dir^1(\Omega;\R^d)}{\calL_{\eps_k}}{\dot{u}_{\eps_k} {-} \dot{w}_{\eps_k}} \dd s \dd t = \int_a^b \int_0^t 
\int_\Omega \varrho_{\eps_k}{:} \sig{\dot{u}_{\eps_k}}\dd x \dd s \dd t  - \int_a^b \int_0^t 
\int_\Omega \varrho_{\eps_k} {:} \sig{\dot{w}_{\eps_k}}\dd x \dd s \dd t\,.
\]
In order to pass to the limit in the first integral term, we replace $\sig{\dot{u}_{\eps_k}} $ by $\dot{e}_{\eps_k} + \dot{p}_{\eps_k}$ via kinematic admissibility, and integrate by parts
the term featuring $ \varrho_{\eps_k} {\dot{e}_{\eps_k}}$, thus obtaining the sum of four integrals, cf.\ equality (2) in \eqref{will-be-quoted}. Referring to the notation $I_1, \ldots, I_4$ for the terms contributing to \eqref{will-be-quoted}, 
we find that 
\[
\begin{array}{llll}
& 
\int_a^b I_1 \dd t & \stackrel{(1)}{\to} & - \int_a^b \int_0^t \int_\Omega \dot{\varrho}{:} e \dd x  \dd s  \dd t,  
\\
& 
\int_a^b I_2 \dd t & \stackrel{(2)}{\to} &  \int_a^b  \int_\Omega \varrho(t){:} e(t)   \dd t, 
\\
& 
\int_a^b I_3 \dd t & \stackrel{(3)}{\to} & -   \int_a^b  \int_\Omega \varrho(0){:} e(0)   \dd t,  
\\
&
\int_a^b I_4 \dd t & \stackrel{(4)}{\to} & 0
\end{array}
\]
as $k\to\infty$,
with convergences (1) \& (2) due to the first of \eqref{safe-load-eps} combined with \eqref{convs-e-linfty}, while (3) follows from \eqref{safe-load-eps}  joint with \eqref{convs-init-data}. Finally, (4)  ensues from  
\[
|I_4|  = \left|    \int_0^t 
\int_\Omega \varrho_{\eps_k}{:} \dot{p}_{\eps_k} \dd x \dd s \right| \leq \eps_k^{1/2}  \tfrac1{\eps_k} \| (\varrho_{\eps_k})_\dev \|_{L^2(0,t;L^2(\Omega;\mt_\dev^{d\times d}))} \eps_k^{1/2} \| \dot{p}_{\eps_k} \|_{L^2(0,t;L^2(\Omega;\mt_\dev^{d\times d}))} \leq C \eps_k^{1/2} \to 0 
\]
where the last estimate is a consequence of  \eqref{safe-load-eps} and of estimate \eqref{aprio-eps-p} for $ \dot{p}_{\eps_k}$.  
Finally, again thanks to \eqref{safe-load-eps} joint with \eqref{conv-Dir-loads}, we find that 
\[
\begin{aligned}
&
 - \int_a^b \int_0^t 
\int_\Omega \varrho_{\eps_k} {:} \sig{\dot{w}_{\eps_k}}\dd x \dd s \dd t
\\
&
\begin{aligned}
 \to  &  - \int_a^b \int_0^t 
\int_\Omega \varrho {:} \sig{\dot{w}}\dd x \dd s \dd t
\\ & 
 =  \int_a^b \int_\Omega \varrho(t) : \sig{w(t)} \dd x \dd t -  \int_a^b \int_\Omega \varrho(0) : \sig{w(0)} \dd x \dd t + \int_a^b \int_0^t 
\int_\Omega \dot{\varrho} : \sig{w}\dd x \dd s \dd t\,,
\end{aligned}
\end{aligned}
\]
the last equality due to  integration by parts.
\par
To pass to the limit in the fourth integral term on the r.h.s.\ of  \eqref{mech-enbal-rescal} we use that  
\[
\left|  \int_0^t 
\int_\Omega \teta_{\eps_k} \bbB_{\eps_k} {:} \dot{e}_{\eps_k} \dd x \dd s \right| \leq   \eps_k^\beta  \| \teta_{\eps_k} \|_{L^2(Q)}  \| \dot{e}_{\eps_k} \|_{L^2(Q;\mt_\sym^{d\times d})} \stackrel{(2)}{\leq} C   \eps_k^{\beta-\tfrac12} \to 0 
\]
for all $t\in [0,T]$, with (2) following from 
the scaling \eqref{scalingbbB}  for $\bbB_\eps$, and estimates \eqref{aprio-eps-e} and  \eqref{aprio-eps-teta}. 
The fourth integral term on the r.h.s.\ of  \eqref{mech-enbal-rescal} tends to zero thanks to estimate \eqref{aprio-eps-u} for $(\dot{u}_{\eps_k})_k$  and to convergence \eqref{conv-Dir-loads}  for $(w_{\eps_k})_k$. 
 Combining \eqref{stress-eps-k}
 with \eqref{conv-Dir-loads} we finally show that 
\[
\int_a^b \int_0^t \int_\Omega \sigma_{\eps_k} {:} \sig{\dot{w}_{\eps_k}} \dd x \dd s \dd t \to \int_a^b \int_0^t \int_\Omega \sigma {:} \sig{\dot{w}} \dd x \dd s \dd t\,.
\]
In view of all of the above convergences,  \eqref{l-e-est}  ensues.
\par
Combining \eqref{u-e-est} and \eqref{l-e-est} we obtain  for every $ (a,b)\subset (0,T)$
\[
\begin{aligned}
\int_a^b   \left(  \calQ(e(t)) {+}   \mathrm{Var}_\calR (p; [0,t])  \right)  \dd t &  \leq \int_a^b   \Big( \calQ(e_0)  - \int_\Omega \varrho(0) : (e_0 {-} \sig{w(0)} ) \dd x + \int_0^t \int_\Omega \sigma : \sig{\dot{w}} \dd x \dd s   
\\
 &  \qquad   + \int_\Omega \varrho(t) : (e(t) {-} \sig{w(t)} ) \dd x 
-\int_0^t\int_\Omega \dot{\varrho}: (e{-} \sig{w}) \dd x \dd s
\Big) \dd t\,.
\end{aligned}
\]
Then,  by the arbitrariness of $(a,b)\subset [0,T]$,  we conclude that for almost all $t\in (0,T)$ there holds
\begin{equation}
\label{u-e-t-foraat}
\begin{aligned}
   \calQ(e(t)) {+}   \mathrm{Var}_\calR (p; [0,t])  &  \leq  
     \calQ(e_0)  - \int_\Omega \varrho(0) : (e_0 - \sig{w(0)} ) \dd x + \int_0^t \int_\Omega \sigma : \sig{\dot{w}} \dd x \dd s   
\\
 &  \qquad   + \int_\Omega \varrho(t) : (e(t) - \sig{w(t)} ) \dd x 
-\int_0^t\int_\Omega \dot{\varrho}: (e{-} \sig{w}) \dd x \dd s. 
   \end{aligned}
   \end{equation}
\emph{Step $3$: ad the lower energy estimate in \eqref{enbal-5} for almost all $t\in (0,T)$.} We use a by now standard argument  (cf.\ \cite{DMFT05, Miel05ERIS}), 
combining the stability condition \eqref{glob-stab-5} with the  previously proved momentum balance \eqref{w-momentum-balance} to deduce that the converse of inequality \eqref{u-e-t-foraat} holds at almost all $t\in (0,T)$. We refer to the proof of \cite[Thm.\ 6]{DMSca14QEPP} for all details. 
\par\noindent 
\emph{Step $4$: conclusion of the proof.} It follows from 
Steps $1$--$3$ that the triple $(u,e,p)$ complies with the  kinematic admissibility and  the global stability conditions, as well as with the energy balance, at every $t 
\in S$, with $S\subset [0,T]$ a set of full measure containing $0$. 
We are then in the position to apply Thm.\ \ref{th:dm-scala} and conclude that  $(u,e,p)$ is a global energetic solution to the perfectly plastic system with the enhanced time regularity \eqref{sono-AC}. 
\par
We also conclude enhanced convergences for the sequences $(u_{\eps_k})$ and $(e_{\eps_k})$ by observing that
\[
\limsup_{k\to \infty} \int_a^b \left( \text{l.h.s.\ of \eqref{mech-enbal-rescal}} \right) \dd  t \leq \limsup_{k\to \infty}  \int_a^b \left( \text{r.h.s.\ of \eqref{mech-enbal-rescal}} \right) \dd  t 
\stackrel{(1)}{=} 
\int_a^b \left( \text{r.h.s.\ of \eqref{enbal-5}} 
\right) \dd t   \stackrel{(2)}{=} 
\int_a^b  \left( \text{l.h.s.\ of \eqref{enbal-5}} \right) \dd t
\]
where (1) follows from  the limit passage arguments in Step $2$ and (2) from  the energy balance  \eqref{enbal-5}.
 Arguing in the very same way as in the proof of Lemma \ref{l:3.6} and Thm.\ \ref{mainth:1}, we conclude that  \begin{subequations}
\label{evvai-teta}
\begin{equation}
\label{strong-cvs-eps-1}
\begin{aligned}
&
\lim_{k\to\infty} 
\int_a^b \frac{\rho {\eps_k}^2}2 \int_\Omega |\dot{u}_{\eps_k}(t)|^2 \dd x  \dd t =0 && \text{whence} && \eps_k \dot{u}_{\eps_k} (t) \to  0  \text{ in }  L^2(\Omega;\R^d),
\\
&
\lim_{k\to\infty} 
 \int_a^b  {\eps_k}\int_0^t\int_\Omega   \bbD \dot{e}_{\eps_k}: \dot{ e}_{\eps_k}   \dd x \dd r  \dd t  =0 && \text{whence} && \eps_k^{1/2}  \dot{e}_{\eps_k} \to 0 \text{ in } L^2(0,t; L^2(\Omega;\mt_\sym^{d\times d})),
  \\ & 
  \lim_{k\to\infty}  \int_a^b 
{\eps_k}\int_0^t\int_\Omega  |\dot{p}_{\eps_k}|^2   \dd x \dd r   \dd t =0  && \text{whence} && \eps_k^{1/2} \dot{p}_{\eps_k} \to 0 \text{ in }
 L^2(0,t; L^2(\Omega;\mt_\dev^{d\times d}))
\end{aligned} 
\end{equation}
 for almost all $t\in (0,T)$, 
as well as the  convergence
\begin{equation}
\label{strong-cvs-eps-2}
\begin{aligned}
& 
\int_a^b\calQ(e_{\eps_k}(t))  \dd t  \to \int_a^b\calQ(e(t))  \dd t \quad  \text{ whence } \quad e_{\eps_k} (t) \to e(t)  \text{ in } L^2(\Omega;\mt_\sym^{d\times d}) \ \foraa t \in (0,T).
\end{aligned} 
\end{equation}
\end{subequations}
We use \eqref{evvai-teta} to conclude \eqref{convs-eps-e}. 
With the very same arguments as in  the proof of \cite[Thm.\ 6]{DMSca14QEPP}  we also infer the pointwise convergence \eqref{convs-eps-u}. 
\par
Furthermore,  exploiting \eqref{evvai-teta}, the weak convergence \eqref{convs-eps-teta} for $(\teta_{\eps_k})_k$, and the arguments from Step $2$, 
 we pass to the limit in the (rescaled)  total energy balance \eqref{total-enbal-rescal}, integrated on an arbitrary interval $(a,b) \subset (0,T)$. We thus  have
\begin{equation}
\label{NEW-A}
\lim_{k\to\infty} \int_a^b \left( \tfrac{\rho\eps_k^2}2 \int_\Omega |\dot{u}_{\eps_k}|^2 \dd x {+} \calE(\teta_{\eps_k}(t), e_{\eps_k}(t)) \right) \dd t =   \int_a^b  \calE(\teta(t), e(t)) \dd t,
\end{equation}
whereas, also taking into account  \eqref{heat-sources-eps} and \eqref{convs-init-data}, arguing as in Step $2$  we find that 
\begin{equation}
\label{NEW-B}
\begin{aligned}
\lim_{k\to\infty} \int_a^b \left(  \text{r.h.s.\ of \eqref{total-enbal-rescal}} \right) \dd t 
 & = \int_a^b \Big( \calE(\teta_0, e_0) 
 +\int_0^t \int_\Omega \mathsf{H} \dd x \dd s 
  +\int_0^t \int_{\partial\Omega} \mathsf{h} \dd S \dd s 
  \\
 &  \qquad 
  - \int_\Omega \varrho(0) : (e_0 {-} \sig{w(0)} ) \dd x 
  + \int_0^t \int_\Omega \sigma : \sig{\dot{w}} \dd x \dd s   
  \\
 &  \qquad 
  + \int_\Omega \varrho(t) : (e(t) {-} \sig{w(t)} ) \dd x 
-\int_0^t\int_\Omega \dot{\varrho}: (e{-} \sig{w}) \dd x \dd s
\Big) \dd t\,.
  \end{aligned}
\end{equation}
Combining 
\eqref{NEW-A} and \eqref{NEW-B} and using  
the arbitrariness of the interval $(a,b)$, we conclude the energy balance \eqref{anche-la-temp}. 
A comparison between \eqref{anche-la-temp} and \eqref{enbal-5} yields \eqref{balance-of-dissipations}.  
This concludes the proof of Theorem \ref{mainth:3}. \QED
\appendix
\section{Auxiliary compactness results}
\label{s:a-1}
 \noindent
 The proof of Theorem \ref{th:mie-theil}, and the argument in Step $1$ of the proof of  Thm.\ \ref{mainth:3},  hinge
 on a compactness argument drawn from the theory of
  \emph{parameterized} (or \emph{Young}) measures with values in an infinite-dimensional  space.
 Hence,
 for the reader's convenience, we preliminarily  collect here the definition
 of Young measure with values in a \underline{reflexive} Banach space $\bsX$. We then
 recall
   the Young measure
 compactness result from \cite{MRS2013}, 
 extending to the frame of the
 weak topology
classical results within Young measure theory (see  e.g.\ \cite[Thm.\,1]{Bald84GALS}, 
\cite[Thm.\,16]{Valadier90}).
\par Preliminarily, let us fix some notation:
We  denote by $\mathscr{L}_{(0,T)}$
the $\sigma$-algebra of the Lebesgue measurable subsets of  the interval $(0,T)$ and,
given
   a reflexive Banach space $\bsX$,
 by $\mathscr B(\bsX)$ its Borel $\sigma$-algebra.
 \begin{definition}[\bf (Time-dependent) Young measures]
  \label{parametrized_measures}
  A \emph{Young measure} in the space $\bsX$
  is a family
  $\bfmu:=\{\mu_t\}_{t \in (0,T)} $ of Borel probability measures
  on $\bsX$
  such that the map on $(0,T)$
\begin{equation}
\label{cond:mea} t \mapsto \mu_{t}(A) \quad \mbox{is}\quad
{\mathscr{L}_{(0,T)}}\mbox{-measurable} \quad \text{for all } A \in
\mathscr{B}(\bsX).
\end{equation}
We denote by $\mathscr{Y}(0,T; \bsX)$ the set of all Young
measures in $\bsX $.
\end{definition}
The following result
subsumes only part of the statements of  \cite[Theorems A.2, A.3]{MRS2013}. We have in fact extrapolated the 
crucial finding   of these results for the purposes of Theorem  \ref{th:mie-theil}, and  also for the proof of Thm.\ \ref{mainth:3}. They   concern the characterization of the limit points in the weak topology of
$L^p (0,T;\bsX)$, $p \in (1,+\infty] $, of a bounded sequence $(\ell_n)_n \subset L^p (0,T;\bsX)$. Every limit point arises as the barycenter
of the limiting Young measure $\bfmu=(\mu_t)_{t\in (0,T)}$
 associated with  (a suitable subsequence $(\ell_{n_k})_k$ of) $(\ell_n)_n$. In turn, for almost all $t\in (0,T)$
 the support of the measure $\mu_t$ is concentrated in the set of limit points of $(\ell_{n_k}(t))_k$ with respect to the weak topology of $\bsX$.
\begin{theorem}{\cite[Theorems A.2, A.3]{MRS2013}}
\label{thm.balder-gamma-conv}
Let $p>1$ and let
  $(\ell_n)_n \subset L^p
(0,T;\bsX)$ be a  bounded sequence.
Then, there exist a subsequence $(\ell_{n_k})_k$ and a Young
measure $\bfmu=\{\mu_t\}_{t \in (0,T)} \in \mathscr{Y}(0,T; \bsX)$ such that for a.a. $t \in
(0,T)$
\begin{equation}
\label{e:concentration}
\begin{gathered}
  \mbox{$ \mu_{t} $ is
      concentrated on
      the set
      $
   \bsL_t: =    \bigcap_{s=1}^{\infty}\overline{\big\{\ell_{n_k}(t)\,: \ k\ge s\big\}}^{\mathrm{weak}{\text{-}\bsX}}$} 
      \end{gathered}
  \end{equation}
of the limit points of the sequence $(\ell_{n_k}(t))$ with respect to
the weak topology of $\bsX$ and,
  setting
  \[
  \ell(t):=\int_{\bsX}
l \, \dd \mu_t (l)  \qquad \foraa t
\in (0,T)\,,
  \]
there holds
\begin{equation}
  \label{eq:35}
\ell_{n_k} \weakto \ell \ \ \text{ in $L^p (0,T;\bsX)$} \qquad \text{as } k \to \infty
\end{equation}
with $\weakto$ replaced by $\weaksto$ if $p=\infty$.
\par
Furthermore, if $\mu_t = \delta_{\ell(t)}$ for almost all $t\in (0,T)$, then, up to the extraction of a further subsequence,
\begin{equation}
\label{singleton-appendix}
\ell_{n_k} (t)\weakto \ell(t) \quad \text{in  } \bsX \quad \foraa t \in (0,T). 
\end{equation}
\end{theorem}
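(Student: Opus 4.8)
The final statement to prove is Theorem \ref{thm.balder-gamma-conv}, the Young measure compactness result, which is attributed to \cite[Theorems A.2, A.3]{MRS2013}. Since the paper cites this verbatim, my plan is to reconstruct the proof along the lines of those references, adapting classical Young measure arguments (as in \cite{Bald84GALS,Valadier90}) to the reflexive-Banach-space-valued, weak-topology setting.

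The plan is as follows. First I would reduce to the separable case: since $(\ell_n)_n$ is bounded in $L^p(0,T;\bsX)$ with $p>1$, the union of the essential ranges is (after modification on a null set) contained in a closed separable subspace $\bsX_0 \subseteq \bsX$; being reflexive, bounded sets of $\bsX_0$ are metrizable in the weak topology, so I may work with the metric space $(\bsX_0, \mathrm{weak})$ and restrict attention to a fixed closed ball $\bsB_M = \overline{B}_{M,\bsX_0}(0)$, which is weakly compact and weakly metrizable, hence a compact metric space. Then I would invoke the fundamental theorem on Young measures with values in a compact metric space (in the form of, e.g., \cite[Thm.\ 1]{Bald84GALS} or \cite[Thm.\ 16]{Valadier90}): truncating $\ell_n$ to a Young-measure-valued map $t \mapsto \delta_{\ell_n(t)}$ and using that $(0,T)$ has finite Lebesgue measure, there is a subsequence $(\ell_{n_k})_k$ and a Young measure $\bfmu = \{\mu_t\}_{t\in(0,T)}$ on $\bsB_M$ (extended by zero to $\bsX$) such that for every Carath\'eodory (indeed every bounded, weakly continuous in the second variable) integrand $\phi$, $\int_0^T \phi(t,\ell_{n_k}(t))\dd t \to \int_0^T\!\!\int_{\bsX}\phi(t,l)\dd\mu_t(l)\dd t$.

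Next I would establish the concentration property \eqref{e:concentration}. The sets $\bsL_t = \bigcap_{s\ge1} \overline{\{\ell_{n_k}(t): k\ge s\}}^{\mathrm{weak}\text{-}\bsX}$ are the weak limit-point sets; the standard argument is that for each weakly open $U \supseteq \bsL_t$ we have $\ell_{n_k}(t)\in U$ eventually, so testing with integrands of the form $\phi(t,l) = \mathbf{1}_{A}(t)\,\psi(l)$, $\psi$ weakly continuous and supported off $\bsL_t$, forces $\int_A \mu_t(\bsX\setminus \bsL_t)\dd t = 0$ for all measurable $A$, whence $\mu_t$ is concentrated on $\bsL_t$ for a.a.\ $t$ (one runs this through a countable generating family). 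For the barycenter formula and \eqref{eq:35}: defining $\ell(t) = \int_{\bsX} l \dd\mu_t(l)$ (a Bochner/Pettis integral, well-defined since $\mu_t$ is supported on the bounded set $\bsB_M$ and $\bsX$ is reflexive), I would test against $\phi(t,l) = \langle\eta(t),l\rangle$ for $\eta \in L^{p'}(0,T;\bsX^*)$ (using weak continuity of $l\mapsto\langle\eta(t),l\rangle$ and boundedness), obtaining $\int_0^T\langle\eta(t),\ell_{n_k}(t)\rangle\dd t \to \int_0^T\langle\eta(t),\ell(t)\rangle\dd t$, i.e.\ exactly weak convergence $\ell_{n_k}\weakto\ell$ in $L^p(0,T;\bsX)$ (weak-$*$ if $p=\infty$, where $\eta \in L^1(0,T;\bsX^*)$ and one uses that $(L^1(0,T;\bsX^*))^* \supseteq L^\infty(0,T;\bsX)$ as the relevant predual pairing survives). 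Finally, for the last assertion \eqref{singleton-appendix}: if $\mu_t = \delta_{\ell(t)}$ a.e., then $\bsL_t = \{\ell(t)\}$ a.e.\ up to refining, meaning every weakly convergent subsequence of $(\ell_{n_k}(t))_k$ has limit $\ell(t)$; combined with the a.e.\ boundedness of $(\ell_{n_k}(t))_k$ in the reflexive space $\bsX_0$, a diagonal extraction over a countable dense (in $t$) set plus a measurability argument yields a further subsequence with $\ell_{n_k}(t)\weakto\ell(t)$ for a.a.\ $t$.

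The main obstacle I anticipate is handling the \emph{weak} topology on $\bsX$ rather than a metric one: the classical Young measure machinery is stated for compact metric (or Polish) target spaces, so the crux is the reduction to bounded sets of a separable reflexive space equipped with the weak topology — verifying weak metrizability and weak compactness of bounded sets, checking that the relevant integrands (those affine/linear in $l$ used to extract weak-$L^p$ convergence) fall within the admissible class of integrands for the Young measure convergence, and ensuring all measurability statements (the map $t\mapsto\mu_t(A)$, the measurability of the limit-point multifunction $t\mapsto\bsL_t$) go through in this setting. Since all of this is carried out in detail in \cite{MRS2013}, I would present the argument in condensed form, emphasizing these points and referring to \cite{MRS2013} for the full technical verifications, and to \cite{Bald84GALS,Valadier90} for the underlying compact-metric Young measure theorem.
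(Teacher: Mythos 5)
First, a point of comparison: the paper does not prove this theorem at all. It is imported verbatim from \cite[Theorems A.2, A.3]{MRS2013} and used as a black box; the only thing proved in the paper's appendix is the additional convergence \eqref{enhSav} of Theorem \ref{th:mie-theil}. So your proposal can only be measured against the cited source, not against an argument contained in this paper.

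Your sketch follows the standard Balder--Valadier route and is the right strategy in outline, but two steps fail as written. (i) The reduction to a fixed weakly compact ball $\overline{B}_{M,\bsX}(0)$ is legitimate only for $p=\infty$. For $1<p<\infty$ an $L^p$-bound gives no pointwise bound, so the values $\ell_n(t)$ do not lie in any fixed ball; the correct device is the tightness functional $h(l)=\|l\|_{\bsX}^p$, whose sublevel sets are weakly compact by reflexivity and which satisfies $\sup_n\int_0^T h(\ell_n(t))\,\dd t<\infty$ --- exactly the hypothesis under which Balder's theory applies to the (Suslin, non-metrizable) weak topology on all of $\bsX$. The same repair is needed for the barycenter (well defined for a.a.\ $t$ by Fatou-type lower semicontinuity of $\int_{\bsX}\|l\|^p\dd\mu_t(l)$, not by support in a ball) and for testing with the unbounded integrands $\langle\eta(t),l\rangle$ (equi-integrability via H\"older and $p>1$). (ii) The final assertion \eqref{singleton-appendix} does not follow from your claim that $\mu_t=\delta_{\ell(t)}$ forces $\bsL_t=\{\ell(t)\}$: concentration \eqref{e:concentration} only yields $\ell(t)\in\bsL_t$, and the stronger implication is false. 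For instance, for $\ell_n=\mathbf{1}_{A_n}\,x_0$ with $x_0\neq 0$ and $A_n$ sweeping the dyadic subintervals of $(0,T)$, one has $\mu_t=\delta_0$ for a.a.\ $t$ while $\bsL_t\supseteq\{0,x_0\}$ for every $t$. The correct argument pushes $\bfmu$ forward under a countable separating family $(\eta_j)_j\subset\bsX^*$, uses the scalar fact that a Dirac limiting Young measure yields convergence in measure of $\langle\eta_j,\ell_{n_k}(\cdot)\rangle$ to $\langle\eta_j,\ell(\cdot)\rangle$, and then extracts a.e.-convergent subsequences diagonally in $j$; a diagonal extraction over a ``countable dense set of times'' cannot work, since pointwise weak convergence at a dense set of times does not propagate to almost every $t$.
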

\par
We are now in the position to develop the \underline{\textbf{proof of \eqref{enhSav} in Theorem \ref{th:mie-theil}}} (recall that the other items in the statement have been proved in  \cite[Thm.\ A.5]{Rocca-Rossi}).   Following the outline developed in \cite{Rocca-Rossi} for Thm.\ A.5 therein,  we split the argument in some steps. 
\\
\underline{\textbf{Claim $1$:}}
\emph{Let $ F  \subset \overline{B}_{1,\bsY}(0)$ be countable and  dense in
$ \overline{B}_{1,\bsY}(0)$. There exist a subsequence $(\ell_{n_k})_k$ of $(\ell_n)_n$,   a negligible set $\bar{J }\subset (0,T)$, and for every $\varphi \in F$
a function
$\mathscr{L}_\varphi: [0,T] \to \R$
such that    the following convergences hold as $k\to\infty$ for every $ \varphi \in F$:
\begin{align}
\label{mie-th-conv}
&
\pairing{}{\bsY}{\ell_{n_k}(t)}{\varphi} \to \mathscr{L}_\varphi(t)
 \quad \text{for every } t \in [0,T],
\\
& 
\label{mie-th-conv-enh}
\pairing{}{\bsY}{\ell_{n_k}(t_k)}{\varphi} \to \mathscr{L}_\varphi(t)
 \quad \text{for every } t \in [0,T] {\setminus} \bar{J} \text{ and for every } (t_k)_k \subset [0,T] \text{ with } t_k \to t.
 \end{align}
 }
Convergence \eqref{mie-th-conv} was already obtained in the proof of \cite[Thm.\ A.5]{Rocca-Rossi}, therefore we will only focus on the proof of \eqref{mie-th-conv-enh}. 
With every  $\varphi \in \overline{B}_{1,\bsY}(0)$ we associate the monotone functions
$\mathcal{V}_{n}^{\varphi}: [0,T] \to [0,+\infty)$
defined by
$\mathcal{V}_{n}^{\varphi}(t) :=  \mathrm{Var}(\pairing{}{\bsY}{\ell_n}{ \varphi}; [0,t] )  $
for every $t\in [0,T]$.
Let now $F \subset \overline{B}_{1,\bsY}(0)$ be countable and dense and let us consider the family of functions
$(\mathcal{V}_{n}^{\varphi})_{n\in \N, \, \varphi \in F}$ and the associated distributional derivatives $(\nu_n^{\varphi})_{n\in \N, \, \varphi \in F}$, in fact Radon
 measures on $[0,T]$.
 It follows from estimate \eqref{BV-bound}, combined with a  diagonalization procedure based on the countability of
 $F$, that
there exist a sequence of indexes  $(n_k)_k$ and for every $\varphi \in F$ a 
Radon measure $\nu_\infty^\varphi$, such that $\nu_{n_k}^\varphi \weaksto  \nu_\infty^\varphi$ as $k\to\infty$. Set $ \mathcal{V}_\infty^{\varphi}(t) : =  \nu_\infty^\varphi([0,t]$ for every $t \in [0,T]$. Since the function   $ \mathcal{V}_\infty^{\varphi}$ 
 is monotone, it has an at most countable jump set (i.e., the set of atoms of the measure $ \nu_\infty^\varphi$), which we denote by  $J_\varphi$. The set $\bar{J}:= \cup_{\varphi \in F}
J_\varphi$ is still countable.
\par
 In order to show that \eqref{mie-th-conv-enh} holds,  let us fix $\varphi \in F$.  The  sequence $(\pairing{}{\bsY}{\ell_{n_k}(t_k)}{\varphi})_k$ is bounded for every $\varphi \in F$ and therefore it admits a  subsequence (not relabeled, possibly depending on $\varphi$), converging to some $\bar{\ell}_{\varphi} \in \R$.  Observe that 
\[
\begin{aligned}
|  \bar{\ell}_{\varphi}  -   \mathscr{L}_\varphi(t) | = \lim_{k\to\infty} |  \pairing{}{\bsY}{\ell_{n_k}(t_k)}{\varphi} -  \pairing{}{\bsY}{\ell_{n_k}(t)}{\varphi} | &  \stackrel{(1)}{\leq} \limsup_{k\to\infty} \mathrm{Var}(\pairing{}{\bsY}{\ell_{n_k}}{ \varphi}; [t,t_k] )   \\ &  =   \limsup_{k\to\infty} \nu_{n_k}^\varphi ( [t,t_k] ) \stackrel{(2)}{\leq} \nu_\infty^\varphi(\{t\})    \stackrel{(3)}{=} 0,
\end{aligned}
\]
where  (1) follows from supposing (without loss of generality) that $t \leq t_k$ for $k $ sufficiently big, (2) from the upper semicontinuity property of weak$^*$ convergence of measures, and (3) from the fact that $t \notin \bar{J}$ is not an atom for the measure $\nu_\infty^\varphi$. Therefore $\bar{\ell}_{\varphi}  =   \mathscr{L}_\varphi(t)$ and, a fortiori, one has convergence \eqref{mie-th-conv-enh} along the \emph{whole} sequence of indexes $(n_k)_k$. 
\par\noindent
\underline{\textbf{Claim $2$:}}
\emph{Let
 $(\ell_{n_k})_k$ be a (not relabeled) subsequence of the sequence from Claim $1$,  with which a limiting Young measure $\bfmu= \{\mu_t\}_{t \in (0,T)} \in \mathscr{Y}(0,T; \bsV)$ is associated according to Theorem
\ref{thm.balder-gamma-conv}.
Then, there exists a negligible set $N\subset (0,T)$ such that for every $t \in (0,T) \setminus N$ the probability measure $\mu_t$ is a Dirac mass $\delta_{\ell(t)}$, with 
$\ell(t) \in \bsV$ fulfilling
\begin{equation}
\label{ident-ell}
\pairing{}{\bsY}{\ell(t)}{\varphi} = \mathscr{L}_\varphi(t)  \qquad \text{for every } \varphi \in F,
\end{equation}
 and  \eqref{weak-ptw-B} holds as $k\to\infty$.}
\\
We refer to  the proof of \cite[Thm.\ A.5]{Rocca-Rossi} for  this Claim.
\par\noindent
\underline{\textbf{Claim $3$:}}
\emph{Set $J: = N {\cup} \bar{J}$.  For every $t \in [0,T] \setminus J$ and for every $ (t_k)_k \subset [0,T] $   with  $ t_k \to t $  there holds $ \ell_{n_k}(t_k) \weakto \ell(t)$ in $\bsY^*$.}
\\
Indeed, the sequence $( \ell_{n_k}(t_k))_k$ is bounded in $\bsY^*$, and therefore it admits a (not relabeled) subsequence weakly converging in $\bsY^*$ to some $\bar\ell$. It follows from \eqref{mie-th-conv-enh} and \eqref{ident-ell} that  $  \pairing{}{\bsY}{\bar \ell }{\varphi}=  \mathscr{L}_\varphi(t) =  \pairing{}{\bsY}{\ell(t)}{\varphi} $ for every $\varphi \in F$. Since $F$ is dense in $ \overline{B}_{1,\bsY}(0)$, we then conclude that $\bar \ell$ and $\ell(t)$ coincide on all the elements in $ \overline{B}_{1,\bsY}(0)$. Hence 
$\bar \ell = \ell(t)$  in $\bsY^*$ and the desired claim follows. This concludes the proof of \eqref{enhSav}. 
\QED
\bibliographystyle{alpha}
\bibliography{ricky_lit.bib} 
\end{document}